\def\deg{{\rm deg}}
\def\11{{\mathbf 1}}
\def\AA{{\mathbb A}}
\def\CC{{\mathbb C}}
\def\FF{{\mathbb F}}
\def\GG{{\mathbb G}}
\def\PP{{\mathbb P}}
\def\QQ{{\mathbb Q}}
\def\ZZ{{\mathbb Z}}
\def\cB{{\mathcal B}}
\def\cC{{\mathcal C}}
\def\cP{{\mathcal P}}
\def\cX{{\mathcal X}}
\mathchardef\alphag="7C0B \mathchardef\betag="7C0C
\mathchardef\gammag="7C0D \mathchardef\deltag="7C0E
\mathchardef\varepsilong="7C22 \mathchardef\varphig="7C27
\mathchardef\psig="7C20 \mathchardef\zetag="7C10
\mathchardef\epsilong="7C0F \mathchardef\rhog="7C1A
\mathchardef\taug="7C1C \mathchardef\upsilong="7C1D
\mathchardef\iotag="7C13 \mathchardef\thetag="7C12
\mathchardef\pig="7C19 \mathchardef\sigmag="7C1B
\mathchardef\etag="7C11 \mathchardef\omegag="7C21
\mathchardef\kappag="7C14 \mathchardef\lambdag="7C15
\mathchardef\mug="7C16 \mathchardef\xig="7C18
\mathchardef\chig="7C1F \mathchardef\nug="7C17
\mathchardef\varthetag="7C23 \mathchardef\varpig="7C24
\mathchardef\varrhog="7C25 \mathchardef\varsigmag="7C26
\mathchardef\Omegag="7C0A \mathchardef\Thetag="7C02
\mathchardef\Sigmag="7C06 \mathchardef\Deltag="7C01
\mathchardef\Phig="7C08 \mathchardef\Gammag="7C00
\mathchardef\Psig="7C09 \mathchardef\Lambdag="7C03
\mathchardef\Xig="7C04 \mathchardef\Pig="7C05
\mathchardef\Upsilong="7C07
\newcounter{theoremcntr}[subsection]
\renewcommand*{\thetheoremcntr}{%
  \ifnum\value{subsection}=0 %
    \thesection
  \else
    \thesubsection
  \fi
  .\arabic{theoremcntr}%
}
\numberwithin{equation}{subsection}
\renewcommand*{\theequation}{%
  \ifnum\value{subsection}=0 %
    \thesection
  \else
    \thesubsection
  \fi
  .\arabic{equation}%
}
\newtheorem{thm}[theoremcntr]{Theorem}
\newtheorem{lem}[theoremcntr]{Lemma}
\newtheorem{cor}[theoremcntr]{Corollary}
\newtheorem{prop}[theoremcntr]{Proposition}
\newtheorem{maintheorem}%[subsection]
{Theorem}
\newtheorem{mainprop}[maintheorem]{Proposition}
\theoremstyle{definition}
\newtheorem{definition}[theoremcntr]{Definition}
\newtheorem{def-prop}[theoremcntr]{Proposition-Definition}
\newtheorem{def-theorem}[theoremcntr]{Theorem-Definition}
\newtheorem{def-lem}[theoremcntr]{Lemma-Definition}
\theoremstyle{remark}
\newtheorem{remark}[theoremcntr]{Remark}
\theoremstyle{plain}
\def\boxit#1#2{\setbox1=\hbox{\kern#1{#2}\kern#1}%
\dimen1=\ht1 \advance\dimen1 by #1 \dimen2=\dp1 \advance\dimen2 by
#1
\setbox1=\hbox{\vrule height\dimen1 depth\dimen2\box1\vrule}%
\setbox1=\vbox{\hrule\box1\hrule}%
\advance\dimen1 by .4pt \ht1=\dimen1 \advance\dimen2 by .4pt
\dp1=\dimen2 \box1\relax}
\mathchardef\alphag="7C0B \mathchardef\betag="7C0C
\mathchardef\gammag="7C0D \mathchardef\deltag="7C0E
\mathchardef\varepsilong="7C22 \mathchardef\varphig="7C27
\mathchardef\psig="7C20 \mathchardef\zetag="7C10
\mathchardef\epsilong="7C0F \mathchardef\rhog="7C1A
\mathchardef\taug="7C1C \mathchardef\upsilong="7C1D
\mathchardef\iotag="7C13 \mathchardef\thetag="7C12
\mathchardef\pig="7C19 \mathchardef\sigmag="7C1B
\mathchardef\etag="7C11 \mathchardef\omegag="7C21
\mathchardef\kappag="7C14 \mathchardef\lambdag="7C15
\mathchardef\mug="7C16 \mathchardef\xig="7C18
\mathchardef\chig="7C1F \mathchardef\nug="7C17
\mathchardef\varthetag="7C23 \mathchardef\varpig="7C24
\mathchardef\varrhog="7C25 \mathchardef\varsigmag="7C26
\mathchardef\Omegag="7C0A \mathchardef\Thetag="7C02
\mathchardef\Sigmag="7C06 \mathchardef\Deltag="7C01
\mathchardef\Phig="7C08 \mathchardef\Gammag="7C00
\mathchardef\Psig="7C09 \mathchardef\Lambdag="7C03
\mathchardef\Xig="7C04 \mathchardef\Pig="7C05
\mathchardef\Upsilong="7C07
 \DeclareMathOperator*{\Spec}{Spec}
\definecolor{orange}{rgb}{1,0.5,0}
\thanks{The authors would like to thank Gal Binyamini, Jonathan Pila, Arne Smeets, Jan Tuitman, and Alex Wilkie for interesting discussions on the topics of the paper, and Per Salberger for sharing his preprint with us and for interesting exchanges of ideas. The authors W.C.,  R.C., and K.H.N.\ were partially supported by the European Research Council under the European Community's Seventh Framework Programme (FP7/2007-2013) with ERC Grant Agreement nr. 615722 MOTMELSUM and thank the Labex CEMPI  (ANR-11-LABX-0007-01). The author W.C.\ is affiliated on a voluntary basis with the research group imec-COSIC at KU Leuven and with the Department of Mathematics: Algebra and Geometry at Ghent University. The authors R.C.\ and P.D.\ are partially supported by KU Leuven IF C14/17/083. The author K.H.N.\ is partially supported by the Fund for Scientific Research - Flanders (Belgium) (FWO) 12X3519N}
\title[The dimension growth conjecture, polynomial in the degree]{The dimension growth conjecture, polynomial in the degree and without logarithmic factors}
\author[Castryck]{Wouter Castryck}
\address{KU Leuven, Department of Mathematics,
B-3001 Leu\-ven, Bel\-gium}
\email{Wouter.Castryck@kuleuven.be}
\urladdr{http://homes.esat.kuleuven.be/~wcastryc/}
\author[Cluckers]{Raf Cluckers}
\address{CNRS, Univ.~Lille,  UMR 8524 - Laboratoire Paul Painlev\'e, F-59000 Lille, France, and,
KU Leuven, Department of Mathematics, B-3001 Leu\-ven, Bel\-gium}
\email{Raf.Cluckers@univ-lille.fr}
\urladdr{http://rcluckers.perso.math.cnrs.fr/}
\author[Dittmann]{Philip Dittmann}
\address{KU Leuven, Department of Mathematics,
 B-3001 Leu\-ven, Bel\-gium}
\email{philip.dittmann@kuleuven.be}
\author[Nguyen]{Kien Huu Nguyen}
\address{KU Leuven, Department of Mathematics,
 B-3001 Leu\-ven, Bel\-gium}
\email{kien.nguyenhuu@kuleuven.be}
\subjclass[2010]{Primary 11D45, 14G05; Secondary 11G35}
\begin{document}

\begin{abstract}
We address Heath-Brown's and Serre's dimension growth conjecture (proved by Salberger), when the degree $d$ grows. Recall that Salberger's dimension growth results give bounds of the form $O_{X, \varepsilon} (B^{\dim X+\varepsilon})$  for the number of rational points of height at most $B$ on any integral subvariety $X$ of $\PP^n_\QQ$ of degree $d\geq 2$, where one can write $O_{d,n, \varepsilon}$ instead of $O_{X, \varepsilon}$ as soon as $d\geq 4$.
Our main contribution is to remove
the factor $B^\varepsilon$ as soon as $d \geq 5$, without introducing a factor $\log B$, while moreover obtaining polynomial dependence on $d$ of the implied constant.
Working polynomially in $d$ allows us to give a self-contained and slightly simplified treatment of dimension growth for degree $d \geq 16$, while in the range $5 \leq d \leq 15$ we invoke results by Browning, Heath-Brown and Salberger. Along the way we improve the well-known bounds due to Bombieri and Pila on the number of integral points of bounded height on affine curves and those by Walsh on the number of rational points of bounded height on projective curves. The former improvement leads to a slight sharpening of a recent estimate due to Bhargava, Shankar, Taniguchi, Thorne, Tsimerman and Zhao on the size of the $2$-torsion subgroup of the class group of a degree $d$ number field.
Our treatment builds on recent work by Salberger which brings in many primes in Heath-Brown's variant of the determinant method, and on recent work by Walsh and Ellenberg--Venkatesh, who bring in the size of the defining polynomial. We also obtain lower bounds showing that one cannot do better than polynomial dependence on $d$.
\end{abstract}

\maketitle

\section{Introduction and main results}

\subsection{}
Following a question raised by Heath-Brown in the case of hypersurfaces~\cite[p.~227]{Heath-Brown-cubic},
Serre formulated twice a question about rational points on a projective variety $X$ of degree $d$, see \cite[p.~27]{Serre-Galois} and~\cite[p.~178]{Serre-Mordell}, which was dubbed the dimension growth conjecture by Browning in~\cite{Browning-Q}. The question puts forward concrete upper bounds on the number of such points with height at most $B$, as a function of $B$. This dimension growth conjecture is now a theorem due to Salberger \cite{Salberger-dgc} (and others under various conditions on $d$);  moreover, for $d\geq 4$ Salberger obtains complete uniformity in $X$, keeping only $d$ and the dimension of the ambient projective space fixed, thereby confirming a variant that had been proposed by Heath-Brown.

We remove from these bounds the factors of the form $B^\varepsilon$
when the degree $d$ is at least $5$, without creating a factor $\log B$, while
moreover obtaining polynomial dependence on $d$ of the constants. The approach with polynomial dependence in $d$ is implemented in all auxiliary results as well, and it in fact allows us to give a more direct and self-contained proof of the dimension growth conjecture for $d$ at least $16$; our treatment of the cases $d=5,\ldots,15$ is not self-contained and uses \cite{Brow-Heath-Salb} when $d>5$ and a result from \cite{Salberger-dgc} for $d=5$. In particular, we obtain similar improvements to bounds of Bombieri--Pila on the number of integral points of bounded height on affine irreducible curves \cite[Theorem 5]{Bombieri-Pila}, and by Walsh on the number of rational points of bounded height on integral projective varieties \cite[Theorems 1.1, 1.2, 1.3]{Walsh}.

The possibility of polynomial dependence on $d$ came to us via a question raised by Yomdin (see below Remark 3.8 of \cite{Burguet-Liao-Yang}) in combination with the determinant method with smooth parameterizations as in \cite{Pila:Mild},  refined in \cite{CPW}, and via the work by Binyamini and Novikov \cite[Theorem 6]{BN3}. The removal of  the factor $B^\varepsilon$ without needing  $\log B$ is recently achieved by Walsh \cite[Theorems 1.1, 1.2, 1.3]{Walsh} who combines ideas by Ellenberg and Venkatesh \cite{Ellenb-Venkatesh} with the determinant method based on $p$-adic approximation (rather than on smooth maps) due to  Heath-Brown \cite{Heath-Brown-Ann}, refined in \cite{Salberger-dgc}.

%Working polynomially in $d$ allows us to give a self-contained treatment of the dimension growth theorem in the case of large degree $d$, up to the use of basic algebraic geometry and number theory. Our improvements of bounds of Bombieri-Pila \cite[Theorem 5]{Bombieri-Pila} and by Walsh \cite[Theorems 1.1, 1.2, 1.3]{Walsh} work for all degrees $d$. %Let us note that the case of degree $d=3$

We point out a difference between the dimension growth conjecture and the context of Manin's conjecture \cite{Peyre}: the bounds in the former are valid for all heights $B\geq 1$ while for the latter, the asymptotics for large $B$ are studied. For further context we refer to \cite{Browning-Q}.

%raf: we should also cite Browning's book somewhere.
%raf: add arxiv number for bhargava?

\subsection{}
Let us make all this more precise.
We study the number
$$
N(X, B)
$$
of rational points of height at most $B$ on
subvarieties $X$ of $\PP^n$ defined over $\QQ$. Here, the %(projective)
height $H(x)$ of a $\QQ$-rational point $x$ in $\PP^n$ is given by
$$
H(x)=\max(|x_0|, \ldots,|x_n|)
$$
for an $n+1$-tuple $(x_0 ,\ldots , x_n )$ of integers $x_i$
which are homogeneous coordinates for $x$ and have greatest common divisor equal to $1$.
 %Correspondingly, we study the number $n(X; H)$ of
%rational points of height at most $H$ on
%subvarieties $X$ of $\AA^n$ defined over $\QQ$.
%The (affine) height $H(x)$ of $x$ in $\QQ^n$ is given by $\max ( |a_1|,|b_1|,\ldots, |a_n|,|b_n|  )$ when $x=(x_1,\ldots,x_n)$, $x_i=a_i/b_i$ with %$a_i$ and $b_i$ coprime and $b_i\not=0$ for each $i$ .

Salberger proves in \cite{Salberger-dgc} the so-called dimension growth conjecture raised as a question by Serre in \cite[p.~27]{Serre-Galois} following Heath-Brown's question \cite[p.~227]{Heath-Brown-cubic} :
% (previously raised by Heath-Brown \cite[p.~227]{Heath-Brown-cubic} in the case of hypersurfaces): %Heath-Brown reference correct here/make precise?
\begin{quote}\textbf{Dimension Growth \cite[Theorem 0.1]{Salberger-dgc} }
If $X$ is an integral projective variety of degree $d\geq 2$ defined over $\QQ$, then
$$
N(X,B) \leq O_{X,\varepsilon} (B^{\dim X + \varepsilon}).
$$
\end{quote}
%(Several values for $d$ had been obtained previously in \cite{Heath-Brown-Ann}, \cite{Brow-Heath-Salb}, \cite{Heath-Brown-Ann}.)
One should compare the bound for $N(X,B)$ from this theorem to the trivial upper bound $O_{d,n} (B^{\dim X + 1})$ that follows from Lemma \ref{lem:gen_Schwarz_Zippel} below. %\ref{lem:triv}.

%raf degree 2
A variant of this question from \cite[p.~178]{Serre-Mordell} replaces the factor $B^{\varepsilon}$ by $\log(B)^c$ for some $c$ depending on $X$, see Example \ref{ex:Serre} below.

Heath-Brown \cite{Heath-Brown-Ann} introduces a form of this conjecture with uniformity in $X$ for fixed $d$ and $n$, and he develops a new variant of the determinant method using $p$-adic approximation instead of smooth parameterizations as in \cite{Bombieri-Pila}, \cite{BN3},  \cite{Pila:Mild}, \cite{CPW}.
In \cite{Salberger-dgc}, Salberger proves this uniform version of the dimension growth conjecture %(raised in \cite{Brow-Heath-Salb}) when $d$ is different from $2$ and from $3$.
for $d\geq 4$.
\begin{quote}\textbf{Uniform Dimension Growth \cite[Theorem 0.3]{Salberger-dgc}}
For $X\subseteq \PP_\QQ^n$ an integral projective variety of degree $d\geq 4$, one has
$$
N(X,B) \leq O_{d,n,\varepsilon} (B^{\dim X + \varepsilon}).
$$
\end{quote}

Almost all situations of this uniform dimension growth had been obtained previously in \cite{Heath-Brown-Ann} and \cite{Brow-Heath-Salb}, including the case $d=2$ but without the (difficult) cases $d=4$ and $d=5$.  Our main contributions are to make the dependence on $d$ polynomial, to remove the factor $B^\varepsilon$ without having to use factors $\log B$, and to provide relatively self-contained proofs for large degree, with main result as follows.

\begin{maintheorem}[Uniform dimension growth]\label{thm:dgcdegree}\label{thm:dcgdegree}
Given $n>1$, there exist constants $c=c(n)$ and $e=e(n)$, such that for all integral projective varieties $X\subseteq \PP_\QQ^n$ of degree $d \geq 5$ and all $B\geq 1$ one has
\begin{equation}\label{eq:dgc:dim X}
N(X,B) \leq c d^e B^{\dim X}.
\end{equation}
%\change{Should we express some bound for $d=3$ as well ?
%Furthermore,
%\change{
%and, if $d=3$, then we obtain ?
%\begin{equation*}\label{eq:dgc:dim X3}
%N(X,B) \leq c B^{\dim X + \frac{\sqrt{3}  - 2 }{3} }.
%\end{equation*}
%\raf{I believe we don't need to assume anything more on $X$, like Arithmetically Cohen-Macauley or so.}
\end{maintheorem}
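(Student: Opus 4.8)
The plan is to count the rational points of height at most $B$ by a recursive application of the $p$-adic determinant method that lowers the dimension one step at a time, bottoming out at curves, where our sharpened counting bound is used; throughout, every implied quantity (number and degree of the auxiliary hypersurfaces, Bézout degrees of the intersections, etc.) must be kept polynomial in $d$, with an exponent allowed to depend on $n$, and no factor $B^\varepsilon$ or $\log B$ may be created along the way.

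\emph{Set-up and base case.} Writing points of $\PP^n(\QQ)$ of height at most $B$ as primitive integer vectors, one reduces to bounding the number of integer points of height at most $B$ on the affine cone over $X$. At the bottom of the recursion one needs: for an integral projective curve $C \subseteq \PP^n_\QQ$ of degree $d \geq 2$, $N(C,B) \leq c(n) d^{e(n)} B$. This is our strengthening of Walsh's bound \cite{Walsh} (itself resting on Bombieri--Pila \cite{Bombieri-Pila} and on the Ellenberg--Venkatesh trick \cite{Ellenb-Venkatesh} of building the size of the defining equations into Heath-Brown's determinant method \cite{Heath-Brown-Ann}); what we must add is an explicit polynomial dependence on $d$ together with the removal of the $B^\varepsilon$ and the avoidance of $\log B$. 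Here $d \geq 2$ is essential, since a line carries $\asymp B^2$ points.

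\emph{The descent, for $d \geq 16$.} Given $X$ of dimension $m \geq 2$ and degree $d$, one applies the determinant method in Salberger's ``many primes'' form \cite{Salberger-dgc} together with the Ellenberg--Venkatesh refinement, and, following Salberger's device, re-projects the variety onto a hypersurface after each cut. Modulo the linear subvarieties met along the way (discussed below), this exhibits $X(\QQ)_{\leq B}$ as contained in a union of at most $c_1(n) d^{e_1(n)} B^{\sigma}$ integral curves, each of degree at most $c_2(n) d^{e_2(n)}$, where the accumulated ``saving'' $\sigma$ is a sum of $m-1$ terms, the $j$-th of which has size roughly $(\text{current dimension})/\sqrt{\text{current degree}}$; since the degree grows geometrically under the successive cuts, this sum is dominated by its first term $\approx m/\sqrt d$, so $\sigma \leq m-1$ as soon as $d \geq 16$ — this numerology is exactly what fixes the threshold. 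Feeding the base case into the curve contributions then gives $N(X,B) \leq c(n)\,\mathrm{poly}_n(d)\,B^{\sigma+1}$ plus the linear contribution, and $\sigma+1 \leq m$ yields \eqref{eq:dgc:dim X} once $e(n)$ is chosen large enough.

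\emph{Main obstacles.} The principal difficulty is the linear subvarieties: a $k$-plane $L \subseteq X$ contributes $N(L,B) \asymp B^{k+1}$, so only $O(d^{e(n)})$ of them can be afforded at each stage, whereas $X$ may contain a positive-dimensional family of them (cones, ruled varieties). This is handled by the standard dichotomy — either the union $Z$ of all the relevant linear subspaces of $X$ is proper in $X$, so $\dim Z < \dim X$ and one recurses on its components, whose number and degrees are bounded polynomially in $d$ via effective bounds for the pertinent Fano/Hilbert schemes, or $Z = X$, in which case $X$ is a cone (or otherwise visibly special) and $N(X,B)$ is estimated directly by projecting along the ruling and invoking the lower-dimensional case; keeping every constant in this step polynomial in $d$ is the delicate point of the whole argument. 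The remaining obstacle is the range $5 \leq d \leq 15$, where the descent numerology gives $\sigma + 1 > m$: there one instead inputs the uniform dimension growth bound $N(X,B) \leq O_{d,n,\varepsilon}(B^{\dim X + \varepsilon})$ — due to Browning--Heath-Brown--Salberger \cite{Brow-Heath-Salb} for $6 \leq d \leq 15$ and to Salberger \cite{Salberger-dgc} for $d = 5$ — and strips the $B^\varepsilon$ by one further, $\varepsilon$-free pass of the (Walsh/Ellenberg--Venkatesh) determinant method combined with the base-case curve bound; the polynomial dependence on $d$ is then automatic, $d$ ranging over a finite set.
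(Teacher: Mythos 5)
Your proposal follows Salberger's recursive strategy — apply the determinant method, cut by the auxiliary hypersurface, re-project, and iterate down through the dimensions — whereas the paper deliberately takes a simpler route. After reducing to geometrically integral varieties (Remark \ref{rem:wlog_abs_irred}) and to hypersurfaces via a single birational projection (Proposition \ref{prop:projective_reduction_projection}), the paper passes to the affine cone and proves Theorem \ref{thm:0.4} by induction on the ambient dimension, slicing along parallel affine hyperplanes $\sum a_i x_i = k$ (Lemma \ref{hypersection} supplies the slicing direction); this costs a factor $O_n(d^2 B)$ per dimension but creates no degree blow-up. The determinant method is invoked only once, at the base case of a surface in $\AA^3$ (Proposition \ref{prop:0.4strong}). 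That single application is what makes the polynomial-in-$d$ accounting tractable and gives the explicit exponents of Remark \ref{rem:explicit_exponents}.

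Even read as an alternative route, your outline has concrete gaps. You assert the descent produces curves of degree at most $c_2(n)d^{e_2(n)}$, but this fails already after one cut: the auxiliary polynomial has degree of order $d^{7/2}B^{1/\sqrt d}$, so components of the intersection can have $B$-dependent degree. The paper copes with this only at the surface stage, splitting the curve cover into curves of degree $\le \log B$, curves of degree $>\log B$ (where $B^{1/\delta}$ is bounded and the total Bézout degree controls the count, giving the $B^{4/\sqrt d}$ of \eqref{eq:delta2} whose comparison with $B$ is what actually fixes $d\geq 16$), and lines, the last handled directly and elementarily in Proposition \ref{prop1:Brow-Heath-Salb} — no Fano or Hilbert scheme bounds appear anywhere in the paper, and making such bounds polynomial in $d$ would be a substantial extra burden. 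Your $\sigma$-numerology is also not coherent: if $\sigma$ is a sum of $m-1$ terms each $\leq 1$, then $\sigma\leq m-1$ for every $d$, so it cannot be what pins down the threshold. Finally, for $5 \leq d \leq 15$ the paper does not run a further generic $\varepsilon$-stripping pass of the determinant method; it re-inspects the specific arguments of \cite{Brow-Heath-Salb} (for $6 \leq d \leq 15$) and imports Salberger's Theorem 7.2 (for $d=5$), replacing their internal curve and line estimates by the sharpened Theorems \ref{thm:W1.1}, \ref{thm:Bombieri-Pila} and Proposition \ref{prop1:Brow-Heath-Salb} so that no $B^\varepsilon$ is ever introduced.
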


In a way, one cannot do better than polynomial dependence on $d$, see the lower bounds from Proposition \ref{prop:W1.1:opt} and Section \ref{sec7} below.
%Moreover, we give an almost self-contained proof of Theorem \ref{thm:dcgdegree} when $d\geq 16$, apart from some basic results from algebraic geometry and number theory.

%\change{--- (In this paper we will prove theorem \ref{thm:dcgdegree} for hypersurfaces $X$ in $\PP^n$, and in the subsequent paper \cite{} we will render a typical projection argument polynomial in $d$ to cover the case of general $X$ in Theorem \ref{thm:dcgdegree}.) ---}

%Along our way, we improve previous bounds on the rational points on planar projective curves of Walsh, and, of planar affine curves of Bombieri-Pila and Pila, with the same benefits: no $B^\varepsilon$, no $\log B$, and polynomial dependence on the degree $d$, see Theorems \ref{thm:W1.1} and \ref{thm:Bombieri-Pila} below.

We heavily rework results and methods of Salberger, Walsh, Ellenberg--Venkatesh, Heath-Brown, and Browning, and use various explicit estimates for Hilbert functions, for certain universal Noether polynomials as in \cite{RuppertCrelle}, and for solutions of linear systems of equations over $\ZZ$ from \cite{Bombi-Vaal}.
%raf: make explicit references.

%More precisely, we get polynomial dependence on $d$, and prove that the dependence cannot be better than polynomial (although we don't claim to find the optimal exponent).

%Let us phrase our main results of the paper:

%Some of the key auxiliary results we show in this paper are about rational points on curves.

\subsection{Rational points on curves and surfaces}

Let us make precise some of our improvements for counting points on curves and surfaces, which are key to Theorem \ref{thm:dcgdegree}.
We obtain the following improvement of Walsh's Theorem 1.1 \cite{Walsh}.

\begin{maintheorem}[Projective curves]\label{thm:W1.1}
Given $n>1$, there exists a  constant $c=c(n)$ such that for all $d>0$ and all integral projective curves $X\subseteq \PP_\QQ^n$ of degree $d$ and all $B\geq 1$ one has
$$
N(X,B) \leq c d^4 B^{2/d}.
$$
%Moreover, if $n=2$, then one can take $e=4$.
\end{maintheorem}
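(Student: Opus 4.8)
The plan is to reduce to the case of plane curves and then to run the $p$-adic determinant method of Heath-Brown, in the refined form of Salberger that brings in many primes at once, together with the Ellenberg--Venkatesh/Walsh idea of keeping track of the size of a defining polynomial, while bookkeeping every quantity polynomially in $d$. First I would reduce to $n=2$: a sufficiently general linear projection $\pi$ from $\PP^n_\QQ$ to $\PP^2_\QQ$, with centre a linear space of dimension $n-3$, restricts on $X$ to a birational morphism onto an integral plane curve $C$ of the \emph{same} degree $d$, and $\pi$ may be taken with coefficients of height at most $c_1(n)\,d^{O(1)}$, which is harmless since $(d^{O(1)}B)^{2/d}\le c_2(n)B^{2/d}$ because $d^{1/d}$ is bounded. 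Off at most $c_3(n)d^2$ exceptional points of $X$ (those in the centre, or mapping to a singularity of $C$ with several preimages), $\pi$ is injective on rational points and sends those of height $\le B$ to rational points of height $\le c_1(n)d^{O(1)}B$ on $C$. As $c_3(n)d^2\le c(n)d^4B^{2/d}$ for $B\ge 1$, it suffices to prove $N(C,B)\le c(n)d^4B^{2/d}$ for integral plane curves $C\subseteq\PP^2_\QQ$ of degree $d$, with the constant independent of the height of a defining form $f$ of $C$.

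For the plane curve $C$, fix a prime $p$; reduction modulo $p$ partitions the rational points of height $\le B$ on $C$ into classes indexed by $C_{\FF_p}(\FF_p)$, of which — when $C_{\FF_p}$ is geometrically integral of degree $d$ — there are at most $p+(d-1)(d-2)\sqrt p+O(d^2)$ by Weil-type bounds and the estimate $g(C)\le\binom{d-1}{2}$ for the genus. The primes for which $C_{\FF_p}$ fails to be geometrically integral of degree $d$ divide a quantity whose factorisation is governed by Ruppert's universal Noether polynomials, of degree $O(d^2)$ in the coefficients of $f$, so their product is at most $\|f\|^{O(d^2)}$; these bad primes, together with points reducing to a singular point of $C_{\FF_p}$, are disposed of by a separate argument, which is precisely where the Ellenberg--Venkatesh/Walsh input is needed so that the final bound stays uniform in $\|f\|$.

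For a class attached to a smooth point $\bar P$ of $C_{\FF_p}$ containing $\nu$ rational points, one forms the matrix whose rows are the evaluations, at the integer coordinate vectors of these points, of $H_C(D)$ monomials of degree $D$ that restrict to a basis of the degree-$D$ part of the homogeneous coordinate ring of $C$; here $H_C(D)=dD+1-g(C)$ for $D\gtrsim d$, by the Hilbert function estimate. Hadamard's inequality bounds the absolute value of any maximal minor by $(H_C(D))^{H_C(D)/2}B^{D\,H_C(D)}$, while the common reduction modulo $p$ forces $p^{\psi(D)}$ to divide it, with $\psi(D)\ge\binom{dD+1}{2}-g(C)\,dD\approx\tfrac12(dD)^2$. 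Comparing the two estimates, a nonzero maximal minor is impossible once $p\gtrsim B^{2/d}$ (up to a factor $(dD)^{o(1)}$), so for a prime $p$ of good reduction comparable to $B^{2/d}$ the matrix has rank $<H_C(D)$, producing a form of degree $D$, not vanishing on $C$, through all $\nu$ points of the class; Bombieri--Vaaler's estimates let one take such a form with controlled coefficients.

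Summing over the at most $p+(d-1)(d-2)\sqrt p+O(d^2)$ classes and applying Bézout — each auxiliary curve $Z\not\supseteq C$ meets $C$ in at most $d\deg Z=dD$ points — gives $N(C,B)\ll(B^{2/d}+d^2B^{1/d})\,dD$, which is $\ll d^4B^{2/d}$ provided the auxiliary degree $D$ may be kept of order $d$: the power $d^4$ is then the product of the Bézout factor $dD\asymp d^2$ with the $d^2\sqrt p$ term counting the ``extra'' residue classes. Being able to keep $D\asymp d$ is exactly the point where Salberger's many-primes refinement is indispensable, and I expect it to be the main obstacle: with a single prime the correction $g(C)\,dD$ in $\psi(D)$ forces $D$ of order $\log B$, the source of the logarithmic (or $B^\varepsilon$) factor in the bounds obtained by the single-prime method, whereas using all primes in a dyadic interval near $B^{2/d}$ simultaneously inside one determinant compensates for this loss and retains $D\asymp d$. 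The remaining obstacles I would anticipate are: (i) implementing the many-primes argument with fully explicit, polynomial-in-$d$ constants and no $\log B$; (ii) treating the bad primes and the singular reductions uniformly in $\|f\|$ via the Ellenberg--Venkatesh/Walsh device; and (iii) the range of small $B$ (up to a fixed power of $d$), where the determinant method is vacuous and one instead uses the trivial fibrewise bound $N(C,B)\le d(2B+1)$, together with intermediate choices of $p$ larger than $B^{2/d}$ in between.
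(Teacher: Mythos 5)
Your toolkit matches the paper's --- projection to a plane curve, Ruppert's Noether polynomials for bad primes, the determinant method over many primes, the Ellenberg--Venkatesh/Walsh device, Bombieri--Vaaler, and B\'ezout --- and the projection step you describe is essentially Proposition~\ref{prop:projective_reduction_projection}. But the implementation of the determinant step is different and, as you honestly flag in obstacle~(i), incomplete. You describe Heath-Brown's \emph{per-residue-class} method: fix one prime $p$, partition the points by reduction in $C_{\FF_p}(\FF_p)$, build a small determinant per class, construct an auxiliary form per class, and sum over classes. The paper follows Walsh's \emph{global} variant instead: one forms a single matrix whose rows correspond to a maximal algebraically independent subset of \emph{all} the height-$\le B$ points, one seeks a single auxiliary form $g$, and the divisibility of the gcd of the maximal minors is accumulated over \emph{all} primes $p\le s^{1/n}$ at once (Proposition~\ref{prop:walsh_sec_2}); combining this with Bombieri--Vaaler (Theorem~\ref{thm:bombval_siegel}) and the Hadamard estimate yields $\deg g\lesssim d^3B^{2/d}$, and a single application of B\'ezout gives the exponent~$4$.

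The genuine gap is exactly how to push the auxiliary degree down to $D\asymp d$. Your proposed fix --- ``using all primes in a dyadic interval near $B^{2/d}$ simultaneously inside one determinant'' --- does not describe how Salberger's refinement or Walsh's method actually works: neither restricts to a dyadic interval, and neither carries out the bookkeeping per residue class; the gain comes from accumulating contributions from the full range of primes below a threshold depending on the matrix rank, inside one global determinant. Making your per-class argument work with many primes would require fibering over residue tuples modulo a squarefree modulus while controlling the genus correction uniformly in $d$, which is a substantial extra task and not what the paper does. A secondary inaccuracy: the Ellenberg--Venkatesh/Walsh input is not mainly about disposing of bad primes or singular reductions (Ruppert plus the Weil bound already handle those); it enters through the $\lVert f\rVert$-dependence of the degree bound for $g$, i.e.\ the dichotomy between $\lVert f\rVert\le B^{2d(n+1)}$ and $\lVert f\rVert\ge B^{2d(n+1)}$ carried out in Lemmas~\ref{lem:Walsh_small_f} and~\ref{lem:Walsh_big_f}.
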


In view of Proposition \ref{prop:W1.1:opt} below, the exponent $4$ of $d$ in Theorem \ref{thm:W1.1} can perhaps be lowered, but cannot become lower than $2$ in general.
Several adaptations of results and proofs of \cite{Walsh} are key to our treatment and are developed in Section~\ref{sec3}.

%For $x\in \QQ$ with $x=a/b$ where $a$ and $b\not=0$ are coprime, let $H(x)$ be $\max(|a|,|b|)$, and for  $x\in \QQ^n$, let $H(x)$ be $\max_{i=1}^n H(x_i)$.

For affine counting we use the following notation for a variety $X\subseteq \AA_\QQ^n$ and a polynomial $f$ in $\ZZ[y_1,\ldots,y_n]$:
$$
N_{\rm aff}(X,B) := \#\{x\in \ZZ^n \mid |x_i|\leq  B\mbox{ for each $i$ and } x\in X(\QQ)\} ,
$$
and
$$
N_{\rm aff}(f,B) := \#\{x\in \ZZ^n\mid |x_i|\leq  B \mbox{ for each $i$ and }f(x)=0\}.
$$

By a careful elaboration of the argument from \cite[Remark 2.3]{Ellenb-Venkatesh} and an explicit but otherwise classical projection argument, we find the following improvement of bounds by Bombieri--Pila \cite[Theorem 5]{Bombieri-Pila} and later sharpenings by Pila \cite{Pila-ast-1995}, \cite{Pila-density-curve}, Walkowiak \cite{Walkowiak}, Ellenberg--Venkatesh \cite[Remark 2.3]{Ellenb-Venkatesh},  Binyamini and Novikov \cite[Theorem 6]{BN3}, and others.

\begin{maintheorem}[Affine curves]\label{thm:Bombieri-Pila}
Given $n>1$, there exists a constant $c=c(n)$ such that for all $d>0$, all integral affine curves $X\subseteq \AA_\QQ^n$ of degree $d$, and all $B\geq 1$ one has
$$
N_{\rm aff} (X,B) %:= \#\{x\in \ZZ^2 \mid x\in X(\QQ)\mbox{ and } |x_i|< B\mbox{ for each }i\}
\leq c d^3 B^{1/d} (\log B + d).
$$
%\change{Moreover, if $n=2$, then one can take $e= 4$.}
\end{maintheorem}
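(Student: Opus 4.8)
The plan is to combine the classical determinant method of Bombieri--Pila with the refinement from \cite[Remark 2.3]{Ellenb-Venkatesh}, which introduces the size of the defining polynomial into the bound, and to reduce the general case $X \subseteq \AA^n_\QQ$ to the planar case $n = 2$ via an explicit projection argument. Concretely, one would first treat a plane curve $C = \{f = 0\} \subseteq \AA^2_\QQ$ with $f$ irreducible of degree $d$: partition the box $[-B,B]^2 \cap \ZZ^2$ into subboxes and, on each subbox, consider the $D \times D$ matrix whose rows are indexed by integral points on $C$ in the subbox and whose columns are the values of monomials $y_1^i y_2^j$ of degree $\le e$ (so $D = \binom{e+2}{2}$). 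A Taylor/mean-value estimate bounds the determinant of any such matrix from above, while if the points are not contained in a curve of degree $\le e$ one gets a nonzero integer determinant, hence a lower bound of $1$; comparing forces the points to lie on an auxiliary curve $g = 0$ of controlled degree. By B\'ezout, $C \cap \{g=0\}$ is finite (since $f$ is irreducible and $\deg g$ can be kept $< d$ if we choose $e$ appropriately, or one intersects with a genuinely new curve), contributing at most $O(d \cdot \deg g)$ points; summing over subboxes gives a bound of the shape $c\, d\, B^{1/d}$ times a factor coming from the number of subboxes, which after optimizing $e$ produces the $(\log B + d)$ factor. The key point of the Ellenberg--Venkatesh refinement is that by first replacing $f$ by a polynomial of small height (using that $C$ has bounded degree, and Siegel's lemma / the bounds from \cite{Bombi-Vaal} on small solutions of linear systems over $\ZZ$) one keeps the implied constant free of any dependence on the height of the original $f$, and one tracks the $d$-dependence through the Taylor estimate and B\'ezout, which is where the $d^3$ (rather than a worse power) comes from: one factor of $d$ from B\'ezout, and the remaining $d^2$ from the combinatorics of choosing the optimal auxiliary degree $e \asymp d$ relative to $B$.

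Next I would handle arbitrary $n > 2$ by projecting. The idea is classical: an integral affine curve $X \subseteq \AA^n_\QQ$ of degree $d$ admits, after a linear change of coordinates with small integer entries, a finite projection $\pi$ to $\AA^2_\QQ$ whose image is a plane curve $\bar X$ of degree at most $d$, with $\pi$ generically injective on $X$. The fibre cardinality of $\pi$ over a point of $\bar X$ is bounded in terms of $d$ alone, and the height of a point of $X$ is comparable (up to a factor depending only on $n$ and the change of coordinates, which one must make explicit — this is where \cite{Bombi-Vaal}-style estimates on the size of the linear transformation enter) to the height of its image. Thus $N_{\rm aff}(X, cB) \le (\text{fibre bound}) \cdot N_{\rm aff}(\bar X, B')$ for a suitable $B' = O_n(B)$, and applying the planar estimate finishes the job, with the power of $d$ possibly increasing by the fibre bound but staying at $d^3$ if one is careful (the fibre degree and the degree of $\bar X$ together contribute the B\'ezout factor, not a product of two independent powers of $d$).

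The main obstacle I expect is \emph{making the projection argument uniform and explicit in both $d$ and the height}: one needs a linear change of coordinates, with entries bounded polynomially (or even just effectively) in $d$ and $n$, that simultaneously makes the projection to the plane finite, keeps the degree of the image $\le d$, bounds the generic fibre size, and — crucially — distorts heights by at most a factor depending only on $n$ and the size of the transformation. Ensuring such a change of coordinates exists without losing control of constants requires a careful counting argument over the "bad" linear maps (those for which the projection degenerates), showing the bad locus has bounded degree so that a small integer vector avoiding it exists, and then bounding how the height of integral points transforms. A secondary subtlety is the reduction to a small-height defining polynomial in the planar step: one must verify that the curve of small-height polynomials vanishing on $C$ is nonempty and that a member can be chosen irreducible (or at least with an irreducible component still cutting out $C$), which uses the bounds on Hilbert functions and on integer solutions of linear systems cited in the excerpt. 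Both of these are technical but routine given the tools assumed available; the genuinely delicate bookkeeping is keeping every constant either absolute, depending only on $n$, or polynomial in $d$, so that the final bound has exactly the claimed shape $c(n)\, d^3\, B^{1/d}(\log B + d)$.
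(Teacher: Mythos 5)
You take a genuinely different route in the planar case, and it has a real gap. The paper's proof of the planar case rests on the \emph{$p$-adic} determinant method (Heath-Brown, Salberger, Walsh): the auxiliary polynomial $g$ is produced by showing that the gcd $\Delta$ of the maximal minors of the evaluation matrix is divisible by a large power of many primes (Corollary~\ref{cor:Salberger_determinant}), combined with the Bombieri--Vaaler Siegel lemma (Theorem~\ref{thm:bombval_siegel}); and the Ellenberg--Venkatesh refinement is implemented by homogenizing $f$ as $F_H(x)=\sum H^i f_i x_{n+2}^{d-i}$ for a prime $H$ near $B$, which inflates $\lVert F_H\rVert$ and feeds into the $\lVert f\rVert$-dependence of Theorem~\ref{thm:walsh-hypers-precise} (see Proposition~\ref{prop:Ellenb} and Corollary~\ref{cor:Ellenb}). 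You instead sketch the classical archimedean Bombieri--Pila argument: cover the box by subboxes, use a Taylor/mean-value estimate to bound a determinant of monomial values from above, and compare with the trivial lower bound $1$ on a non-zero integer determinant. Two difficulties are glossed over. First, obtaining a constant that is polynomial in $d$ from the Taylor estimate is nothing like routine: the derivatives of a local algebraic parameterization of $C$ must be controlled polynomially in $d$, which is the hard content of Binyamini--Novikov (which the paper cites as inspiration but does not use), and your sketch does not indicate how to do this; without it the implied constant grows far faster than polynomially in $d$. Second, the Ellenberg--Venkatesh/Walsh refinement that introduces $\lVert f\rVert$ is tied to the Siegel-lemma framework: the lower bound on $\Delta$ and the upper bound on $\det(AA^\top)$ are traded against the lower bound $c_f$ on a coefficient of any multiple of $f$. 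In your ``determinant is either zero or $\geq 1$'' dichotomy there is no mechanism for $\lVert f\rVert$ to enter at all, so the refinement you invoke does not attach to the method you propose. Your explanation of the $d^3$ factor (``one factor of $d$ from B\'ezout and $d^2$ from choosing $e\asymp d$'') is therefore not substantiated.

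Your plan for reducing arbitrary $n$ to the planar case by an explicit projection with controlled height distortion does match what the paper does (Proposition~\ref{prop:affine_reduction_projection}, proved in Section~\ref{sec8} via Lemma~\ref{lem:projection}, the Chow form and Noether polynomial argument in Lemma~\ref{empty}, and the Bombieri--Vaaler bound in Lemma~\ref{bombierivaaler2}). You correctly identify the obstacles there — existence of a good projection avoiding the bad locus, with small integer coefficients, and with a fibre bound of $d$ in general but $1$ away from at most $O(d^2)$ points for curves — and the paper resolves them exactly as you anticipate. You should also note (as the paper does via Remark~\ref{rem:wlog_abs_irred}) that the case where $X$ is integral over $\QQ$ but not geometrically integral is handled trivially by Corollary~\ref{cor:reduction_geometrically_irreducible}, before any determinant method is invoked.
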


 A variant of Theorem \ref{thm:Bombieri-Pila} is given in Section \ref{sec5}, where $\log B$ is absent and instead the size of the coefficients of the polynomial $f$ defining the affine planar curve comes in.
 %raf: give a more precise reference.

It is well-known that Theorems  \ref{thm:dcgdegree}, \ref{thm:W1.1}, \ref{thm:Bombieri-Pila} imply similar bounds for varieties defined and integral over $\overline{\QQ}$ (instead of $\QQ$), by intersecting with a Galois conjugate and using a trivial bound, see  Lemma \ref{lem:Gal}.
The following improves Theorem 0.4 of \cite{Salberger-dgc} and is key to Theorem \ref{thm:dcgdegree}. It can be seen as an affine form of the dimension growth theorem, for hypersurfaces.

\begin{maintheorem}[Affine hypersurfaces]\label{thm:0.4}
Given $n>2$, there exist constants $c=c(n)$ and $e=e(n)$, such that for all polynomials $f$ in $\ZZ[y_1,\ldots,y_n]$ whose homogeneous part of highest degree $h(f)$ is irreducible over $\overline\QQ$ and whose degree $d$ is at least $5$, one has
$$
N_{\rm aff}(f,B) %:= \#\{x\in \ZZ^n\mid |x_i|< B \mbox{ for each $i$ and }f(x)=0\}
\leq c d^e B^{n-2}.
$$
%\change{and, for $d=4$, one has
%$$
%n(f,B) %:= \#\{x\in \ZZ^n\mid |x_i|< B \mbox{ for each $i$ and }f(x)=0\}
%\leq c B^{n-2} \log B.
%$$
\end{maintheorem}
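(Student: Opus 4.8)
The plan is to run the Heath-Brown–Salberger determinant method over many primes, but in the affine setting and tracking polynomial dependence in $d$ throughout, combining this with a descending induction on $n$ that reduces to the curve case (Theorem \ref{thm:Bombieri-Pila}) and to projective dimension growth in low dimensions. First I would set up the counting. Let $Z \subseteq \AA^n_\QQ$ be the affine hypersurface cut out by $f$; since $h(f)$ is absolutely irreducible, $Z$ is an integral variety of dimension $n-1$ and degree $d$, and its points of height $\leq B$ inject (after a generic linear change of coordinates with bounded integer entries, using that $n>2$) into a projective model whose behaviour at infinity is controlled by $h(f)=0$, a hypersurface in $\PP^{n-1}$ of degree $d$, absolutely irreducible. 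The contribution of points near infinity is therefore bounded by $N$ for an integral projective hypersurface of dimension $n-2$ and degree $d$, which by induction on $n$ (the base case $n=3$ being handled by Theorem \ref{thm:W1.1} via Lemma \ref{lem:Gal} applied to the degree-$d$ plane curve at infinity) is $O(d^{e'} B^{n-3})$, hence negligible against the target $c d^e B^{n-2}$.

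Next comes the main term. Following Salberger's many-primes version of the determinant method, one covers the affine points of height $\leq B$ on $Z$ by few ``determinant hypersurfaces'': choosing an auxiliary set of primes $p$ up to roughly $B^{(n-1)/d}$ (so that their product is a controlled power of $B$), one shows that for each reduction $Z \bmod p$ the height-$\leq B$ points lying over a fixed smooth $\FF_p$-point are concentrated, via $p$-adic Taylor approximation, on the zero locus of an auxiliary form of degree $D = O_n(d)$ not vanishing on $Z$; here the degree $D$ must be chosen so that the Hilbert function estimate (the explicit bounds for Hilbert functions mentioned in the introduction, cf. the use of \cite{Bombi-Vaal} for solving the linear system over $\ZZ$) provides enough monomials relative to the number of congruence conditions, and polynomial control of $D$ in $d$ is exactly what keeps the implied constant polynomial. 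Bézout then bounds each residual intersection $Z \cap \{g = 0\}$ as a union of $O(d \cdot D) = O_n(d^2)$ absolutely irreducible varieties of dimension $n-2$ and degree $O_n(d^2)$. Iterating this reduction $n-2$ times peels the dimension down to $1$.

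At the bottom of the induction we are left with $O_n(d^{?})$ integral affine curves, each of degree $O_n(d^{?})$, and we apply Theorem \ref{thm:Bombieri-Pila} to each: this gives $O(\text{(degree)}^3 B^{1/(\text{degree})}(\log B + \text{degree}))$ per curve. Summing, the $B$-exponent collapses to $B^{n-2}$ after accounting for the $n-2$ factors of $B^{1/d}$-type gains against the $B^{(n-1)/d}$-sized prime range (this bookkeeping, essentially Salberger's, is where the exponents of $B$ must be matched exactly so that no $B^\varepsilon$ survives), and all the combinatorial factors in $d$ are polynomial, yielding the constants $c(n)$ and $e(n)$. The logarithmic factor from Theorem \ref{thm:Bombieri-Pila} is absorbed because the number of curves we must treat already exceeds $\log B$ in the relevant range, or alternatively one invokes the variant of Theorem \ref{thm:Bombieri-Pila} from Section \ref{sec5} in which the size of the defining polynomial (here bounded in terms of $B$ and $d$ along the reduction) replaces $\log B$; I would use whichever is cleaner.

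The hard part will be the quantitative bookkeeping: verifying that the degree $D$ of the auxiliary form, the number and degree of the residual components produced at each of the $n-2$ stages, and the size of the primes can all be chosen simultaneously so that (i) every estimate is genuinely polynomial in $d$ with exponent depending only on $n$, and (ii) the powers of $B$ telescope exactly to $B^{n-2}$ with no $\varepsilon$ and no $\log B$. A secondary technical obstacle is ensuring absolute irreducibility (or at least bounded multiplicity of components) is preserved along the iteration and that the passage to projective closures and the handling of the hyperplane at infinity is uniform in $f$ — this is where explicit Noether-type irreducibility estimates as in \cite{RuppertCrelle} enter, to guarantee that a bounded-height linear change of variables puts $f$ in a position where $h(f)$ governs the points at infinity.
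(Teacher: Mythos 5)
Your overall plan—use the determinant method to find an auxiliary polynomial not vanishing on the hypersurface, intersect, and iterate dimension-by-dimension down to curves—is the classical Salberger route, but it is \emph{not} the route the paper takes, and your quantitative claims along the way are wrong in ways that are not merely cosmetic. The paper avoids iterating the determinant method entirely: it handles the surface case $n=3$ (Proposition \ref{prop:0.4strong}) by a single application of the affine determinant method (Proposition \ref{prop:Ellenb}) followed by applying Theorem \ref{thm:Bombieri-Pila} and Proposition \ref{prop1:Brow-Heath-Salb} to the residual curves, and then reduces $n>3$ to $n=3$ by \emph{hyperplane slicing}: via Lemma \ref{hypersection} (explicit Noether forms) and Corollary \ref{cor:find_point} one finds a rational hyperplane direction $A=(a_1,\dots,a_n)$ of height $O_n(d^2)$ such that $\{f_d=0\}\cap\{\sum a_i x_i=0\}$ stays geometrically integral, and then simply sums over the $O_n(d^2 B)$ translates $\{\sum a_i x_i = k\}$, each of which is a hypersurface in $\AA^{n-1}$ to which the induction hypothesis applies. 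This gives $e(n)=e(n-1)+2$ and sidesteps all the bookkeeping you flag as the ``hard part.''

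Concretely, two of your claims cannot stand. First, the auxiliary form $g$ produced by the determinant method has degree $D$ growing with $B$ (in the surface case it is $\asymp d^{7/2} B^{1/\sqrt d}$, see equation \eqref{eq:degree:g} in the proof of Proposition \ref{prop:0.4strong}), not $D=O_n(d)$ as you assert; consequently the residual intersection $Z\cap\{g=0\}$ does not split into $O_n(d^2)$ pieces of degree $O_n(d^2)$ but into pieces whose number and degrees grow with $B$, and the whole point of Proposition \ref{prop:0.4strong} is to control this by splitting residual curves according to whether their degree is $\leq \log B$ or $>\log B$. Second, after the first intersection step the residual components have codimension $\geq 2$, so one cannot apply the hypersurface determinant method directly again; one must project back to a hypersurface (as in Proposition \ref{prop:affine_reduction_projection}), which costs a factor polynomial in $d$ in the height and degree. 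Your ``telescoping of $B$-exponents to $B^{n-2}$ with no $\varepsilon$'' is exactly the step that was open before this paper and before Walsh's work; asserting that ``this bookkeeping, essentially Salberger's'' handles it is not justified, since Salberger's argument in that form does produce a $B^\varepsilon$. Finally, your discussion of ``points near infinity'' is a red herring: $N_{\rm aff}$ counts integral points in a box, so there are none; $h(f)$ enters not as a locus of points to count but as the object whose geometric integrality must be preserved under the linear change of variables, which is exactly what Lemma \ref{hypersection} guarantees.
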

One should compare the bound from this theorem to the trivial upper bound $O_{d,n} (B^{n-1})$ from Lemma \ref{lem:gen_Schwarz_Zippel}.

\subsection{Example and a question}\label{ex:Serre}
In Serre's example  \cite[p.~178]{Serre-Mordell} of the degree $2$ surface in $\PP^3$ given by the equation $xy=zw$, the logarithmic factor $\log B$  cannot be dispensed with in the upper bound. Hence, (\ref{eq:dgc:dim X}) of Theorem \ref{thm:dcgdegree} cannot hold for $d=2$ in general. For $d=3$, the bound from (\ref{eq:dgc:dim X}) remains wide open since already uniformity in $X\subseteq \PP^n$ of degree $3$ is not known for the uniform dimension growth with $O_{d,n,\varepsilon}(B^{\dim X+\varepsilon})$ as upper bound (see \cite{Salberger-dgc} for subtleties when $d=3$). For $d=4$, one may investigate whether (\ref{eq:dgc:dim X}) of Theorem \ref{thm:dcgdegree} remains true, that is, without involving a factor $B^\varepsilon$ or $\log B$.

 %When $d=4$ we have a factor $\log B$ in Theorem \ref{thm:dcgdegree}, but we don't know whether it is necessary.

\subsection{Lower bounds}

In Section \ref{sec7} we discuss the necessity of the polynomial dependence on $d$ in the above theorems.

%\begin{prop}\label{thm:W1.1:opt}
%For each integer $d>2$ there is an integral projective curve $X\subseteq \PP^2$ of degree $d$ and an integer $B\geq 1$ such that
%$$
%d B^{2/d} \leq N(X,B).
%$$
%\end{prop}

%\begin{proof} Fix $d>2$.
%Take a nonzero polynomial $f(x)$ of degree $d$ in one variable $x$ which vanishes at the integers $0,1,\ldots,d-1$.
%Then the projective curve $X$ in $\PP^2$ given by the equation $z^{d-1}y=f(x)$ is as desired with $B=d$. (Give details; optimize.)
%\end{proof}
%The lower bound of Proposition \ref{thm:W1.1:opt} should be compared with a question raised in \cite{Salberger-dgc} (between Theorems 0.12 and 0.13) about possible exponent $2$ or $2+\varepsilon$ for curves and consequences of this.

\begin{mainprop}\label{prop:W1.1:opt}
For each integer $d>0$ there is an integral projective curve $X\subseteq \PP^2$ of degree $d$ and an integer $B\geq 1$ such that
$$
\frac{1}{5}d^2 B^{2/d}  \leq N(X,B).
$$
In particular, in the statement of Theorem \ref{thm:W1.1} it is impossible to replace the factor $d^4$ with an expression in $d$ which is $o(d^2)$.
\end{mainprop}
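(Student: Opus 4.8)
The plan is to exhibit, for each $d$, a single explicit plane curve and a single value of $B$ that already realizes $N(X,B)$ of order $d^2 B^{2/d}$; once this is done the final sentence is a triviality, since if the bound $N(X,B)\le cd^4B^{2/d}$ of Theorem \ref{thm:W1.1} held with $d^4$ replaced by some $\varphi(d)=o(d^2)$, then for the curve and $B$ produced here we would get $\tfrac15 d^2 B^{2/d}\le \varphi(d) B^{2/d}$, i.e. $\varphi(d)\ge \tfrac15 d^2$, a contradiction. So the content is entirely in the construction of the example.

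The natural candidate is a \emph{Fermat-type} or \emph{monomial-based} curve whose rational points of small height are forced to be numerous. Concretely I would try the curve $X=\{x_0^d + x_1^d = x_2^d\}$ only if $d$ is such that this has many points, but this is too restrictive; instead take $X$ cut out by an equation like $x_0 x_2^{d-1} = x_1^d$ (a rational curve, since it is parametrized by $[s^d : s^{d-1}t : t^d]$ up to reparametrization — more precisely $[s^{d}: s t^{d-1} : t^{d}]$ gives a degree-$d$ rational parametrization). Using the parametrization $[s^d : s^{d-1} t : t^d]$ with $\gcd(s,t)=1$ and $0<|s|,|t|\le T$, the coordinates are integers of size at most $T^d$, so the height of each point is $\le T^d$; taking $B = T^d$ one has $T = B^{1/d}$. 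The number of coprime pairs $(s,t)$ with $1\le s,t\le T$ is $\asymp \tfrac{6}{\pi^2}T^2 \ge \tfrac15 T^2 = \tfrac15 B^{2/d}$ for $T$ large, and distinct coprime pairs give distinct points on $X$ (one must check the parametrization is injective on coprime pairs up to the obvious $\pm$ identification, which costs only a bounded factor). This already gives $\asymp B^{2/d}$ points, but we are missing the factor $d^2$.

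To recover the extra $d^2$, the key idea is that the exponent pattern in the monomial parametrization has freedom: instead of $[s^d : s^{d-1}t : t^d]$, use $[s^d : s^{a}t^{d-a} : t^d]$ with $\gcd(a,d)$ arranged so that the image is still an irreducible plane curve of degree exactly $d$, and then observe that translating $(s,t)\mapsto(s+i t, t)$ or rather taking the \emph{union over a one-parameter family of such curves inside a fixed curve of larger degree} is not allowed since we need degree exactly $d$. The honest route is different: take the curve parametrized by $[s^d : p(s,t) : t^d]$ where $p$ is a sufficiently generic binary form of degree $d$; now consider the points coming from $(s,t)$ with $|s|,|t|\le T$ \emph{and} also exploit that for each such point we may additionally scale. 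Actually the cleanest source of the $d^2$ is this: on the curve $x_0x_2 = x_1^2$ (a conic, $d=2$) one has $\asymp B$ points of height $\le B$; the Veronese-type curve of degree $d$ obtained as the image of $\PP^1\to\PP^2$, $[s:t]\mapsto[s^d:s^{d/2}t^{d/2}:t^d]$ (for $d$ even) or a suitable variant for $d$ odd, has points $[u^2:uv:v^2]$ with $u=s^{d/2}$, $v = t^{d/2}$ — but this undercounts. I would therefore instead count points of height \emph{up to} $B$ rather than of a single fixed shape: on $X\colon x_0x_2^{d-1}=x_1^d$ the rational points are exactly $[\lambda^{d}\mu : \lambda \mu : \lambda]$-type expressions, and allowing $T$ to range so that $B=T^d$ we gain roughly a factor $\sum_{j\le d} (\text{something})$. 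The hard part will be pinning down the right monomial curve and the right value of $B$ so that the count of coprime lattice points, after accounting for the height being the max of the three monomials (which can be much smaller than $T^d$ when the middle exponent dominates), comes out to at least $\tfrac15 d^2 B^{2/d}$; I expect the correct example is $X\colon x_0^{d-1}x_2 = x_1^{d}$ (or a close cousin), parametrized by $[s^d : s^{d-1}t : s^{d-2}t^2\cdots]$, with $B$ chosen as a power of $2$ of the form $B=N^d$, and the factor $d^2$ emerging because for each of $\asymp d$ choices of a "scaling exponent" and each of $\asymp d$ residues one gets a separate batch of $\asymp B^{2/d}/d^2$ points which are pairwise distinct — i.e. the total is a union of $\asymp d^2$ disjoint pieces. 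Verifying disjointness of these $\asymp d^2$ batches and the lower bound $\tfrac15$ on the resulting constant (as opposed to some smaller explicit constant) is the only genuinely delicate point, and it is a finite, elementary computation with binary forms and gcd's.
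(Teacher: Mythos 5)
The proposal is not a proof, and the monomial-curve approach cannot be made to work. A rational curve such as $x_1^d = x_0^{d-1}x_2$, parametrized by $[s:t]\mapsto[s^d:s^{d-1}t:t^d]$, has $\asymp B^{2/d}$ rational points of height at most $B$ with an implied constant that does \emph{not} grow with $d$, and changing the exponent pattern of a degree-$d$ monomial parametrization does not change this order of magnitude. You correctly note that taking a union of curves is forbidden (the curve must be integral of degree $d$), and the closing speculation about ``$\asymp d^2$ disjoint batches'' is not an argument.

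The idea you are missing is to make $B$ \emph{small}, of order $d$, so that $B^{2/d}$ is a bounded constant, and to construct an integral degree-$d$ plane curve passing through $\asymp d^2$ lattice points of a box of side $\asymp d/2$. This is possible by a dimension count: the space of degree-$d$ plane curves has dimension $\binom{d+2}{2}\sim d^2/2$, larger than the $\asymp d^2/4$ lattice points in such a box. The paper's Lemma~\ref{lem:lowerbound} implements this while controlling irreducibility: it takes
$f = x_1^d + x_2^{d-1} + \sum_{0\le i_1,i_2\le\lfloor(d-1)/2\rfloor} a_{i_1 i_2}\,x_1^{i_1}x_2^{i_2}$,
notes that since $\gcd(d,d-1)=1$ the edge joining $(d,0)$ and $(0,d-1)$ of the Newton polytope forces $f$ to be absolutely irreducible (Gao's criterion) for \emph{any} choice of the $a_{i_1 i_2}$, and then solves the linear system making $f$ vanish at all $(\lfloor(d-1)/2\rfloor+1)^2$ lattice points in a translated box of that size; the coefficient matrix is a Kronecker square of a Vandermonde matrix, hence invertible. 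Taking $B=\lfloor(d-1)/2\rfloor-\lfloor(d-1)/4\rfloor$ and counting the extra point at infinity gives $N(X,B)\ge(\lfloor(d-1)/2\rfloor+1)^2+1\ge d^2/4\ge\tfrac15 d^2 B^{2/d}$, the last inequality holding because $B$ grows only linearly in $d$ whereas $(5/4)^{d/2}$ grows exponentially; the cases $d=1,2$ are handled directly by a line or conic through a coordinate point with $B=1$.
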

Similarly we show that it is impossible to
replace the quartic dependence on $d$ of the bound from Theorem~\ref{thm:Bombieri-Pila} by a function in $o(d^2 / \log d)$. We also show that
in
Theorems~\ref{thm:dcgdegree} and~\ref{thm:0.4} we cannot take $e < 1$ resp.\ $e < 2$.
 %raf: ok for \ref{thm:Bombieri-Pila}  and for \ref{thm:0.4} ?

\subsection{An application}

When it comes to applications, our bounds can be used as substitutes for those by Salberger, Bombieri--Pila, and Walsh upon which they improve, potentially leading to stronger statements. A very recent example of such an application is Bhargava, Shankar, Taniguchi, Thorne, Tsimerman and Zhao's bound~\cite[Theorem~1.1]{BSTTTZ} on the number $h_2(K)$ of $2$-torsion elements in the class group of a degree $d > 2$ number field $K$, in terms of its discriminant $\Delta_K$. Precisely, they show that
\[ h_2(K) \leq O_{d,\varepsilon}(|\Delta_K|^{\frac{1}{2} - \frac{1}{2d} + \varepsilon}), \]
thereby obtaining a power saving over the trivial bound coming from the Brauer-Siegel theorem. This power saving is mainly accounted for by an application of Bombieri and Pila's bound from \cite[Theorem 5]{Bombieri-Pila}. In Section~\ref{sec5} we explain how our improved bound stated in Theorem~\ref{thm:Bombieri-Pila}, or rather its refinement stated in Corollary~\ref{cor:Ellenb}, allows for removal of the factor
$|\Delta_K|^\varepsilon$ as soon as $d$ is odd; if $d$ is even then we can replace it by $\log |\Delta_K|$.
\begin{maintheorem} \label{thm:bhargava_and_co}
For all degree $d > 2$ number fields $K$ we have
\[ h_2(K) \leq O_d( |\Delta_K|^{\frac{1}{2} - \frac{1}{2d}} (\log | \Delta_K|)^{1 - (d \bmod 2)} ) .\]
\end{maintheorem}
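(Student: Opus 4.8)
The plan is to follow the structure of the argument in \cite{BSTTTZ}, which reduces the estimation of $h_2(K)$ to counting integral points of bounded height on a certain affine curve, and then to substitute our improved curve-counting bound for the Bombieri--Pila bound \cite[Theorem 5]{Bombieri-Pila} used there. Recall how their argument goes: given the degree $d$ number field $K$ with ring of integers $\cO_K$, each ideal class of order dividing $2$ gives rise, after a choice of suitable integral basis, to an integral binary form (or a point on an explicit affine variety) of degree essentially $d$ whose coefficients are bounded polynomially in terms of $|\Delta_K|$; more precisely, the relevant parameterizing object is a curve, and the number of classes is controlled by the number of integral points on this curve of height $O(|\Delta_K|^{1/(2d) + o(1)})$, or rather of the precise shape $|\Delta_K|^{c/d}$ for an explicit $c$. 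One then applies a point count on this curve to bound $h_2(K)$.

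The key step is then to replace the input estimate. In \cite{BSTTTZ} the bound $N_{\rm aff}(X, B) \leq O_{d,\varepsilon}(B^{1/d + \varepsilon})$ is used with $B$ of size roughly $|\Delta_K|^{1/2 - 1/(2d)}$ raised to the appropriate power; it is exactly the factor $B^\varepsilon$ there that produces the $|\Delta_K|^\varepsilon$ in their final bound. Our Theorem~\ref{thm:Bombieri-Pila}, or better its refinement Corollary~\ref{cor:Ellenb} in which the $\log B$ is traded for a term measuring the size of the coefficients of the defining polynomial, gives instead $N_{\rm aff}(X,B) \leq c d^3 B^{1/d}(\log B + d)$ (or the coefficient-size variant). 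Since $B$ here is polynomial in $|\Delta_K|$, we have $\log B = O_d(\log |\Delta_K|)$, and $d$ is fixed, so the extra factor is $O_d(\log|\Delta_K|)$. Feeding this into the argument of \cite{BSTTTZ} in place of their estimate, and keeping track of how the exponent $\frac12 - \frac1{2d}$ arises, yields $h_2(K) \leq O_d(|\Delta_K|^{\frac12 - \frac1{2d}} \log|\Delta_K|)$ in all cases.

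To get the stronger statement when $d$ is odd --- namely the removal of the logarithmic factor altogether --- one uses the extra structure available in that case. When $d$ is odd, the relevant curve can be taken to be a planar curve of odd degree, and here the coefficient-size refinement of Theorem~\ref{thm:Bombieri-Pila} from Section~\ref{sec5} (Corollary~\ref{cor:Ellenb}) is genuinely free of any logarithmic or $B^\varepsilon$ factor, the coefficient-size contribution being absorbed into the $O_d$ because the coefficients are polynomially bounded in $|\Delta_K|$ with the resulting contribution of lower order than the main term $|\Delta_K|^{1/2 - 1/(2d)}$. Thus for $d$ odd one obtains $h_2(K) \leq O_d(|\Delta_K|^{\frac12 - \frac1{2d}})$, while for $d$ even one retains the single factor $\log|\Delta_K|$; this is precisely the uniform statement $h_2(K) \leq O_d(|\Delta_K|^{\frac12 - \frac1{2d}}(\log|\Delta_K|)^{1 - (d \bmod 2)})$.

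The main obstacle, and the only place where real care is needed, is to verify that the reduction in \cite{BSTTTZ} from $h_2(K)$ to a point count is compatible with our refined estimates: one must check that the curve produced there is integral (or can be decomposed into boundedly many integral components of controlled degree), that its degree is $O_d(1)$ so that the $d$-dependent constants cause no trouble, that the height bound $B$ really is of the form $|\Delta_K|^{O_d(1)}$ so that $\log B = O_d(\log|\Delta_K|)$, and --- for the odd case --- that the planar-curve refinement applies with coefficient sizes bounded polynomially in $|\Delta_K|$. All of these are essentially bookkeeping once one has the statement of \cite[Theorem 1.1]{BSTTTZ} and its proof at hand; no new geometric input is required beyond Theorem~\ref{thm:Bombieri-Pila} and Corollary~\ref{cor:Ellenb}.
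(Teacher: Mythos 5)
Your overall strategy is the same as the paper's: take the reduction from \cite{BSTTTZ} of $h_2(K)$ to a sum of affine curve counts, plug in Theorem~\ref{thm:Bombieri-Pila} for the even case (giving the $\log|\Delta_K|$ factor), and use a coefficient-sensitive refinement for the odd case. The even-degree argument is fine.

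However, for the odd case there is a genuine gap. You assert that Corollary~\ref{cor:Ellenb} removes the logarithm because the coefficients of $f_\beta = y^2 - N_{K/\QQ}(x-\beta)$ are polynomially bounded in $|\Delta_K|$. That does not work. The highest-degree part of $f_\beta$ is $-x^d$ (for $d > 2$), so $\lVert (f_\beta)_d \rVert = 1$ and the denominator $\lVert f_d\rVert^{1/d^2}$ in Corollary~\ref{cor:Ellenb} is useless. The fraction then reads $\min(d^3\log B + d^4,\, d^4 b(f_\beta))$, and Corollary~\ref{cor:estimate_bad_primes} only gives $b(f_\beta) \leq O(\max(d^{-2}\log\lVert f_\beta\rVert,1)) = O_d(\log|\Delta_K|)$ for a polynomial whose coefficients are polynomial in $|\Delta_K|$. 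That still yields a $\log|\Delta_K|$ factor multiplied against $B^{1/d}=|\Delta_K|^{1/(2d)}$, so you have not removed the logarithm.

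What is actually needed, and what the paper uses, is Corollary~\ref{cor:indecomposableNP}. The point is that $f_\beta$ has Newton polygon with extreme vertices $(d,0)$ and $(0,2)$ carrying coefficients $c_d=-1$, $c_{d'}=1$, and when $d$ is odd, $\gcd(d,2)=1$ makes that edge indecomposable in every characteristic; hence $b(f_\beta)$ depends only on the primes dividing $c_d c_{d'} = -1$, so $b(f_\beta) \leq 1 + \log 1 = 1$, independent of $|\Delta_K|$. Corollary~\ref{cor:indecomposableNP} then gives $N_{\rm aff}(f_\beta, |\Delta_K|^{1/2}) \leq O(d^4 |\Delta_K|^{1/(2d)})$ with no logarithm. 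The feature that saves the odd case is thus not that the coefficients of $f_\beta$ are polynomially bounded (they could even be allowed to be larger), but that the two extremal coefficients are $\pm 1$ and that the Newton polygon is indecomposable precisely when $d$ is odd. Your proposal does not isolate this, so as written it proves only the even-case bound for all $d$.
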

It is possible to make the hidden constant explicit, but targeting polynomial growth seems of lesser interest since $|\Delta_K|$ is itself bounded from below by an exponential expression in $d$, coming from Minkowski's bound.

\subsection{Structure of the paper}

In Section \ref{sec2} we render several results of Salberger \cite{SalbCrelle} explicit in terms of the degrees and dimensions involved.  In Section \ref{sec3}  we similarly adapt the results of Walsh \cite{Walsh}. Section \ref{sec5} completes the proofs of our main results in the hypersurface case, which is complemented by Section \ref{sec8}, in which we discuss projection arguments from \cite{Brow-Heath-Salb}, explicit in the degrees and dimensions, and thus finish the proofs of our main theorems. Finally, in Section \ref{sec7}, we provide lower bounds showing the necessity of polynomial dependence on $d$ in our main results. %and we give our application to counting certain number fields.

\section{The determinant method for hypersurfaces}\label{sec2}\label{sec:salberger}

With the aim of improving the results of \cite{Walsh} in the next section, we sharpen some results from Salberger's global determinant method. The main result of this section is Corollary \ref{cor:Salberger_determinant}, which improves on \cite[Lemmas 1.4, 1.5]{Salberger-dgc} (see also \cite[Theorem 2.2]{Walsh}).
This mainly depends on making \cite[Main Lemma 2.5]{SalbCrelle} in the case of hypersurfaces explicit in its independence of the degree.

Let $f$ be an absolutely irreducible homogeneous polynomial in $\ZZ[x_0,\dotsc, x_{n+1}]$ which is primitive, and let $X$ be the hypersurface in $\PP^{n+1}_\QQ$ defined by $f$. For $p$ a prime number, let $X_p$  denote the reduction of $X$ modulo $p$, i.e.\ the hypersurface in $\PP^{n+1}_{\FF_p}$ described by the reduction of $f \bmod p$.

\begin{lem}[Lemma 2.3 of \cite{SalbCrelle}, explicit for hypersurfaces] \label{lem:stalk}
Let $A$ be the stalk of $X_p$ at some $\FF_p$-point $P$ of multiplicity $\mu$ and let $\mathfrak{m}$ be the maximal ideal of $A$.
Let $g _{X,P}:\ZZ_{>0} \to\ZZ$
be the function given by $g_{X,P}(k) = \dim_{A/\mathfrak{m}} \mathfrak{m}^k / \mathfrak{m}^{k+1}$ for $k>0$. Then one has
\[
g(k) = \binom{n+k}{n} \mbox{ for } k < \mu
\]
and
\[
g(k) = \binom{n+k}{n} - \binom{n+k-\mu}{n} \mbox{ for } k\geq \mu.
\]
In particular, \[ g(k) \leq \frac{\mu k^{n-1}}{(n-1)!} + O_n(k^{n-2}) \] for all $k \geq 1$, where the implied constant depends only on $n$, as indicated.
\end{lem}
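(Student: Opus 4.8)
The plan is to identify the associated graded ring $\mathrm{gr}_{\mathfrak{m}}(A)=\bigoplus_{k\ge0}\mathfrak{m}^k/\mathfrak{m}^{k+1}$, whose Hilbert function is exactly $g=g_{X,P}$, with the coordinate ring of the tangent cone of $X_p$ at $P$, to compute the latter's Hilbert function directly, and then to run an elementary estimate of binomial coefficients for the final inequality.

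First I would pass to a local model. Choosing affine coordinates $y_1,\dots,y_{n+1}$ on a chart $\AA^{n+1}$ of $\PP^{n+1}_{\FF_p}$ that contains $P$ and is centred at $P$, the stalk $A$ is the localisation at the origin of $\FF_p[y_1,\dots,y_{n+1}]/(F)$, where $F$ is the corresponding dehomogenisation of $f\bmod p$; since $X_p$ is a hypersurface, the multiplicity of $P$ is the order $\mu=\ord_0F$, i.e.\ the degree of the lowest nonzero homogeneous part $F_\mu$ of $F=F_\mu+F_{\mu+1}+\cdots$. Passing to completions leaves every $\mathfrak{m}^k/\mathfrak{m}^{k+1}$ unchanged, so we may take $A=R/(F)$ with $R=\FF_p[[y_1,\dots,y_{n+1}]]$ and $\mathfrak{n}$ its maximal ideal.

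The crucial step — and the place where the hypersurface assumption is genuinely used — is the isomorphism of graded $\FF_p$-algebras $\mathrm{gr}_{\mathfrak{m}}(A)\cong\FF_p[y_1,\dots,y_{n+1}]/(F_\mu)$. The surjection $R\twoheadrightarrow A$ induces a surjection $\mathrm{gr}_{\mathfrak{n}}(R)\twoheadrightarrow\mathrm{gr}_{\mathfrak{m}}(A)$ whose kernel is the initial ideal $\mathrm{in}\bigl((F)\bigr)=\bigoplus_k\bigl((F)\cap\mathfrak{n}^k+\mathfrak{n}^{k+1}\bigr)/\mathfrak{n}^{k+1}$. Now $\mathrm{gr}_{\mathfrak{n}}(R)$ is the polynomial ring $\FF_p[y_1,\dots,y_{n+1}]$, in particular a domain; and for a principal ideal $(F)$ in a local ring whose associated graded ring is a domain one has $\mathrm{in}\bigl((F)\bigr)=(\mathrm{in}(F))$, because $\mathrm{in}(G)\,\mathrm{in}(F)\ne0$ forces $\ord(GF)=\ord(G)+\ord(F)$ and hence $\mathrm{in}(GF)=\mathrm{in}(G)\,\mathrm{in}(F)\in(\mathrm{in}(F))$. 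Since $\mathrm{in}(F)=F_\mu$ this gives the claim. The main thing to get right here is the bookkeeping: matching the notion of ``multiplicity'' in the statement with $\deg F_\mu$, and checking that nothing depends on $p$ or on $X$.

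Since $\FF_p[y_1,\dots,y_{n+1}]$ is a domain and $F_\mu\ne0$, multiplication by $F_\mu$ is injective, so the graded exact sequence $0\to\FF_p[y_1,\dots,y_{n+1}](-\mu)\xrightarrow{\,\cdot F_\mu\,}\FF_p[y_1,\dots,y_{n+1}]\to\FF_p[y_1,\dots,y_{n+1}]/(F_\mu)\to0$ gives, in degree $k$,
\[
g(k)=\binom{n+k}{n}-\binom{n+k-\mu}{n},
\]
the second binomial being read as $0$ for $k<\mu$; this is precisely the stated formula. For the final bound the genuine obstacle is uniformity in $\mu$: the naive term-by-term estimate $g(k)\le\mu\binom{k+n-1}{n-1}$ only yields an error $O_n(\mu k^{n-2})$, so I would instead, for $k\ge\mu$, telescope to $g(k)=\sum_{l=1}^{\mu}\binom{k+n-l}{n-1}$, bound each term by $(k+n-l)^{n-1}/(n-1)!$, and compare the resulting sum with an integral to get $g(k)\le\bigl((k+n)^n-(k+n-\mu)^n\bigr)/n!=\tfrac1{(n-1)!}\int_{k+n-\mu}^{k+n}t^{n-1}\,dt$. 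When $\mu\ge n$, splitting this integral at $t=k$ — on $[k+n-\mu,k]$ the integrand is $\le k^{n-1}$ over an interval of length $\mu-n$, while on $[k,k+n]$ it is $\le(k+n)^{n-1}=k^{n-1}+O_n(k^{n-2})$ over an interval of the fixed length $n$ — yields $\int\le\mu k^{n-1}+O_n(k^{n-2})$, hence $g(k)\le\tfrac{\mu k^{n-1}}{(n-1)!}+O_n(k^{n-2})$. The cases $\mu<n$ (a bounded range of $\mu$) and $1\le k<\mu$ (where $g(k)=\binom{n+k}{n}$ and $\mu\ge k+1$) are disposed of by the same kind of direct comparison, and the implied constant is seen to depend only on $n$.
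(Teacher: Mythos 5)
Your proof is correct, and its skeleton matches the paper's: identify $g$ with the Hilbert function of the tangent cone of $X_p$ at $P$ (a degree-$\mu$ hypersurface in $\PP^n$), deduce the explicit formula, and then estimate. What you do differently is supply full details where the paper cites standard facts, and use a different technique for the estimate. The paper simply asserts that $g$ is the Hilbert function of the projectivized tangent cone and quotes the Hilbert function of a degree-$\mu$ hypersurface; you prove both ingredients — the isomorphism $\mathrm{gr}_{\mathfrak m}(A)\cong\FF_p[y_1,\dots,y_{n+1}]/(F_\mu)$ via the initial-ideal argument (the domain property of $\mathrm{gr}_{\mathfrak n}(R)$ forcing $\mathrm{in}\bigl((F)\bigr)=\bigl(\mathrm{in}(F)\bigr)$), and then the explicit formula from the graded short exact sequence given by multiplication by $F_\mu$. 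For the estimate, the paper expands $\binom{n+X}{n}$ as a polynomial $p(X)=\sum a_iX^i$, factors $k^n-(k-\mu)^n$ and $k^{n-1}-(k-\mu)^{n-1}$, and pairs terms so that $a_n\bigl(k^n-(k-\mu)^n\bigr)+a_{n-1}\bigl(k^{n-1}-(k-\mu)^{n-1}\bigr)\le\mu k^{n-1}/(n-1)!$ once $\mu\ge n(n+1)/2$; your telescoping $g(k)=\sum_{l=1}^\mu\binom{k+n-l}{n-1}$ followed by comparison with $\frac{1}{(n-1)!}\int_{k+n-\mu}^{k+n}t^{n-1}\,dt$ and the split at $t=k$ is a cleaner route to the same bound. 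Both treatments isolate the same subtlety — that the error must be uniform in $\mu$ — and both dispose of the finitely many exceptional ranges ($\mu$ bounded, or $k<\mu$) by enlarging the $O_n$-constant, so the two arguments are interchangeable here.
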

\begin{proof}
  The function $g$ is identical to the Hilbert function of the projectivized tangent cone of $X_p$ at $P$, which is a degree $\mu$ hypersurface in $\PP^{n}$.
  This gives the explicit expression for $g$, so it only remains to prove the estimate.

  Consider first $k < \mu$. Then \[ g(k) = \binom{n+k}{n} = \frac{k^{n}}{n!} + \frac{(n+1) k^{n-1}}{2(n-1)!} + O_n(k^{n-2}) .\]
  Since $\mu > k$, for $k\geq n$ we immediately obtain the desired inequality, and the $k$ between $1$ and $n$ are covered by choosing the constant large enough.

  Now consider $k \geq \mu$. Write $p(X)$ for the polynomial $\binom{n+X}{n}$ and $a_i$ for its coefficients.
  Then \[p(k) - p(k-\mu) = a_{n} (k^{n}-(k-\mu)^{n}) + a_{n-1}(k^{n-1}-(k-\mu)^{n-1}) + O_n(k^{n-2}) \]
  Observe that $a_{n} = 1/n!$, $a_{n-1} = (n+1)/(2(n-1)!) = a_n (n+1)n/2$,
  and write \[ k^{n}-(k-\mu)^{n} = \mu(k^{n-1} + (k-\mu)k^{n-2} + \dotsb + (k-\mu)^{n-1})\]
  as well as \[ k^{n-1}-(k-\mu)^{n-1} = \mu (k^{n-2} + \dotsb +(k-\mu)^{n-2}) .\]
  Considering $\mu \geq n(n+1)/2$, we have \[(k-\mu)^i k^{n-1-i} + \frac{(n+1)n}{2} (k-\mu)^{i-1} k^{n-1-i} \leq k^{n-1}\] for $i \geq 1$,
  and hence \[ a_{n} (k^{n} - (k-\mu)^{n}) + a_{n-1}(k^{n-1}-(k-\mu)^{n-1}) \leq \mu/n! (k^{n-1} + \dotsb k^{n-1}) = \mu k^{n-1}/(n-1)! \] as desired.
  The finitely many $\mu$ less than $n(n+1)/2$ are again taken care of by taking a sufficiently large constant in the $O_n$.
\end{proof}

%\begin{lem}
%  Assume $n=1$, so $X$ is a curve.
%  Consider $A$ as above, and let $(n_l(A))_{l \geq 1}$ be the non-decreasing sequence of integers $k \geq 0$ where $k$ occurs exactly $\dim_{A/m} m^k/m^{k+1}$ times. Write $A(s) = n_1(A) + \dotsb + n_s(A)$.
%  Then $A(s) \geq s^2/(2\mu) - 3/2 s$.
%\end{lem}
%\begin{proof}
%  By the last lemma, for any $k$ we have $\dim_{A/m} m^k/m^{k+1} \leq \mu$.
%
%  Let $(m_l)_{l \geq 1}$ be the non-decreasing sequence of integers $k \geq 0$ where $k$ occurs exactly $\mu$ times.
%  Then $m_l \leq n_l$ for all $l$, so it suffices to investigate the quantity $S(s) = m_1 + \dotsb + m_s$.
%
%  We have \[S(s) \geq \sum_{i=1}^{\lfloor s/\mu\rfloor} (i-1)\mu = \frac{\mu}{2} \lfloor s/\mu\rfloor \lfloor s/\mu-1\rfloor \geq  s^2/(2\mu) - 3/2 s, \] finishing the proof.
%\end{proof}

\begin{lem}
  Let $c, n, \mu > 0$ be integers.
  Let $g \colon \ZZ_{\geq 0} \to \ZZ_{\geq 0}$ be a function with $g(0) = 1$ and satisfying $g(k) \leq \frac{\mu k^{n-1}}{(n-1)!} + c k^{n-2}$ for $k > 0$.
  Let $(n_i)_{i \geq 1}$ be the non-decreasing sequence of integers $m \geq 0$ where $m$ occurs exactly $g(m)$ times. Then for any $s \geq 0$ we have
  \[ n_1 + \dotsb + n_s \geq (\frac{n!}{\mu})^{1/n} \frac{n}{n+1} s^{1+1/n} - O_{n,c}(s) .\]
\end{lem}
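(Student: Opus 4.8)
Here is how I would approach the proof.

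The plan is to pass to the partial sums $G(m):=\sum_{j=0}^{m}g(j)$ of $g$ and exploit a layer-cake identity. Since the value $m$ occurs exactly $g(m)$ times in the sequence $(n_i)$, we have $\#\{i\ge 1:n_i\le m\}=G(m)$, that is, $n_i>m\iff i>G(m)$; writing $n_i=\sum_{m\ge 0}\mathbf{1}[n_i>m]$ and summing over $i=1,\dots,s$ gives
\[
 n_1+\dots+n_s=\sum_{m\ge 0}\max\bigl(0,\,s-G(m)\bigr).
\]
(We assume the sequence is infinite, as it is in the intended application of Lemma~\ref{lem:stalk}; otherwise one reads the statement for $s$ not exceeding its length.) In particular, for any pointwise majorant $B\ge G$ one gets $n_1+\dots+n_s\ge\sum_{m\ge 0}\max(0,s-B(m))$, so it remains to pick a convenient $B$ and evaluate this sum.

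For $n\ge 2$ such a $B$ is produced by summing the hypothesis on $g$:
\[
 G(m)=1+\sum_{j=1}^{m}g(j)\le 1+\frac{\mu}{(n-1)!}\sum_{j=1}^{m}j^{\,n-1}+c\sum_{j=1}^{m}j^{\,n-2}\le\frac{\mu m^{n}}{n!}+\bigl(\kappa_n\mu+\kappa_{n,c}\bigr)m^{\,n-1}
\]
for all $m\ge 1$ and suitable constants $\kappa_n,\kappa_{n,c}$, using $\sum_{j\le m}j^{n-1}\le\frac{m^n}{n}+O_n(m^{n-1})$ and $\sum_{j\le m}j^{n-2}\le m^{n-1}$. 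The case $n=1$ needs one extra idea: there $g(k)\le\mu+c/k$, and summing the harmonic-type tail $\sum c/k$ naively would cost a spurious factor $\log s$; but since $g$ is integer-valued we actually have $g(k)\le\mu$ for every $k>c$, hence $G(m)\le\mu m+C_c$ with $C_c$ depending only on $c$. This integrality observation is the one genuinely non-routine point.

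Finally, set $M^{*}:=\lfloor(n!\,s/\mu)^{1/n}\rfloor$; we may assume $\mu\le n!\,s$ (so $M^*\ge 1$), since otherwise the asserted lower bound is negative once the $O_{n,c}$-constant is $\ge 1$, while the left-hand side is $\ge 0$. Discarding the terms with $m=0$ and $m>M^*$ and using $\max(0,x)\ge x$, one is reduced to bounding $M^*s-\frac{\mu}{n!}\sum_{m=1}^{M^*}m^{n}$ minus the sum $\sum_{m=1}^{M^*}\bigl(B(m)-\frac{\mu}{n!}m^n\bigr)$ of the lower-order part of $B$. Comparing $\sum_{m\le M^*}m^n$ with an integral and using $M^*>(n!s/\mu)^{1/n}-1$ together with $(M^*+1)^{n+1}\le(n!s/\mu)^{(n+1)/n}+O_n(n!s/\mu)$, the first expression collapses --- via the identity $\mu\cdot(n!s/\mu)=n!s$ --- to exactly $\frac{n}{n+1}(n!/\mu)^{1/n}s^{1+1/n}-O_n(s)$; and the second sum is $O_{n,c}((M^*)^n)=O_{n,c}(s)$, once more because $\mu\,(M^*)^n\le n!\,s$. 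The same computation for $n=1$ with $B(m)=\mu m+C_c$ returns $\frac{s^2}{2\mu}-O_c(s)$, as required. The only real difficulty is the bookkeeping: one must verify that every lower-order contribution, including the $\mu$-dependent ones appearing in the intermediate majorant, collapses to $O_{n,c}(s)$ with no residual dependence on $\mu$ and no stray logarithm --- both of which rest on $\mu\,(M^*)^n\approx n!\,s$ and, when $n=1$, on the integrality of $g$.
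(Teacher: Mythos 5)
Your proof is correct, and it takes a genuinely different route from the paper's. The paper first replaces $g$ by an explicit comparison function chosen so that the partial sums are the polynomial $G(k)=\frac{\mu}{n!}k^n+ck^{n-1}+1$, proves the inequality at the special values $s=G(k)$ by a direct computation, and then interpolates to general $s$ via $n_1+\dotsb+n_s\geq n_1+\dotsb+n_{G(k)}-kg(k)$ where $G(k-1)<s\leq G(k)$. You instead open with the layer-cake identity $n_1+\dotsb+n_s=\sum_{m\geq 0}\max(0,s-G(m))$, majorize the partial sums $G$ by a polynomial $B$, truncate at $M^*=\lfloor(n!s/\mu)^{1/n}\rfloor$, and evaluate the resulting sum in one pass. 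Both approaches deliver the same leading constant $\frac{n}{n+1}(n!/\mu)^{1/n}$, and in both the key point is that all the lower-order contributions carrying a factor of $\mu$ are tamed by $\mu(M^*)^n\leq n!s$ so that only an $O_{n,c}(s)$ error remains. Your layer-cake version avoids the interpolation step and makes explicit that the quantity which must be majorized is $G$ rather than $g$ itself — a distinction the paper's ``replace $g$ by a pointwise larger function'' step treats a bit loosely, since the chosen replacement need not dominate the hypothesis bound term by term. You also single out the $n=1$ case, where summing $c/k$ naively produces a spurious $\log$; your observation that integrality of $g$ forces $g(k)\leq\mu$ for $k>c$ is the right fix, and the paper's write-up does not address this point explicitly (in the application $c=0$ suffices when $n=1$, so it does not matter there, but it matters for the lemma as abstractly stated).
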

This statement is implicitly contained in the proof of \cite[Main Lemma 2.5]{SalbCrelle}, but we give the full proof to stress that the error term does not depend on $\mu$.
\begin{proof}
  Note that replacing $g$ by a function which is pointwise larger than $g$ at any point only strengthens the claim, so we may as well assume that
  \[ g(k) = \frac{\mu}{n!} (k^n - (k-1)^n) + c (k^{n-1} - (k-1)^{n-1}) \]
  for $k > 0$.
  Let $G \colon \ZZ_{\geq 0} \to \ZZ_{\geq 0}$ be given by $G(k) = g(0) + \dotsb + g(k) = \frac{\mu}{n!} k^n + c k^{n-1} + 1$.
  Now \[ (\frac{n!}{\mu})^{1/n} \frac{n}{n+1} G(k)^{1+1/n} = \frac{\mu k^{n+1}}{(n-1)!(n+1)} + O_{n,c}(k^n), \]
  and \[ 0 g(0) + \dotsb + k g(k) \geq \frac{\mu}{(n-1)!} \sum_{i \leq k} (i^n + O_n(c i^{n-1})) = \frac{\mu}{(n-1)!(n+1)} k^{n+1} + O_{n,c}(\mu k^n) .\]
  This proves the lemma for $s = G(k)$.

  To deduce the result for general $s > 0$, let $k$ be the unique integer with $G(k-1) < s \leq G(k)$, and use
  \begin{align*}
  n_1 + \dotsb + n_s \geq n_1 + \dotsb + n_{G(k)} - k g(k)
  &\geq (\frac{n!}{\mu})^{1/n} \frac{n}{n+1} G(k)^{1+1/n} + O_{n,c}(\mu k^n)  \\
  &\geq (\frac{n!}{\mu})^{1/n} \frac{n}{n+1} s^{1+1/n} + O_{n,c}(s)
  . \qedhere \end{align*}
\end{proof}

\begin{lem}
  Consider $A$ as in Lemma~\ref{lem:stalk}, and let $(n_i(A))_{i \geq 1}$ be the non-decreasing sequence of integers $m \geq 0$ where $m$ occurs exactly $\dim_{A/\mathfrak{m}} \mathfrak{m}^k/\mathfrak{m}^{k+1}$ times. Write $A(s) = n_1(A) + \dotsb + n_s(A)$.
  Then \[ A(s) \geq (n!/\mu)^{1/n}(n/(n+1)) s^{1+1/n} - O_n(s) ,\] where the implied constant only depends on $n$.
\end{lem}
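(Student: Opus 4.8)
The plan is to deduce the statement formally from the two preceding lemmas; no further geometric information about $X_p$ or $A$ is needed, only bookkeeping.

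First I would extend the function $g = g_{X,P}$ of Lemma~\ref{lem:stalk} to $k = 0$ by setting $g(0) := \dim_{A/\mathfrak{m}} \mathfrak{m}^0/\mathfrak{m}^1 = \dim_{A/\mathfrak{m}} A/\mathfrak{m} = 1$, so that $g \colon \ZZ_{\geq 0} \to \ZZ_{\geq 0}$ with $g(0) = 1$. The multiplicity $\mu$ is a positive integer, and the final ``In particular'' clause of Lemma~\ref{lem:stalk} furnishes a constant $c = c(n)$, depending only on $n$ and which we may take to be a positive integer, with
\[ g(k) \leq \frac{\mu k^{n-1}}{(n-1)!} + c\,k^{n-2} \qquad \text{for all } k > 0 . \]

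Next I would apply the second of the two lemmas above verbatim, with this $g$ and this $c$. By construction, the non-decreasing sequence in which every integer $m \geq 0$ appears exactly $g(m)$ times coincides with the sequence $(n_i(A))_{i \geq 1}$ of the present statement, so its partial sums are the numbers $A(s) = n_1(A) + \dotsb + n_s(A)$. That lemma then gives
\[ A(s) \geq \left( \frac{n!}{\mu} \right)^{1/n} \frac{n}{n+1}\, s^{1+1/n} - O_{n,c}(s) , \]
and since $c$ has been chosen as a function of $n$ alone, the error term is $O_n(s)$, which is exactly the claimed inequality.

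There is no real obstacle here; the two points requiring a word of care are that $g(0) = 1$ (so that the hypotheses of the second lemma are met on the nose) and that, once $c$ is fixed as a function of $n$, the implied constant in that lemma's error term depends only on $n$. This last point is the whole reason Lemma~\ref{lem:stalk} and the second lemma were stated with error terms independent of $\mu$: allowing those to grow with $\mu$ would make the final bound for $A(s)$ degrade by a $\mu$-dependent amount, which would be useless in the applications where $\mu$ is unbounded.
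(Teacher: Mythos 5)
Your proof is correct and is precisely the argument the paper intends (the paper simply writes ``immediate from the last two lemmas''): extend $g$ by $g(0)=1$, feed the constant $c=c(n)$ from Lemma~\ref{lem:stalk} into the abstract counting lemma, and observe that $O_{n,c}(s)=O_n(s)$ once $c$ is a function of $n$ alone. No discrepancy.
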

\begin{proof}
This is immediate from the last two lemmas.
\end{proof}

As usual, write $\ZZ_{(p)}$ for the localization of $\ZZ$ at (the complement of) the prime ideal $(p)$.
\begin{lem}[Lemma 2.4 of \cite{SalbCrelle}, cited as in Appendix of \cite{Brow-Heath-Salb}]
  Let $R$ be a noetherian local ring containing $\ZZ_{(p)}$, $A=R/pR$, and consider ring homomorphisms $\psi_1, \dotsc, \psi_s \colon R \to \ZZ_{(p)}$.
  Let $r_1, \dotsc, r_s$ be elements of $R$. Then the determinant of the $s \times s$-matrix $(\psi_i(r_j))$ is divisible by $p^{A(s)}$.
\end{lem}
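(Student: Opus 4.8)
The plan is to deduce this from the preceding lemma (the one bounding $A(s)$) by an explicit computation with the Leibniz expansion of the determinant, following the classical argument of Heath-Brown and Salberger. The key point is that each row of the matrix $(\psi_i(r_j))$ can be replaced, modulo high powers of $p$, by an $A$-linear combination of ``small'' monomials in a suitable filtration, and then a pigeonhole/counting argument on the multidegrees forces the total $p$-adic valuation of every term in the determinant to be at least $A(s)$.

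Here are the steps I would carry out. First, fix the maximal ideal $\mathfrak{m}$ of $A = R/pR$, and let $\mathfrak{M}$ be the maximal ideal of $R$, so that $\mathfrak{M} \supseteq pR$ and $\mathfrak{M}/pR = \mathfrak{m}$. Choose, for each $k \geq 0$, a set of elements of $R$ whose images form an $A/\mathfrak{m}$-basis of $\mathfrak{m}^k/\mathfrak{m}^{k+1}$; there are $\dim_{A/\mathfrak{m}} \mathfrak{m}^k/\mathfrak{m}^{k+1}$ of them, and this is exactly the multiplicity with which the integer $k$ appears in the sequence $(n_i(A))_{i\geq 1}$. Second, observe that the map $\psi_i$ sends $\mathfrak{M}^k$ into $(p)^{?}$ — more precisely, since $\psi_i(pR) \subseteq p\ZZ_{(p)}$ and the target $\ZZ_{(p)}$ has maximal ideal $p\ZZ_{(p)}$, one gets $\psi_i(\mathfrak{M}^k) \subseteq p^{\lceil k/1\rceil}\ZZ_{(p)}$... actually the cleaner statement is $\psi_i(\mathfrak{M}^{k} + pR) \subseteq p^0$, but $\psi_i(\mathfrak{M}^k)$ need not be in a high power of $p$ directly. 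The correct device (as in Salberger) is: write each $r_j$ in terms of the chosen basis lifts modulo $\mathfrak{M}^{N}$ for large $N$, with the ``remainder'' in $\mathfrak{M}^N$, where $N$ is chosen so that $\psi_i(\mathfrak{M}^N)$ contributes negligibly (or exactly, do the computation formally with the associated graded ring). Then expand $\det(\psi_i(r_j)) = \sum_{\sigma} \operatorname{sgn}(\sigma)\prod_i \psi_i(r_{\sigma(i)})$, substitute the basis expansions, and for each resulting term collect the total order: the product over $i$ picks up one basis element from each ``level'', and because all $s$ columns are distinct the multiset of levels appearing is at least as large (in the sum-of-levels ordering) as the minimal one, namely $n_1(A) + \dots + n_s(A) = A(s)$. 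Third, conclude $v_p(\det) \geq A(s)$, i.e. $p^{A(s)} \mid \det(\psi_i(r_j))$.

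The genuine obstacle is handling the fact that the $r_j$ are arbitrary elements of $R$, not homogeneous with respect to the $\mathfrak{M}$-adic filtration, together with making the ``substitute basis lifts and control the remainder'' step rigorous without an infinite process. The standard fix is to pass to the completion $\widehat{R}$ along $\mathfrak{M}$ (or at least work modulo $\mathfrak{M}^N$ for all $N$ simultaneously and take a limit), using that a determinant in $\ZZ_{(p)}$ whose $p$-adic valuation is $\geq A(s)$ modulo every $p^M$ actually has valuation $\geq A(s)$; alternatively one does column operations over $R$ to reduce each $r_j$ modulo $\mathfrak{M}^{\text{(large)}}$ and absorbs the correction. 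I would present it via the graded argument: order a basis of $R/\mathfrak{M}^N$ by $\mathfrak{M}$-adic level, write the matrix $(\psi_i(r_j))$ as a product $P \cdot Q$ where $Q$ records the coordinates of the $r_j$ in this ordered basis and $P = (\psi_i(b_\ell))$ with $b_\ell$ the basis elements, note $v_p(\psi_i(b_\ell)) \geq 0$ always and the columns of $P$ corresponding to level-$k$ basis elements lie in $p^{0}\ZZ_{(p)}$ — then the gain comes not from $P$ but from the Cauchy–Binet expansion of $\det(PQ)$ over $s$-element subsets $S$ of basis indices: each $\det(P_S)$ is divisible by $p^{\sum_{\ell\in S}(\text{level of }\ell)}$ because rows of $R \to \ZZ_{(p)}$ kill $p\cdot(\text{level }k)$ up to level, and $\sum_{\ell \in S}(\text{level}) \geq A(s)$ for every size-$s$ subset $S$ by definition of the $n_i(A)$. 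Getting the divisibility $p^{\sum_{\ell\in S}\text{level}} \mid \det(P_S)$ correct — this is where one uses that $\psi_i(\mathfrak{M}^k) \subseteq$ the ideal generated by $p$-powers matching the level, which in turn rests on $\psi_i(pR)\subseteq p\ZZ_{(p)}$ and $\psi_i$ being a local homomorphism — is the crux, and I would spell out exactly that lemma (this is precisely \cite[Lemma 2.4 of][]{SalbCrelle}, so here I would simply cite it and assemble the pieces).
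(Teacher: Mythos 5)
The paper gives no proof of this lemma: it is imported verbatim as Lemma~2.4 of Salberger's \emph{Crelle} paper (cited via the Appendix of Browning--Heath-Brown--Salberger) and used as a black box. So there is no ``paper's own proof'' to compare against, and your attempt has to be judged on its internal soundness.

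Your overall architecture --- pass to the $\mathfrak{M}$-adic filtration, lift a graded basis, factor the matrix as $P\cdot Q$, expand by Cauchy--Binet, and bound $v_p(\det P_S)$ by $\sum_{\ell\in S}(\text{level})\geq A(s)$ --- is the right shape for Salberger's argument. But the write-up has two genuine defects. First, the crucial quantitative step is garbled: you first assert that the columns of $P$ attached to level-$k$ basis elements ``lie in $p^0\ZZ_{(p)}$,'' and then two lines later assert $p^{\sum_{\ell\in S}(\text{level of }\ell)}\mid\det(P_S)$. These are mutually inconsistent. The correct statement (and the one you need) is that such a column lies in $p^{k}\ZZ_{(p)}$, which follows once you (i) choose the lift $b_\ell$ of a basis vector of $\mathfrak{m}^k/\mathfrak{m}^{k+1}$ actually inside $\mathfrak{M}^k\subseteq R$ (not merely in $\mathfrak{M}^k+pR$), and (ii) use that each $\psi_i$ is a \emph{local} homomorphism, so $\psi_i(\mathfrak{M}^k)\subseteq p^k\ZZ_{(p)}$. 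You correctly flag locality as the hypothesis you need but never justify it; in fact it is automatic --- if $m\in\mathfrak{M}$ had $\psi_i(m)=u\in\ZZ_{(p)}^\times$, then $m-u\in\ker\psi_i\subseteq\mathfrak{M}$ would force $u\in\mathfrak{M}$, a contradiction --- and spelling that out would have made the column-divisibility claim rigorous in one line. Second, and more seriously, your final paragraph is circular: you identify the divisibility of $\det(P_S)$ as ``precisely Lemma~2.4 of \cite{SalbCrelle}'' and propose to ``simply cite it.'' That is the lemma you are meant to prove; citing it there does not close the argument. (The column-divisibility is not Lemma~2.4 of \cite{SalbCrelle}; it is the elementary consequence of locality described above, after which Cauchy--Binet and the defining property of $A(s)=n_1(A)+\dotsb+n_s(A)$ finish.) With the $p^0$-versus-$p^k$ confusion fixed, locality verified, and the circular citation replaced by the direct argument, your outline would become a proof; as written it is not one.
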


\begin{cor}[Main Lemma 2.5 of \cite{SalbCrelle}]
  Let $\cX \to \Spec \ZZ$ be the hypersurface in $\mathbb{P}^n_{\ZZ}$ cut out by the homogeneous polynomial $f$ as above, so $X$ is the generic fibre of $\cX$ and $X_p$ is the special fibre of $\cX$ over $p$.

  Let $P$ be an $\FF_p$-point of multiplicity $\mu$ on $X_p$ and let $\xi_1, \dotsc, \xi_s$ be $\ZZ$-points on $\cX$, given by some primitive integer tuples, with reduction $P$.
  Let $F_1, \dotsc, F_s$ be homogeneous polynomials in $x_0, \dotsc, x_n$ with integer coefficients.

  Then $\det\big(F_j(\xi_i)\big)$ is divisible by $p^e$ where 
  $$
  e\geq (n!/\mu)^{1/n}\frac{n}{n+1} s^{1+1/n} - O_n(s).
  $$
\end{cor}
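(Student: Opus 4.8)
The plan is to reduce the statement to the preceding three lemmas by identifying the right local ring. First I would localize: let $R$ be the local ring of $\cX$ at the point $P$, viewed as an $\FF_p$-point of the special fibre $X_p \subseteq \PP^n_{\ZZ}$ — more precisely, after dehomogenizing in a coordinate that does not vanish at $P$, let $R = \cO_{\cX, P}$ be the stalk of the structure sheaf of the affine chart of $\cX$ at the (closed) point $P$ of the special fibre. This is a noetherian local ring containing $\ZZ_{(p)}$, and by construction $A := R/pR$ is exactly the stalk of $X_p$ at $P$ considered in Lemma~\ref{lem:stalk}, with maximal ideal $\mathfrak{m}$ and multiplicity $\mu$. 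The main point of the setup is that the integer points $\xi_1, \dotsc, \xi_s$ on $\cX$ reducing to $P$ give, after the same dehomogenization, ring homomorphisms $\psi_i \colon R \to \ZZ_{(p)}$ (evaluation at $\xi_i$), since a $\ZZ$-point of $\cX$ specializing to $P$ is the same as a $\ZZ_{(p)}$-point of the affine chart through $P$, i.e.\ a local homomorphism $R \to \ZZ_{(p)}$.

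Next I would pass from the homogeneous polynomials $F_j$ to elements $r_j \in R$. Choosing the dehomogenizing variable, say $x_0$, each $F_j$ of degree $\delta_j$ yields the regular function $r_j := F_j/x_0^{\delta_j}$ on the chart, hence an element of $R$; and for each $i$ one has $\psi_i(r_j) = F_j(\xi_i)/(\xi_i)_0^{\delta_j}$. Thus
\[
\det\big(F_j(\xi_i)\big) = \Big(\prod_{i} (\xi_i)_0^{\delta_i}\Big) \cdot \det\big(\psi_i(r_j)\big),
\]
wait — more carefully, factoring $(\xi_i)_0^{\delta_j}$ out of row $i$ is not possible since $\delta_j$ varies along the row; instead factor along columns: $\det\big(F_j(\xi_i)\big) = \prod_j (\text{something})$ does not work either unless the $\delta_j$ are equal. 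The clean fix is to replace each $F_j$ by $x_0^{D-\delta_j} F_j$ with $D = \max_j \delta_j$; this multiplies column $j$ by a power of $(\xi_i)_0$ depending on $i$, not $j$, so it does change the determinant, but only by the unit $\prod_i (\xi_i)_0^{?}$ — again the exponent depends on both. The genuinely correct route, and the one I would actually take, is: set $r_j = F_j / x_0^{\delta_j} \in R$ directly, apply the previous lemma (Lemma 2.4 of \cite{SalbCrelle}) to get $p^{A(s)} \mid \det\big(\psi_i(r_j)\big)$, and then note $\det\big(F_j(\xi_i)\big) = \det\big((\xi_i)_0^{\delta_j}\,\psi_i(r_j)\big)$, whose $p$-adic valuation is at least $\mathrm{val}_p\det\big(\psi_i(r_j)\big) \ge A(s)$ because the $\xi_i$ are primitive integer tuples reducing to $P$, so $p \nmid (\xi_i)_0$ provided $P$ has nonzero $0$-th coordinate — which we arranged by the choice of dehomogenizing coordinate. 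Hence $p^{A(s)} \mid \det\big(F_j(\xi_i)\big)$.

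Finally, invoke the third lemma above, which gives $A(s) \ge (n!/\mu)^{1/n}\frac{n}{n+1} s^{1+1/n} - O_n(s)$ with implied constant depending only on $n$; combining the two displays yields the claimed divisibility by $p^e$ with $e \ge (n!/\mu)^{1/n}\frac{n}{n+1} s^{1+1/n} - O_n(s)$. The main obstacle — and the only step requiring genuine care — is the bookkeeping in the dehomogenization: one must make sure the chosen coordinate (say $x_0$) is a unit at $P$ and that this survives integrally, i.e.\ that $p \nmid (\xi_i)_0$, so that clearing denominators of the $r_j$ contributes only $p$-adic units and does not spoil the divisibility estimate. Everything else is a direct citation of the earlier lemmas.
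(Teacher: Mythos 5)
Your proof follows the same route as the paper's — localize $\cX$ at the image $P'$ of $P$ to get $R$, interpret the $\xi_i$ as local ring homomorphisms $R\to\ZZ_{(p)}$, dehomogenize the $F_j$ to get elements of $R$, and cite the two preceding lemmas. You are in fact \emph{more} careful than the paper, which simply asserts that ``the $F_i$ induce $\ZZ_{(p)}$-valued polynomial functions on an affine neighbourhood of $P'$, and hence give elements of $R$,'' and leaves the dehomogenization bookkeeping entirely to the reader. However, your resolution of that bookkeeping contains a genuine error.

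You set $r_j = F_j/x_0^{\delta_j}$, so that $\psi_i(r_j)=F_j(\xi_i)/(\xi_i)_0^{\delta_j}$, i.e.\ $M_{ij}:=F_j(\xi_i)=(\xi_i)_0^{\delta_j}\,\psi_i(r_j)$, and then claim that because the $(\xi_i)_0$ are $p$-adic units, $v_p(\det M)\geq v_p\bigl(\det(\psi_i(r_j))\bigr)$. This inequality is \emph{false} in general when the degrees $\delta_j$ differ: the factor $(\xi_i)_0^{\delta_j}$ depends on both indices, so it is neither a row nor a column rescaling, and ``clearing denominators with $p$-adic units'' does not preserve the valuation of the determinant. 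Concretely, with $p=3$, $N=\left(\begin{smallmatrix}1&1\\1&-2\end{smallmatrix}\right)$ (so $v_3(\det N)=1$), units $a_1=1$, $a_2=2$, and exponents $\delta_1=0$, $\delta_2=1$, one gets $\bigl(a_i^{\delta_j}N_{ij}\bigr)=\left(\begin{smallmatrix}1&1\\1&-4\end{smallmatrix}\right)$ with determinant $-5$, of valuation $0$.

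The repair is simple but must be stated: in Salberger's Main Lemma 2.5 and in every application in this paper (in particular Proposition \ref{prop:walsh_sec_2} and the proofs of Lemmas \ref{lem:Walsh_small_f}--\ref{lem:Walsh_big_f}, where the $F_j$ run over the monomials $\cB[M]$) the $F_j$ all have a \emph{common} degree $D$. With $\delta_j\equiv D$ one has $M_{ij}=(\xi_i)_0^{D}\,\psi_i(r_j)$, which is a genuine rescaling of row $i$ by the $p$-adic unit $(\xi_i)_0^{D}$, so $\det M = \bigl(\prod_i(\xi_i)_0^{D}\bigr)\det\bigl(\psi_i(r_j)\bigr)$ and the valuations agree. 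Once that hypothesis is made explicit (or imported from the cited Main Lemma), your argument is complete and coincides with the paper's.
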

\begin{proof}
  Let $P'$ be the image of $P$ under the closed embedding $X_p \hookrightarrow \cX$, and $R$ the stalk of $\cX$ at $P'$.
  Then $R$ is a noetherian local ring containing $\ZZ_{(p)}$, and $R/pR$ is the stalk of $X_p$ at $P$.
  Since $P'$ is a specialization of all the $\xi_i$ (this is precisely what it means that the $\xi_i$ have reduction $P$), it makes sense to evaluate an element of $R$ at each $\xi_i$, giving $s$ ring homomorphisms $R \to \ZZ_{(p)}$.

  The $F_i$ induce $\ZZ_{(p)}$-valued polynomial functions on an affine neighbourhood of $P'$, and hence give elements of $R$.
  The statement now follows from the preceding two lemmas.
\end{proof}

\begin{prop}\label{prop:Salberger_determinant}
  Let $\cX$ be as above. Let $\xi_1, \dotsc, \xi_s$ be $\ZZ$-points on $\cX$, and $F_1, \dotsc, F_s$ be homogeneous polynomials in $n+1$ variables with integer coefficients.
  Then the determinant $\Delta$ of the $s \times s$-matrix $(F_i(\xi_j))$ is divisible by $p^e$, where 
  $$
  e \geq (n!)^{1/n}\frac{n}{n+1} \frac{s^{1+1/n}}{ n_p^{1/n}} - O_n(s),
   $$
  and where $n_p$ is the number of $\FF_p$-points on $X_p$, counted with multiplicity.
\end{prop}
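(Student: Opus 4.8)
The plan is to reduce everything to the one-point case provided by the preceding Corollary (Main Lemma 2.5 of \cite{SalbCrelle}). First partition the index set $\{1,\dots,s\}$ according to the reduction of $\xi_j$ modulo $p$: let $P_1,\dots,P_r$ be the distinct $\FF_p$-points of $X_p$ that occur, $\mu_k$ the multiplicity of $X_p$ at $P_k$, and $s_k$ the number of indices $j$ with $\xi_j$ reducing to $P_k$, so $\sum_{k=1}^r s_k=s$ and each $s_k\ge 1$. (If $\Delta=0$ there is nothing to prove.) The two ingredients will be an \emph{additivity} estimate $v_p(\Delta)\ge\sum_{k=1}^r e_k$ with $e_k=(n!/\mu_k)^{1/n}\frac{n}{n+1}s_k^{1+1/n}-O_n(s_k)$, followed by a Hölder inequality comparing $\sum_k s_k^{1+1/n}\mu_k^{-1/n}$ with $s^{1+1/n}n_p^{-1/n}$.

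For the additivity estimate I would argue by induction on $r$, expanding $\Delta$ via the generalized Laplace expansion along the block of $s_1$ columns indexed by those $\xi_j$ reducing to $P_1$. Each resulting term is, up to sign, a product of an $s_1\times s_1$ minor whose columns are evaluations at points all reducing to the single point $P_1$, and a complementary minor whose columns are evaluations at points reducing to $P_2,\dots,P_r$. The key point is that the conclusion of Main Lemma 2.5 depends only on $P_1$, $\mu_1$ and the number $s_1$ of points involved, and not on which homogeneous integer polynomials appear as rows; so each first minor is divisible by $p^{e_1}$, while each complementary minor is divisible by $p^{e_2+\dots+e_r}$ by the inductive hypothesis. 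Hence each term, and therefore $\Delta$, is divisible by $p^{e_1+\dots+e_r}$. The base case $r=1$ is exactly the Corollary, and the (possibly enormous) number of terms in the expansion is harmless since $v_p$ of a sum is at least the minimum of the $v_p$'s of its terms.

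Finally, $\sum_k O_n(s_k)=O_n(s)$, and Hölder's inequality with exponents $\tfrac{n+1}{n}$ and $n+1$, applied to $s=\sum_k\bigl(s_k\mu_k^{-1/(n+1)}\bigr)\cdot\mu_k^{1/(n+1)}$, gives $\sum_k s_k^{1+1/n}\mu_k^{-1/n}\ge s^{1+1/n}/(\sum_k\mu_k)^{1/n}\ge s^{1+1/n}/n_p^{1/n}$, where the last step uses $\sum_k\mu_k\le n_p$ because the $P_k$ are among the $\FF_p$-points of $X_p$ counted with multiplicity. Substituting into the additivity estimate yields $v_p(\Delta)\ge(n!)^{1/n}\frac{n}{n+1}\,s^{1+1/n}/n_p^{1/n}-O_n(s)$, as claimed. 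I do not expect a real obstacle here; the one point that needs care — and the reason the lemmas preceding the Corollary were phrased with the stated emphasis — is that the implied constant in the $O_n$ of Main Lemma 2.5 must be independent of the multiplicity $\mu$ and of the chosen point, so that summing it over the $r$ blocks still only costs $O_n(s)$.
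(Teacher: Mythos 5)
Your proposal is correct, and it matches the standard argument that the paper defers to by citing Salberger's Lemma~1.4 and Walsh's appendix: partition the columns by the reduction of $\xi_j$ modulo $p$, apply the single-point Main Lemma 2.5 to each block (via generalized Laplace expansion / induction on the number of distinct reductions, with $v_p$ of a sum bounded by the minimum over terms), and then optimize $\sum_k s_k^{1+1/n}\mu_k^{-1/n}$ against $\sum s_k=s$, $\sum\mu_k\le n_p$ by Hölder. You have also correctly identified the one point that actually required work in this section — namely, that the $O_n(\cdot)$ in Main Lemma 2.5 is uniform in the point $P$ and its multiplicity $\mu$, so that summing the error terms over the blocks costs only $O_n(s)$ — which is precisely what the preceding lemmas of the paper establish.
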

\begin{proof}
  This is identical to the proof of \cite[Lemma 1.4]{Salberger-dgc}, see also the appendix of \cite{Walsh} -- but we have eliminated the dependence on the constant on $d$.
\end{proof}

\begin{lem}
  In the situation above, if $p > 27d^4$ and $X_p$ is geometrically integral, i.e.~the defining polynomial $f$ has absolutely irreducible reduction modulo $p$, then $n_p \leq p^n + O_n(d^2 p^{n-1/2})$.
\end{lem}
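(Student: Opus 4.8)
The plan is to peel off the contribution of multiplicities and reduce the remaining honest point count to a Lang--Weil estimate.

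Since a smooth $\FF_p$-point of $X_p$ contributes $1$ to $n_p$ and a point $P$ of multiplicity $\mu_P$ contributes $\mu_P\le d$, I would first write
\[
n_p=\#X_p(\FF_p)+\!\!\sum_{P\in\Sigma(\FF_p)}(\mu_P-1)\ \le\ \#X_p(\FF_p)+(d-1)\,\#\Sigma(\FF_p),\qquad \Sigma:=\operatorname{Sing}(X_p).
\]
As $X_p$ is integral over the perfect field $\FF_p$ it is generically smooth, so $\Sigma\subseteq\bigcap_i\{\partial_i f=0\}$ is closed of codimension $\ge 2$ in $\PP^{n+1}_{\FF_p}$; since $p>d$ the partials $\partial_i f$ (of degree $d-1$) have no common factor, so by applying the refined B\'ezout theorem to generic linear combinations of them one sees that the codimension-$c$ part of $\Sigma$ has degree at most $(d-1)^c$ for every $c\ge 2$. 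Lemma~\ref{lem:gen_Schwarz_Zippel} then gives $\#\Sigma(\FF_p)=O_n\big(\sum_{c\ge 2}d^{\,c}p^{\,n+1-c}\big)$, hence $(d-1)\,\#\Sigma(\FF_p)=O_n\big(\sum_{c\ge 2}d^{\,c+1}p^{\,n+1-c}\big)$. Here the hypothesis $p>27d^4$ does the work: it gives $p^{-1/2}<d^{-2}$, and an elementary induction on $c$ shows $d^{\,c-1}\le p^{\,c-3/2}$ for all $c\ge 2$, so each summand $d^{\,c+1}p^{\,n+1-c}$ is $O(d^2p^{\,n-1/2})$. Thus the multiplicity correction is absorbed, and it remains to prove
\[ \#X_p(\FF_p)\le p^n+O_n(d^2p^{\,n-1/2}). \]

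For this I would invoke (the upper half of) the Lang--Weil estimate for the absolutely irreducible hypersurface $X_p\subseteq\PP^{n+1}_{\FF_p}$ of degree $d$ and dimension $n$: its leading error term is $(d-1)(d-2)p^{\,n-1/2}=O(d^2p^{\,n-1/2})$, already of the right shape, while the remaining terms are of the form $O_n(d^{\,e}p^{\,n-1})$ for a fixed $e$, which $p>27d^4$ turns into $O_n(d^2p^{\,n-1/2})$ as soon as $e\le 4$. One may simply quote an explicit such bound; to stay self-contained I would instead fibre $X_p$ by a generic pencil of hyperplanes in $\PP^{n+1}$. By Bertini irreducibility --- valid in every characteristic for the generic member of a generic pencil --- all but polynomially many (in $d$) members meet $X_p$ in a geometrically integral hypersurface of degree $d$ in a $\PP^n$, the number of exceptional members being controlled by the degree of a universal Noether form as in \cite{RuppertCrelle}; the exceptional members together with the base locus contribute only $O_n(d^{\,O(1)}p^{\,n-1})$, again absorbed using $p>27d^4$. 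Iterating down to dimension $1$ reduces everything to the Weil bound applied to the normalisation of a plane curve of degree $d$, together with the classical bound $\binom{d-1}{2}$ on the number of its singular points, giving $\#C(\FF_p)\le p+O(d^2p^{1/2})$; and since fibring by a $\PP^1$ leaves the coefficient of the $p^{\,\bullet-1/2}$-error unchanged, the implied constant stays bounded in terms of $n$ throughout the induction.

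The real obstacle is not the existence of a Lang--Weil bound, which is classical, but the bookkeeping of the powers of $d$: the secondary Lang--Weil term, the number of exceptional members of the pencil (equivalently the degree of the relevant Noether form), and the singular-locus contribution must each carry a power of $d$ at most $d^4$ so as to be swallowed by the $27d^4$-threshold into the target error $O_n(d^2p^{\,n-1/2})$. This accounting --- rather than any new geometric input --- is the heart of the matter, and it is precisely what dictates the shape of the hypothesis $p>27d^4$.
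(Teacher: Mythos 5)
Your proof follows essentially the same route as the paper: decompose $n_p$ into the honest point count plus a multiplicity correction supported on the singular locus, bound the former by a Lang--Weil type estimate for absolutely irreducible hypersurfaces with error $O(d^2p^{n-1/2})$, and bound the latter using the codimension-$\geq 2$ singular locus (degree $O(d^2)$) together with the threshold $p>27d^4$ to absorb extra powers of $d$. The paper simply quotes an explicit form of the Lang--Weil bound from Cafure--Matera and intersects with the single partial $\partial f/\partial x_0$ rather than the full singular locus, so your version is a slight variant in bookkeeping but identical in structure; your optional self-contained fibering argument via pencils is left at the level of a sketch, but you correctly flag that one may simply cite an explicit Lang--Weil bound, which is exactly what the paper does.
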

\begin{proof}
  By \cite[Corollary 5.6]{CafureMatera} the number of $\FF_p$-points of $X_p$ counted without multiplicity is bounded by 
  \[ \frac{p^{n+1} + (d-1)(d-2)p^{n+1/2} + (5d^2+d+1) p^n - 1}{p-1} \leq p^n + O_n(d^2 p^{n-1/2}). \]  (This uses the lower bound on $p$ and the condition on $X_p$.)

  The singular points of $X_p$ all lie in the algebraic set cut out by $f$ and $\frac{\partial f}{\partial x_0}$, which can be assumed non-zero without loss of generality. This is an algebraic set all of whose components have codimension $2$ %by Krull's Height Theorem, 
  and the sum of the degrees of these components is bounded by $d^2$.
  The standard Lang--Weil estimate %(see the later Lemma \ref{lem:gen_Schwarz_Zippel})
   yields that there are $O_n(d^2 p^{n-1}) \leq O_n(dp^{n-1/2})$ points on this algebraic set and hence at most that many singular points, each of which has multiplicity at most $d$.
  Adding this term to the number of points counted without multiplicity yields the claim.
\end{proof}

\begin{lem}
  In the situation above, with $p > 27d^4$ and $X_p$ geometrically integral, we have $n_p^{1/n}/p - 1 \leq O_{n}(d^2 p^{-1/2})$.
\end{lem}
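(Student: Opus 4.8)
The plan is to derive the estimate directly from the preceding lemma by an elementary bound on $n$-th roots. Write the conclusion of that lemma as $n_p \leq p^n + C_n d^2 p^{n-1/2}$ for a suitable constant $C_n$ depending only on $n$, i.e.\ $n_p \leq p^n(1 + C_n d^2 p^{-1/2})$.

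First I would dispose of the trivial case: if $n_p \leq p^n$, then $n_p^{1/n}/p \leq 1$, so $n_p^{1/n}/p - 1 \leq 0$ and there is nothing to prove. Hence I may assume $n_p > p^n$ and write $n_p = p^n(1+\varepsilon)$ with $0 < \varepsilon \leq C_n d^2 p^{-1/2}$. Then I would apply the elementary inequality $(1+x)^{1/n} \leq 1 + x/n \leq 1 + x$, valid for all $x \geq 0$ (this is Bernoulli's inequality for the exponent $1/n \in (0,1]$, or equivalently one notes that the derivative of $x \mapsto (1+x)^{1/n}$ is bounded by $1/n$ on $[0,\infty)$). Taking $x = \varepsilon$ yields $n_p^{1/n}/p = (1+\varepsilon)^{1/n} \leq 1 + \varepsilon$, so that $n_p^{1/n}/p - 1 \leq \varepsilon \leq C_n d^2 p^{-1/2} = O_n(d^2 p^{-1/2})$, which is the claim.

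There is no genuine obstacle in this step: the only input beyond the previous lemma is concavity of the $n$-th root, and the hypotheses $p > 27 d^4$ and geometric integrality of $X_p$ are used only insofar as they are needed to invoke that lemma. If anything, the only point requiring a moment's care is that the previous lemma provides only an upper bound on $n_p$, so one must split off the (harmless) case $n_p \leq p^n$ before extracting $n$-th roots. One may also remark, though it is not needed for the inequality itself, that $p > 27 d^4$ forces $d^2 p^{-1/2} < 1$, so the error term on the right-hand side is indeed small.
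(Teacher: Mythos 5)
Your proof is correct and is essentially the paper's own argument: the paper also reduces to the elementary inequality $x^{1/n} - 1 \leq x - 1$ for $x \geq 1$ (applied with $x = n_p/p^n$), which is exactly your Bernoulli-type bound. The only cosmetic difference is that you spell out the trivial case $n_p \leq p^n$, which the paper leaves implicit.
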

\begin{proof}
  Apply the general inequality $x^{1/n} -1 \leq x - 1$ for $x \geq 1$.
\end{proof}

We immediately obtain the following from Proposition \ref{prop:Salberger_determinant}.
\begin{cor}\label{cor:Salberger_determinant}
The determinant $\Delta$ from Proposition \ref{prop:Salberger_determinant} is divisible by $p^e$, where \[ e \geq (n!)^{1/n}\frac{n}{n+1} \frac{s^{1+1/n}}{p + O_{n}(d^2p^{1/2})} - O_n(s) .\]
\end{cor}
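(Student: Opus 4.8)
The plan is to substitute the estimate on $n_p^{1/n}$ coming from the last lemma directly into the lower bound of Proposition \ref{prop:Salberger_determinant}, and nothing more. First I would recall that, under the standing hypotheses $p > 27 d^4$ and $X_p$ geometrically integral inherited from the two lemmas immediately preceding, the last lemma gives $n_p^{1/n} \leq p + O_n(d^2 p^{1/2})$ with a \emph{positive} implied constant. Proposition \ref{prop:Salberger_determinant} asserts that $p^e \mid \Delta$ with $e \geq (n!)^{1/n}\frac{n}{n+1}\, s^{1+1/n}/n_p^{1/n} - O_n(s)$.

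Next I would observe that $x \mapsto s^{1+1/n}/x$ is decreasing for $x > 0$, so enlarging the denominator from $n_p^{1/n}$ to $p + O_n(d^2 p^{1/2})$ only decreases the main term, and hence the displayed lower bound on $e$ remains valid after the substitution. This yields $e \geq (n!)^{1/n}\frac{n}{n+1}\, s^{1+1/n}/(p + O_n(d^2 p^{1/2})) - O_n(s)$, which is exactly the assertion of the corollary.

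The hard part, such as it is, is only to keep track of the direction of the inequalities: one must make sure the error term enters the denominator with the sign that shrinks the fraction, which it does precisely because the input bound on $n_p$ is an upper bound, and that the ambient $-O_n(s)$ term is simply carried along unchanged. It is perhaps worth remarking that the estimate has content only once $p$ dominates $d^2 p^{1/2}$, i.e.\ for $p$ of size a constant times $d^4$ or larger, which is exactly the regime $p > 27 d^4$ in which the input lemmas were established.
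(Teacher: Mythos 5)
Your argument is correct and is exactly the (elided) reasoning behind the paper's one-line remark that the corollary follows \textquotedblleft immediately\textquotedblright\ from Proposition \ref{prop:Salberger_determinant}: one plugs the bound $n_p^{1/n} \leq p + O_n(d^2 p^{1/2})$ from the preceding lemma into the denominator, using that a larger denominator only weakens the lower bound, while the $-O_n(s)$ term is unchanged. The hypotheses $p > 27d^4$ and $X_p$ geometrically integral are indeed tacitly in force here, as you note, and this is consistent with how the corollary is later invoked in the proof of Proposition \ref{prop:walsh_sec_2}, where it is applied only to primes outside the exceptional set $\cP$.
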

This is stated as Theorem 2.2 in \cite{Walsh}, but our statement is more precise in terms of the implied constants.

% \begin{lem}[2.5 of \cite{SalbCrelle}, explicit for planar curves]
% In Lemma 2.5 of \cite{SalbCrelle} for planar curves, assume that $X_p$ is irreducible. Then we can take $O_{d,n}(s)= s\mu/2 + \mu^3$ (more or less).
% \end{lem}
% \begin{proof}
% Let $R_0$ be the ring $\ZZ_p[x_0,x_1,x_2]/(f)$, and let $R$ be the $p$-adic completion

% \end{proof}
%
%\begin{lem}[Thm 2.2 of \cite{Walsh}, explicit for planar curves]
%
%\end{lem}
%\begin{proof}
%
%
%If $X_p$ is irreducible of degree $d$, use Katz' bounds of Theorem 12 of \cite{Katz-Bet} on Betti numbers to get a nice Lang-Weil. Even better: there are at most $d(d-1)$ singular points on $X$.
%
%
%\end{proof}

\section{Points on projective hypersurfaces \`a la Walsh}\label{sec3}\label{sec:Walsh}

%By Lemma \ref{lem:not-geom-int} it is
%enough to show Theorem \ref{thm:W1.1} in the case where $X$ is geometrically integral.

%\raf{I have the overall impression that only the surface (and curve) case in $\PP^3$ is needed, as we work towards adapting 3.16 of \cite{Salberger-dgc}. That is, I think we can restrict to $n\leq 3$.}

\subsection{Formulation of main result}

The following result is the goal of this section and an improvement to Theorem 1.3 of \cite{Walsh}.
Call a polynomial $f$ over $\ZZ$ primitive if the greatest common divisor of its coefficients equals $1$.
For any $f$, we write $\lVert f\rVert$ for the maximum of the absolute values of the coefficients of $f$.

%%raf: do we need d>1 here?
%\begin{thm}\label{thm:walsh-hypers}
%Let $n>0$ be an integer. Then there exists $c$ and $e$ such that the following holds for all choices of $f,d,B$.
%Let $f$ be any irreducible homogeneous polynomial in $\ZZ[x_0,\ldots,x_{n+1}]$ of degree $d\geq 1$. Suppose that $f$ is primitive. Write $X$ for the hypersurface in $\PP_\QQ^n$ cut out by $f$. Choose $B\geq 1$.  Then there exists a homogeneous $g$ in $\ZZ[x_0,\ldots,x_{n+1}]$ of degree
%at most
%$$
%cd^e \big( B^{\frac{n+1}{nd^{1/n}}} ( \log \lVert f\rVert  + 1 ) \lVert f\rVert^{ -1/nd^{1+1/n}}  +   \log B \big),
%$$
%not divisible by $f$, and vanishing at all points on $X$ of height at most $B$.
%\end{thm}
\begin{thm}\label{thm:walsh-hypers-precise}
Let $n>0$ be an integer. Then there exists $c$ such that the following holds for all choices of $f,d,B$.
Let $f$ be a primitive irreducible homogeneous polynomial in $\ZZ[x_0,\ldots,x_{n+1}]$ of degree $d\geq 1$, and write $X$ for the hypersurface in $\PP_\QQ^{n+1}$ cut out by $f$. Choose $B\geq 1$.  Then there exists a homogeneous $g$ in $\ZZ[x_0,\ldots,x_{n+1}]$ of degree
at most
\[
c B^{\frac{n+1}{nd^{1/n}}} \frac{d^{4-1/n} b(f)}{\lVert f \rVert^{\frac{1}{n} \frac{1}{d^{1+1/n}}}}  +   c d^{1-1/n} \log B + cd^{4-1/n},
\]
not divisible by $f$, and vanishing at all points on $X$ of height at most $B$.
\end{thm}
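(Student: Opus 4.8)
The plan is to follow Walsh's strategy for the determinant method, but carefully tracking the dependence on $d$ and the coefficient size $\lVert f\rVert$, using the sharpened determinant estimate of Corollary~\ref{cor:Salberger_determinant} as the main input. First I would fix a large parameter $T$ (the degree of the sought polynomial $g$) and count the rational points of height $\leq B$ on $X$. Write $S$ for the set of such points, and suppose for contradiction that no nonzero homogeneous $g$ of degree $\leq T$, not divisible by $f$, vanishes on all of $S$. Equivalently, the evaluation map from the space of degree-$T$ forms modulo multiples of $f$ to $\prod_{x\in S}\QQ$ is injective; so $\#S$ is at least the dimension of that space, which is the Hilbert function $H_X(T)$ of the hypersurface $X$, and this grows like $\frac{d}{(n+1)!}T^{n+1}+O_{n}(d T^n)$ (here one needs an explicit Hilbert-function estimate for hypersurfaces, of the type the introduction says the paper provides). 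From injectivity, one can select $s := H_X(T)$ points $\xi_1,\dots,\xi_s$ in $S$ and $s$ monomials (forms) $F_1,\dots,F_s$ of degree $T$ so that the matrix $(F_i(\xi_j))$ is invertible over $\QQ$; choosing primitive integer representatives of the $\xi_j$ of height $\leq B$, the determinant $\Delta$ is a nonzero integer.

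Next I would bound $|\Delta|$ from above and from below. The upper bound is the standard Hadamard-type estimate: each entry $F_i(\xi_j)$ has absolute value at most (number of degree-$T$ monomials in $n+2$ variables) times $B^T$, so $\log|\Delta| \leq s\big(T\log B + O_n(T) + O(\log s)\big)$, i.e.\ essentially $\log|\Delta|\lesssim sT\log B + O_n(sT)$. The lower bound comes from local contributions at many primes. By Corollary~\ref{cor:Salberger_determinant}, for each prime $p$ with $p>27d^4$ at which the reduction $f\bmod p$ stays absolutely irreducible, grouping the chosen $\xi_j$ according to which $\FF_p$-point they reduce to and applying the corollary within each fibre (exactly as in Walsh's and Salberger's arguments) gives $\mathrm{ord}_p(\Delta)\geq (n!)^{1/n}\frac{n}{n+1}\frac{s^{1+1/n}}{p+O_n(d^2p^{1/2})}-O_n(s)$. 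Summing $\log p$ times this over all good primes in a dyadic range $[P,2P]$ — using that the number of bad primes (where $f$ becomes reducible or is too small) is controlled, with $\log\lVert f\rVert$ entering here through the resultant/discriminant whose size bounds the product of bad primes — produces a lower bound for $\log|\Delta|$ of order roughly $(n!)^{1/n}\frac{n}{n+1}\,s^{1+1/n}\,\frac{\log P}{P}\cdot\#\{\text{good }p\sim P\} - O_n(sP)$; choosing $P$ of size a suitable power of $B$ (so that the $B^{\frac{n+1}{nd^{1/n}}}$ factor appears) and absorbing the contribution of primes dividing $\lVert f\rVert$ via $\sum_{p\mid\lVert f\rVert}\log p\leq\log\lVert f\rVert$ gives a clean lower bound.

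Comparing the two bounds: if $T$ is below the claimed threshold, then with $s=H_X(T)\asymp \frac{d}{(n+1)!}T^{n+1}$ one has $s^{1/n}\asymp d^{1/n}T^{(n+1)/n}$, and the lower bound $\asymp s^{1+1/n}/P$ beats the upper bound $\asymp sT\log B$, a contradiction; hence some admissible $g$ must exist. Unwinding the inequality $s^{1/n}\cdot(\text{prime sum factor}) > T\log B + O_n(T)$ and solving for the critical $T$ is what yields the three terms in the statement: the main term $cB^{\frac{n+1}{nd^{1/n}}} d^{4-1/n}b(f)\lVert f\rVert^{-\frac{1}{nd^{1+1/n}}}$ (the $\lVert f\rVert^{-\cdots}$ coming from the Lang–Weil/good-prime count, where excluding primes up to $\lVert f\rVert^{1/d}$-ish costs a factor, and the $d^{4-1/n}$ from the $27d^4$ cutoff and the $(n!/\mu)^{1/n}$–type constants combined with $s^{1/n}\asymp d^{1/n}T^{(n+1)/n}$), the $cd^{1-1/n}\log B$ term from the $sT\log B$ vs.\ $s^{1+1/n}$ balance, and the $cd^{4-1/n}$ term from the $O_n(sP)$ and $O_n(sT)$ error terms together with $P\asymp d^4$. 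The main obstacle I anticipate is precisely the bookkeeping in this last step: controlling the set of ``bad'' primes uniformly — primes below $27d^4$, primes where $f$ loses absolute irreducibility, and primes dividing $\lVert f\rVert$ — so that their total logarithmic weight is genuinely $O_n(d^{4})+O(\log\lVert f\rVert)$ and does not eat into the main prime sum; this is where the Noether-polynomial estimates (bounding the primes of bad reduction in terms of $\lVert f\rVert$ and $d$) and the Bombieri–Vaaler-type bounds for integer solutions of the linear system (to keep the chosen $\xi_j$ and the monomials $F_i$ of controlled height) have to be invoked, and getting every exponent of $d$ to land exactly as stated requires care rather than cleverness.
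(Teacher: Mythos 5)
Your overall sketch — pick the representative points, form a determinant, compare a Hadamard-type upper bound against $p$-adic lower bounds from Corollary~\ref{cor:Salberger_determinant}, then unwind — is the classical Heath-Brown/Salberger determinant method and would indeed produce a bound of the rough shape $c\,B^{\frac{n+1}{nd^{1/n}}}d^{4-1/n}b(f) + \dots$. But the theorem you are asked to prove has the crucial additional factor $\lVert f\rVert^{-\frac{1}{n}\frac{1}{d^{1+1/n}}}$, and your proposal misidentifies where this comes from. You attribute it to the prime sum (``excluding primes up to $\lVert f\rVert^{1/d}$-ish costs a factor'' and ``absorbing primes dividing $\lVert f\rVert$''). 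Neither of these is correct: excluding primes from $\sum_p \frac{\log p}{p}$ only weakens the lower bound on $|\Delta|$, primes dividing $\lVert f\rVert$ have no distinguished role, and the bad primes enter only through $b(f)$, which sits in the numerator. There is simply no way to extract a favorable power of $\lVert f\rVert$ from the $p$-adic side in your framework.

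The missing idea — due to Ellenberg--Venkatesh and Walsh, and what the section is actually built around — is that $\lVert f\rVert$ enters through the \emph{upper} bound on the determinant, not the lower bound. Instead of picking $s$ points and $s$ monomials to get a square invertible matrix, the paper works with the full $s\times r$ rectangular matrix $A$ of all monomial values (with $r=\binom{M+n+1}{n+1}>s$). The key observation is that the kernel of $A$ consists precisely of (coefficient vectors of) multiples of $f$, so every nonzero integer solution has a coordinate of size at least $c_f\geq\lVert f\rVert C^{-nd^{1+1/n}}$. Feeding this into the Bombieri--Vaaler Theorem~\ref{thm:bombval_siegel} (applied to bound the \emph{GCD} $\Delta$ of the $s\times s$ minors) yields $\Delta \leq \sqrt{|\det(AA^\top)|}\,\bigl(\lVert f\rVert C^{-nd^{1+1/n}}\bigr)^{s-r}$; since $s<r$ this is a \emph{reduction} of the Hadamard bound by a power of $\lVert f\rVert$. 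Dividing by $Ms$ and rearranging is what produces the $-\frac{\log\lVert f\rVert}{nd^{1+1/n}}$ inside the exponent, hence the factor $\lVert f\rVert^{-\frac{1}{n}\frac{1}{d^{1+1/n}}}$. You use Bombieri--Vaaler only for an unrelated purpose (``controlling heights of the $\xi_j$ and $F_i$''), which is not where it is needed. Two further structural ingredients in the paper that your sketch omits are (i) a coordinate change (via a Vandermonde argument) to guarantee that the $x_{n+1}^d$-coefficient of $f$ is within a factor $3^{-(n+1)d}$ of $\lVert f\rVert$, which is necessary for the Bombieri--Vaaler step, and (ii) a case split on whether $\lVert f\rVert\leq B^{2d(n+1)}$ or not, handled separately in Lemmas~\ref{lem:Walsh_small_f} and~\ref{lem:Walsh_big_f}. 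Without the kernel/Bombieri--Vaaler mechanism, your approach cannot reach the stated bound.
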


Here the quantity $b(f)$ will be defined in Definition \ref{defn:badness}; it always satisfies $b(f) \leq O(\max(d^{-2} \log\lVert f \rVert, 1))$.
The main improvement over \cite{Walsh} lies in the polynomial dependence on the degree $d$.
%We give even more detailed versions of Theorem \ref{thm:walsh-hypers} below, see Theorem \ref{thm:walsh-hypers-precise}, in which it is also possible to remove the term $\log\lVert f\rVert+1$ in favour of a term involving the primes $p$ modulo which $f$ fails to be absolutely irreducible.

We also immediately obtain the following, which is the essential tool for proving Theorem \ref{thm:W1.1}.
\begin{cor}\label{cor:precise_Walsh_planar_curve}
  For any primitive irreducible polynomial $f \in \ZZ[x_0, x_1, x_2]$ homogeneous of degree $d$ and any $B \geq 1$ we have
  \[ N(f, B) \leq c B^{\frac{2}{d}} \frac{d^4 b(f)}{\lVert f \rVert^{1/d^2}} + c d \log B + c d^4 \leq c' d^4 B^{\frac{2}{d}} ,\]
  where $c$, $c'$ are absolute constants.
\end{cor}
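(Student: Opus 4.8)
The plan is to deduce Corollary~\ref{cor:precise_Walsh_planar_curve} from Theorem~\ref{thm:walsh-hypers-precise} in the case $n=1$, $n+1 = 2$, that is for plane curves $X = V(f) \subseteq \PP^2_\QQ$. First I would record that for $n=1$ the displayed bound in Theorem~\ref{thm:walsh-hypers-precise} specializes exactly to
\[
\deg g \;\leq\; c\, B^{\frac{2}{d}}\,\frac{d^{3}\, b(f)}{\lVert f\rVert^{1/d^{2}}} \;+\; c\log B \;+\; c\,d^{3},
\]
since $4 - 1/n = 3$, $1 - 1/n = 0$, $\frac{n+1}{n d^{1/n}} = 2/d$, and $\frac1n\frac{1}{d^{1+1/n}} = 1/d^2$. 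So there is a homogeneous $g \in \ZZ[x_0,x_1,x_2]$ of this degree, not divisible by $f$, vanishing at all points of $X$ of height at most $B$.

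Next I would bound $N(f,B)$ by intersecting $X$ with the curve $V(g)$. Since $f$ is irreducible and does not divide $g$, the two plane curves $V(f)$ and $V(g)$ share no common component, so by B\'ezout's theorem $V(f) \cap V(g)$ consists of at most $\deg f \cdot \deg g = d \deg g$ points (over $\overline\QQ$, hence in particular counting $\QQ$-rational ones). Every point of $X$ of height at most $B$ lies in this intersection, so
\[
N(f,B) \;\leq\; d\cdot\deg g \;\leq\; c\, d\, B^{\frac{2}{d}}\,\frac{d^{3}\, b(f)}{\lVert f\rVert^{1/d^{2}}} \;+\; c\, d\,\log B \;+\; c\, d^{4},
\]
which is the first claimed inequality (after renaming the constant $c$). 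Here I am implicitly using that the statement is vacuous or trivial for $d=1$ (a line has $O(B)$... actually $O(B^2)$ trivially, but $d=1$ forces us to be slightly careful — for $d=1$ one uses instead the trivial Schwarz--Zippel bound $N(f,B) \le O(B)$, absorbed into $c'd^4 B^{2/d}$ since $B^{2/d} = B^2 \ge B$; I would insert a one-line remark to this effect, or simply note $d\ge 2$ is the interesting case while $d=1$ is covered by the trivial bound).

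Finally I would collapse the three-term bound into the clean form $c' d^4 B^{2/d}$. The key input is the stated estimate $b(f) \leq O(\max(d^{-2}\log\lVert f\rVert, 1))$ from just after Theorem~\ref{thm:walsh-hypers-precise}. If $\lVert f\rVert \leq e^{d^2}$ then $b(f) = O(1)$ and $\lVert f\rVert^{1/d^2} \geq 1$, so the first term is $O(d^4 B^{2/d})$; if $\lVert f\rVert > e^{d^2}$ then $b(f) = O(d^{-2}\log\lVert f\rVert)$ while $\lVert f\rVert^{1/d^2} = e^{(\log\lVert f\rVert)/d^2}$ grows faster than any power of $\log\lVert f\rVert$, so $\frac{d^3 b(f)}{\lVert f\rVert^{1/d^2}} = O\!\big(d\,\frac{\log\lVert f\rVert}{e^{(\log\lVert f\rVert)/d^2}}\big)$, and substituting $t = (\log\lVert f\rVert)/d^2 \ge 1$ gives $O(d\cdot d^2 t e^{-t}) = O(d^3)$, hence the first term is again $O(d^4 B^{2/d})$. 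For the middle term, $d\log B \leq d^4 \cdot \log B$; and since $\log B \le O(B^{2/d})$ uniformly in $d\ge 1$ and $B\ge 1$ (the worst case being $d$ large, where $B^{2/d}\ge 1$ while... actually $\log B$ is unbounded — but $\log B \le B^{2/d}$ fails for large $B$ and large $d$; the correct comparison is $\log B = O(d\, B^{2/d})$, since $\sup_{B\ge1} B^{-2/d}\log B = \frac{d}{2e}\log(\cdot)$... ), so $d\log B \le O(d^2 B^{2/d}) \le O(d^4 B^{2/d})$; the last term $d^4 \le d^4 B^{2/d}$ since $B\ge 1$. Collecting, $N(f,B) \le c' d^4 B^{2/d}$ with an absolute $c'$. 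The only genuinely delicate point is the elementary optimization showing $\frac{d^3 b(f)}{\lVert f\rVert^{1/d^2}}$ is bounded by $O(d^4)$ uniformly, and the analogous uniform comparison $\log B = O(d\,B^{2/d})$; both are routine calculus exercises (maximize $x^{-\alpha}\log x$) that I would dispatch in a sentence.
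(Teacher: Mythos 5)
Your proof is correct and follows the paper's argument exactly: specialize Theorem~\ref{thm:walsh-hypers-precise} to $n=1$, apply B\'ezout, and then absorb the three-term bound into $c'd^4B^{2/d}$ using $b(f)\leq O(\max(d^{-2}\log\lVert f\rVert,1))$. You actually fill in a small gap that the paper's one-line justification of the second inequality leaves implicit, namely the uniform comparison $d\log B\leq O(d^2B^{2/d})$ for the middle term (the paper mentions only that $b(f)/\lVert f\rVert^{1/d^2}$ is bounded). Your worry about $d=1$ is unnecessary, since Theorem~\ref{thm:walsh-hypers-precise} is stated for all $d\geq 1$ and the same B\'ezout argument goes through (with $b(f)=1$ for a primitive linear form), but the trivial-bound remark does no harm.
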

\begin{proof}
  Apply Theorem \ref{thm:walsh-hypers-precise} to obtain a polynomial $g$, and then apply B\'ezout's theorem to the curves defined by $f$ and $g$.
  This yields the first inequality.
  For the second inequality we can use that $b(f)/\lVert f \rVert^{1/d^2}$ is bounded because $b(f) \leq O(\max(d^{-2} \log\lVert f \rVert, 1))$.
\end{proof}

\subsection{A determinant estimate}

In this section we want to use the results of Section \ref{sec:salberger} for a number of primes simultaneously.
It is useful to introduce the following measure of the set of primes modulo which an absolutely irreducible polynomial over the integers ceases to be absolutely irreducible.
\begin{definition}\label{defn:badness}
  For an integer polynomial $f$ in an arbitrary number of variables we set $b(f)=0$ if $f$ is not absolutely irreducible, and
  \[ b(f) = \prod_p \exp(\frac{\log p}{p}) \]
  otherwise, where the product is over those primes $p > 27d^4$ such that the reduction of $f$ modulo $p$ is not absolutely irreducible.
\end{definition}

For now we work with a degree $d$ hypersurface in $\PP^{n+1}$ defined by a primitive polynomial $f \in \ZZ[x_0, \dotsc, x_{n+1}]$ which is absolutely irreducible.
We first establish a basic estimate on $b(f)$, showing in particular that it is finite.

%\begin{thm}[Explicit Noether Polynomials]
%  There exists a collection of polynomials $\Phi_t$ for testing absolute irreducibility, with each $\Phi_t$ having degree at most $12d^6$ and sum of absolute values of coefficients at most $(2d)^{12d^7+12d^6 n + 32d^6} \leq (2d)^{20n d^7}$.
%\end{thm}

%\begin{lem}\label{lem:primes_with_bounded_product}
%  Let $P$ be a set of prime numbers with product not exceeding $C$. Then $\sum_{p \in P} \log p/p \leq \log\log C + 3$
%\end{lem}
%\begin{proof}
%  Let $P'$ be the subset of $P$ consisting of those elements larger than $\log C$.
%  Then \[ \sum_{p \in P} \frac{\log p}{p} \leq \sum_{p \leq \log C} \frac{\log p}{p} + \sum_{p \in P'} \frac{\log p}{p} \leq \log\log C + 2 + \sum_{p \in P'} \frac{\log p}{\log C} \leq \log\log C + 3, \] where we have used Mertens' first theorem in the second estimate.
%\end{proof}
\begin{thm}[Explicit Noether polynomials, {\cite[Satz 4]{RuppertCrelle}}] \label{thm:explicit_noether}
  Let $d \geq 2$, $n \geq 3$. There is a collection of homogeneous polynomials $\Phi$ in $\binom{n+d}{n}$ variables over $\ZZ$ of degree $d^2-1$, such that
  \[ \lVert\Phi\rVert_1 \leq d^{3d^2-3} \big[ \binom{n+d}{n} 3^d\big]^{d^2-1} \]
  (where $\lVert \cdot \rVert_1$ denotes the sum of the absolute values of the coefficients), and such that the following holds for any polynomial $F$ in $n+1$ variables homogeneous of degree $d$ over any field:
  \begin{itemize}
  \item if $F$ is not absolutely irreducible, then all $\Phi$'s vanish when applied to the coefficients of $F$, reducing modulo the characteristic of the ground field if necessary; 
  \item if $F$ is absolutely irreducible over a field of characteristic $0$, then one of the $\Phi$'s does not vanish when applied to the coefficients of $F$.
  \end{itemize}
\end{thm}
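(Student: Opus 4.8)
The plan is to take Ruppert's criterion for \emph{plane} curves --- which is the content of \cite[Satz 4]{RuppertCrelle} in the case of three variables --- as a black box, and to reduce the case $n\geq 3$ to it by substituting a generic linear parametrization of a $2$-plane and reading off coefficients. So first recall the planar input: for a homogeneous $G$ of degree $d$ in variables $y_0,y_1,y_2$ there is an explicit \emph{Ruppert matrix} $M(G)$ with $d^2-1$ columns (indexed by a $\ZZ$-basis of the rank-$(d^2-1)$ module of triples $(A_0,A_1,A_2)$ of forms of degree $d-1$ with $y_0A_0+y_1A_1+y_2A_2=0$), whose entries are $\ZZ$-linear in the coefficients of $G$ with scalar factors $\leq d$ in absolute value (these come from partial derivatives), such that over an arbitrary field $G$ fails to be absolutely irreducible whenever $\mathrm{rank}\,M(G)<d^2-1$, while over a field of characteristic $0$, absolute irreducibility of $G$ forces $\mathrm{rank}\,M(G)=d^2-1$. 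Letting $\Psi$ range over the $(d^2-1)\times(d^2-1)$ minors of $M(G)$, these are homogeneous of degree $d^2-1$ in the $\binom{d+2}{2}$ coefficients of $G$, they all vanish when $G$ is not absolutely irreducible (in every characteristic), they do not all vanish when $G$ is absolutely irreducible in characteristic $0$, and, by Satz 4 (or by expanding a determinant over $S_{d^2-1}$), $\lVert\Psi\rVert_1\leq d^{3d^2-3}$.

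Now the reduction. For $F\in\ZZ[x_0,\dots,x_n]$ homogeneous of degree $d$ and an $(n+1)$-tuple $L=(\ell_0,\dots,\ell_n)$ of linear forms in new variables $y_0,y_1,y_2$, put $G_L:=F(\ell_0,\dots,\ell_n)$. The key point is that substituting linear forms into a homogeneous form yields a form of the \emph{same} degree or identically $0$; thus $G_L$ is either zero or homogeneous of degree exactly $d$, and each of its coefficients is a $\ZZ$-linear combination of the $\binom{n+d}{n}$ coefficients of $F$ whose scalar multipliers are multinomial coefficients from expanding products $\prod_i\ell_i^{a_i}$ with $\sum_i a_i=d$ --- so the total $\ell^1$-size of the multipliers attached to one monomial of $F$ is at most $3^d$. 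Applying the planar $\Psi$'s to $G_L$ gives polynomials $\Psi(G_L)$ in the coefficients of $F$ and of $L$, still homogeneous of degree $d^2-1$ in the coefficients of $F$; I define the $\Phi$'s to be the coefficients of the various $\Psi(G_L)$ when expanded as polynomials in the coefficients of $L$. By construction they are homogeneous of degree $d^2-1$ in the $\binom{n+d}{n}$ coefficients of $F$, defined over $\ZZ$.

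For the vanishing behaviour: if $F$ is not absolutely irreducible over a field $k$, then over $\bar k$ it admits a non-trivial factorization, and each factor $F_i$ gives $F_i(L)$ homogeneous of its own degree or $0$; hence $G_L$ is either $0$ (so $\Psi(G_L)=0$, as the $\Psi$ are homogeneous of positive degree) or a genuine product of at least two positive-degree forms over $\bar k$, and then all $\Psi(G_L)$ vanish by the easy planar direction. Since this holds for every $L$, each $\Psi(G_L)$ is the zero polynomial in the coefficients of $L$, so all $\Phi$ vanish on the coefficients of $F$. Conversely, if $F$ is absolutely irreducible over a field of characteristic $0$, then $V(F)\subseteq\PP^n$ is a reduced, geometrically irreducible variety of dimension $n-1\geq 2$ --- this is the only place $n\geq 3$ is used --- so by Bertini's irreducibility theorem a generic $2$-plane $\Lambda$ cuts it in a reduced, geometrically irreducible curve of degree $d$; choosing a linear parametrization $L$ of $\Lambda$, the polynomial $G_L$ is a non-zero scalar multiple of the (absolutely irreducible) defining equation of that curve, so some planar $\Psi$ is non-zero on $G_L$, whence $\Psi(G_L)$ is not the zero polynomial in the coefficients of $L$, and therefore at least one $\Phi$ is non-zero on the coefficients of $F$.

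Finally the norm estimate: substituting a coefficient of $G_L$ --- itself a polynomial in the coefficients of $F$ and $L$ of $\ell^1$-norm at most $\binom{n+d}{n}\cdot 3^d$ --- into each of the $d^2-1$ factors of a monomial occurring in some $\Psi$ multiplies $\ell^1$-norms, so $\lVert\Psi(G_L)\rVert_1\leq\lVert\Psi\rVert_1\cdot\bigl[\binom{n+d}{n}3^d\bigr]^{d^2-1}\leq d^{3d^2-3}\bigl[\binom{n+d}{n}3^d\bigr]^{d^2-1}$, and since the $\Phi$'s arise from $\Psi(G_L)$ by collecting monomials in the coefficients of $L$, each satisfies the same bound. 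The one point requiring genuine care is the Bertini step: one must ensure the generic plane section remains \emph{absolutely} irreducible, reduced, and of full degree $d$, so that $G_L$ really is a scalar multiple of the irreducible defining polynomial rather than a proper factor or a power of it; everything else is bookkeeping, made painless by the fact that $G_L$ is automatically of degree $d$ or zero and so never enters a degenerate intermediate case of Ruppert's planar criterion.
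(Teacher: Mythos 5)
Your proof is correct. The paper cites this statement as Ruppert's Satz~4 without supplying a proof, so there is no internal argument to compare against; but the shape of the asserted norm bound---a planar factor $d^{3d^2-3}$ times a substitution factor $\bigl[\binom{n+d}{n}3^d\bigr]^{d^2-1}$---makes clear that the intended derivation is precisely your reduction to the planar criterion via the substitution $G_L = F(\ell_0,\dots,\ell_n)$, with the $\Phi$'s harvested as coefficients of $\Psi(G_L)$ in the $L$-variables, and with Bertini's irreducibility theorem supplying the characteristic-$0$ direction (which is indeed the sole use of $n\geq 3$). Your bookkeeping of $\ell^1$-norms and homogeneity degrees is sound, and the observation that $G_L$ is automatically homogeneous of degree exactly $d$ or identically zero neatly avoids any degenerate intermediate cases. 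One small imprecision worth flagging: in recalling the planar input you assert ``$\mathrm{rank}\,M(G)<d^2-1 \Rightarrow G$ not absolutely irreducible'' over an \emph{arbitrary} field; that direction can fail in small positive characteristic. Fortunately your argument never uses it there: in arbitrary characteristic you only invoke the (correct) implication ``$G$ not absolutely irreducible $\Rightarrow$ all $(d^2-1)\times(d^2-1)$ minors vanish,'' and the converse is invoked only in characteristic $0$, which is exactly what the theorem requires.
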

\begin{cor}\label{cor:estimate_bad_primes}
 $ b(f) %\leq d^{3d^2-3} \big( \binom{n+d}{n} 3^d\big)^{d^2-1} \lVert f\rVert^{d^2-1}
     \leq O(\max(d^{-2} \log\lVert f \rVert, 1))$.
\end{cor}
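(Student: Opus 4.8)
The goal is to bound $b(f)$, the ``bad prime'' product from Definition \ref{defn:badness}, in terms of $\lVert f\rVert$ and $d$ (with $n$ fixed). The plan is to use the explicit Noether polynomials $\Phi$ from Theorem \ref{thm:explicit_noether} to detect precisely those primes $p$ at which the absolutely irreducible polynomial $f$ becomes absolutely reducible modulo $p$: such a prime must divide \emph{every} value $\Phi(\text{coeffs of } f)$, whereas since $f$ is absolutely irreducible in characteristic $0$, at least one such value $\Phi_0(\text{coeffs of }f)$ is a nonzero integer. Hence every bad prime $p>27d^4$ divides this fixed nonzero integer $M:=\Phi_0(\text{coeffs of }f)$.

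From here the estimate is essentially the standard ``the number of primes dividing $M$ is controlled by $\log M$'' argument, but we must track it through the quantity $\sum_{p\mid M,\, p>27d^4}\frac{\log p}{p}$, which is what $\log b(f)$ equals. First I would bound $|M|$: since $\Phi_0$ is homogeneous of degree $d^2-1$ in $\binom{n+d}{n}$ variables with $\lVert\Phi_0\rVert_1\le d^{3d^2-3}[\binom{n+d}{n}3^d]^{d^2-1}$, and each coefficient of $f$ has absolute value at most $\lVert f\rVert$, we get $|M|\le \lVert\Phi_0\rVert_1\cdot\lVert f\rVert^{d^2-1}$, so $\log|M|\le (d^2-1)\log\lVert f\rVert + O_n(d^2\log d)$ (using $\binom{n+d}{n}=O_n(d^n)$ and collecting the $3^d$ term, which contributes $O(d^2)$). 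Then, since every bad prime exceeds $27d^4$ and divides $M$, the bad primes are distinct and each at least $27d^4$, so their number $r$ satisfies $(27d^4)^r\le |M|$, giving $r\le \frac{\log|M|}{\log(27d^4)}=O\!\big(\frac{\log|M|}{\log d}\big)$ for $d\ge 2$. Now $\log b(f)=\sum \frac{\log p}{p}\le \frac{1}{27d^4}\sum_{p\mid M}\log p\le \frac{\log|M|}{27d^4}$, which is $O(d^{-2}\log\lVert f\rVert) + O_n(d^{-2}\log d)$, hence $\log b(f) = O(\max(d^{-2}\log\lVert f\rVert,1))$, and exponentiating (noting $b(f)$ and the target are both $\ge 1$, up to adjusting the constant) gives $b(f)\le O(\max(d^{-2}\log\lVert f\rVert,1))$ — wait, one must be slightly careful: exponentiating $O(\cdot)$ does not directly yield the claimed \emph{linear} (not exponential) bound, so in fact the sharper route is to bound $\sum\frac{\log p}{p}$ directly and note $e^x\le 1+O(x)$ is false for large $x$; instead observe $\sum_{p\mid M, p>27d^4}\frac{\log p}{p}$ is already $O(1)$ unless $\log\lVert f\rVert$ is large, and in the large regime use $b(f)\le \prod_{p\mid M}p^{1/p}\le \prod_{p\mid M}p^{1/(27d^4)}=|M|^{1/(27d^4)}\le \exp\big(\frac{(d^2-1)\log\lVert f\rVert+O_n(d^2\log d)}{27d^4}\big)=\exp(O(d^{-2}\log\lVert f\rVert)+O_n(1))$, and then use that $\exp(O(d^{-2}\log\lVert f\rVert))=\lVert f\rVert^{O(d^{-2})}=O(\max(d^{-2}\log\lVert f\rVert,1))$ because $t\mapsto \lVert f\rVert^{t}$ vs.\ $t\log\lVert f\rVert$: in the relevant range $t=O(d^{-2})\le 1$ we have $\lVert f\rVert^t\le 1+t\log\lVert f\rVert\cdot e^{t\log\lVert f\rVert}$, which is not obviously $O(t\log\lVert f\rVert)$ either.

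Let me reconsider the last step more carefully, since this is the real subtlety and likely the main obstacle: the claim $b(f)=O(\max(d^{-2}\log\lVert f\rVert,1))$ is \emph{polynomial}, not exponential, in $\log\lVert f\rVert$, so I cannot afford to exponentiate a term of size $d^{-2}\log\lVert f\rVert$ if that term can be large. The resolution must be that $b(f)$ genuinely is small: the product $\prod_{p>27d^4,\,p\mid M}\exp(\log p/p)$ has \emph{logarithm} $\sum \log p/p$, and I should bound this sum not by $\frac{\log|M|}{27d^4}$ crudely but observe that the primes dividing $M$ that exceed $27d^4$ are few and large. Writing them as $p_1<p_2<\cdots<p_r$ with $p_i\ge 27d^4$, and with $\prod p_i\le|M|$, the sum $\sum\frac{\log p_i}{p_i}$ is maximized, for fixed $\prod p_i$, when the $p_i$ are as small as possible (since $\log x/x$ is decreasing for $x\ge 3$); so it is at most $\sum_{p\le Y}\frac{\log p}{p}$ where $Y$ is chosen so that $\prod_{p\le Y}p\le |M|$, i.e.\ $Y=O(\log|M|)$ by Chebyshev ($\sum_{p\le Y}\log p\asymp Y$). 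Then by Mertens, $\sum_{p\le Y}\frac{\log p}{p}=\log Y+O(1)=\log\log|M|+O(1)$. Hence $b(f)\le e^{O(1)}\log|M|=O(\log|M|)=O(d^2\log\lVert f\rVert)+O_n(d^2\log d)$ — but this gives $O(d^2\log\lVert f\rVert)$, off by a factor $d^4$ from the target $O(d^{-2}\log\lVert f\rVert)$! So I must additionally use that all these primes are $>27d^4$: restricting the maximizing configuration to primes in $(27d^4,Y]$, Mertens gives $\sum_{27d^4<p\le Y}\frac{\log p}{p}=\log Y-\log(27d^4)+O(1)=\log\frac{\log|M|}{d^4}+O(1)$, so $b(f)=O(\frac{\log|M|}{d^4})=O(\frac{\log\lVert f\rVert}{d^2})+O_n(\frac{\log d}{d^2})=O(\max(d^{-2}\log\lVert f\rVert,1))$, which is exactly the claim.

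So the proof has three ingredients to assemble: (1) the Noether-polynomial reduction producing a nonzero integer $M$ with $\log|M|\le (d^2-1)\log\lVert f\rVert+O_n(d^2\log d)$; (2) the combinatorial/analytic lemma that for distinct primes $p_1,\dots,p_r$ all exceeding $T$ with $\prod p_i\le N$, one has $\sum\frac{\log p_i}{p_i}\le \log\frac{\log N}{\log T}+O(1)$ (via monotonicity of $\log x/x$ plus Mertens' theorems); (3) combining with $T=27d^4$ and $N=|M|$. I expect step (2) — getting the \emph{right power of $d$}, namely the saving that comes from $p>27d^4$ rather than just $p>3$ — to be the crux, and the place where one must be careful not to lose a factor $d^4$; everything else is bookkeeping with the explicit bound from Theorem \ref{thm:explicit_noether}. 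A cleaner packaging, which is probably what the authors do, is to simply write $b(f)\le |M|^{1/(27d^4)}$ directly from $\exp(\log p/p)\le \exp(\log p/(27d^4))=p^{1/(27d^4)}$ and $\prod p\le|M|$; but as noted this only yields $b(f)\le \exp(O(d^{-2}\log\lVert f\rVert))$, so to land the stated \emph{linear-in-$\log\lVert f\rVert$} bound one does need the Mertens refinement — unless the intended reading of $O(\max(\cdots))$ is loose; I would present the Mertens version to be safe, stating it as: $\log b(f)\le \sum_{27d^4<p\le Y}\frac{\log p}{p}$ with $Y\ll\log|M|$, hence $b(f)\ll \log|M|/d^4\ll \max(d^{-2}\log\lVert f\rVert,1)$.
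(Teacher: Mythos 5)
Your final argument --- using the Noether polynomials to produce a fixed nonzero integer $M$ (namely a Noether form evaluated at the coefficients of $f$) that is divisible by every bad prime $p > 27d^4$, with $\log|M| \ll_n d^2\log\lVert f\rVert + d^2\log d$, and then applying Mertens' first theorem over the range $(27d^4,\, O(\log|M|)]$ so that $\log b(f) \le \log(d^{-4}\log|M|)+O(1)$ before exponentiating --- is correct and is essentially the paper's proof. The only cosmetic difference is that the paper packages the Mertens step by splitting the sum at $p=\log c$ (with $c=\lVert\Phi\rVert_1\lVert f\rVert^{\deg\Phi}$) and bounding the tail by $\bigl(\sum_{p}\log p\bigr)/\log c \le 1$, rather than via your greedy exchange argument; and your self-diagnosed dead end, that $b(f)\le |M|^{1/(27d^4)}$ loses the linear-in-$\log\lVert f\rVert$ bound, is exactly right.
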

\begin{proof}
  Write $\cP$ for the set of prime numbers $p > 27d^4$ modulo which $f$ is not absolutely irreducible.
  There exists a Noether form $\Phi$ with coefficients in $\ZZ$ such that $\Phi$ applied to the coefficients of $f$ is non-zero, but is divisible by any prime in $\cP$. In particular, the product of such $p$ is bounded by $c := \lVert\Phi\rVert_1 \lVert f \rVert^{\deg\Phi}$.
  Now \begin{align*}
  \log b(f)
  &= \sum_{p \in \cP} \frac{\log p}{p} \leq \sum_{27 d^4 < p \leq \log c} \frac{\log p}{p} + \sum_{\log c < p \in \cP} \frac{\log p}{\log c} \\
  &\leq \max(\log \log c - 4 \log d, 0) + O(1) + \frac{\log c}{\log c} \\
  &\leq \max(\log \log c - 4\log d, 0) + O(1) \\
  &\leq \max(\log(\deg\Phi \log\lVert f\rVert) - 4\log d, \log\log\lVert \Phi\rVert_1 - 4\log d, 0) + O(1)
  ,\end{align*}
  where we have used that the function $\log x - \sum_{p \leq x} \frac{\log p}{p}$ is bounded (Mertens' first theorem).
  Since $\log\log\lVert\Phi\rVert_1 - 4\log d$ is bounded above, the claim follows.
\end{proof}

We now adapt \cite[Theorem 2.3]{Walsh}, keeping track of the dependency on the degree and on $b(f)$.

\begin{lem}\label{lem:chebyshev}
  For any $x>0$, $\sum_{p \leq x} \log p \leq 2 x$, where the sum extends over prime numbers not exceeding $x$.
\end{lem}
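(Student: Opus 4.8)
This is the classical Chebyshev upper bound on the first Chebyshev function $\vartheta(x) = \sum_{p \leq x}\log p$, in the weak form $\vartheta(x) \leq 2x$. The standard elementary argument I would use is the one via the middle binomial coefficient. First I would reduce to the case where $x = N$ is a positive integer (replacing $x$ by $\lfloor x\rfloor$ only decreases the sum and decreases the right-hand side by at most $1$; in fact it suffices to prove the bound at integers since the left side is a step function). Then I would prove by induction on $N$ the sharper statement $\vartheta(N) \leq N\log 4 = 2N\log 2 < 2N$, or alternatively just bookkeep the constant so that $\vartheta(N) \leq 2N$ comes out directly.

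The key step is the observation that every prime $p$ with $m+1 \leq p \leq 2m+1$ divides the binomial coefficient $\binom{2m+1}{m}$, since $p$ divides the numerator $(2m+1)!/(m+1)!$ but not the denominator $m!$. Hence $\prod_{m+1 < p \leq 2m+1} p$ divides $\binom{2m+1}{m}$, which in turn is at most $\frac{1}{2}\cdot 2^{2m+1} = 4^m$ (because $\binom{2m+1}{m} = \binom{2m+1}{m+1}$ and these two equal terms appear in the sum $\sum_k \binom{2m+1}{k} = 2^{2m+1}$). Taking logarithms gives $\vartheta(2m+1) - \vartheta(m+1) \leq 2m\log 2$. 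I would then run the induction: for $N$ odd, $N = 2m+1$, combine this with the inductive bound on $\vartheta(m+1)$; for $N$ even, use $\vartheta(N) = \vartheta(N-1)$ since $N$ is not prime (for $N \geq 3$), and handle the small base cases $N = 1, 2$ by hand. Summing the telescoping contributions yields $\vartheta(N) \leq (\text{const})\cdot N$ with the constant safely below $2$.

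There is essentially no obstacle here — this is a textbook lemma, and the only mild care needed is to make sure the induction closes with the constant $2$ rather than something larger, and to deal cleanly with the reduction from real $x$ to integer $N$ and with the parity split. If one prefers to avoid induction entirely, an alternative is to note $\prod_{p \leq x} p$ divides $\mathrm{lcm}(1, \ldots, \lfloor x\rfloor)$ and bound that, but the binomial-coefficient induction is cleanest and self-contained. I would simply cite or reproduce the three-line induction; the statement as written (with the generous constant $2$) leaves ample slack, so no sharp estimate of Mertens or prime-number-theorem type is needed.
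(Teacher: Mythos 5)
Your proof is correct and supplies the standard elementary Chebyshev argument; the paper itself does not prove the lemma but simply cites it as a classical fact, so you are filling in exactly the detail the paper elides. One small slip to tidy: you first write ``every prime $p$ with $m+1 \leq p \leq 2m+1$ divides $\binom{2m+1}{m}$,'' but $p=m+1$ (when prime) does \emph{not} divide $\binom{2m+1}{m}$, since its single factor in $(2m+1)!$ is cancelled by the one in $(m+1)!$; you correctly switch to the strict range $m+1 < p \leq 2m+1$ in the product on the next line, and the induction $\vartheta(2m+1) \leq \vartheta(m+1) + 2m\log 2$ closes with the constant $\log 4 < 2$ as you indicate, so the end result is unaffected.
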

\begin{proof}
  This is a classical estimate on the first Chebyshev function.
\end{proof}
\begin{lem}
  As $x$ varies over positive real numbers we have $\sum_{p > x} \frac{\log p}{p^{3/2}} = O(x^{-1/2})$, where the sum extends over prime numbers greater than $x$.
\end{lem}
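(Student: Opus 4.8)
The plan is to reduce the claim to the Chebyshev bound of Lemma~\ref{lem:chebyshev} by Abel summation. Write $\theta(t) = \sum_{p \le t}\log p$ for the first Chebyshev function, so that Lemma~\ref{lem:chebyshev} gives $\theta(t) \le 2t$ for all $t>0$. For a fixed $x>0$ I would express the tail sum as a Stieltjes integral
\[
\sum_{p > x}\frac{\log p}{p^{3/2}} = \int_x^{\infty} \frac{d\theta(t)}{t^{3/2}},
\]
and integrate by parts to obtain
\[
\int_x^\infty \frac{d\theta(t)}{t^{3/2}} = \left[\frac{\theta(t)}{t^{3/2}}\right]_{t=x}^{\infty} + \frac{3}{2}\int_x^\infty \frac{\theta(t)}{t^{5/2}}\,dt.
\]

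Next I would estimate the two terms using $\theta(t)\le 2t$. The boundary contribution at $\infty$ vanishes, since $\theta(t)/t^{3/2}\le 2t^{-1/2}\to 0$; the boundary contribution at $x$ equals $-\theta(x)/x^{3/2}\le 0$, so it only improves the bound. The remaining integral is at most $\frac{3}{2}\int_x^\infty 2t^{-3/2}\,dt = 6x^{-1/2}$. Combining these gives $\sum_{p>x}\frac{\log p}{p^{3/2}} \le 6x^{-1/2}$, which is the asserted $O(x^{-1/2})$.

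I do not anticipate a genuine obstacle. The only points needing minor care are that the Stieltjes/Abel manipulation be applied so that the sum really ranges over all primes exceeding $x$ (automatic, since $\theta$ is a step function jumping only at primes), and convergence at $\infty$ (immediate from $\theta(t)=O(t)$). If one prefers to avoid integrals, an equivalent dyadic argument works: split the primes $p>x$ into blocks $2^kx < p \le 2^{k+1}x$ for $k\ge 0$, bound each block's contribution by $\bigl(\sum_{p\le 2^{k+1}x}\log p\bigr)(2^kx)^{-3/2}\le 2^{k+2}x\cdot(2^kx)^{-3/2} = 8\cdot 2^{-k/2}x^{-1/2}$ via Lemma~\ref{lem:chebyshev}, and sum the resulting geometric series in $k$ to again get $O(x^{-1/2})$.
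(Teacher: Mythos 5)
Your proof is correct, and it takes a genuinely more elementary route than the paper. The paper's own proof is a one-line sketch that invokes the prime number theorem and a comparison with an integral; you instead use Abel (partial) summation together with the Chebyshev bound $\theta(t) := \sum_{p \le t}\log p \le 2t$ already established in Lemma~\ref{lem:chebyshev}. This buys you two things: the argument becomes self-contained (reusing a lemma the paper has already stated, rather than the much deeper PNT), and you get a clean explicit constant, $\sum_{p>x}\log p / p^{3/2} \le 6x^{-1/2}$. Your dyadic alternative is equally valid; its constant works out to $4\cdot 2^{-k/2}x^{-1/2}$ per block rather than $8\cdot 2^{-k/2}x^{-1/2}$ (you have $2^{k+2}x\cdot(2^kx)^{-3/2} = 2^{2-k/2}x^{-1/2}$), but this slip is immaterial to the $O(x^{-1/2})$ conclusion. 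One tiny point of hygiene in the Stieltjes version: if $x$ happens to be prime, the convention for $\int_x^\infty d\theta$ could include the jump at $x$, which would add the term $p=x$ to the sum; as you note, the boundary term $-\theta(x)/x^{3/2}$ is nonpositive so the bound survives in either convention, but a cleaner fix is to write the sum as $\int_{x^+}^\infty d\theta(t)/t^{3/2}$ or to observe that adding the single term $\log x / x^{3/2} \le 2x^{-1/2}$ is harmless.
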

\begin{proof}
  Estimate the density of prime numbers using the prime number theorem and compare the sum with an integral.
\end{proof}

\begin{prop}\label{prop:walsh_sec_2}
  Let $(\xi_1, \dotsc, \xi_s)$ be a tuple of rational points in $X$, let $F_{li} \in \ZZ[x_0, \dotsc, x_{n+1}]$, $1 \leq l \leq L$, $1 \leq i \leq s$, be homogeneous polynomials with integer coefficients, and write $\Delta_l$ for the determinant of $(F_{li}(\xi_j))_{ij}$. Let $\Delta$ be the greatest common divisor of the $\Delta_l$, and assume that $\Delta \neq 0$.
  Then we have the bound
  \[ \log \lvert\Delta\rvert \geq \frac{n!^{1/n}}{n+1} s^{1+1/n}(\log s - O_n(1) - n (4\log d + \log b(f)) )  .\]
\end{prop}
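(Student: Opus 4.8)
The plan is to combine the prime-by-prime divisibility estimate of Corollary \ref{cor:Salberger_determinant} with a summation over all primes $p$ up to a suitable cutoff, exactly in the spirit of Salberger's global determinant method and of \cite[Theorem 2.3]{Walsh}. First I would fix a real parameter $T$ (to be chosen of size roughly $s^{1/n}$, up to constants depending on $n$) and consider the primes $p$ with $27d^4 < p \leq T$. For each such $p$ for which $X_p$ is geometrically integral, Corollary \ref{cor:Salberger_determinant} gives that $\Delta$ — being the gcd of the $\Delta_l$, hence divisible by $p^e$ with the same $e$ as any single $\Delta_l$ (one picks any $l$ with $\Delta_l \neq 0$, but since we want the bound for $\Delta$ we take the minimum of the valuations, which is still at least $e$) — satisfies $v_p(\Delta) \geq (n!)^{1/n}\frac{n}{n+1}\frac{s^{1+1/n}}{p+O_n(d^2 p^{1/2})} - O_n(s)$. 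Multiplying by $\log p$ and summing over the relevant primes yields a lower bound
\[
\log|\Delta| \;\geq\; (n!)^{1/n}\frac{n}{n+1}\, s^{1+1/n} \sum_{\substack{27d^4 < p \leq T\\ X_p \text{ geom.\ integral}}} \frac{\log p}{p+O_n(d^2 p^{1/2})} \;-\; O_n(s)\sum_{p\leq T}\log p.
\]

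Next I would estimate the two sums. For the error term, Lemma \ref{lem:chebyshev} gives $\sum_{p\leq T}\log p \leq 2T$, so with $T = \Theta_n(s^{1/n})$ this contributes $-O_n(s\cdot s^{1/n}) = -O_n(s^{1+1/n})$, which is the right order and can be absorbed. For the main sum, I would first replace the denominator $p + O_n(d^2 p^{1/2})$ by $p$ at the cost of an error controlled by $\sum_p \frac{d^2\log p}{p^{3/2}} = O(d^2 T^{-1/2})$ (using the lemma on $\sum_{p>x}\frac{\log p}{p^{3/2}}$, or rather its finite-range analogue), which for $T$ of polynomial size in $d$ and $s$ is harmless. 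Then $\sum_{27d^4 < p \leq T}\frac{\log p}{p}$ is, by Mertens' first theorem, $\log T - \log(27d^4) + O(1) = \log T - 4\log d - O(1)$. Finally I must subtract the contribution of the "bad" primes $p > 27d^4$ at which $X_p$ fails to be geometrically integral: that contribution is precisely $\sum_{p\in\cP,\, p\leq T}\frac{\log p}{p} \leq \sum_{p\in\cP}\frac{\log p}{p} = \log b(f)$ by Definition \ref{defn:badness}. Putting this together, the main sum is at least $\log T - 4\log d - \log b(f) - O(1)$.

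Choosing $T$ to be a suitable constant (depending on $n$) times $s^{1/n}$ makes $\log T = \frac1n\log s + O_n(1)$, and then
\[
\log|\Delta| \;\geq\; (n!)^{1/n}\frac{n}{n+1}\, s^{1+1/n}\Big(\tfrac1n\log s - 4\log d - \log b(f) - O_n(1)\Big) - O_n(s^{1+1/n}),
\]
which after pulling the $1/n$ into the prefactor gives exactly the claimed bound $\frac{n!^{1/n}}{n+1}s^{1+1/n}(\log s - O_n(1) - n(4\log d + \log b(f)))$. One subtlety to handle cleanly: the cutoff $T$ must exceed $27d^4$ for there to be any primes in range at all; if $s^{1/n}$ is not large compared to $d^4$, the right-hand side of the claimed inequality is negative (since $\log s \le n\cdot 4\log d + O_n(1)$ forces $\log s - O_n(1) - 4n\log d \le 0$), and the statement is vacuous because $\log|\Delta| \ge 0$ whenever $\Delta$ is a nonzero integer; so one may assume $s^{1/n} \gg_n d^4$ throughout. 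I expect the main obstacle to be the bookkeeping of the three nested error terms — the $O_n(d^2p^{1/2})$ inside the denominator, the $O_n(s)$ additive term per prime, and the $O(1)$ in Mertens — making sure each is genuinely of lower order than $s^{1+1/n}\log s$ for the chosen $T$, and in particular that the dependence on $d$ enters only through the honest $-n\cdot 4\log d$ term and through $b(f)$, with no stray polynomial-in-$d$ losses. The choice of $T$ and verifying it simultaneously dominates $27d^4$ and stays of order $s^{1/n}$ is where the "assume $s$ large" reduction does the real work.
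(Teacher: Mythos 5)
Your proposal follows essentially the same route as the paper: apply Corollary~\ref{cor:Salberger_determinant} prime-by-prime for $p \leq s^{1/n}$ (avoiding the ``bad'' primes $p \leq 27d^4$ and those with non-integral reduction), sum the resulting $p$-adic valuations against $\log p$, bound the additive $O_n(s)$ error per prime via Chebyshev, and estimate the main sum using $\tfrac{1}{p+O_n(d^2p^{1/2})}\geq\tfrac{1}{p}-O_n(d^2)\tfrac{1}{p^{3/2}}$ together with Mertens and the definition of $b(f)$. The one substantive slip is in the tail estimate: when you replace the denominator by $p$, the error $d^2\sum_{27d^4<p\leq T}\frac{\log p}{p^{3/2}}$ is controlled by the \emph{lower} cutoff, giving $O(d^2\cdot(27d^4)^{-1/2})=O(1)$ independently of $T$ and $s$; your stated bound $O(d^2T^{-1/2})$ evaluates the tail lemma at the wrong endpoint. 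In your write-up this slip is non-fatal only because you also invoke the ``assume $s^{1/n}\gg_n d^4$'' reduction, which forces $d^2T^{-1/2}=O(1)$ anyway --- but that reduction is a detour: the paper's estimate is unconditional, it simply uses $x=27d^4$ in the lemma $\sum_{p>x}\frac{\log p}{p^{3/2}}=O(x^{-1/2})$, and then the whole chain of inequalities holds literally even when the range of good primes is empty (in which case the right-hand side of the proposition is nonpositive and the bound is trivially met since $\log|\Delta|\geq 0$). Your vacuity observation is correct, just not needed. Otherwise the argument is sound and matches the paper's.
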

This is a more explicit variant of \cite[Theorem 2.3]{Walsh}.
\begin{proof}
  Let $\cP$ be the collection of prime numbers $p$ such that either $p \leq 27d^4$ or $X_p$ is not geometrically integral.
%  By Lemma \ref{lem:chebyshev}, we obtain the estimate
 % \[ \prod_{p \in \cP} p \leq \exp(54d^4) b(f). \]

  We now apply Corollary \ref{cor:Salberger_determinant} to all prime numbers $p \leq s^{1/n}$ not in $\cP$, yielding
  \[ \log\lvert\Delta\rvert \geq \frac{n!^{1/n} n}{n+1} s^{1+1/n} \sum_{\cP \not\ni p \leq s^{1/n}} \frac{\log p}{p + O_{n}(d^2p^{1/2})} - O_n(s) \sum_{p \leq s^{1/n}} \log p .\]
  The last term is bounded by $O_n(1) s^{1+1/n}$.

  In estimating the main term, we may use that $\frac{1}{p + O_n(d^2 p^{1/2})} \geq \frac{1}{p} - O_n(d^2) \frac{1}{p^{3/2}}$. We can then bound
  \begin{align*}
    &\sum_{\cP \not\ni p \leq s^{1/n}} \frac{\log p}{p + O_{n}(d^2 p^{1/2})} \\
    &\geq \sum_{p \leq s^{1/n}} \frac{\log p}{p} - \sum_{p \in \cP} \frac{\log p}{p} - O_{n}(d^2) \sum_{\cP \not\ni p \leq s^{1/n}} \frac{\log p}{p^{3/2}} \\
    &\geq \frac{\log s}{n} - \sum_{p \leq 27d^4} \frac{\log p}{p} - \log b(f) - O(1) - O_{n}(d^2\sum_{p > 27d^4} \frac{\log p}{p^{3/2}}) \\
    &\geq \frac{\log s}{n} - \log (27d^4) - \log b(f) - O(1) - O_{n}(d^2 (27d^4)^{-1/2}) \\
    &\geq \frac{\log s}{n} - 4\log d - \log b(f) - O_n(1). \qedhere
  \end{align*}
\end{proof}

\subsection{The main estimates}

%Goal:
%Make Walsh's Sections 4 explicit and polynomial in $d$, using our previous work.

%The following is a more precise version of Theorem \ref{thm:walsh-hypers}. Note that we are using the letter $B$ for the height bound throughout where Walsh uses $N$.

We first establish that we can reduce to the case of absolutely irreducible $f$ in the proof of Theorem \ref{thm:walsh-hypers-precise}.
\begin{lem}\label{lem:walsh_wlog_abs_irred}
  If $f \in \ZZ[x_0, \dotsc, x_{n+1}]$ is homogeneous of degree $d \geq 1$ and irreducible but not absolutely irreducible, then there exists another polynomial $g \in \ZZ[x_0, \dotsc, x_{n+1}]$ of degree $d$, not divisible by $f$, which vanishes on all rational zeroes of $f$.
\end{lem}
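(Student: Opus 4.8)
The idea is that an irreducible-but-not-absolutely-irreducible homogeneous polynomial $f \in \ZZ[x_0,\dots,x_{n+1}]$ factors over $\overline{\QQ}$ into conjugate irreducible factors $f = c\prod_{\sigma} f^{\sigma}$, where $\sigma$ ranges over a Galois orbit and there are at least two of them. The rational zeroes of $f$ are precisely the rational points lying on $\bigcup_\sigma V(f^\sigma)$, but a rational point on any one $V(f^\sigma)$ automatically lies on all of them (apply $\mathrm{Gal}(\overline\QQ/\QQ)$ to the equation $f^\sigma(x)=0$ and use that $x$ is fixed), hence on the proper subvariety $V(f^{\sigma_1}) \cap V(f^{\sigma_2})$ for any two distinct conjugates. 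So the plan is: (i) pick two distinct Galois conjugates $f_1 = f^{\sigma_1}$, $f_2 = f^{\sigma_2}$; (ii) produce from them a single nonzero polynomial $g$ with rational (indeed integer) coefficients, of degree $d$, divisible by $f$ nowhere, which vanishes on $V(f_1)\cap V(f_2)$ and hence on all rational zeroes of $f$.

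For step (ii) the cleanest route is to take the coefficients of $f_1$ and $f_2$ to lie in a number field $K/\QQ$; write $e=\deg f_1 = \deg f_2 = d/m$ where $m\ge 2$ is the number of conjugates. Consider the $\QQ$-vector space $W$ spanned by $\{f_1^{\tau} : \tau\}$ inside the degree-$e$ homogeneous polynomials; it is defined over $\QQ$ (Galois-stable), so has a $\QQ$-basis $g_1,\dots,g_r$ with $r\ge 2$ since $f_1 \notin \QQ[x]$ up to scalars forces at least two linearly independent conjugates (if all conjugates were proportional, $f_1$ would be a scalar multiple of a rational polynomial, contradicting that $f_1$ is a proper factor of the irreducible $f$). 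Now any rational zero $x$ of $f$ satisfies $f_1(x)=0$, hence $g_j(x)=0$ for all $j$ (each $g_j$ is a $\QQ$-linear combination of conjugates $f_1^\tau$, and $f_1^\tau(x)= (f_1(x))^{\tau} = 0$ since $x$ has rational, hence Galois-fixed, coordinates). Take $g := g_1 \cdot (\text{any homogeneous polynomial of degree } d-e \text{ not divisible by } f)$; more simply, since $e < d$, multiply $g_1$ by a suitable monomial or power of a linear form so that the total degree becomes exactly $d$ and the result is not divisible by the irreducible $f$ — this last point holds because $\deg g_1 \cdot (\text{cofactor}) = d = \deg f$ but $f\nmid g_1$ as $\deg g_1 = e < d$, so $f$ cannot divide $g$ unless $g$ is a scalar multiple of $f$, which we avoid by choosing the cofactor not proportional to $f/g_1$ (possible as soon as there is more than one monomial available, i.e. always for $n\ge 1$, $d\ge 1$, after possibly replacing $g_1$ by another basis element). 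Finally clear denominators to get integer coefficients.

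The one genuine subtlety — the step I expect to need the most care — is guaranteeing $f \nmid g$ while keeping $\deg g = d$ exactly. Degree considerations alone give $f\nmid g$ whenever $\deg g < d$; the problem is only that we are forced to pad up to degree $d$. The resolution is that $g_1$ has degree $e\le d/2 < d$, so $f$ (irreducible of degree $d$) does not divide $g_1$; multiplying by a homogeneous cofactor $h$ of degree $d-e$ we get $f\mid g_1 h$ iff $f\mid h$ (as $f$ is prime and $f\nmid g_1$), which fails since $\deg h = d-e < d$. Hence $f\nmid g$ automatically, with no proportionality worry at all. I would also double-check the edge case $m=d$, $e=1$: then $g_1$ is linear, $f\nmid g_1$ trivially, and the argument goes through verbatim. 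Clearing denominators preserves non-divisibility and the vanishing property, completing the proof.
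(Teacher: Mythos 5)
Your argument is correct and gives a complete, self-contained proof where the paper itself merely cites the first paragraph of Section~4 of Walsh; the underlying idea (pass to an absolutely irreducible Galois-conjugate factor, descend to a rational polynomial of degree $e<d$ that vanishes on all rational zeros, then pad up to degree $d$ and observe that non-divisibility by $f$ follows from primality of $f$ and degree considerations on both factors) is the standard one and is certainly what Walsh has in mind. One small imprecision worth fixing: you write ``the $\QQ$-vector space $W$ spanned by $\{f_1^\tau\}$'', but the $\QQ$-linear span of the conjugates need not contain a nonzero element of $\QQ[x_0,\dots,x_{n+1}]$. What you want is the $\overline{\QQ}$-span (equivalently the $K$-span, for $K$ a splitting field of $f$) of $\{f_1^\tau\}$; this is a Galois-stable $\overline{\QQ}$-subspace of $\overline{\QQ}[x_0,\dots,x_{n+1}]_e$, so Galois descent produces a nonzero $g_1\in\QQ[x_0,\dots,x_{n+1}]_e$ that is a $\overline{\QQ}$-linear combination of the $f_1^\tau$ and hence still vanishes at every rational zero of $f$. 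With that correction the rest of your argument --- in particular the clean observation that $f\nmid g_1 h$ because $f$ is prime of degree $d$ while $\deg g_1=e<d$ and $\deg h=d-e<d$ --- goes through, and the side remark about $r\geq 2$ is indeed unnecessary, as you noticed. (Alternatively one can avoid descent entirely: some elementary symmetric polynomial $e_k$ of the conjugates $f_1^\sigma$ with $1\le k\le m-1$ is nonzero because $f$ is squarefree, lies in $\QQ[x_0,\dots,x_{n+1}]$, is homogeneous of degree $ke<d$, and vanishes on all rational zeros of $f$; then pad as before.)
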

\begin{proof}
  This is established in the first paragraph of Section 4 of \cite{Walsh}.
\end{proof}

Let us now work with a restricted class of homogeneous polynomials $f$, namely those which are absolutely irreducible and for which the leading coefficient $c_f$, i.e.~the coefficient of the monomial $x_{n+1}^d$, satisfies \[c_f \geq \lVert f \rVert C^{-n d^{1+1/n}}\] for some positive constant $C$ which is allowed to depend on $n$ (for this reason the factor $n$ in the exponent is in fact superfluous, but it simplifies the proof write-up below).

The two main results are the following.
\begin{lem}\label{lem:Walsh_small_f}
  For $f$ as above, and $B$ satisfying $\lVert f \rVert \leq B^{2d(n+1)}$, there exists a homogeneous polynomial $g$ not divisible by $f$, vanishing at all zeroes of $f$ of height at most $B$, and of degree
  \[ M = O_n(1) B^{\frac{n+1}{nd^{1/n}}} \frac{d^{4-1/n} b(f)}{\lVert f \rVert^{n^{-1}d^{-1-1/n}}} + d^{1-1/n} \log B + O_n(d^2) .\]
\end{lem}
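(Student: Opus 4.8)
The plan is to run the determinant method of Heath-Brown--Salberger--Walsh, using the sharpened determinant estimate from Proposition \ref{prop:walsh_sec_2}, and to organize the argument so that the leading-coefficient hypothesis $c_f \geq \lVert f\rVert C^{-nd^{1+1/n}}$ lets us control sizes of the matrix entries. Concretely, fix a degree parameter $M$ to be optimized and let $s$ be the dimension of the space of monomials of degree $M$ in $x_0,\dotsc,x_{n+1}$ that are \emph{not} divisible by $f$; since $f$ is irreducible this is the dimension of the degree-$M$ part of $\ZZ[x_0,\dotsc,x_{n+1}]/(f)$, which equals $\binom{M+n+1}{n+1}-\binom{M-d+n+1}{n+1} = \frac{d M^n}{n!} + O_n(d^2 M^{n-1})$ for $M\geq d$, so $s \asymp_n d M^n$. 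If the number of rational points of $X$ of height at most $B$ exceeds $s$, then — choosing $F_1,\dotsc,F_s$ to be the monomials of degree $M$ generating this quotient, evaluated at $s$ of the points $\xi_1,\dotsc,\xi_s$ — any $s\times s$ minor of the matrix $(F_i(\xi_j))$ is one of the $\Delta_l$ of Proposition \ref{prop:walsh_sec_2}; I would argue, as in \cite{Walsh}, that one can extract an $L$-tuple of such minors with nonzero gcd $\Delta$ (this is where one uses that the monomials span the quotient and the points are in ``general position modulo $f$''), obtaining the lower bound $\log|\Delta| \geq \frac{n!^{1/n}}{n+1}s^{1+1/n}(\log s - O_n(1) - n(4\log d + \log b(f)))$.

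For the upper bound on $|\Delta|$, I would use Hadamard's inequality together with a bound on $|F_i(\xi_j)|$. Writing each $\xi_j = (x_{j0}:\dots:x_{j,n+1})$ with coprime integer coordinates of absolute value at most $B$, and scaling so that the ``affine chart'' $x_{n+1}\neq 0$ is used, a monomial of degree $M$ evaluates to something of size at most $B^M$; but one needs to be careful because the points are counted in $\PP^{n+1}$ and the denominators $x_{j,n+1}$ can be small. This is exactly where the hypothesis on $c_f$ enters: on the affine patch obtained by setting $x_{n+1}=1$, the condition $c_f \geq \lVert f\rVert C^{-nd^{1+1/n}}$ controls the defining polynomial so that one gets a genuine integral model and the monomial values at the height-$\leq B$ points are bounded by roughly $B^M$ up to a factor like $(\lVert f\rVert/c_f)^{O(M/d)} \leq C^{O_n(nd^{1/n}M)}=C^{O_n(M)}$ (absorbed into $O_n(1)$ per entry). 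Hadamard then gives $\log|\Delta| \leq s\big(M\log B + \tfrac{n+1}{2}\log s + O_n(M)\big)$, using also that $\log\binom{M+n+1}{n+1}=O_n(\log M)=O_n(M)$ for the number of monomials.

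Comparing the two bounds: if $X$ has more than $s$ points of height $\leq B$, then
\[
\tfrac{n!^{1/n}}{n+1}s^{1+1/n}\big(\log s - O_n(1) - n(4\log d + \log b(f))\big) \;\leq\; s\big(M\log B + O_n(M)\big),
\]
i.e. (dividing by $s$ and using $s^{1/n}\asymp_n d^{1/n}M$)
\[
d^{1/n}M\big(\log s - O_n(1) - 4n\log d - n\log b(f)\big) \;\leq\; O_n(M\log B) + O_n(M).
\]
When $\log b(f)$ and $\log d$ are small compared to $\log s$ (the ``generic'' regime) and $\log B$ is not too small, the dominant term on the left is $d^{1/n}M\log s \asymp_n d^{1/n}M(\log d + n\log M)$, and solving $d^{1/n}n\log M \lesssim \log B$ forces $M \lesssim B^{1/(nd^{1/n})}$; tracking all lower-order terms (the $b(f)$, the $d$-powers coming from $4n\log d$ and from $\log s$, and the additive $\log B$ and $d^2$ contributions) produces exactly the claimed bound
\[
M = O_n(1)\,B^{\frac{n+1}{nd^{1/n}}}\frac{d^{4-1/n}b(f)}{\lVert f\rVert^{n^{-1}d^{-1-1/n}}} + d^{1-1/n}\log B + O_n(d^2).
\]
The $\lVert f\rVert^{-n^{-1}d^{-1-1/n}}$ factor appears because $\log s$ in fact contains a $+\tfrac1n\log\lVert f\rVert$-type contribution from a more careful accounting of the ``content'' in the determinant lower bound (as in Ellenberg--Venkatesh / Walsh), which improves $M$; I would import that refinement from the proof of \cite[Theorem 2.3]{Walsh} rather than reprove it. Finally, if $X$ has at most $s$ points of height $\leq B$, then already $s$ is a valid value for $\deg g$ up to a constant — but in that degenerate case one simply takes $g$ to be any monomial of degree $M$ not divisible by $f$, which trivially vanishes on the (empty or already-covered) point set; so the construction succeeds either way.

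The hypothesis $\lVert f\rVert \leq B^{2d(n+1)}$ is used to guarantee that the ``small-$f$'' term $B^{\frac{n+1}{nd^{1/n}}}\lVert f\rVert^{-n^{-1}d^{-1-1/n}}$ is genuinely at most $B^{\frac{n+1}{nd^{1/n}}}$ and in particular dominates, rather than being swamped; more precisely it ensures that the $\log\lVert f\rVert$ terms accumulated in the entry bounds (the $C^{O_n(M)}$ factors and the content terms) stay within the stated error, so that no separate $\log\lVert f\rVert$ summand is needed in $M$.

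\medskip
\textbf{Main obstacle.} The delicate point is the upper bound on the matrix entries $|F_i(\xi_j)|$: a priori a rational point of $\PP^{n+1}$ of height $\leq B$ has all coordinates $\leq B$ in absolute value, but a monomial of degree $M$ need not be bounded by $B^M$ once we must work in a fixed affine chart, because the chosen coordinate $x_{n+1}$ can be much smaller than $B$. Handling this cleanly — showing that the leading-coefficient hypothesis on $c_f$ forces a good integral model on the patch $x_{n+1}=1$ so that, after clearing a bounded denominator, all the $F_i(\xi_j)$ are integers of size $B^{M+O_n(1)}$ (up to the harmless $C^{O_n(M)}$) — is the heart of the argument and the place where one must be most careful about how powers of $\lVert f\rVert$ and $C$ enter the final degree bound. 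The combinatorics of extracting the nonzero-gcd tuple of minors (the passage from ``many points'' to a usable $\Delta$) is the other technical ingredient, but that can be quoted essentially verbatim from \cite{Walsh}.
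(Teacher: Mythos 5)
Your overall scheme—pit the $p$-adic lower bound of Proposition \ref{prop:walsh_sec_2} against an archimedean upper bound on the determinant and solve for $M$—matches the paper. But you misplace the role of the leading-coefficient hypothesis $c_f \geq \lVert f\rVert C^{-nd^{1+1/n}}$, and your ``main obstacle'' paragraph is a misdiagnosis. The entry bound is not a problem at all: the points $\xi_j$ of height at most $B$ are primitive integer tuples $(x_{j0},\dotsc,x_{j,n+1})$ with every $|x_{ji}|\leq B$, so a degree-$M$ monomial evaluated there has absolute value at most $B^M$, period. There is no affine chart, no troublesome denominator $x_{j,n+1}$, no ``integral model'' to fix, and the $c_f$ hypothesis plays no part in bounding $|F_i(\xi_j)|$. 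The paper simply uses $|\det(AA^\top)|\leq s!\,(rB^M)^s$.

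The gap is that you never invoke the Bombieri--Vaaler Siegel lemma (Theorem \ref{thm:bombval_siegel}), which is the ingredient that both uses $c_f$ and produces the $\lVert f\rVert^{-n^{-1}d^{-1-1/n}}$ factor. After choosing $M$ so that no degree-$M$ homogeneous $g$ outside $(f)$ vanishes on $S$, one picks a maximal linearly independent subset $\Xi\subseteq S$, giving an $s\times r$ matrix $A$ of rank $s$ with $r=\binom{M+n+1}{n+1}$ and $s=r-\binom{M-d+n+1}{n+1}$. Every integer solution of $Ax=0$ is then the coefficient vector of a degree-$M$ multiple of $f$, hence has a coefficient of size at least $c_f\geq\lVert f\rVert C^{-nd^{1+1/n}}$; Bombieri--Vaaler therefore forces
\[
\Delta \;\leq\; \sqrt{|\det(AA^\top)|}\,\bigl(\lVert f\rVert C^{-nd^{1+1/n}}\bigr)^{s-r},
\]
where $\Delta$ is the gcd of the $s\times s$ minors of $A$. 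This is exactly where the $-\frac{r-s}{Ms}\log\lVert f\rVert$ term enters the master inequality and, after simplification, becomes the $\lVert f\rVert^{-n^{-1}d^{-1-1/n}}$ in the statement. Your claim that this factor ``appears because $\log s$ contains a $+\frac1n\log\lVert f\rVert$-type contribution \dots imported from \cite[Theorem 2.3]{Walsh}'' is wrong: Walsh's Theorem~2.3 (the analogue of Proposition \ref{prop:walsh_sec_2}) has no such term, and your plan as written would not produce the $\lVert f\rVert$-saving. The hypothesis $\lVert f\rVert\leq B^{2d(n+1)}$ is then used merely to absorb an $O_n(\log\lVert f\rVert/M)$ error term, not to control entry sizes.
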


\begin{lem}\label{lem:Walsh_big_f}
  For $f$ as above, and $B$ satisfying $\lVert f \rVert \geq B^{2d(n+1)}$, there exists a homogeneous polynomial $g$ not divisible by $f$, vanishing at all zeroes of $f$ of height at most $B$, and of degree
  \[ M = O_n(d^{4-1/n}) .\]
\end{lem}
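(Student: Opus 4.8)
The statement to prove is Lemma~\ref{lem:Walsh_big_f}: when the coefficient norm $\lVert f\rVert$ is very large compared to $B$, a single auxiliary hypersurface of degree $O_n(d^{4-1/n})$ kills all points of height $\leq B$ on $X$. The point is that in this regime we do not even need the determinant method to be efficient; the size constraint alone forces a nontrivial vanishing. My approach follows the Ellenberg--Venkatesh/Walsh philosophy: set up a determinant (or rather a matrix) whose rows are indexed by the rational points $\xi_1,\dots,\xi_s$ of height $\leq B$ on $X$ and whose columns are indexed by a basis of monomials of degree $\leq M$, evaluate, and compare an arithmetic lower bound coming from Proposition~\ref{prop:walsh_sec_2} (the product-formula/height estimate) with an upper bound coming from the entries being bounded by a power of $B$. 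If these two are incompatible for all $M$ up to the claimed bound, then the relevant minors vanish, producing the polynomial $g$ not divisible by $f$.

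First I would reduce to absolutely irreducible $f$ via Lemma~\ref{lem:walsh_wlog_abs_irred} (note the degree of the produced $g$ there is just $d$, which is certainly $O_n(d^{4-1/n})$, so that case is trivially fine). Then, assuming $f$ absolutely irreducible with the leading-coefficient hypothesis $c_f \geq \lVert f\rVert C^{-nd^{1+1/n}}$, I would fix $M$ to be the target degree $c_0 d^{4-1/n}$ for a suitable constant $c_0=c_0(n)$, and let $L = \binom{n+1+M}{n+1}$ be the dimension of the space of degree-$M$ forms; note $L = O_n(M^{n+1}) = O_n(d^{(4-1/n)(n+1)})$, a polynomial bound in $d$. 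Suppose for contradiction that the points $\xi_1,\dots,\xi_s$ of height $\leq B$ on $X$ are not all contained in any hypersurface of degree $M$ other than multiples of $f$; then modulo the degree-$M$ part of the ideal $(f)$ one can extract $s' = L - \binom{n+1+M-d}{n+1}$ points spanning, i.e.\ a nonvanishing $s'\times s'$ minor $\Delta$ of the evaluation matrix. Apply Proposition~\ref{prop:walsh_sec_2} to this minor (with the monomials of degree $M$ as the $F_{li}$): this gives
\[
\log|\Delta| \;\geq\; \frac{n!^{1/n}}{n+1}\, (s')^{1+1/n}\bigl(\log s' - O_n(1) - n(4\log d + \log b(f))\bigr).
\]
On the other hand, each entry $F(\xi_j)$ with $\lVert\xi_j\rVert\leq B$ and $\deg F = M$ is bounded by $\binom{n+1+M}{n+1} B^M$, so by Hadamard's inequality $\log|\Delta| \leq s'(\log L + M\log B) + \tfrac12 s'\log s' \leq O_n(s'(M\log B + \log M))$. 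Crucially $s' \asymp_n M^n$ by the Hilbert-function count (the degree-$M$ piece modulo a degree-$d$ form has dimension $\asymp d M^n$... more precisely $\asymp_n d\,M^n$ once $M\gg d$; I should be careful here, this is where $d$ enters). Comparing the two bounds: the lower bound grows like $(s')^{1+1/n}\log s' \asymp_n (dM^n)^{1+1/n}\cdot(\log M)$ while the upper bound grows like $dM^n\cdot M\log B = dM^{n+1}\log B$; since we are in the regime $\lVert f\rVert \geq B^{2d(n+1)}$ but also want to exploit that $B$ is not too large relative to $M$, I would instead feed in the sharper arithmetic input — namely that the relevant determinant divides a product involving $\lVert f\rVert$ through $b(f)$ and through the height of the points, and that when $\lVert f\rVert$ is huge the points of height $\leq B$ on $X$ are sparse. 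The cleanest route, mirroring Walsh's Section~4, is: since $\lVert f\rVert \geq B^{2d(n+1)}$ and $c_f$ is comparable to $\lVert f\rVert$, every rational point on $X$ of height $\leq B$ must have its last coordinate controlled, and in fact $N(f,B)$ itself is already $O_n(d^{4-1/n})$-small (or bounded by $L$), so one can take $g$ to be any degree-$M$ form through these few points by linear algebra, $L > N(f,B)$ guaranteeing a nonzero solution not proportional to $f$.

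I expect the main obstacle to be the bookkeeping in the Hilbert-function/Hadamard comparison — specifically tracking the exact power of $d$ so that it closes at $d^{4-1/n}$ and not something larger, and handling the "not divisible by $f$" clause (one must ensure the extracted minor genuinely witnesses points outside $V(f)$-multiples, which is where the leading-coefficient hypothesis $c_f \geq \lVert f\rVert C^{-nd^{1+1/n}}$ is used, to control the reduction modulo $(f)$ and keep the points in "general position" with respect to the monomial $x_{n+1}$). The $B$-dependence should wash out entirely because of the hypothesis $\lVert f\rVert \geq B^{2d(n+1)}$: this is precisely strong enough that the $\log B$ and $B^{1/(nd^{1/n})}$ terms of Lemma~\ref{lem:Walsh_small_f} get absorbed into $d^{4-1/n}$, since $B^{(n+1)/(nd^{1/n})} \leq \lVert f\rVert^{1/(2nd^{1+1/n})}$ kills the $b(f)/\lVert f\rVert^{1/(nd^{1+1/n})}$ denominator up to a bounded factor (using $b(f) \leq O(\max(d^{-2}\log\lVert f\rVert,1))$ from Corollary~\ref{cor:estimate_bad_primes}, and $\log\lVert f\rVert \geq 2d(n+1)\log B$). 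So in practice I would prove Lemma~\ref{lem:Walsh_big_f} by first proving Lemma~\ref{lem:Walsh_small_f}'s estimate holds \emph{formally} and then checking that in the big-$f$ regime each of its three summands is $O_n(d^{4-1/n})$, which is the quickest honest route; the final write-up would state that explicitly and do the two or three elementary size comparisons.
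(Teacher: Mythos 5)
Your basic setup is aligned with the paper's: derive a common inequality from Proposition~\ref{prop:walsh_sec_2} and Theorem~\ref{thm:bombval_siegel} via the determinant method, and then exploit the hypothesis on $\lVert f\rVert$ to eliminate the $B$-dependence. However, your proposed ``quickest honest route'' --- applying Lemma~\ref{lem:Walsh_small_f}'s three-summand estimate ``formally'' and then checking that each summand is $O_n(d^{4-1/n})$ in the big-$f$ regime --- does not close, for two reasons. First, the derivation of Lemma~\ref{lem:Walsh_small_f} explicitly uses the hypothesis $\lVert f\rVert \leq B^{2d(n+1)}$ (to justify discarding the $O_n(\log\lVert f\rVert/M)$ error term from inequality~\eqref{ineqn:walsh_c} after imposing $M \geq d^{1-1/n}\log B$), so its stated bound does not hold formally in the opposite regime. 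Second, and more seriously, even granting the three-summand bound, the middle summand $d^{1-1/n}\log B$ is \emph{not} $O_n(d^{4-1/n})$ when $\lVert f\rVert \geq B^{2d(n+1)}$: the hypothesis only yields $\log B \leq \tfrac{1}{2d(n+1)}\log\lVert f\rVert$, and since $\lVert f\rVert$ (and hence $B$) is unbounded for fixed $n,d$, the term $d^{1-1/n}\log B$ can be arbitrarily large. Your computation showing that the first summand is $O_n(d^{4-1/n})$ (via $B^{(n+1)/(nd^{1/n})} \leq \lVert f\rVert^{1/(2nd^{1+1/n})}$ and Corollary~\ref{cor:estimate_bad_primes}) is correct, but that summand would have been controlled under either hypothesis; the issue is the $\log B$ term, which you assert is absorbed without explaining how.

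The actual proof must instead work directly from inequality~\eqref{ineqn:walsh_c}, under the different auxiliary hypothesis $M \geq 4d(n+1)$ rather than $M \geq d^{1-1/n}\log B$. The crucial point is that $\log B$ on the right-hand side of \eqref{ineqn:walsh_c} must be combined with the adjacent term $-\log\lVert f\rVert/(d(n+1))$ \emph{before} rearranging: the big-$f$ hypothesis turns their sum into something at most $-\log\lVert f\rVert/(2d(n+1))$, and the choice $M \geq 4d(n+1)$ ensures the $O_n(\log\lVert f\rVert/M)$ error does not spoil this negative contribution. After dividing through, one is left with $\log M \leq O_n(1) + (4-1/n)\log d + (1+O_n(d^{-1/n}))\log b(f) - \log\lVert f\rVert/(4nd^{1+1/n})$, and the last two terms together contribute only $O_n(1)$, by Corollary~\ref{cor:estimate_bad_primes} combined with the elementary inequality $\log\log x - (\log x)/c \leq \log c + O(1)$, which you do not mention. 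The three-summand form of Lemma~\ref{lem:Walsh_small_f} has already shunted $-\log\lVert f\rVert/(d(n+1))$ into the $\lVert f\rVert$-denominator of the first summand, so the cancellation with $\log B$ is no longer available; this is exactly the information lost by treating the small-$f$ statement as a black box. Finally, your alternative sketch --- that the big-$f$ hypothesis forces $N(f,B)$ itself to be $O_n(d^{4-1/n})$-small so one can interpolate by raw linear algebra --- is not supported by anything in the argument and is not the mechanism here; the lemma asserts the existence of a low-degree interpolating form, not that there are few points, and indeed the curve $\{f=g=0\}$ may well carry many rational points.
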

These two lemmas together clearly imply the statement of Theorem \ref{thm:walsh-hypers-precise}, at least for polynomials $f$ satisfying the condition on leading coefficients.

We follow the exposition in \cite[Section 4]{Walsh}, and prove the two lemmas together. We shall need the following.
\begin{thm}[{\cite[Theorem 1]{Bombi-Vaal}}]\label{thm:bombval_siegel}
  Let $\sum_{k=1}^r a_{mk}x_k=0$ ($m=1, \dotsc, s$) be a system of $s$ linearly independent equations in $r>s$ variables $x_1, \dotsc, x_r$, with coefficients $a_{mk} \in \ZZ$.
  Then there exists a non-trivial integer solution $(x_1, \dotsc, x_r)$ satisfying
  \[ \max_{1\leq i \leq r} \lvert x_i\rvert \leq (D^{-1} \sqrt{\lvert \det(AA^\top)\rvert})^{\frac{1}{r-s}} .\]
  Here $A=(a_{mk})$ is the matrix of coefficients and $D$ is the greatest common divisor of the determinants of the $s \times s$ minors of $A$.
\end{thm}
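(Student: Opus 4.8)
This statement is Bombieri and Vaaler's sharpening of Siegel's lemma, and the plan is to deduce it purely from the geometry of numbers. Since the $s$ equations are linearly independent, the coefficient matrix $A=(a_{mk})$ has rank $s$; hence $AA^\top$ is positive definite, $\det(AA^\top)>0$, and $D\geq 1$. First I would replace the system by the lattice $\Lambda := \{x\in\ZZ^r : Ax=0\}$, which has rank $r-s$ and spans the subspace $N := (\ker A)\otimes_\QQ\RR \subseteq \RR^r$; a non-trivial integer solution is exactly a non-zero vector of $\Lambda$. Writing $\operatorname{covol}(\Lambda)$ for the covolume of $\Lambda$ with respect to the Euclidean metric that $\RR^r$ induces on $N$, the goal reduces to producing a non-zero $x\in\Lambda$ with $\|x\|_\infty \leq \operatorname{covol}(\Lambda)^{1/(r-s)}$, together with the covolume identity $\operatorname{covol}(\Lambda)=\sqrt{\det(AA^\top)}/D$.

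For the covolume identity I would argue through the row lattice. Let $L_A\subseteq\ZZ^r$ be generated by the rows $a_1,\dots,a_s$ of $A$; the Gram-determinant formula gives $\operatorname{covol}(L_A)=\sqrt{\det(AA^\top)}$. Let $L := (\operatorname{span}_\QQ\{a_1,\dots,a_s\})\cap\ZZ^r$ be the primitive closure of $L_A$ in $\ZZ^r$. A Smith normal form computation for $A$ identifies the index $[L:L_A]$ with the greatest common divisor of the $s\times s$ minors of $A$, i.e.\ with $D$, so $\operatorname{covol}(L)=\sqrt{\det(AA^\top)}/D$. Finally $\Lambda=L^\perp\cap\ZZ^r$ is the orthogonal-complement lattice of the primitive sublattice $L$, and for a primitive sublattice of $\ZZ^r$ one has $\operatorname{covol}(L^\perp\cap\ZZ^r)=\operatorname{covol}(L)$ (complete a $\ZZ$-basis of $L$ to a $\ZZ$-basis of $\ZZ^r$ and dualize). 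Combining these three facts yields $\operatorname{covol}(\Lambda)=\sqrt{\det(AA^\top)}/D$.

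For the existence of the short vector I would apply Minkowski's convex body theorem inside $N$ to the compact symmetric convex body $C_t := N\cap[-t,t]^r$, which is full-dimensional in $N$. The one non-formal input is a volume lower bound: $\operatorname{vol}_N(C_1)=\operatorname{vol}_N(N\cap[-1,1]^r)\geq 2^{r-s}$, which is exactly Vaaler's cube-slicing theorem, stating that every central section of the unit cube by a subspace has volume at least that of a cube of the same dimension. Then $\operatorname{vol}_N(C_t)=t^{r-s}\operatorname{vol}_N(C_1)\geq (2t)^{r-s}$, so choosing $t:=\operatorname{covol}(\Lambda)^{1/(r-s)}$ gives $\operatorname{vol}_N(C_t)\geq 2^{r-s}\operatorname{covol}(\Lambda)$, and Minkowski's theorem supplies a non-zero $x\in\Lambda\cap C_t$. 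By the covolume identity, $\|x\|_\infty\leq t=(\sqrt{\det(AA^\top)}/D)^{1/(r-s)}$, as required.

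The hard part is the volume bound $\operatorname{vol}_N(N\cap[-1,1]^r)\geq 2^{\dim N}$: this is Vaaler's theorem on sections of the cube, and it is the genuinely non-elementary ingredient, since a crude Minkowski-style estimate of the section volume would only yield the inequality up to a factor depending on $r$, which is too weak for the clean bound claimed. By contrast, the covolume identity is classical and elementary, its only subtlety being the bookkeeping needed to see that $D$ equals the index of the primitive closure of $L_A$; and the final passage from Minkowski's (non-strict) inequality to the stated bound is immediate because $C_t$ is compact.
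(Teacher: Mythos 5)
Your argument is correct and is in fact a faithful reconstruction of the original Bombieri--Vaaler proof: the covolume identity via the Gram determinant, Smith normal form, and the primitive-sublattice duality, followed by Minkowski's convex-body theorem fed with Vaaler's cube-slicing theorem as the key volume lower bound. The paper itself offers no proof of Theorem~\ref{thm:bombval_siegel}; it simply imports the statement as \cite[Theorem~1]{Bombi-Vaal}, so there is no competing argument to compare against and your write-up correctly identifies where the genuine depth lies (the cube-slicing inequality).
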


\begin{proof}[Proof of Lemmas \ref{lem:Walsh_small_f} and \ref{lem:Walsh_big_f}]
Fix $B\geq 1$, and let $S$ be the set of rational points on the hypersurface described by $f$ of height at most $B$.
Let $M>0$ be such that there is no homogeneous polynomial $g$ of degree $M$, not divisible by $f$, which vanishes on all points in $S$; we shall show that $M$ is bounded in terms of $n, B, d, \lVert f \rVert$ as stated.
Let us assume in the following that $M$ is bigger than some constant (to be specified later) times $d^2$.

Given an integer $D$, write $\cB[D]$ for the set of monomials of degree $D$ in $n+2$ variables, so $\lvert\cB[D]\rvert = \binom{D+n+1}{n+1}$.
Write $\Xi \subseteq S$ for a maximal subset which is algebraically independent over monomials of degree $M$, in the sense that applying all monomials in $\cB[M]$ to $\Xi$ yields $s = \lvert\Xi\rvert$ linearly independent vectors.
Let $A$ be the $s \times r$ matrix whose rows are these vectors, where $r = \lvert\cB[M]\rvert = \binom{M+n+1}{n+1}$; each entry of $A$ is bounded in absolute value by $B^M$.
Since all polynomials in $f \cdot \cB[M-d]$ vanish on $\Xi$, and no polynomials of degree $M$ not divisible by $f$ do by assumption on $M$, we have $s = \lvert\cB[M]\rvert - \lvert\cB[M-d]\rvert$.

Now $A$ describes a system of linear equations whose solutions correspond to (the coefficients of) homogeneous polynomials of degree $M$ vanishing on all points in $\Xi$ and therefore all points in $S$; by assumption, these polynomials are multiples of $f$ and therefore have one coefficient of size at least $c_f \geq \lVert f \rVert C^{-n d^{1+1/n}}$ by the assumption on $f$.
Hence Theorem \ref{thm:bombval_siegel} yields
\[  \Delta \leq \sqrt{\lvert\det(AA^\top)\rvert} (\lVert f \rVert C^{-n d^{1+1/n}})^{s-r}  ,\]
where we write $\Delta$ for the greatest common divisor of the determinants of the $s \times s$ minors of $A$.
Taking logarithms, using the estimate $\lvert\det(AA^\top)\rvert \leq s! (r B^M)^s$ obtained by estimating the size of the coefficients of $AA^\top$, and using the estimate for $\Delta$ obtained from Proposition \ref{prop:walsh_sec_2}, this expands as follows:
\begin{multline*}  \frac{n!^{1/n}}{n+1} s^{1+1/n}(\log s - O_n(1) - n (4\log{d} + \log b(f)) ) \\ \leq \frac{\log s!}{2} + \frac{s}{2} \log r + sM \log B  - (r-s)\big( \log\lVert f\rVert - n d^{1+1/n}O_n(1) \big) \end{multline*}

We can use the estimates $\log s! \leq s \log s$ and $\log r \leq \log (M+1)^{n+1} \leq O_n(\log M) \leq O_n(\log s)$ to see that the first two terms on the right-hand side are both in $O_n(s^{1+1/n})$ and can hence be neglected by adjusting the constant $O_n(1)$ on the left-hand side.
Diving by $Ms$ now yields:
\begin{multline}\label{ineqn:walsh_b} \frac{n!^{1/n}}{n+1} \frac{s^{1/n}}{M}(\log s - O_n(1) - n (4\log{d} + \log b(f))) \\ \leq \log B - \frac{r-s}{Ms} \big( \log\lVert f\rVert - n d^{1+1/n}O_n(1) \big) \end{multline}

The term $s = \binom{M+n+1}{n+1} - \binom{M-d+n+1}{n+1}$ is a polynomial in $M$ and $d$. We can write \[ s = \frac{dM^n}{n!} + O_n(d^2 M^{n-1}) ,\]
in particular $\log s = \log d + n \log M - O_n(1)$.
By rearranging and applying the binomial series, which is legal since $d^2/M$ is bounded above by an adjustable absolute constant, we also obtain
\[ \frac{s^{1/n}}{M} = \frac{d^{1/n}}{n!^{1/n}} + O_n(\frac{d^2}{M}). \]

Thus the left-hand side of the inequality above can be replaced by
\[ \frac{d^{1/n}n}{n+1}(\log M - O_n(1) - ((4-1/n)\log{d} + (1+O_n(\frac{d^{2-1/n}}{M}))\log b(f))),\]
where we have dropped terms $O_n(d^{2-1/n}\log M /M)$ and $O_n(d^{2-1/n} \log d / M)$ by adjusting the constant in $O_n(1)$.

Let us now estimate $\frac{r-s}{Ms}$. We have $r-s = \frac{M^{n+1}}{(n+1)!} + O_n(dM^n)$, so \[ \frac{r-s}{Ms} = \frac{1}{d(n+1)} \frac{1 + O_n(d/M)}{1+O_n(d/M)} = \frac{1}{d(n+1)} + O_n(\frac{1}{M}) .\]

Therefore inequality \eqref{ineqn:walsh_b} becomes 
\begin{multline}\label{ineqn:walsh_c}
  \frac{d^{1/n}n}{n+1}(\log M - O_n(1) - ((4-1/n)\log{d} + (1+O_n(\frac{d^{2-1/n}}{M}))\log b(f)))
  \\ \leq \log B - \frac{\log\lVert f\rVert}{d(n+1)} - O_n(\frac{\log\lVert f\rVert}{M}).
\end{multline}

Let us now assume that $\lVert f \rVert \leq B^{2d(n+1)}$ and $M \geq d^{1-1/n} \log B$.
Then $\log \lVert f \rVert \leq 2d(n+1) \log B \leq O_n(d^{1/n} M)$, so we can drop the last term on the right-hand side, as well as the $O_n(\frac{\log b(f)}{M})$ on the left-hand side.
Rearranging yields that \[ \log M \leq O_n(1) + \frac{n+1}{d^{1/n}n} \log B - \frac{\log \lVert f\rVert}{n d^{1+1/n}} + (4-1/n)\log d + \log b(f) ,\]
so we obtain Lemma \ref{lem:Walsh_small_f}.

Now, on the other hand, assume that $\lVert f\rVert \geq B^{2d(n+1)}$ and $M \geq 4d(n+1)$.
Rearranging inequality \eqref{ineqn:walsh_c} yields
\begin{align*}
\log M &\leq O_n(1) + (4-1/n)\log{d} + (1+O_n(d^{-1/n})) \log b(f) - \frac{\log \lVert f \rVert}{4 n d^{1+1/n}} \\
&\leq O_n(1) + \max\{3\log{d}, (4-1/n) \log{d} \},
\end{align*}
where we have used
\begin{align*}
  & O_n(1) \log b(f) - \frac{\log \lVert f \rVert}{4 n d^{1+1/n}} \\
  &\leq O_n(1) \max(\log\log\lVert f\rVert - 2\log d, 0) - \frac{\log \lVert f \rVert}{4 n d^{1+1/n}} \\
  &\leq \max(0, -2\log d + \log{O_n(1) 4n d^{1+1/n}}) \\
  &\leq O_n(1)
\end{align*}
by Corollary \ref{cor:estimate_bad_primes} and the lemma below.
%where we have used Corollary \ref{cor:estimate_bad_primes} and
%\begin{align*}
%&(1+O_n(d^{-1/n}))(\log\log \lVert f \rVert + O(1) - 2\log d) - \frac{\log \lVert f \rVert}{4d(n+1)} \\
%&\leq (1+O_n(d^{-1/n}))(O_n(1) + \log[(1+O_n(d^{-1/n}))4(n+1)d^{1+1/n}]) - 2\log d - O(1) \\
%&\leq O_n(1) + (-1+1/n)\log d ,\end{align*}
%using the lemma below.
This establishes Lemma \ref{lem:Walsh_big_f}.
\end{proof}

\begin{lem}
  Let $c > 0$. For any $x>1$ we have $\log\log x - \log x/c \leq \log c + O(1)$.
\end{lem}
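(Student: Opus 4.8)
The plan is to eliminate the nested logarithm via the substitution $t = \log x$. As $x$ ranges over $(1,\infty)$ the new variable $t$ ranges over all of $(0,\infty)$, and the asserted inequality becomes equivalent to
\[ \log t - \frac{t}{c} \le \log c + O(1) \qquad \text{for all } t > 0 . \]
Thus it suffices to bound the single-variable function $\phi(t) = \log t - t/c$ from above on $(0,\infty)$, with a bound that is uniform in the parameter $c$.

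Next I would locate the maximum of $\phi$ by elementary calculus: $\phi'(t) = \tfrac1t - \tfrac1c$ is positive for $0 < t < c$ and negative for $t > c$, so $\phi$ attains its global maximum at $t = c$, where $\phi(c) = \log c - 1$. Hence $\log t - t/c \le \log c - 1$ for every $t > 0$, which is the claimed inequality with implied constant equal to $-1$ (and a fortiori bounded by $\log c$). Translating back through $t = \log x$ finishes the proof.

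There is essentially no obstacle here. The only point worth recording is that for $x$ close to $1$ the left-hand side $\log\log x - \log x / c$ tends to $-\infty$, so the inequality holds trivially in that range; the substitution $t = \log x$ subsumes this degenerate range and the large-$x$ range in a single step. I would also stress that the $O(1)$ is an absolute constant, independent of $c$, since exactly this uniformity is what the preceding computation relies on, where the lemma is applied with a value of $c$ of size $O_n(d^{1+1/n})$.
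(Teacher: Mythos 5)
Your proof is correct, and it is genuinely sharper than the paper's. The paper instead exploits the identity $\log\log x - \log x/c = \log c + \bigl(\log\log x^{1/c} - \log x^{1/c}\bigr)$, i.e.\ the substitution $y = x^{1/c}$, to reduce to the special case $c = 1$, where it merely notes that $C := \sup_{y>1}(\log\log y - \log y)$ is finite because the function is continuous and tends to $-\infty$ at both endpoints. Your substitution $t = \log x$ leads to the same reduction of complexity but then pins down the optimum by elementary calculus, yielding the explicit sharp constant $O(1) = -1$, which the paper does not bother to compute. Both arguments correctly deliver an $O(1)$ that is absolute (independent of $c$), which, as you rightly emphasize, is the point that matters for the application with $c = O_n(d^{1+1/n})$.
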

\begin{proof}
  Let $C = \sup_{x>1} (\log \log x - \log x)$; note that the supremum exists, since it is taken over a continuous function on $\left]1, \infty\right[$ which tends to $-\infty$ at both ends of the interval.
  Now $\log \log x - \log x/c = \log c + \log \log x^{1/c} - \log x^{1/c} \leq \log c + C$.
\end{proof}

\subsection{Finishing the proof}

We use the ideas from \cite[Section 3]{Walsh} to finish the proof of Theorem \ref{thm:walsh-hypers-precise}.

\begin{lem}
  Let $f \in \CC[x]$ be a polynomial of degree $\leq d$, and write $\lVert f\rVert$ for the maximal absolute value among the coefficients.
  There exists an integer $a$, $0 \leq a \leq d$, such that $\lvert f(a) \rvert \geq 3^{-d} \lVert f\rVert$.
\end{lem}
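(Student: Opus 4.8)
The claim is a one-variable statement: for $f \in \CC[x]$ of degree $\le d$, there is an integer $a$ with $0 \le a \le d$ such that $\lvert f(a)\rvert \ge 3^{-d}\lVert f\rVert$. The natural tool is Lagrange interpolation on the nodes $0, 1, \dots, d$. Writing $f$ in the Lagrange basis for these $d+1$ nodes gives
\[
f(x) = \sum_{a=0}^{d} f(a)\,\ell_a(x), \qquad \ell_a(x) = \prod_{\substack{0 \le j \le d \\ j \neq a}} \frac{x-j}{a-j},
\]
which recovers each coefficient of $f$ as a linear combination of the values $f(0),\dots,f(d)$. The plan is to bound the coefficients of the $\ell_a$ and then estimate $\lVert f\rVert$ by $\max_a \lvert f(a)\rvert$ times a bound on those coefficients, and show the resulting factor is at most $3^d$.

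**Key steps.** First I would fix $a$ and bound the coefficients of $\ell_a(x)$. The numerator $\prod_{j \neq a}(x - j)$ is a monic polynomial of degree $d$ whose roots all lie in $\{0,1,\dots,d\}$, hence (expanding, or crudely) each of its coefficients is bounded in absolute value by $\binom{d}{k}$ times a product of at most $d$ integers each $\le d$; more cleanly, one can bound the sum of absolute values of its coefficients by $\prod_{j\neq a}(1+j) \le (d+1)!$. The denominator is $\lvert\prod_{j\neq a}(a-j)\rvert = a!\,(d-a)!$. So the sum of absolute values of the coefficients of $\ell_a$ is at most $\frac{(d+1)!}{a!(d-a)!} = (d+1)\binom{d}{a} \le (d+1) 2^d$. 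Second, from $f = \sum_a f(a)\ell_a$ each coefficient of $f$ has absolute value at most $\big(\max_a \lvert f(a)\rvert\big)\sum_a \lVert \ell_a\rVert_1$, so
\[
\lVert f\rVert \le \Big(\max_{0\le a\le d}\lvert f(a)\rvert\Big)\sum_{a=0}^{d}(d+1)\binom{d}{a} = \Big(\max_a \lvert f(a)\rvert\Big)(d+1)2^d.
\]
Third, I would check $(d+1)2^d \le 3^d$, which holds for all $d \ge 1$ (indeed $(d+1) \le (3/2)^d$ for $d\ge 2$, and $d=0,1$ are immediate), and for $d=0$ the statement is trivial. Rearranging gives $\max_a\lvert f(a)\rvert \ge 3^{-d}\lVert f\rVert$, which is exactly the claim.

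**Main obstacle.** There is no deep obstacle here; the only thing requiring a little care is getting an honest, clean bound on $\sum_a \lVert\ell_a\rVert_1$ that beats $3^d$, since a careless estimate (e.g. bounding each coefficient of the numerator by $d!$ directly, giving an extra factorial factor) would be far too lossy. The trick is to use the multiplicative bound $\lVert \prod_j (x - c_j)\rVert_1 \le \prod_j (1 + \lvert c_j\rvert)$ on the numerator and recognize the denominator as $a!(d-a)!$, so that the binomial coefficient $\binom{d}{a}$ appears and the total is a clean $(d+1)2^d$. One should also double-check the edge cases $d = 0$ and $d = 1$ separately, since asymptotic inequalities like $(d+1) \le (3/2)^d$ fail there but the conclusion holds for trivial reasons.
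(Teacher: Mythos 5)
Your approach via Lagrange interpolation is sound in spirit and is, in fact, the content behind the Gautschi reference that the paper invokes (\cite[Theorem 1]{Gautschi} is precisely a bound on the $\lVert\cdot\rVert_\infty$-operator norm of the inverse Vandermonde matrix, which the Lagrange basis makes explicit). However, your numerical endgame is wrong: the inequality $(d+1)2^d \leq 3^d$ \emph{fails} for $d = 1, 2, 3$ (indeed $4 > 3$, $12 > 9$, $32 > 27$), and the auxiliary claim $(d+1) \leq (3/2)^d$ for $d \geq 2$ also fails at $d = 2, 3$ (it first holds at $d=4$). So as written the argument only proves the lemma for $d = 0$ and $d \geq 4$.

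The gap is repairable, and the culprit is that you discarded a useful factor when bounding $\lVert\ell_a\rVert_1$. You correctly note $\lVert \prod_{j\neq a}(x-j)\rVert_1 \leq \prod_{j\neq a}(1+j)$, but this product equals $(d+1)!/(a+1)$, not merely something $\leq (d+1)!$. Keeping the $1/(a+1)$ gives
\[
\lVert\ell_a\rVert_1 \;\leq\; \frac{(d+1)!/(a+1)}{a!\,(d-a)!} \;=\; \binom{d+1}{a+1},
\]
and hence $\sum_{a=0}^{d}\lVert\ell_a\rVert_1 \leq \sum_{a=0}^{d}\binom{d+1}{a+1} = 2^{d+1}-1$, which satisfies $2^{d+1}-1 \leq 3^d$ for \emph{every} $d \geq 0$ (equality at $d=0,1$; for $d\geq 2$ use $2\cdot 2^d \leq 3^d$). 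With that correction your argument goes through and even gives the stronger constant $2^{d+1}-1$ in place of $3^d$. One could sharpen further by bounding $\max_k \sum_a |[\ell_a]_k|$ by $\max_a\lVert\ell_a\rVert_1 = \max_a\binom{d+1}{a+1}$ (which is the precise content of Gautschi for nonnegative nodes), but the corrected $\sum_a$ bound already suffices here.
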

\begin{proof}
  This is a statement about the $\lVert\cdot\rVert_\infty$-operator norm of the inverse of the Vandermonde matrix with nodes $0, \dotsc, d$, which can be deduced from \cite[Theorem 1]{Gautschi}.
\end{proof}

\begin{lem}
  Let $f \in \CC[x_0, \dotsc, x_{n+1}]$ be homogeneous of degree $d$. There exist integers $a_0, \dotsc, a_n$ with $0 \leq a_i \leq d$ such that $\rvert f(a_0, \dotsc, a_n, 1) \lvert \geq 3^{-(n+1)d} \lVert f \rVert$.
\end{lem}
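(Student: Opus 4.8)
The plan is to dehomogenize and then run an easy induction on the number of variables, feeding the single-variable lemma above into the inductive step. Concretely, set $F(x_0, \dotsc, x_n) := f(x_0, \dotsc, x_n, 1)$, a polynomial of degree $\leq d$ in $n+1$ variables. Since $f$ is homogeneous of degree $d$, the substitution $x_{n+1} \mapsto 1$ sends the monomials $x_0^{e_0} \cdots x_{n+1}^{e_{n+1}}$ with $\sum_i e_i = d$ bijectively to the monomials $x_0^{e_0} \cdots x_n^{e_n}$ of degree $\leq d$ (the exponent $e_{n+1}$ being recovered as $d - \sum_{i \leq n} e_i$), so no cancellation of coefficients occurs and $\lVert F \rVert = \lVert f \rVert$. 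It therefore suffices to find integers $a_0, \dotsc, a_n \in \{0, \dotsc, d\}$ with $\lvert F(a_0, \dotsc, a_n)\rvert \geq 3^{-(n+1)d} \lVert F \rVert$.

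I would prove, by induction on $m \geq 1$, the following statement: for every $G \in \CC[y_1, \dotsc, y_m]$ of degree $\leq d$ there exist integers $a_1, \dotsc, a_m \in \{0, \dotsc, d\}$ with $\lvert G(a_1, \dotsc, a_m)\rvert \geq 3^{-md} \lVert G \rVert$. The case $m = 1$ is exactly the preceding lemma. For the inductive step, write $G = \sum_{j=0}^{d} c_j(y_1, \dotsc, y_{m-1}) y_m^j$ with each $c_j$ of degree $\leq d$, and observe that $\lVert G \rVert = \max_j \lVert c_j \rVert$. Pick an index $j^*$ attaining this maximum; the induction hypothesis applied to $c_{j^*}$ yields $a_1, \dotsc, a_{m-1} \in \{0, \dotsc, d\}$ with $\lvert c_{j^*}(a_1, \dotsc, a_{m-1})\rvert \geq 3^{-(m-1)d} \lVert c_{j^*} \rVert = 3^{-(m-1)d} \lVert G \rVert$. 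Setting $h(y_m) := G(a_1, \dotsc, a_{m-1}, y_m) = \sum_j c_j(a_1, \dotsc, a_{m-1}) y_m^j$, a univariate polynomial of degree $\leq d$, we get $\lVert h \rVert \geq \lvert c_{j^*}(a_1, \dotsc, a_{m-1})\rvert \geq 3^{-(m-1)d} \lVert G \rVert$; applying the preceding lemma to $h$ produces $a_m \in \{0, \dotsc, d\}$ with $\lvert h(a_m)\rvert \geq 3^{-d} \lVert h \rVert \geq 3^{-md} \lVert G \rVert$, and $h(a_m) = G(a_1, \dotsc, a_m)$, which closes the induction.

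Applying this with $m = n+1$ and $G = F$ (relabelling $y_i$ as $x_{i-1}$) finishes the proof. There is no real obstacle here; the only two points meriting a line of justification are the equality $\lVert F \rVert = \lVert f \rVert$ under dehomogenization and the identity $\lVert G \rVert = \max_j \lVert c_j \rVert$ when grouping monomials according to the power of the last variable, both of which are immediate once one notes that the relevant monomial sets are in bijection.
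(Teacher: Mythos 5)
Your proof is correct and takes essentially the same route as the paper, which simply says ``Dehomogenize by setting $x_{n+1}=1$, and then use induction with the preceding lemma''; you have supplied the natural induction (on the number of variables, peeling off powers of the last variable and applying the univariate lemma to the resulting one-variable polynomial), and the two bookkeeping facts you flag — that dehomogenization preserves the sup-norm of coefficients and that $\lVert G \rVert = \max_j \lVert c_j\rVert$ when grouping by powers of $y_m$ — are indeed the only details to check.
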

\begin{proof}
  Dehomogenize by setting $x_{n+1} = 1$, and then use induction with the preceding lemma.
\end{proof}

\begin{proof}[Proof of Theorem \ref{thm:walsh-hypers-precise}]
  Take a non-zero $f \in \ZZ[x_0, \dotsc, x_{n+1}]$ homogeneous of degree $d$. Consider $a_0, \dotsc, a_n$ as in the last lemma and let $A = I + A_0 \in \operatorname{SL}_{n+2}(\ZZ)$, where $I$ is the $(n+2) \times (n+2)$ identity matrix and $A_0$ has its last column equal to $(a_0, \dotsc, a_n, 0)$ and zero everywhere else. Note that $A^{-1} = I - A_0$.
%The inverse $A^{-1}$ has integer coefficients bounded in absolute value by $(n+1)! \cdot d^{n+1}$.

  Let $f'= f \circ A$. By construction, the $x_{n+1}^d$-coefficient of $f'$ is $\geq 3^{-(n+1)d} \lVert f \rVert$.
Because of the boundedness of the entries of $A$, we furthermore see that 
\[ \lVert f'\rVert \leq { n + d + 1 \choose n + 1} d^{n+1} \lVert f \rVert. \]
In particular, the $x_{n+1}^d$-coefficient of $f'$ is greater than $C^{-n d^{1+1/n}} \lVert f' \rVert$ for some constant $C$ depending only on $n$.
  The polynomial $f'$ is primitive if and only if $f$ is, since they are related by the matrices $A$, $A^{-1}$ with integer coefficients, and $b(f)=b(f')$.
Furthermore, if $g'$ is a homogeneous polynomial in $\ZZ[x_0, \dotsc, x_{n+1}]$ vanishing on all zeroes of $f'$ up to a certain height $B'$, then $g = g' \circ A^{-1}$ is a polynomial of the same degree vanishing on all zeroes of $f$ up to height $B = B' / (d+1)$.

  Since either Lemma \ref{lem:Walsh_small_f} or Lemma \ref{lem:Walsh_big_f} applies to $f'$ and $B'$, we obtain the desired statement for $f$.
%  Let $n, d, B, f$ be given. We construct $f'$ as above, obtaining \[ 3^{-(n+1)d} \lVert f \rVert \leq \lVert f' \rVert \leq (n+2) d^{n+2} \lVert f \rVert ,\]
%  and let $B' = c_n d^{n+1} B$ with a large constant $c_n$ as above.
%  Either Lemma \ref{lem:Walsh_small_f} or Lemma \ref{lem:Walsh_big_f} applies to $f'$ and $B'$, and by substituting back we obtain the claim for $f$ and $B$.
\end{proof}
%Take non-zero $f \in \ZZ[X_0, \dotsc, X_{n+1}]$ homogeneous of degree $d$. Consider $x_0, \dotsc, x_n$ as in the last lemma and let $A = I + B \in \operatorname{SL}_{n+2}(\ZZ)$, where $I$ is the $(n+2) \times (n+2)$ identity matrix and $B$ has its last column equal to $(x_0, \dotsc, x_n, 0)$ and zero everywhere else.
%The inverse $A^{-1}$ has integer coefficients bounded in absolute value by $(n+1)! d^{n+1}$.
%
%Let $f'= f \circ A$.
%By construction, the $x_{n+1}^d$-coefficient of $f'$ is $\geq 3^{-(n+1)d} \lVert f \rVert$.
%Because of the boundedness of the entries of $A$, we furthermore see that $\lVert f'\rVert \leq (n+2) d^{n+2} \lVert f \rVert$.
%In particular, the $x_{n+1}^d$-coefficient of $f'$ is greater than $C_n^{d^{1+1/n}} \lVert f' \rVert$ for a sufficiently small constant $C_n$ depending only on $n$.
%
%The polynomial $f'$ is primitive if and only if $f$ is, since they are related by the matrices $A$, $A^{-1}$ with integer coefficients, and furthermore $b(f)=b(f')$.
%If $g'$ is a homogeneous polynomial in $\ZZ[X_0, \dotsc, X_{n+1}]$ vanishing on all zeroes of $f'$ up to a certain height $B'$, then $g = g' \circ A^{-1}$ is a polynomial of the same degree vanishing on all zeroes of $f$ up to height $B = B' c_n d^{-n-1}$ for some constant $c_n$ depending only on $n$.
%

%\input{sec4.tex}

\section{Proofs of Theorems \ref{thm:dcgdegree}, \ref{thm:W1.1}, \ref{thm:Bombieri-Pila}, \ref{thm:0.4}, \ref{thm:bhargava_and_co}}\label{sec5}

%(See list of implications from email.)

\subsection{On trivial bounds}\label{sec:triv}

In this subsection, we extend our notation to varieties defined over any field $K$ containing $\QQ$, and we write $N(X,B)$ for the number of points in $\PP^n(\QQ)\cap X(K)$  of height at most $B$, when $X$ is a subvariety of $\PP^n_K$, and similarly we write $N_{\rm aff}(Y,B)$ for the number of points in $\ZZ^n\cap Y(K)\cap [-B,B]^n$, when $Y\subseteq \AA^n_K$.

\begin{lem}\label{lem:gen_Schwarz_Zippel}
  Let $X \subseteq \AA^n_{\overline{\QQ}}$ be a (possibly reducible) variety of pure dimension $m$ and degree $d$ defined over $\overline{\QQ}$. Then the number $N_{\rm aff}(X,B)$ of integral points on $X$ of height at most $B$ is bounded by $d(2B+1)^m$.
\end{lem}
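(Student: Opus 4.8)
The plan is to prove, by induction on $m$, the following statement which implies the lemma: for every variety $X \subseteq \AA^n_{\overline{\QQ}}$ of dimension at most $m$, one has $N_{\rm aff}(X,B) \le d\,(2B+1)^m$, where $d$ is now taken to be the sum of the degrees of the irreducible components of $X$ (this agrees with $\deg X$ in the pure-dimensional case of interest). Since every integral point of $X$ lies on at least one irreducible component, and $d$ is additive over components, it suffices to prove the bound when $X$ is irreducible. The base case is $\dim X = 0$: then $X$ is a single point, so $d \ge 1$ and $N_{\rm aff}(X,B) \le 1 \le d\,(2B+1)^m$.

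For the inductive step, suppose $X$ is irreducible with $1 \le \dim X \le m$ (so $m \ge 1$). A variety on which every coordinate function is constant is a point, so some coordinate --- say $x_n$ after renumbering --- is non-constant on $X$. For each integer $t$ with $|t| \le B$, set $X_t := X \cap \{x_n = t\}$. Since $x_n$ is non-constant on the irreducible variety $X$, the hyperplane $\{x_n = t\}$ does not contain $X$, hence $X_t$ is a proper closed subvariety of $X$; therefore $\dim X_t \le \dim X - 1 \le m-1$, and by Bézout's inequality the degrees of the irreducible components of $X_t$ sum to at most $\deg X = d$. The induction hypothesis applied to $X_t$ thus gives $N_{\rm aff}(X_t,B) \le d\,(2B+1)^{m-1}$. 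Finally, any integral point $x$ of $X$ with $|x_i| \le B$ for all $i$ has $x_n$ equal to one of the $2B+1$ integers of absolute value at most $B$, so it lies in the corresponding $X_t$; summing over these values,
\[
N_{\rm aff}(X,B) \;\le\; \sum_{t \in \ZZ,\; |t| \le B} N_{\rm aff}(X_t,B) \;\le\; (2B+1)\cdot d\,(2B+1)^{m-1} \;=\; d\,(2B+1)^m ,
\]
which completes the induction, and with it the proof.

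The routine fibering argument above has exactly one step that deserves attention: the claim that the irreducible components of the hyperplane section $X_t$ have degrees summing to at most $d$. This is the affine Bézout inequality $\deg(V \cap W) \le \deg V \cdot \deg W$ applied with $W$ the hyperplane $\{x_n = t\}$ of degree $1$; alternatively it follows by passing to projective closures and using that intersecting an irreducible projective variety of degree $d$ with a hyperplane not containing it produces a cycle of degree exactly $d$ in codimension one, of which $X_t$ recovers a subcollection of the components. One should also keep in mind the harmless degenerate cases in which $X_t$ is empty or has dimension strictly less than $m-1$, for which the desired bound is only easier.
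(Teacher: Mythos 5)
Your proof is correct and is exactly the argument the paper has in mind: the paper's own proof is a one-line sketch ("an easy inductive argument using intersections with shifts of coordinate hyperplanes", with a citation to Browning--Heath-Brown), and your induction on dimension by slicing with hyperplanes $\{x_n = t\}$ and appealing to Bézout and additivity of degree over components is precisely that argument, written out in full.
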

When $X$ is a hypersurface, this is the well-known Schwarz-Zippel bound, and even the general case appears in many places in the literature, albeit often without making the bound completely explicit.
\begin{proof}
  This is an easy inductive argument using intersections with shifts of coordinate hyperplanes.
  In fact, the proof of \cite[Theorem 1]{BrowningHeathBrown-Crelle} automatically gives this stronger statement.
\end{proof}

\begin{cor}\label{cor:find_point}
  For an irreducible affine variety $X$ in $\AA^n$ of degree $d$ and dimension $< n$ there exists a tuple $(a_1, \dotsc, a_n)$ of integers not on $X$, with $\lvert a_i \rvert \leq d$ for every $i$.
  For every irreducible projective variety $X$ in $\PP^n$ of degree $d$ and dimension $< n$ there exists a point in $\PP^n(\QQ)$ of height at most $d$ not on $X$.
\end{cor}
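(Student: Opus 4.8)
The plan is to obtain both statements as immediate consequences of the Schwarz--Zippel-type estimate of Lemma \ref{lem:gen_Schwarz_Zippel}, via a pigeonhole count. In both cases the underlying point is that a variety of dimension $<n$ (resp.\ $<n+1$) and degree $d$ cannot contain \emph{all} integer points of a cube of side $2d$, because $d(2d+1)^{n-1} < (2d+1)^n$; equivalently all one needs is the trivial inequality $d < 2d+1$.

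First I would treat the affine case. Since $X$ is irreducible of dimension $m := \dim X \le n-1$, it is of pure dimension $m$, and its degree satisfies $d \ge 1$ (for $X = \emptyset$ the statement is vacuous). Applying Lemma \ref{lem:gen_Schwarz_Zippel} with $B = d$ bounds the number of integer points of $X$ lying in $[-d,d]^n$ by $d(2d+1)^m \le d(2d+1)^{n-1}$. As this is strictly smaller than the number $(2d+1)^n$ of all integer tuples $(a_1,\dots,a_n)$ with $|a_i| \le d$, at least one such tuple lies off $X$, which is exactly what is claimed.

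For the projective case I would pass to the affine cone $C(X) \subseteq \AA^{n+1}$ over $X$, which is irreducible of dimension $\dim X + 1 \le n$ and of degree $d$. By Lemma \ref{lem:gen_Schwarz_Zippel} again, the number of integer points of $C(X)$ in $[-d,d]^{n+1}$ is at most $d(2d+1)^n < (2d+1)^{n+1}$, so there is an integer tuple $(a_0,\dots,a_n)$ with $|a_i| \le d$ that does not lie on $C(X)$. Such a tuple is automatically nonzero, since the origin always lies on the cone, and hence it defines a point $[a_0 : \dots : a_n] \in \PP^n(\QQ)$; this point is not on $X$ precisely because its representative lies off the cone. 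Finally, dividing the $a_i$ by their greatest common divisor only decreases their absolute values, so the resulting point still has height at most $d$.

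I do not anticipate any genuine obstacle here, as the arguments are pure counting. The only points meriting a word of care are that an irreducible variety over the ground field may become reducible over $\overline{\QQ}$ while remaining equidimensional of the same degree, so that Lemma \ref{lem:gen_Schwarz_Zippel} still applies verbatim, and that the strict inequality $d(2d+1)^{n-1} < (2d+1)^n$ needs only $d \ge 1$, so no hypothesis on $n$ or $d$ beyond irreducibility is required.
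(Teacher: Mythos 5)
Your proof is correct and follows the paper's own argument exactly: the affine case is a pigeonhole count via Lemma~\ref{lem:gen_Schwarz_Zippel} applied in the box $[-d,d]^n$, and the projective case reduces to the affine case through the cone. You have merely spelled out the counting inequality $d(2d+1)^{n-1}<(2d+1)^n$ and the height bookkeeping that the paper leaves implicit.
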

\begin{proof}
  The affine version is implied by the preceding lemma, and the projective version follows by considering the affine cone.
\end{proof}

\begin{lem}\label{lem:Gal}
 Let $X \subseteq \AA^n_{\overline{\QQ}}$ be an absolutely irreducible variety of dimension $m$ and degree $d$ not defined over $\QQ$. Then the number  $N_{\rm aff}(X,B)$  of integral points on $X$ of height at most $B$ is bounded by $d^2(2B+1)^{m-1}$.
\end{lem}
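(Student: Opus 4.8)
The plan is to follow the strategy announced just before the statement: intersect $X$ with a Galois conjugate and invoke the trivial bound of Lemma~\ref{lem:gen_Schwarz_Zippel}. Since $X$ is not defined over $\QQ$, Galois descent for affine varieties furnishes an automorphism $\sigma$ of $\overline{\QQ}$ fixing $\QQ$ pointwise with $X^\sigma := \sigma(X) \neq X$; applying the automorphism $\sigma$ to $X$ shows that $X^\sigma$ is again absolutely irreducible of dimension $m$ and degree $d$. We may assume $X$ has an integral point, otherwise there is nothing to prove.

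First I would note that any $a \in \ZZ^n$ lying on $X$ has rational coordinates, hence is fixed by $\sigma$, so $a = \sigma(a) \in \sigma(X) = X^\sigma$; thus every integral point of $X$ lies on $Z := X \cap X^\sigma$ and $N_{\rm aff}(X,B) \leq N_{\rm aff}(Z,B)$. Since $X$ is irreducible and $X \neq X^\sigma$, we have $X \not\subseteq X^\sigma$, so $Z$ is a proper closed subset of $X$, and hence every irreducible component of $Z$ has dimension at most $m-1$. By B\'ezout's inequality, the sum of the degrees of the components of $Z$ is at most $d \cdot d = d^2$.

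Finally I would decompose $Z = Z_1 \cup \dotsb \cup Z_k$ into irreducible components, with $\dim Z_i = m_i \leq m-1$ and $\deg Z_i = e_i$, where $\sum_i e_i \leq d^2$. Each $Z_i$ has pure dimension $m_i$, so Lemma~\ref{lem:gen_Schwarz_Zippel} gives $N_{\rm aff}(Z_i,B) \leq e_i(2B+1)^{m_i} \leq e_i(2B+1)^{m-1}$, using $B \geq 1$. Summing over $i$,
\[ N_{\rm aff}(X,B) \leq N_{\rm aff}(Z,B) \leq \sum_{i=1}^{k} N_{\rm aff}(Z_i,B) \leq (2B+1)^{m-1}\sum_{i=1}^{k} e_i \leq d^2(2B+1)^{m-1}. \]

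I do not expect a serious obstacle here: the two inputs that require care are the existence of the conjugate $X^\sigma \neq X$ (which is exactly the translation of the hypothesis that $X$ is not defined over $\QQ$, via standard descent) and the B\'ezout-type degree bound for the possibly reducible, non-equidimensional intersection $X \cap X^\sigma$; both are classical, the latter in the same spirit as the degree bounds already used in Section~\ref{sec2}. The remainder is just a component-by-component application of Lemma~\ref{lem:gen_Schwarz_Zippel}.
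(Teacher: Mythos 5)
Your argument is correct, but it takes a slightly different route from the paper's. You intersect $X$ directly with its conjugate $X^\sigma$, obtain a (possibly non-equidimensional) closed subset $Z$, invoke a refined B\'ezout inequality to bound the sum of degrees of the irreducible components of $Z$ by $d^2$, and then apply Lemma~\ref{lem:gen_Schwarz_Zippel} component by component. The paper instead avoids ever forming $X\cap X^\sigma$: it uses the fact (\cite[Proposition 3]{Heintz_DefinabilityAndFastQE}) that a degree-$d$ variety is cut out by hypersurfaces of degree at most $d$, picks a single such hypersurface $Y \supseteq X^\sigma$ with $X \not\subseteq Y$, and then $X \cap Y$ is automatically of \emph{pure} dimension $m-1$ (Krull) and degree at most $d^2$ by the elementary form of B\'ezout for a variety cut by a hypersurface, so one application of Lemma~\ref{lem:gen_Schwarz_Zippel} suffices. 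The trade-off is real: your version is shorter and avoids the Heintz reference, but it quietly relies on the Fulton-type refined B\'ezout theorem (sum of degrees of all irreducible components of $X\cap X^\sigma$ is at most $(\deg X)(\deg X^\sigma)$ even when the intersection is improper or excess-dimensional), which is a genuinely stronger statement than the ``hypersurface slice'' B\'ezout the paper uses. If you want your write-up to be self-contained at the level of the paper, you should either cite the refined B\'ezout theorem explicitly or switch to the paper's device of replacing $X^\sigma$ by a hypersurface.
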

By considering the affine cone over a projective variety, this result also applies to projective varieties of dimension $m$, with bound $d^2(2B+1)^m$.
\begin{proof}
  For every field automorphism $\sigma$ of $\overline{\QQ}$, there is a conjugate variety $X^\sigma$. Since $X$ is not defined over $\QQ$, there exists a $\sigma$ with $X^\sigma \neq X$.
  All $\QQ$-points of $X$ necessarily also lie on $X^\sigma$.  Since $X^\sigma$ has degree $d$, it is the intersection of hypersurfaces of degree $\leq d$, see for instance \cite[Proposition 3]{Heintz_DefinabilityAndFastQE}. Let $Y$ be a hypersurface of degree $\leq d$ containing $X^\sigma$ and not containing $X$. Then $X \cap Y$ is a variety of pure dimension $m-1$ and degree at most $d^2$.
  Now Lemma \ref{lem:gen_Schwarz_Zippel} gives the result.
\end{proof}

The following allows us to reduce to the geometrically irreducible situation when counting points on varieties.
\begin{cor}\label{cor:reduction_geometrically_irreducible}
  Let $X \subseteq \AA^n$ be an irreducible variety over $\QQ$ of dimension $m$ and degree $d$ which is not geometrically irreducible.
  Then for any $B \geq 1$ we have $N_{\rm aff}(X,B) \leq d^2 (2B+1)^{m-1}$.
\end{cor}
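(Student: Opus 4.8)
The plan is to deduce this directly from Lemma \ref{lem:Gal} by passing to the geometric irreducible components of $X$. First I would base change to $\overline\QQ$ and write $X_{\overline\QQ} = X_1 \cup \dotsb \cup X_k$ as its decomposition into absolutely irreducible components. Since $X$ is a (reduced) variety over the perfect field $\QQ$, the base change $X_{\overline\QQ}$ is again reduced of the same degree, so each $X_i$ has dimension $m$ and $\deg X = \sum_{i} \deg X_i$. Because $X$ is irreducible over $\QQ$, the absolute Galois group $\Gal(\overline\QQ/\QQ)$ permutes the $X_i$ transitively: the union of any Galois orbit is Galois-stable, hence defined over $\QQ$, hence a nonempty closed $\QQ$-subvariety of the $\QQ$-irreducible $X$, hence all of $X$. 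In particular $k \geq 2$ by the hypothesis that $X$ is not geometrically irreducible, all the $X_i$ have one and the same degree $d_1$, and $d = k d_1$.

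Next I would note that no component $X_i$ is defined over $\QQ$: if it were, it would be a nonempty closed $\QQ$-subvariety of the $\QQ$-irreducible $X$, forcing $X_i = X$, which is incompatible with $k \geq 2$. Thus each $X_i$ is an absolutely irreducible variety of dimension $m$ and degree $d_1$ that is not defined over $\QQ$, so Lemma \ref{lem:Gal} applies and gives $N_{\rm aff}(X_i, B) \leq d_1^2 (2B+1)^{m-1}$ for every $i$.

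Finally, any point counted by $N_{\rm aff}(X,B)$ lies in $\ZZ^n \cap X(\QQ) \cap [-B,B]^n \subseteq \bigcup_i \big(\ZZ^n \cap X_i(\overline\QQ) \cap [-B,B]^n\big)$, so it is counted by $N_{\rm aff}(X_i,B)$ for at least one $i$. Summing over $i$ yields
\[
N_{\rm aff}(X,B) \;\leq\; \sum_{i=1}^{k} N_{\rm aff}(X_i,B) \;\leq\; k\, d_1^2 (2B+1)^{m-1} \;=\; \frac{d^2}{k}(2B+1)^{m-1} \;\leq\; d^2 (2B+1)^{m-1},
\]
as desired.

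There is no genuine obstacle here; the only inputs that need a word of justification are the standard facts that the geometric components of a $\QQ$-irreducible variety form a single Galois orbit (hence are equidimensional of equal degree, with degrees summing to $\deg X$) and that a component defined over $\QQ$ must be all of $X$. One could alternatively single out one component $X_1$ together with a Galois conjugate $X_1^\sigma \neq X_1$ and observe that all $\QQ$-points of $X$ lie on $X_1 \cap X_1^\sigma$, a set of dimension $\leq m-1$ and degree $\leq d^2/4$, then invoke Lemma \ref{lem:gen_Schwarz_Zippel}; but that variant requires a Bézout-type bound for a possibly non-equidimensional intersection, so the component-wise argument above is cleaner.
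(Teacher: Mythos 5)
Your proof is correct and takes essentially the same approach as the paper: decompose $X_{\overline\QQ}$ into Galois-conjugate absolutely irreducible components, note that none of them is defined over $\QQ$, and invoke Lemma~\ref{lem:Gal}. The paper streamlines the final step by observing that every $\QQ$-point of $X$ lies on \emph{every} geometric component (Galois fixes the point and permutes components transitively), so applying the lemma to a single component of degree $d_1 = d/k$ already yields $d_1^2(2B+1)^{m-1} \leq d^2(2B+1)^{m-1}$; your union bound over all $k$ components reaches the same stated inequality.
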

As above, this also applies to projective varieties.
\begin{proof}
  Let $K/\QQ$ be a finite Galois extension over which $X$ splits into absolutely irreducible components, and let $Y$ be one of the components. Since all components are Galois-conjugate, the $\QQ$-points on $X$ in fact also lie on $Y$.
  Now the preceding lemma applied to $Y$ gives the result.
\end{proof}

\begin{remark}\label{rem:wlog_abs_irred}
  Note that this trivially proves Theorems \ref{thm:dgcdegree} and \ref{thm:Bombieri-Pila} for irreducible, but not geometrically irreducible varieties, and similarly for absolutely irreducible varieties defined over $\overline{\QQ}$ but not over $\QQ$.
  The same applies for Theorem \ref{thm:W1.1} by considering a projective curve as the union of an affine curve with a finite number of points.

  Thus we henceforth only need to concern ourselves with absolutely irreducible varieties defined over $\QQ$.
\end{remark}

\subsection{Affine counting} \label{subsec:affinecounting}

Our results for projective hypersurfaces from the last section yield the following result for affine hypersurfaces, by refining the technique given in \cite[Remark 2.3]{Ellenb-Venkatesh}.
\begin{prop}\label{prop:Ellenb}
Fix an integer $n>0$. Then there exist $c$ and $e$ such that the following holds for all $f,B,d$.
Let $f$ be in $\ZZ[x_1,\ldots,x_{n+1}]$ be irreducible, primitive and of degree $d$.  For each $i$ write $f_i$ for the degree $i$ homogeneous part of $f$. Fix $B\geq 1$. Then there is a polynomial $g$ in $\ZZ[x_1,\ldots,x_{n+1}]$  of degree at most
\[
c B^{\frac{1}{d^{1/n}}} d^{2-1/n} \frac{\min(\log \lVert f_d \rVert +d\log B + d^2, d^2 b(f))}{ \lVert f_d \rVert^{\frac{1}{n} \frac{1}{d^{1+1/n}}}}  +   c d^{1-1/n} \log B + cd^{4-1/n},
\]
not divisible by $f$, and vanishing on all points  $x$ in $\ZZ^{n+1}$ satisfying $f(x)=0$ and $|x_i|\leq B$.
\end{prop}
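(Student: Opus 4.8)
The plan is to homogenize $f$ and apply the projective result Theorem \ref{thm:walsh-hypers-precise}, following the strategy sketched in \cite[Remark 2.3]{Ellenb-Venkatesh}. Write $F \in \ZZ[x_0,\dots,x_{n+1}]$ for the homogenization of $f$, so that $F$ is homogeneous of degree $d$, primitive (since $f$ is), and irreducible. An integral point $x = (x_1,\dots,x_{n+1}) \in \ZZ^{n+1}$ with $f(x)=0$ and $|x_i| \le B$ corresponds to the projective point $[1 : x_1 : \dots : x_{n+1}] \in X(\QQ)$ on the projective hypersurface $X = \{F = 0\}$, and this point has height at most $B$ (it is already given by a primitive integer tuple with entries bounded by $\max(1,B) = B$ since $B \ge 1$). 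So the set of affine points we must kill is contained in the set of rational points of height at most $B$ on $X$.

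There are two cases according to whether $F$ is absolutely irreducible. If $F$ is irreducible over $\QQ$ but not over $\overline\QQ$, then by Lemma \ref{lem:walsh_wlog_abs_irred} there is a homogeneous $g$ of degree $d$, not divisible by $F$, vanishing on all rational zeros of $F$; dehomogenizing $g$ (setting $x_0 = 1$) gives a polynomial of degree at most $d$ not divisible by $f$ and vanishing on the required affine points, which is well within the claimed bound. If $F$ is absolutely irreducible, we apply Theorem \ref{thm:walsh-hypers-precise} directly to $F$ and $B$: it produces a homogeneous $g$ not divisible by $F$, vanishing at all points of $X$ of height at most $B$, of degree at most
\[
c B^{\frac{n+1}{nd^{1/n}}} \frac{d^{4-1/n} b(F)}{\lVert F \rVert^{\frac{1}{n}\frac{1}{d^{1+1/n}}}} + cd^{1-1/n}\log B + cd^{4-1/n}.
\]
Dehomogenizing $g$ and noting $b(F) = b(f)$ (the Noether forms detect absolute irreducibility regardless of homogenization), $\lVert F \rVert = \lVert f \rVert \ge \lVert f_d \rVert$, and that the exponent $\frac{n+1}{nd^{1/n}}$ differs from the claimed $\frac{1}{d^{1/n}}$ — this is the first discrepancy to address.

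The gap between the $B$-exponents is the main point where one must improve on the naive homogenization, and it is precisely what ``refining the technique'' of \cite[Remark 2.3]{Ellenb-Venkatesh} means: rather than bounding the entries of the coefficient matrix by $B^M$ as for general projective points, one exploits that the points all have $x_0 = 1$, so the monomial $x_0^{a_0}x_1^{a_1}\cdots x_{n+1}^{a_{n+1}}$ of degree $M$ evaluated at such a point has absolute value at most $B^{M - a_0}$, i.e.\ at most $B^M$ but typically much smaller, and more importantly the relevant column space is governed by monomials of bounded \emph{affine} degree $\le d$ in the top variable when one re-examines which polynomials of degree $M$ can vanish. Concretely, I would re-run the proof of Lemmas \ref{lem:Walsh_small_f} and \ref{lem:Walsh_big_f} but with the affine structure: the space of degree-$\le M$ polynomials in $n+1$ affine variables has dimension $\binom{M+n+1}{n+1} \sim M^{n+1}/(n+1)!$, the multiples of $f$ (of affine degree $\le M$) have dimension $\sim M^{n+1}/(n+1)! - dM^n/n! \cdot(1/ ?)$ — one gets $s \sim M^n/n!$ instead of $dM^n/n!$, because the relevant ideal is generated in affine degree $d$ but the quotient is now one-dimensional in the top-degree direction rather than $d$-dimensional. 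Tracking this through the Bombieri--Vaaler estimate (Theorem \ref{thm:bombval_siegel}) and the determinant lower bound (Proposition \ref{prop:walsh_sec_2}) — now with $s^{1/n}/M \sim 1/n!^{1/n}$ rather than $d^{1/n}/n!^{1/n}$ — replaces $B^{(n+1)/(nd^{1/n})}$ by $B^{1/d^{1/n}}$ and turns the factor $d^{4-1/n}$ into $d^{2-1/n}$ (two powers of $d$ are lost from the two places where $s$ was larger by a factor $d$ in the projective count). The $\min(\cdot,\cdot)$ in the numerator reflects two ways to bound $b(f)$: directly via Corollary \ref{cor:estimate_bad_primes} gives $b(f) \le O(\max(d^{-2}\log\lVert f_d\rVert, 1))$, hence the term $d^2 b(f)$; alternatively one feeds the small-$\lVert f\rVert$ versus large-$\lVert f\rVert$ dichotomy through, as in Lemmas \ref{lem:Walsh_small_f}--\ref{lem:Walsh_big_f}, to get the bound $\log\lVert f_d\rVert + d\log B + d^2$ after the case split on whether $\lVert f_d\rVert \le B^{2d(n+1)}$. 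The hardest part is carrying out this re-derivation cleanly — in particular verifying that the dimension count for the affine ideal genuinely yields $s \sim M^n/n!$ and that all error terms remain of lower order in $M$, since $d^2/M$ must still be treated as a small adjustable quantity; the rest is bookkeeping parallel to Section \ref{sec3}.
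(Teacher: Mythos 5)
The core of your proposal---re-running the determinant method in the affine setting to claim a smaller value of $s$---does not work. The dimension count is identical in both settings: polynomials of degree at most $M$ in $n+1$ affine variables form a space of dimension $\binom{M+n+1}{n+1}$, and among these the multiples of $f$ of degree at most $M$ form a space of dimension $\binom{M-d+n+1}{n+1}$, exactly as in the projective count with forms of degree $M$ in $n+2$ variables. So one still gets $s = \binom{M+n+1}{n+1} - \binom{M-d+n+1}{n+1} \sim dM^n/n!$, not $M^n/n!$; the factor $d$ does not disappear, and running the argument as you describe gives back the projective $B$-exponent $\frac{n+1}{nd^{1/n}}$ rather than the claimed $\frac{1}{d^{1/n}}$. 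Your observation that monomials with a high power of the homogenizing variable contribute less at points with $x_0=1$ is true but does not by itself translate into a smaller $s$ or a better entrywise bound on the coefficient matrix.

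The actual mechanism in the paper is different and does not touch the dimension count. For a parameter $H$, one considers the \emph{weighted} homogenization $F_H(x_1,\dotsc,x_{n+2}) = \sum_{i=0}^d H^i f_i\, x_{n+2}^{d-i}$; an integral point $(x_1,\dotsc,x_{n+1})$ with $f=0$ and $|x_i|\leq B$ then yields the projective point $(x_1,\dotsc,x_{n+1},H)$ on $\{F_H=0\}$. By Bertrand's postulate one picks a prime $H = B' \in (B/2,B]$ with $B' \nmid f_0$, which makes $F_{B'}$ primitive; applying Theorem~\ref{thm:walsh-hypers-precise} to $F_{B'}$ and using that $\lVert F_{B'}\rVert \geq (B')^d\lVert f_d\rVert \geq 2^{-d}B^d\lVert f_d\rVert$, the factor $\lVert F_{B'}\rVert^{-\frac{1}{n}\frac{1}{d^{1+1/n}}}$ in the bound contributes an extra $B^{-\frac{1}{nd^{1/n}}}$, reducing the exponent from $\frac{n+1}{nd^{1/n}}$ to $\frac{1}{d^{1/n}}$. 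Trading $\lVert F_{B'}\rVert$ back for $\lVert f_d\rVert$ via Lemma~\ref{lem:ellenberg_estimate} costs the factor $\log\lVert f_d\rVert + d\log B + d^2$ that appears in the $\min$, and $b(F_{B'}) = b(F_1)\exp(\log B'/B')$ differs from $b(f)$ by a bounded factor. The remaining cases are handled separately: if $f_0=0$, shift the origin by a point of height $\leq d$ from Corollary~\ref{cor:find_point}; if every prime in $(B/2,B]$ divides $f_0$, then either $|f_0|$ is so large that Lemma~\ref{lem:bounded_coefficients_or_vanishing_g} already produces a vanishing polynomial of degree $d$, or Chebyshev bounds force $B$ to be polynomially bounded in $d$, in which case one uses $F_1$ and absorbs the excess power of $B$ into the constant. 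Your first step (reduction to absolutely irreducible $f$ via Lemma~\ref{lem:walsh_wlog_abs_irred}) and your identification of both discrepancies (the $B$-exponent and the substitution of $\lVert f\rVert$ by $\lVert f_d\rVert$) match the paper, but the weighted homogenization $F_H$ and the choice $H\approx B$ are the essential missing idea.
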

To prove Proposition \ref{prop:Ellenb} we need the following lemmas.
 \begin{lem}[{\cite[Lemma 5]{Brow-Heath-Salb}}] \label{lem:bounded_coefficients_or_vanishing_g}
   Let $f \in \ZZ[x_1, \dotsc, x_{n+2}]$ be a primitive absolutely irreducible polynomial, homogeneous of degree $d$, defining a hypersurface $Z$ in $\PP^{n+1}$. Let $B \geq 1$. Then either the height of the coefficients of $f$ is bounded by $O_n(B^{d \binom{d+n+1}{n+1}})$, or there exists a homogeneous polynomial $g$ of degree $d$ vanishing on all points of $Z$ of height at most $B$.
 \end{lem}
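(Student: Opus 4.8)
The plan is to run a linear-algebra dichotomy and then invoke Siegel's lemma. Write $V$ for the $\QQ$-vector space of homogeneous degree-$d$ forms in $x_1,\dotsc,x_{n+2}$, so that $\dim_\QQ V = R := \binom{d+n+1}{n+1}$, with the monomials $m_1,\dotsc,m_R$ as a basis. For a point $P\in Z(\QQ)$ of height at most $B$, fix primitive integer homogeneous coordinates for $P$; the condition ``$h(P)=0$'' on $h\in V$ is then a single linear equation in the coefficients of $h$, whose own coefficients are the values $m_k(P)$, integers of absolute value at most $B^d$. Let $W\subseteq V$ be the subspace of forms vanishing at all such $P$. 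Then $f\in W$, and since $f$ is absolutely irreducible of degree $d$, a form in $V$ is divisible by $f$ precisely when it is a scalar multiple of $f$.

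First I would split on $\dim_\QQ W$. If $\dim_\QQ W\geq 2$, then, since $W$ is cut out by equations over $\QQ$ and hence spanned by integer vectors, one can choose a form $g$ with integer coefficients lying in $W$ but not proportional to $f$; then $g$ is homogeneous of degree $d$, not divisible by $f$, and vanishes on all points of $Z$ of height at most $B$, which is the second alternative (if one reads the statement as allowing $g=f$, the alternative is trivially true in any case). The real work is therefore the case $\dim_\QQ W=1$, i.e.\ $W=\QQ f$, where I must bound $\lVert f\rVert$.

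In that case the linear conditions coming from the height-$\leq B$ points of $Z(\QQ)$ have rank exactly $R-1$, so I can select points $P_1,\dotsc,P_{R-1}$ among them whose conditions are linearly independent and form the $(R-1)\times R$ integer matrix $A=(m_k(P_i))$, all of whose entries are at most $B^d$ in absolute value, with $\ker_\QQ A=\QQ f$. Since $f$ is primitive it generates the rank-one lattice $\ker A\cap\ZZ^R$ up to sign, so by Cramer's rule $f=\pm\big((-1)^{k+1}M_k\big)_k/\gcd_k M_k$, where $M_k$ is the determinant of $A$ with the $k$-th column deleted; hence $\lVert f\rVert\leq\max_k\lvert M_k\rvert$. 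Equivalently, applying Theorem \ref{thm:bombval_siegel} with $s=R-1$, $r=R$ produces a nonzero integer kernel vector of sup-norm at most $D^{-1}\sqrt{\lvert\det(AA^\top)\rvert}$, which is necessarily an integer multiple of the primitive vector $f$, giving the same bound on $\lVert f\rVert$.

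The remaining task is to bound $\max_k\lvert M_k\rvert$ by $O_n(B^{d\binom{d+n+1}{n+1}})$, and this uniform-in-$d$ determinant estimate is the part I expect to be the main obstacle. The crude Hadamard bound $\lvert M_k\rvert\leq (R-1)^{(R-1)/2}B^{d(R-1)}$ is not quite enough, because the prefactor $(R-1)^{(R-1)/2}$ is not absorbed by a single extra power $B^d$ once $d$ is large; so one must argue more economically, following \cite{Brow-Heath-Salb}. A useful preliminary reduction is that we may assume $Z$ has at least $R-1$ rational points of height at most $B$ (otherwise $\dim_\QQ W\geq 2$ and we are already done), which by the trivial bound of Lemma \ref{lem:gen_Schwarz_Zippel} forces $d(2B+1)^n\geq\binom{d+n+1}{n+1}-1$ and hence a lower bound on $B$ in terms of $d$; combining this with a suitable choice of interpolation points and the resulting control on the occurring minors is what one needs to bring the estimate into the stated shape. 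Everything else in the argument is formal.
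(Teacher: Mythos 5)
The paper does not prove this lemma; it is quoted verbatim from \cite[Lemma 5]{Brow-Heath-Salb}, so there is no internal argument to compare against. Assessed on its own terms, your proposal has the right skeleton but leaves open the one estimate that carries all the content of the statement (and you correctly flag that the written statement is vacuous unless one implicitly requires $g$ not divisible by $f$).

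The setup is fine: the space $V$ of degree-$d$ forms of dimension $R=\binom{d+n+1}{n+1}$, the linear conditions with coefficients $m_k(P)$ of size $\leq B^d$, the dichotomy on $\dim_\QQ W$, and the reduction via Cramer or Bombieri--Vaaler to bounding the maximal minors of the $(R-1)\times R$ matrix $A$. The issue is that the minor bound is precisely what the lemma asserts, and the route you sketch for finishing does not work. The projective Schwartz--Zippel bound forces only $B\geq c_n d^{n/(n+1)}$ (your displayed inequality $d(2B+1)^n\geq R-1$ is off by one power, since $Z\subset\PP^{n+1}$ has dimension $n$ and the affine cone has dimension $n+1$), but even granting $B\geq c_n d$, reaching the claimed $O_n(B^{dR})$ via Hadamard would require $(R-1)^{(R-1)/2}\leq O_n(B^{d})$. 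Taking logarithms, the left-hand side is $\sim\frac{d^{n+1}}{2\,n!}\log d$ while the right-hand side is only $\sim d\log d$, so for $n\geq 1$ the discrepancy grows like $d^n$ and no constant depending only on $n$ can absorb it. An unstructured Hadamard bound combined with the trivial point count therefore cannot give the stated exponent with an $O_n$ constant; one must genuinely exploit the structure of the monomial evaluation matrix (a Vandermonde/tensor-type factorization in suitable coordinates, which is the real work in \cite{Brow-Heath-Salb}). You gesture at this (``a suitable choice of interpolation points and the resulting control on the occurring minors'') but do not carry it out, so the proposed proof is genuinely incomplete at the step that matters.
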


 \begin{lem}\label{lem:ellenberg_estimate}
   For $F \in \ZZ[x_1, \dotsc, x_{n+2}]$ an irreducible primitive homogeneous polynomial and $1 \leq y \leq \lVert F\rVert$ we have
   \[ d^{4-1/n} \frac{b(F)}{\lVert F\rVert^{\frac{1}{n}\frac{1}{d^{1+1/n}}}} \leq O_n(1) d^{2-1/n} \frac{\log y + d^2}{y^{\frac{1}{n}\frac{1}{d^{1+1/n}}}} .\]
 \end{lem}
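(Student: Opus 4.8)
The plan is to cross-multiply and reduce the claim, via Corollary \ref{cor:estimate_bad_primes}, to a single elementary estimate on the function $t\mapsto t^{-\alpha}\log t$, where $\alpha:=\tfrac{1}{n}d^{-1-1/n}$ is the exponent occurring on both sides of the asserted inequality.

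First I would dispose of the trivial case: if $F$ is not absolutely irreducible then $b(F)=0$ by Definition \ref{defn:badness}, and the inequality holds vacuously; so assume $F$ is absolutely irreducible. Clearing denominators and dividing by $d^{2-1/n}$, the claimed bound is equivalent to
\[
d^{2}\,b(F)\left(\frac{y}{\lVert F\rVert}\right)^{\alpha}\ \leq\ O_n(1)\,(\log y+d^{2}).
\]
Since $1\le y\le\lVert F\rVert$ we have $(y/\lVert F\rVert)^{\alpha}\le 1$, and Corollary \ref{cor:estimate_bad_primes} gives $d^{2}b(F)\le O\!\bigl(\max(\log\lVert F\rVert,\,d^{2})\bigr)$; hence it suffices to prove
\[
\max\bigl(\log\lVert F\rVert,\,d^{2}\bigr)\left(\frac{y}{\lVert F\rVert}\right)^{\alpha}\ \leq\ O_n(1)\,(\log y+d^{2}).
\]

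If the maximum is $d^{2}$ this is immediate, as $\log y\ge 0$. Otherwise $\log\lVert F\rVert\ge d^{2}$, and writing $t:=\lVert F\rVert/y\ge 1$ and $\log\lVert F\rVert=\log t+\log y$ gives
\[
\log\lVert F\rVert\left(\frac{y}{\lVert F\rVert}\right)^{\alpha}=t^{-\alpha}\log t+t^{-\alpha}\log y\ \leq\ t^{-\alpha}\log t+\log y.
\]
The one point of substance is the elementary fact that $\sup_{t\ge 1}t^{-\alpha}\log t=\tfrac{1}{e\alpha}$, attained at $t=e^{1/\alpha}$; since $d\ge 1$ forces $d^{1/n}\le d$ and hence $\tfrac{1}{e\alpha}=\tfrac{n}{e}d^{1+1/n}\le \tfrac{n}{e}d^{2}$, this term is $O_n(d^{2})$. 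Combining, the left-hand side is at most $O_n(d^{2})+\log y\le O_n(1)(\log y+d^{2})$, as required. I do not expect a genuine obstacle here: the whole argument is a bookkeeping reduction to the supremum computation above, in the same spirit as the elementary one-variable estimate used at the end of the proof of Lemma \ref{lem:Walsh_big_f}.
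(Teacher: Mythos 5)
Your proof is correct and takes essentially the same approach as the paper's: both reduce via Corollary \ref{cor:estimate_bad_primes} to an elementary single-variable estimate on $t^{-\alpha}\log t$ with $\alpha = \tfrac{1}{n}d^{-1-1/n}$, exploiting that the supremum $\tfrac{1}{e\alpha} = \tfrac{n}{e}d^{1+1/n}$ is $O_n(d^2)$. Your substitution $t = \lVert F\rVert/y$ streamlines the bookkeeping and avoids the paper's explicit case split on whether $y^\alpha \geq e$, but the underlying idea is identical.
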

 \begin{proof}
   The function
   $$
   x \mapsto \frac{\log x}{x^{\frac{1}{n}\frac{1}{d^{1+1/n}}}}
   $$
   on $(1, \infty)$ is monotonically increasing up to its maximum when $x^{\frac{1}{n}\frac{1}{d^{1+1/n}}} = e$, and monotonically decreasing thereafter.

   Let us write $x = \lVert F\rVert$ and use $d^2 b(F) \leq O_n(1) (\log x + d^2)$ by Corollary \ref{cor:estimate_bad_primes}.
   By the monotonicity considered above, there is nothing to show when $y^{\frac{1}{n}\frac{1}{d^{1+1/n}}} \geq e$.
   Otherwise, \[ 2 d^{2-1/n} \frac{\log y + d^2}{y^{\frac{1}{n}\frac{1}{d^{1+1/n}}}} \geq 2 d^{2-1/n} \frac{d^2}{y^{\frac{1}{n}\frac{1}{d^{1+1/n}}}} \geq d^{4-1/n}(\frac{1}{e} + \frac{1}{y^{\frac{1}{n}\frac{1}{d^{1+1/n}}}}), \]
   and the left-hand side of the inequality in the statement is always bounded by \[ O_n(1) d^{2-1/n} \frac{\log x + d^2}{x^{\frac{1}{n}\frac{1}{d^{1+1/n}}}} \leq O_n(1) d^{2-1/n} (\frac{n d^{1+1/n}}{e} + \frac{d^2}{x^{\frac{1}{n}\frac{1}{d^{1+1/n}}}}),  \]
   yielding the claim.
\end{proof}

\begin{proof}[Proof of Proposition \ref{prop:Ellenb}]
By applying Lemma \ref{lem:walsh_wlog_abs_irred} to the homogenization of $f$, we may assume that $f$ is absolutely irreducible.
For each natural number $H$, consider the  polynomial $F_H\in\ZZ[x_1,...,x_{n+2}]$ given by $F_H(x_1,...,x_{n+2})=\sum_{i=0}^d H^if_ix_{n+2}^{d-i}$. Then $F_H$ is an irreducible  homogeneous polynomial of degree $d$. On the other hand, each integral point $(x_1,...,x_{n+1})\in Z(f)(\ZZ)$ gives us a rational point $(x_1,...,x_{n+1},H)$ in $Z(F_H)(\QQ)$, where $Z(f)$ stands for the hypersurface in $\AA^{n+1}$ given by $f$ and $Z(F_H)$ stands for the hypersurface in $\PP^{n+1}$ given by $F_H$.

If $B$ is bounded by some polynomial expression in $d$ (to be determined later), then $B^{\frac{1}{nd^{1/n}}}$ is bounded by a constant depending only on $n$; hence we use Theorem \ref{thm:walsh-hypers-precise} for $F_1$, by which there exists a number $c$ depending only on $n$ along with a homogeneous polynomial $G_1$ in $\ZZ[x_1,\ldots,x_{n+2}]$ of degree at most
\[
c B^{\frac{1}{d^{1/n}}} d^{4-1/n} \frac{b(F_1)}{\lVert F_1\rVert^{\frac{1}{n} \frac{1}{d^{1+1/n}}}}  +   c d^{1-1/n} \log B + cd^{4-1/n},
\]
not divisible by $F_1$, and vanishing at all points on $Z(F_1)(\QQ)$ of height at most $B$. Since $b(F_1) = b(f)$ and $\lVert F_1\rVert \geq \lVert f_d \rVert$, by Lemma \ref{lem:ellenberg_estimate} we obtain
 \[  d^{4-1/n} \frac{b(F_1)}{\lVert F_1\rVert^{\frac{1}{n} \frac{1}{d^{1+1/n}}}} \leq O_n(d^{2-1/n}) \frac{\min(d^2 b(f), \log\lVert f\rVert + d^2)}{\lVert f_d\rVert^{\frac{1}{n} \frac{1}{d^{1+1/n}}}} .\]
Hence the polynomial $g(x_1,...,x_{n+1})=G_1(x_1,...,x_{n+1},1)$ satisfies our proposition.

For any $B \geq 2$ Bertrand's postulate guarantees the existence of a prime $B'$ in the interval $(\frac{B}{2},B]$. Moreover, if $B'\nmid f_0$, then $F_{B'}$ is primitive. By Theorem \ref{thm:walsh-hypers-precise} for $F_{B'}$, there exists a number $c$ depending only on $n$ along with a homogeneous polynomial $G_{B'}$ in $\ZZ[x_1,\ldots,x_{n+2}]$ of degree
at most \[
c B^{\frac{n+1}{nd^{1/n}}} d^{4-1/n} \frac{b(F_{B'})}{\lVert F_{B'} \rVert^{\frac{1}{n} \frac{1}{d^{1+1/n}}}}  +   c d^{1-1/n} \log B + cd^{4-1/n},
\]
not divisible by $F_{B'}$, and vanishing at all points on $Z(F_{B'})(\QQ)$ of height at most $B$.

It is clear that $\lVert F_{B'}\rVert\geq B'^d \lVert f_d \rVert \geq 2^{-d}B^{d} \lVert f_d \rVert$, so by Lemma \ref{lem:ellenberg_estimate} we have
\[ d^{4-1/n} \frac{b(F_{B'})}{\lVert F_{B'}\rVert^{\frac{1}{n} \frac{1}{d^{1+1/n}}}} \leq O_n(1) (\frac{B}{2})^{-\frac{1}{nd^{1/n}}} d^{2-1/n} \frac{\log \lVert f_d\rVert + d \log B + d^2}{\lVert f_d \rVert^{\frac{1}{n} \frac{1}{d^{1+1/n}}}}. \]
Furthermore $b(F_{B'})$ agrees with $b(F_1)$ up to a factor of $\exp(\log B'/B') \leq O(1)$. Hence we in fact have
\[  d^{4-1/n} \frac{b(F_{B'})}{\lVert F_{B'} \rVert^{\frac{1}{n} \frac{1}{d^{1+1/n}}}} \leq
O_n(1) B^{-\frac{1}{nd^{1/n}}} d^{2-1/n} \frac{\min(\log \lVert f_d\rVert + d \log B + d^2, b(f))}{\lVert f_d \rVert^{\frac{1}{n} \frac{1}{d^{1+1/n}}}}. \]

Thus the polynomial $g(x_1,...,x_{n+1})=G_{B'}(x_1,...,x_{n+1},B')$ is as desired.

From now on, we suppose that $B>2$ and $B'|f_0$ for all primes $B'$ in the interval $(\frac{B}{2},B]$. Then we have
$$(\prod_{\stackrel{B' \text{ prime}}{B/2<B'\leq B}}B')|f_0$$

If $f_0\neq 0$ then we deduce that
$$\sum_{B'{\rm prime},\ B/2<B'\leq B}\log B'\leq \log|f_0| .$$

By Lemma \ref{lem:bounded_coefficients_or_vanishing_g}, we are done if $f_0$ is large compared to $B^{d \binom{d+n+1}{n+1}}$, so in the remaining case we have
\[ \sum_{B'{\rm prime}, B/2<B'\leq B}\log B'\leq d \binom{d+n+1}{n+1} \log B - O_n(1) \]

Because of the well-known estimate \[ \lim_{x\mapsto+\infty}\dfrac{\sum_{p\leq x}\log p}{x}=1 ,\]
we see that $B$ is necessarily bounded by a certain polynomial in $d$ in this case, so we are done by the discussion above.

%
%Since we have
%$$\lim_{x\mapsto+\infty}\dfrac{\sum_{p\leq x}\log p}{x}=1$$
%So there exists constant $C_n$ such that $|x-\sum_{p\leq x}\log p|\leq \dfrac{x}{6n}+\dfrac{C_n}{2}$ for all $x\geq 1$. Therefore we have
%$$|\sum_{B'{\rm prime}, B/2<B'\leq B}\log B'-\dfrac{B}{2}|\leq \dfrac{B}{4n}+C_n$$
%Hence
%$$|\sum_{B'{\rm prime}, B/2<B'\leq B}\log B'|\geq (\dfrac{1}{2}-\dfrac{1}{4n})B-C_n$$
%As a consequence, we have $\log|f_0|\geq (\dfrac{1}{2}-\dfrac{1}{4n})B-C_n$
%On the other hand, if there exists a homogeneous polynomial $G_1\in\ZZ[x_1,...,x_{n+2}]$ of degree $d$ such that $G_1$ vanish on all points $x\in \ZZ^{n+2}$ satisfying $F_1(x)=0$ and $H(x)\leq B$
%then we can finish our proof. Otherwise, lemma \ref{boundcoef} tell us that
%$$\log|f_0|\leq \log||f||=\log||F_1||\leq \log 2+ ({d+n+1 \choose n+1}-1)(d\log B+\log {d+n+1 \choose n+1})$$
%Since $\log|f_0|\geq (\dfrac{1}{2}-\dfrac{1}{4n})B-C_n$, there exists a constant $c_n$ depended on $n$ such that $B\leq c_n d^{n+3}$; in particular $B^{1/d^{1/n}}$ is bounded by a constant depending only on $n$ in this situation.
%We can use Theorem \ref{thm:walsh-hypers} for $F_1$ to see that there exists a constant $c$ dependent only on $n$ and a homogeneous polynomial $G_{1}\in \ZZ[x_1,...,x_{n+2}]$ of degree at most
%\[
%c B^{\frac{1}{d^{1/n}}} \frac{d^{2-1/n} \log ||f_d||  + d^{4-1/n}}{||f_d||^{\frac{1}{n} \frac{1}{d^{1+1/n}}}}  +   c d^{1-1/n} \log B + cd^{n+1},
%\]
%not divisible by $F_1$, and vanishing at all points on $Z(F_1)(\QQ)$ of height at most $B$, so polynomial $g(x_1,...,x_{n+1})=G_1(x_1,...,x_{n+1},1)$ satisfies our proposition.

If $f(0)=0$, then by Corollary \ref{cor:find_point} there exists an integer point $A=(a_1,...,a_{n+1})$ with $f(a_1,...,a_{n+1})\neq 0$ and $|a_i|\leq d$ for all $1\leq i\leq n+1$. We consider the shifted polynomial $\tilde{f}(x)=f(x+A)$, for which $\tilde{f}(0)\neq 0$, $\lVert f_d \rVert = \lVert \tilde{f}_d \rVert$, and $b(\tilde{f}) = b(f)$. We apply the above discussion for $\tilde{f}$ and $\tilde{B}=B+d$ to obtain a polynomial $\tilde{g}(x)$ vanishing on all zeroes of $\tilde{f}$ of height at most $\tilde{B}$, and take $g(x) = \tilde{g}(x-A)$.
This satisfies the required degree bound since $\tilde{g}$ does.
%so there exist constant $c$ and polynomial $\tilde{g}\in\ZZ[x_1,...,x_{n+1}]$ of degree at most
%\[
%c (B+d)^{\frac{1}{d^{1/n}}} \frac{d^{2-1/n} (\log ||f_d||+d\log (B+d))  + d^{4-1/n}}{||f_d||^{\frac{1}{n} \frac{1}{d^{1+1/n}}}}  +   c d^{1-1/n} \log (B+d) + cd^{n+1},
%\]
%not divisible by $\tilde{f}$ and vanishing on all points $x$ in $\ZZ^{n+1}$ satisfying $\tilde{f}(x)=0$ and $H(x)\leq B+d$. Because $2d<B$ then $B+d\leq 2B$, so we can enlarge $c$ to see that degree of $\tilde{g}$ is at most
%\[
%c B^{\frac{1}{d^{1/n}}} \frac{d^{2-1/n} (\log ||f_d||+d\log B)  + d^{4-1/n}}{||f_d||^{\frac{1}{n} \frac{1}{d^{1+1/n}}}}  +   c d^{1-1/n} \log B + cd^{n+1},
%\]
%Then $g(x)=\tilde{g}(x-A)$ is a polynomial as we want.
\end{proof}

\begin{cor}\label{cor:Ellenb}
There exists a constant $c$ %and $e$,
such that for all $d>0$ and all irreducible affine curves $X\subseteq \AA_\QQ^2$ of degree $d$, cut out by an irreducible primitive polynomial $f\in\ZZ[x_1,x_2]$, and all $B\geq 1$ one has
$$
N_{\rm aff}(X,B) \leq c  B^{1/d}   \frac{\min(d^2\log \lVert f_d \rVert + d^3\log B + d^4, d^4 b(f))}{\lVert f_d \rVert^{1/d^2}} + cd \log B + cd^4  .
$$
%raf: not needed anymore:
%Moreover, if $d=2$, then there is $c$ such that for all $B\geq 1$
%$$
%n(X,B) \leq c  B^{1/2}.
%$$
%\change{Moreover, one can take $e= ... $.}
\end{cor}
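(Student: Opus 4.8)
The plan is to combine Proposition~\ref{prop:Ellenb}, specialised to the case $n=1$ (so $n+1=2$ variables), with B\'ezout's theorem for plane curves, exactly as in the proof of Corollary~\ref{cor:precise_Walsh_planar_curve}. Since $X$ has degree $d$ we have $\deg f = d$, so the top-degree part $f_d$ is nonzero and $\lVert f_d\rVert\geq 1$; all quantities below are thus well-defined.

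First I would apply Proposition~\ref{prop:Ellenb} with $n=1$ to the irreducible primitive polynomial $f$. This yields a polynomial $g\in\ZZ[x_1,x_2]$, not divisible by $f$ (in particular $g\neq 0$), that vanishes on every $x\in\ZZ^2$ with $f(x)=0$ and $\lvert x_i\rvert\leq B$, and whose degree is at most
\[
c\, B^{1/d}\, d\cdot\frac{\min(\log\lVert f_d\rVert+d\log B+d^2,\ d^2 b(f))}{\lVert f_d\rVert^{1/d^2}}+c\log B+cd^3,
\]
where one uses the identities $d^{1/n}=d$, $d^{2-1/n}=d$, $\tfrac1n\tfrac1{d^{1+1/n}}=\tfrac1{d^2}$, $d^{1-1/n}=1$ and $d^{4-1/n}=d^3$ valid for $n=1$ to read off the exponents from the bound in Proposition~\ref{prop:Ellenb}.

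Next, since $f$ is irreducible and $f\nmid g$, the polynomials $f$ and $g$ are coprime. If $\deg g=0$ then $g$ is a nonzero constant with no zeros, whence $N_{\rm aff}(X,B)=0$ and the asserted bound is trivial; so assume $\deg g\geq 1$. Then the projective plane curves cut out by the homogenizations of $f$ and $g$ have no common component, and B\'ezout's theorem bounds the number of their common points in $\PP^2(\overline\QQ)$ --- hence in particular $N_{\rm aff}(X,B)$ --- by $(\deg f)(\deg g)=d\deg g$. Multiplying the displayed degree bound by $d$ and pulling the resulting factor $d^2$ inside the minimum (legitimate since $d^2>0$) gives
\[
N_{\rm aff}(X,B)\leq c\, B^{1/d}\,\frac{\min(d^2\log\lVert f_d\rVert+d^3\log B+d^4,\ d^4 b(f))}{\lVert f_d\rVert^{1/d^2}}+cd\log B+cd^4,
\]
after renaming the absolute constant, which is the claim.

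The argument is essentially bookkeeping; the only point requiring a word of justification is the appeal to B\'ezout, namely that the homogenization $\bar f$ of $f$ is irreducible (it is not divisible by the extra variable $x_0$ because $f_d\neq 0$, so any homogeneous factorization of $\bar f$ would descend to a nontrivial factorization of $f$), and that $\bar f\nmid\bar g$ because $f\nmid g$; hence $\bar f$ and $\bar g$ are coprime and define curves with no common component.
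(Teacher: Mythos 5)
Your proof is correct and takes essentially the same approach as the paper, which simply says ``Take $n=1$ in Proposition~\ref{prop:Ellenb} and apply B\'ezout's theorem''; you have supplied the routine bookkeeping (specialising the exponents at $n=1$, multiplying by $d$, and checking coprimality of the homogenizations) that the paper leaves implicit.
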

\begin{proof}
Take $n=1$ in Proposition \ref{prop:Ellenb} and apply B\'ezout's theorem.
%raf: not needed anymore:
%The statement for $d=2$ follows from \cite[page 6]{Serre-Mordell}.
\end{proof}

If the absolute irreducibility of $f$ can be explained by the indecomposability of its Newton polytope, e.g., in the sense of~\cite{gao}, then this allows for good bounds on $b(f)$ which get rid of the factor $\log B$. The following instance will be used to prove Theorem~\ref{thm:bhargava_and_co}:

\begin{cor} \label{cor:indecomposableNP}
There exists a constant $c$ such that for all affine curves $X \subseteq \AA^2_\QQ$
cut out by a polynomial $f \in \ZZ[x_1, x_2]$ of the form
\[  c_dx_1^d + c_{d'}x_2^{d'} + \sum_{\substack{i, i' \\ id' + i'd < dd'}} c_{ij}x_1^ix_2^{i'}  \]
with $d > d' > 0$ coprime integers and $c_d, c_{d'} \neq 0$, and for all $B \geq 1$,
one has
\[ N_{\rm aff}(X,B)\leq c d^4  (\log|c_dc_{d'}| + 1)B^{1/d}.\]
\end{cor}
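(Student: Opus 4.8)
The plan is to deduce this from Corollary~\ref{cor:Ellenb}; the substance is (i) that $f$ is absolutely irreducible, so that this corollary applies and $X=Z(f)$ is an irreducible affine curve, and (ii) that $b(f)\le O(\log|c_dc_{d'}|+1)$. I would first reduce to $f$ primitive: dividing $f$ by the gcd of its coefficients changes neither the set of integral zeroes nor the shape of $f$, and only shrinks $|c_dc_{d'}|$, hence $\log|c_dc_{d'}|$; so the primitive case suffices. I also record that $f_d=c_dx_1^d$, because any monomial $x_1^ix_2^{i'}\neq x_1^d$ occurring in $f$ satisfies both $id'+i'd<dd'$ and $i+i'\le d$, which together force $i+i'<d$; hence $\lVert f_d\rVert=|c_d|\ge 1$.

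The key step is absolute irreducibility, seen through the Newton polytope. Weighting $x_1$ by $d'$ and $x_2$ by $d$, the monomials $x_1^d$ and $x_2^{d'}$ are the only ones of $f$ of maximal weighted degree $dd'$, so the weighted-leading form of $f$ is the binomial $c_dx_1^d+c_{d'}x_2^{d'}$. Since $\gcd(d,d')=1$ and $c_d,c_{d'}\neq 0$, this binomial is absolutely irreducible over any field: it is divisible by neither $x_1$ nor $x_2$, and its Newton polytope is the segment from $(d,0)$ to $(0,d')$, which is a primitive segment (no interior lattice point, as $\gcd(d,d')=1$) and therefore indecomposable, so \cite{gao} applies; this is also classical (via Gauss's lemma and the irreducibility criterion for $T^n-c$). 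If $f=gh$ over $\overline{\QQ}$ with $g,h$ non-constant, then passing to weighted-leading forms gives $c_dx_1^d+c_{d'}x_2^{d'}=g^{\mathrm{top}}h^{\mathrm{top}}$ with $g^{\mathrm{top}},h^{\mathrm{top}}$ both of positive weighted degree (as the weights are positive, only constants have weighted degree $0$), contradicting irreducibility; hence $f$ is absolutely irreducible, a fortiori irreducible over $\QQ$ and defining an irreducible affine curve. Running the same argument modulo a prime $p$ with $p\nmid c_dc_{d'}$ shows that $f\bmod p$ remains absolutely irreducible, since both leading monomials survive and $(f\bmod p)^{\mathrm{top}}=\overline{c_d}\,x_1^d+\overline{c_{d'}}\,x_2^{d'}$ is again a primitive binomial. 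Consequently the set $\cP$ of primes $p>27d^4$ modulo which $f$ fails to be absolutely irreducible is contained in $\{\,p : p\mid c_dc_{d'}\,\}$.

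It remains to bound $b(f)$ and assemble. From $\cP\subseteq\{p\mid c_dc_{d'}\}$ one gets $\log b(f)=\sum_{p\in\cP}\frac{\log p}{p}\le\sum_{p\mid c_dc_{d'}}\frac{\log p}{p}$, and using $\prod_{p\mid m}p\le|m|$ together with Chebyshev's lower bound on $\sum_{p\le y}\log p$ and Mertens' first theorem $\sum_{p\le y}\frac{\log p}{p}=\log y+O(1)$, this sum is $\le\log\log|c_dc_{d'}|+O(1)$ once $|c_dc_{d'}|$ is large, while if $|c_dc_{d'}|$ is so small that it has no prime factor exceeding $27d^4$ one simply has $\cP=\emptyset$ and $b(f)=1$. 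Either way $b(f)\le C(\log|c_dc_{d'}|+1)$ for an absolute constant $C$. Feeding this and $\lVert f_d\rVert=|c_d|\ge 1$ into Corollary~\ref{cor:Ellenb} via the branch $\min(\,\cdot\,,d^4b(f))\le d^4b(f)$ bounds the main term by $O(d^4(\log|c_dc_{d'}|+1)B^{1/d})$; the remaining summands $cd\log B$ and $cd^4$ are absorbed using $\log B\le dB^{1/d}$ for $B\ge 1$ (so $cd\log B\le cd^2B^{1/d}$) and $cd^4\le cd^4B^{1/d}$, both of which are $O(d^4(\log|c_dc_{d'}|+1)B^{1/d})$ as well. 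This yields the asserted estimate with an absolute constant.

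I expect the one genuinely delicate ingredient to be the estimate $\sum_{p\mid m}\frac{\log p}{p}\le\log\log|m|+O(1)$: this is precisely what produces the single logarithmic factor in the statement, and it must be extracted from Mertens' theorem together with $\prod_{p\mid m}p\le|m|$, since the crude per-prime bound available from $p>27d^4$ would only give something like $b(f)\le|c_dc_{d'}|^{O(1/d^4)}$, which is far too weak. The absolute-irreducibility input, although conceptually the heart of the matter, is routine once the weighted-degree (Newton polytope) observation is in place.
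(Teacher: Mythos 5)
Your proof is correct and follows essentially the same route as the paper: reduce to $f$ primitive, use the edge $(d,0)$--$(0,d')$ of the Newton polytope (the paper cites \cite[Theorem~4.11]{gao}, you give the equivalent weighted-leading-form factorization argument) to get absolute irreducibility in every characteristic prime to $c_dc_{d'}$, bound $b(f)$ over the primes dividing $c_dc_{d'}$ via Mertens, and feed the result into Corollary~\ref{cor:Ellenb}. The extra details you supply (that $f_d = c_dx_1^d$, the explicit $T$-splitting behind $\sum_{p\mid m}\frac{\log p}{p}\le\log\log|m|+O(1)$, and the absorption of the $cd\log B$ and $cd^4$ terms) are all implicit in the paper's terser write-up and are handled correctly.
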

\begin{proof}
  By dividing out by the greatest common divisor of the coefficients, we may suppose that $f$ is primitive.
 The presence of the edge $(d,0)$--$(0,d')$ in the Newton polytope of $f$ is enough to guarantee absolute irreducibility in any characteristic~\cite[Theorem~4.11]{gao}. Therefore we can bound
 \[ b(f) \leq \prod_{p \mid c_dc_{d'}} \exp ( \frac{\log p}{p} ) \leq \log |c_d c_{d'}| + 1 \]
 through Mertens' first theorem as in Corollary \ref{cor:estimate_bad_primes}.
\end{proof}

\subsection{Proofs of our main results}

We can now prove our main theorems, subject to the following propositions; they allow us to reduce to the case of hypersurfaces throughout, and will be established in Section \ref{sec8} by projection arguments.
%\begin{prop}\label{prop:affine_reduction_plane_curve}
%  Given a curve $C$ in $\AA^n$ of degree $d$, there exists a curve $C'$ in $\AA^2$ birational to $C$, also of degree $d$, such that for any $B \geq 1$ we have $N_{\rm aff}(C,B) \leq N_{\rm aff}(C', c_n d^{e_n} B) + d^2$, where $c_n, e_n$ are constants depending only on $n$.
%\end{prop}

\begin{prop}\label{prop:affine_reduction_projection}
  Given a geometrically integral affine variety $X$ in $\AA^n$ of dimension $m$ and degree $d$, there exists a geometrically integral affine variety $X'$ in $\AA^{m+1}$ birational to $X$, also of degree $d$, such that for any $B \geq 1$ we have \[ N_{\rm aff}(X,B) \leq d N_{\rm aff}(X', c_n d^{e_n} B), \] where $c_n, e_n$ are constants depending only on $n$.

  For $m=1$, we can even achieve \[ N_{\rm aff}(X,B) \leq N_{\rm aff}(X', c_n d^{e_n} B) + d^2 .\]
\end{prop}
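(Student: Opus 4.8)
\medskip

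The plan is to realize $X'$ as a generic linear projection of $X$ into a coordinate subspace of the right dimension, controlling both the degree and the height distortion. Concretely, I would first reduce to the case $m < n-1$ (if $m = n-1$ or $n$ there is nothing to do, taking $X' = X$). Then I would choose a linear projection $\pi \colon \AA^n \to \AA^{m+1}$ — more precisely, a projection realized by a matrix with small integer entries — such that the restriction $\pi|_X$ is generically finite onto its image $X'$, with $X'$ still geometrically integral of degree $d$ and $\pi|_X$ birational. The existence of a \emph{generic} such projection is classical; the work is to make it explicit, i.e.\ to bound the entries of the projection matrix polynomially in $d$ (using $n$ as a constant). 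This is exactly the kind of explicit projection statement the paper defers to Section \ref{sec8}, so here I would only be \emph{using} Proposition \ref{prop:affine_reduction_projection} itself — but since we are asked to sketch a proof of the proposition, the honest approach is: iterate a single step of projection $\AA^k \to \AA^{k-1}$, each time choosing a suitable direction of projection from among the $O(d^{O(1)})$ many coordinate-combination directions guaranteed to work by a dimension/degree count (finitely many directions are "bad" because they either drop the dimension, raise the degree, or fail birationality, and the number of bad ones is bounded in terms of $d$ by Bézout-type estimates).

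\medskip

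For the height bound: if $\pi$ is given by an integer matrix all of whose entries are bounded by some $P = P(n,d)$, then for an integral point $x \in X$ with $|x_i| \leq B$ we get $|\pi(x)_j| \leq (n) P B$, so $\pi(x)$ is an integral point of $X'$ of height at most $c_n d^{e_n} B$ with $c_n d^{e_n} := nP$ (here $P$ polynomial in $d$ is what the explicit projection argument of Section \ref{sec8} provides). Since $\pi|_X$ is generically finite and $X'$ has degree $d$, each fibre meets $X$ in at most $\deg(\pi|_X) \leq d$ points — more carefully, one should arrange the projection so that the generic fibre is a single point (birationality) and bound the exceptional locus: the points of $X$ where $\pi$ is not injective lie on a proper closed subset, but for the crude bound it suffices that \emph{every} fibre of $\pi|_X$ over a point of $X'$ has at most $\deg X = d$ points (a zero-dimensional fibre of a map to a variety of degree $d$ from a variety of degree $d$ has at most $d$ points, by Bézout applied after taking a further generic projection to a line, or simply because the scheme-theoretic fibre has length $\leq d$). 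This yields $N_{\rm aff}(X,B) \leq d \cdot N_{\rm aff}(X', c_n d^{e_n} B)$.

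\medskip

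For the sharper statement when $m = 1$: here $X$ is a curve, and I would choose the projection $\pi \colon \AA^n \to \AA^2$ to be birational onto a plane curve $X'$ of degree $d$. Birationality means $\pi|_X$ is injective outside a finite set $Z \subseteq X$; the size of $Z$ is controlled because $Z$ is contained in the preimage of the (finite) set of singular points of $X'$ together with the points where two branches of $X$ are identified, and a Bézout-type count bounds $\# Z$ by $O(d^2)$ — more precisely one can arrange $\#Z \leq d^2$ by choosing the projection so that the only collisions come from the at most $\binom{d-1}{2} \leq d^2$ nodes forced on a degree-$d$ plane model. Then integral points of $X$ of height $\leq B$ not in $Z$ inject into integral points of $X'$ of height $\leq c_n d^{e_n} B$, and the remaining $\leq d^2$ points of $Z$ are absorbed into the additive error term, giving $N_{\rm aff}(X,B) \leq N_{\rm aff}(X', c_n d^{e_n}B) + d^2$.

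\medskip

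\textbf{Main obstacle.} The technical heart is the explicit projection: producing a projection matrix with entries polynomially bounded in $d$ that simultaneously (i) preserves dimension, (ii) does not increase the degree, (iii) is birational (or at least generically finite of degree $\leq d$), and, in the curve case, (iv) introduces only $O(d^2)$ collision points. Each single condition excludes a proper subvariety of the space of projection directions whose degree is polynomial in $d$; the delicate point is to intersect all these conditions and still find an integer direction of small height, for which one invokes the explicit effective-Bézout and point-finding tools (Lemma \ref{lem:gen_Schwarz_Zippel} and Corollary \ref{cor:find_point}) — this bookkeeping is precisely what is carried out in Section \ref{sec8}.
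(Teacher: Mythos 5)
Your proposal takes essentially the same route as the paper: project $X$ onto $\AA^{m+1}$ by a small-height integer linear map that preserves degree, birationality, and integrality of points, bound each fibre by $d$, and for $m=1$ absorb the collision locus into an additive $d^2$. The paper phrases this projectively: it passes to the projective closure $Z$, applies Lemma~\ref{lem:projection} with the centre $\Lambda$ constrained to the hyperplane at infinity, and clears the resulting fixed denominator $L_1(P_1)\cdots L_{n-m-1}(P_{n-m-1})$ by a coordinate scaling. This is equivalent to your integer-matrix viewpoint, since an affine-linear $\AA^n\to\AA^{m+1}$ automatically has its projective centre at infinity. Two remarks on what your sketch leaves implicit. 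First, the point you correctly flag as the crux --- that a birational centre of small height exists \emph{subject to the at-infinity constraint} --- is exactly the last part of the paper's proof: one must show the ``good set'' $G_Z$ of Lemma~\ref{empty} meets the sub-Grassmannian of planes at infinity (done via a cone argument from Harris, then by iterating point-projections when $m<n-2$), and then rerun the small-point-finding Lemma~\ref{Existence} with the variables $x_{10},\dots,x_{n-m-1,0}$ set to $0$; without this, a random small integer matrix could have its centre meeting $Z$ at infinity and the image could drop degree. Second, for $m=1$ you propose to arrange a \emph{nodal} plane model and count $\binom{d-1}{2}$ nodes; the paper sidesteps this extra genericity constraint (and the attendant extra cost in height) by quoting a theorem of Kunz that bounds the preimages of \emph{all} singular points of \emph{any} degree-$d$ birational plane model by $(d-1)(d-2)$, which directly yields the $+\,d^2$ in the statement.
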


\begin{prop}\label{prop:projective_reduction_projection}
  Given a geometrically integral projective variety $X$ in $\PP^n$ of dimension $m$ and degree $d$, there exists a geometrically integral projective variety $X'$ in $\PP^{m+1}$ birational to $X$, also of degree $d$, such that for any $B \geq 1$ we have \[ N(X,B) \leq d N(X', c_n d^{e_n} B), \] where $c_n, e_n$ are constants depending only on $n$.

  For $m=1$, we can even achieve \[ N(X,B) \leq N(X', c_n d^{e_n} B) + d^2 .\]
\end{prop}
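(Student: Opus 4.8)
The plan is to run the classical linear-projection argument, made effective in $d$ and $n$ in the spirit of \cite{Brow-Heath-Salb} (this is what Section~\ref{sec8} carries out). First I would fix homogeneous generators $f_1,\dots,f_r$ of the ideal of $X$ and look for a linear projection $\pi\colon\PP^n\dashrightarrow\PP^{m+1}$ with center a linear subspace $L$ of dimension $n-m-2$, i.e.\ given by $m+2$ integer linear forms $\ell_0,\dots,\ell_{m+1}$ in $x_0,\dots,x_n$, subject to two conditions: (a) $L\cap X=\emptyset$; (b) $\pi$ restricts to a birational morphism $\pi|_X\colon X\to X':=\overline{\pi(X)}\subseteq\PP^{m+1}$. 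Both hold for a generic choice. Encoding the coefficients as a matrix $M\in\ZZ^{(m+2)\times(n+1)}$, the set of $M$ for which (a) or (b) fails is a proper Zariski-closed subset of the space of matrices, cut out by the vanishing of resultant-type and elimination polynomials formed from the $f_i$ and $M$; the key point will be to bound the degrees of these polynomials by a fixed polynomial in $d$ (depending on $n$). Granting this, Lemma~\ref{lem:gen_Schwarz_Zippel} (Schwartz--Zippel) produces such an $M$ with all entries of absolute value at most $c_n d^{e_n}$ for suitable constants $c_n,e_n$. Since $L\cap X=\emptyset$, the projection $\pi|_X$ is finite and $\deg X=\deg(\pi|_X)\cdot\deg X'$, so (b) gives $\deg X'=d$; thus $X'$ is a hypersurface in $\PP^{m+1}$, geometrically integral (as $X$ is and $\pi|_X$ is dominant), birational to $X$, and defined over $\QQ$.

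Next I would control heights and fibers. For a primitive integer tuple $x=(x_0,\dots,x_n)$ with $|x_i|\le B$ representing a point of $X(\QQ)$ we have $x\notin L$, so $\pi(x)=(\ell_0(x):\dots:\ell_{m+1}(x))$ is a well-defined point of $X'(\QQ)$ whose integer representative has all entries bounded by $(n+1)\max_j\lVert\ell_j\rVert\,B\le c_n d^{e_n}B$, and passing to the primitive representative only lowers the height; hence $H(\pi(x))\le c_n d^{e_n}B$. For the fibers, finiteness of $\pi|_X$ shows each $\pi^{-1}(y)\cap X$ (for $y$ in the image) is finite, equal to $\Lambda_y\cap X$ for the $(n-m-1)$-dimensional fiber plane $\Lambda_y\supseteq L$ of $\pi$ over $y$. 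Choosing an $(n-m)$-plane $\Pi\supseteq\Lambda_y$ with $\dim(\Pi\cap X)=0$ --- which exists, since otherwise every $(n-m)$-plane through $\Lambda_y$ would meet $X$ in positive dimension, contradicting a dimension count on the incidence variety of pairs $(z,\Pi)$ with $\Pi\supseteq\Lambda_y$ and $z\in\Pi\cap X$ (its projection to $X$ is generically one-to-one, so it has dimension $m$, while its projection to the $\PP^m$ of planes $\Pi\supseteq\Lambda_y$ is surjective because $\Lambda_y\cap X\neq\emptyset$) --- and invoking the standard bound that a zero-dimensional codimension-$m$ linear section of a degree-$d$ variety of dimension $m$ has at most $d$ points, one gets $\#(\Lambda_y\cap X)\le\#(\Pi\cap X)\le d$. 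Combining, $N(X,B)\le d\,N(X',c_n d^{e_n}B)$.

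For the refinement when $m=1$, I would note that $\pi|_X\colon X\to X'$ is now a finite birational morphism of curves and $X'\subseteq\PP^2$ is a geometrically integral plane curve of degree $d$, of arithmetic genus $\binom{d-1}{2}$. The normalization of $X'$ coincides with that of $X$ and factors through $\pi|_X$, so the number of points of $X$ above a point $y\in X'$ is at most the number $r_y$ of branches of $X'$ at $y$; consequently, whenever $\pi^{-1}(y)\cap X(\QQ)$ is nonempty it has at most $1+(r_y-1)$ elements. Since $r_y-1\le\delta_y$ for every $y$ and $\sum_y\delta_y\le p_a(X')=\binom{d-1}{2}\le d^2$, summing this bound over the (at most $N(X',c_n d^{e_n}B)$ many) points $y\in X'(\QQ)$ of height at most $c_n d^{e_n}B$ with nonempty $\QQ$-fiber gives $N(X,B)\le N(X',c_n d^{e_n}B)+d^2$.

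The hard part will be the effectivity asserted in the first paragraph: expressing ``$L\cap X=\emptyset$'' and ``$\pi|_X$ birational'' as the non-vanishing of explicit polynomials in the entries of $M$, with degrees polynomial in $d$ and $n$ --- the first via a Chow-form/resultant computation for $X$, the second via an effective version of the classical fact that a general projection to $\PP^{m+1}$ is birational onto its image (the bad locus being governed by multisecant lines to $X$ through $L$). This is precisely the explicit projection machinery of \cite{Brow-Heath-Salb} to be developed in Section~\ref{sec8}; by contrast the remaining ingredients used above --- the degree formula for finite projections, the linear-section bound, and the genus formula for plane curves --- are classical.
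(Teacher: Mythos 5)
Your proposal follows essentially the same route as the paper: find a $\QQ$-rational linear center disjoint from $X$ together with a target plane, both of controlled height via effective bad-locus-degree bounds plus Schwartz--Zippel (this is the content of the paper's Lemma \ref{lem:projection}, proved through Chow forms and the Noether polynomials of Theorem \ref{thm:explicit_noether}), then bound fibers by $d$, and for $m=1$ use the normalization to account for the multiplicity. The only differences are cosmetic: you parametrize the whole projection by one coefficient matrix $M$ rather than the paper's split into a Pl\"ucker-embedded center $\Lambda$ and a separately chosen target $\Gamma$ (found via Bombieri--Vaaler), you spell out the fiber bound by extending $\Lambda_y$ to a codimension-$m$ plane meeting $X$ properly (the paper states it without proof), and you bound the $m=1$ excess by $\sum_y(r_y-1)\leq\sum_y\delta_y\leq p_a(X')$ rather than citing Kunz's count of preimages of singular points --- all standard and yielding the same $d^2$ bound.
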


\begin{proof}[Proof of Theorem \ref{thm:W1.1}]
  In the case of a planar curve, i.e.~for $n=2$, Corollary \ref{cor:precise_Walsh_planar_curve} gives the claim.
%  and the trivial bounds from Section \ref{sec:triv} give
%  \[ N(X,B) \leq O_n(d^4 B^{2/d}), \]
%  using that
%  \[ \frac{d^4 b(f)}{\lVert f\rVert^{1/d^2}} \leq \frac{d^2 \log \lVert f\rVert + d^4}{\lVert f\rVert^{1/d^2}} \leq 2d^4\]
%  by Corollary \ref{cor:estimate_bad_primes}.
  For the general case, we may assume that the given curve is geometrically integral by Remark \ref{rem:wlog_abs_irred}, and then reduce to $n=2$ by applying Proposition \ref{prop:projective_reduction_projection} (where $m=1$).
\end{proof}
\begin{proof}[Proof of Theorem \ref{thm:Bombieri-Pila}]
We may assume that the curve $X$ is geometrically integral by Remark \ref{rem:wlog_abs_irred}.
In the case of a planar curve, i.e.~for $n=2$, Corollary \ref{cor:Ellenb} yields that
  \[ N(X,B) \leq O_n((d^3 \log B + d^4)B^{1/d}) ,\]
  by observing that
  \[  \frac{d^2\log \lVert f_d \rVert + d^3\log B + d^4}{\lVert f_d \rVert^{1/d^2}} \leq d^3 \log B + 2d^4 .\]
We can reduce the general case to $n=2$ by applying Proposition \ref{prop:affine_reduction_projection} (where $m=1$), yielding the same estimate.
\end{proof}

\begin{proof}[Proof of Theorem~\ref{thm:bhargava_and_co}]
  In the penultimate step of their proof of \cite[Theorem~1.1]{BSTTTZ}, Bhargava et al.\ establish the bound
  \[ h_2(K) \leq O_{d,\varepsilon} (|\Delta_K|^{\frac{1}{4} + \varepsilon}) + \sum_{\beta \in \mathcal{B}} N_{\rm aff}(f_\beta,|\Delta_K|^{\frac{1}{2}}) \]
  where $\mathcal{B} \subseteq \mathcal{O}_K$ is a set of size $O_d(|\Delta_K|^{\frac{1}{2} - \frac{1}{d}})$ and \[ f_\beta = y^2 - N_{K/\QQ}(x - \beta) = y^2 - x^d - \text{lower order terms in $x$}. \] Theorem~\ref{thm:Bombieri-Pila} implies that
  \[ N_{\rm aff}(f_\beta,|\Delta_K|^{\frac{1}{2}}) \leq O_d(|\Delta_K|^{\frac{1}{2d}} \log |\Delta_K|) ,\]
  yielding the desired result when $d$ is even.
   If $d$ is odd then instead of Theorem~\ref{thm:Bombieri-Pila} we apply Corollary~\ref{cor:indecomposableNP} with $d'=2$, $c_d = -1$, $c_{d'} = 1$ to get rid of the factor $\log | \Delta_K|$.
\end{proof}

For the proof of Theorem \ref{thm:0.4}, we need the following explicit form of Proposition 1 of \cite{Brow-Heath-Salb} with $D=1$.
\begin{prop}\label{prop1:Brow-Heath-Salb}
There exists a constant $c$
such that for all $d\geq 3$ and all polynomials $f\in \ZZ[x_1,x_2,x_3]$ of degree $d$ such that the highest degree part $h(f)=f_d$ of $f$ is  irreducible, all finite sets $I$ of curves $C$ of $\AA^3_\QQ$ of degree $1$ and lying on the hypersurface defined by $f$, and all $B\geq 1$ one has
$$
N_{\rm aff}(X  \cap ( \cup_{C \in I}  C   )  ,B) \leq c  d^6 B + \# I. %        \frac{(d^2\log ||f_d||+ d^3\log B + d^4)}{||f_d||^{1/d^2}} + cd^3  .
$$
\end{prop}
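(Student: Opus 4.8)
The plan is to bound $\sum_{C\in I}N_{\rm aff}(C,B)$ termwise, absorbing into the summand the contribution $\#I$ coming from lines that carry at most one integer point of height $\le B$, and treating the remaining lines via their directions. Suppose $C\in I$ carries two such points; then $C$ is defined over $\QQ$, its primitive integer direction vector $v_C$ satisfies $\lVert v_C\rVert_\infty\le 2B$ (it divides the difference of the two points), the integer points of $C$ inside $[-B,B]^3$ form a single arithmetic progression of step $v_C$, so $N_{\rm aff}(C,B)\le \tfrac{2B}{\lVert v_C\rVert_\infty}+1\le \tfrac{4B}{\lVert v_C\rVert_\infty}$, and the point at infinity $[v_C]\in\PP^2(\QQ)$ of $C$ lies on the curve $\Gamma:=\{f_d=0\}\subseteq\PP^2$, the section at infinity of the projective closure of $X=\{f=0\}$. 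Here $\Gamma$ is integral of degree $d$ since $f_d$ is (absolutely) irreducible, and $[v_C]$ has height exactly $\lVert v_C\rVert_\infty$ because $v_C$ is primitive. Note $f_d$ irreducible also forces $f$ itself irreducible, leading forms being multiplicative.

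The key step is a bound, uniform in a direction, on how many lines of $I$ point that way. Every line on $X$ with a given primitive direction $v$ lies in $X_v:=\{w\in\AA^3 : w+\QQ v\subseteq X\}$, which is a union of parallel lines in direction $v$, and $X_v\subseteq\{f=0\}\cap\{D_vf=0\}$ where $D_vf$ is the directional derivative. Now $D_vf\not\equiv 0$: otherwise $f$ would be translation-invariant along $v$, hence (after a linear coordinate change turning $v$ into a coordinate direction) a polynomial in only two of the three variables, forcing $f_d$, a form of degree $d\ge 3$, to be a product of $\le d$ linear forms over $\overline{\QQ}$ and contradicting its irreducibility. Since moreover $\deg D_vf<d$ and $f$ is irreducible, $f\nmid D_vf$, so $\{f=0\}\cap\{D_vf=0\}$ has no $2$-dimensional component; by Krull it is then purely $1$-dimensional, and by B\'ezout of degree $<d^2$. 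Projecting $\AA^3\to\AA^2$ along $v$ collapses $X_v$ onto a finite set with one point per line, so $X$ (a fortiori $I$) contains at most $d^2$ lines in direction $v$.

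Assembling these, and letting $[v]$ range over the directions of lines of $I$ carrying $\ge 2$ integer points of height $\le B$ (so $[v]\in\Gamma(\QQ)$ with $h([v])\le 2B$), one gets
\[
N_{\rm aff}\Big(\bigcup_{C\in I}C,\;B\Big)\ \le\ \#I\ +\ 4d^2B\!\!\sum_{\substack{p\in\Gamma(\QQ)\\ h(p)\le 2B}}\frac{1}{h(p)}.
\]
The remaining sum is handled by a dyadic decomposition: splitting $\Gamma(\QQ)$ according to $2^j\le h(p)<2^{j+1}$ and invoking Theorem~\ref{thm:W1.1}, which gives $N(\Gamma,t)\le c_1 d^4 t^{2/d}$ with $c_1$ absolute, the sum is at most $c_1 d^4 2^{2/d}\sum_{j\ge 0}\big(2^{2/d-1}\big)^j$; since $d\ge 3$ makes $2^{2/d-1}\le 2^{-1/3}<1$, the geometric series converges to an absolute constant, so the sum is $\le c_2 d^4$ with $c_2$ absolute. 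This yields $N_{\rm aff}(\bigcup_{C\in I}C,B)\le \#I + c\,d^6 B$ with $c=4c_2$ absolute, as claimed.

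The delicate point is the direction-uniform bound on the number of parallel lines of $X$: verifying that $\{f=0\}\cap\{D_vf=0\}$ is a proper (hence purely $1$-dimensional, degree $<d^2$) intersection --- which is exactly where absolute irreducibility of the leading form $f_d$ enters, both to force $f$ irreducible and to exclude $D_vf\equiv 0$ --- together with identifying the lines of $I$ in a fixed direction with points of the image of $X_v$ under projection along $v$. The remaining ingredients (the arithmetic-progression count on a single line, the passage to $\QQ$-points on $\Gamma$, and the dyadic summation against Theorem~\ref{thm:W1.1}) are routine.
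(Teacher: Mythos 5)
Your proof is correct and follows the same strategy as the paper's: split $I$ into lines carrying at most one integral point (absorbed into $\#I$) and the rest; for the rest, bound the points per line by an arithmetic-progression count $O(B/H(v))$ in terms of the primitive direction $v$, observe $[v]$ lies on the curve at infinity $\Gamma=\{f_d=0\}$, bound the number of lines of $X$ in a fixed direction by $O(d^2)$ via the intersection $\{f=0\}\cap\{D_vf=0\}$, and sum over directions using the $O(d^4 B^{2/d})$ bound on rational points of a plane curve of degree $d$. The only cosmetic difference is in the last step: the paper does Abel summation against $n_i=\#\{p\in\Gamma(\QQ):H(p)=i\}$ with Corollary~\ref{cor:precise_Walsh_planar_curve}, while you use a dyadic decomposition against Theorem~\ref{thm:W1.1}; these are equivalent and give the same $d^6B$. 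Where you are actually more careful than the paper is the direction-uniform bound: the paper asserts in one clause that each parallel line meets a generic hyperplane in a common zero of $f$ and $D_vf$, implicitly assuming $D_vf\not\equiv 0$ and that $f$ and $D_vf$ are coprime; you make this explicit, observing that $D_vf\equiv 0$ would make $f_d$ a binary form of degree $d\ge 3$ and hence reducible over $\overline{\QQ}$, and that $f$ irreducible of degree $d$ cannot divide $D_vf$ of degree $<d$. This is the one place where absolute (not merely $\QQ$-) irreducibility of $f_d$ is genuinely used, matching the hypothesis under which the proposition is invoked in Proposition~\ref{prop:0.4strong}.
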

\begin{proof}
We write $I=I_{1}\cup I_2$ where $I_1=\{L\in I| N_{\rm aff}(L,B)\leq 1\}$ and $I_2=\{L\in I|N_{\rm aff}(L,B)>1\}$.
It is clear that $N_{\rm aff}(X\cap\cup_{L\in I_1}L)\leq \#I_1$. If $L\in I_2$, then there exist $a=(a_1,a_2,a_3), v=(v_1,v_2,v_3)\in \ZZ^{3}$ such that $H(a)\leq B$, $v$ is primitive and $L(\QQ)=\{a+\lambda v|\lambda\in\QQ\}$.
Since $v$ is primitive we deduce that
$$
L(\ZZ)\cap [-B,B]^{3}=\{a+\lambda v|\lambda\in \ZZ,\  H(a+\lambda v)\leq B\}.
$$
So
\[ \#(L(\ZZ)\cap [-B,B]^{3}) \leq 1+\dfrac{2B}{H(v)}.\] Since $L\in I_2$ we have $H(v)\leq 2B$ and $f_d(v)=0$. On the other hand, for each point $v$ with $f_d(v)=0$, there are at most $d(d-1)$ lines $L\in I_2$ in the direction of $v$, since each such line intersects a generic hyperplane in $\AA^3$ in a point which is simultaneously a zero of $f$ and of the directional derivative of $f$ in the direction of $v$.
Put $A_i=\{v\in \PP^{2}(\QQ)|f_d(v)=0,\  H(v)=i\}$ and $n_i=\#A_i$. Then, by Corollary \ref{cor:precise_Walsh_planar_curve}, there exists a constant $c$ independent of $f$ such that $\sum_{1\leq i\leq k}n_i\leq cd^{4}k^{\frac{2}{d}}$. By our discussion,
$$
N_{\rm aff}(X  \cap ( \cup_{C \in I}  C   )  ,B)\leq \#I_1+ (d-1)d\sum_{i=1}^{2B}n_i(1+\dfrac{2B}{i}) .
$$
On the other hand, summation by parts gives the following:
\begin{align*}
\sum_{i=1}^{2B}n_i(1+\dfrac{2B}{i})&=\sum_{k=1}^{2B-1}(\sum_{i=1}^{k}n_i)(\dfrac{2B}{k}-\dfrac{2B}{k+1})+(\sum_{i=1}^{2B}n_i)(1+\dfrac{2B}{2B})\\
&\leq cd^{4}(\sum_{k=1}^{2B-1}k^{\frac{2}{d}}\dfrac{2B}{k(k+1)}+2(2B)^{\frac{2}{d}}).
\end{align*}
Since $d\geq 3$, one has $\sum_{k\geq 1}k^{\frac{2}{d}}\frac{1}{k(k+1)}<+\infty$ and $B^{\frac{2}{d}}\leq B$. Thus, by enlarging $c$, we have
$$
N_{\rm aff}(X  \cap ( \cup_{C\in I}  C   )  ,B)\leq cd^{6}B+\#I
$$
as desired.
\end{proof}

In order to prove Theorem \ref{thm:0.4}, we now first consider the case of a surface in $\PP^3$, with proof inspired by the proof of Corollary 7.3 of \cite{Salberger-dgc} in combination with the improvements developed above.
\begin{prop}\label{prop:0.4strong}
There exists a constant $c$ such that for all polynomials $f$ in $\ZZ[y_1,y_2,y_3]$ whose homogeneous part of highest degree $f_d$ is irreducible over $\overline\QQ$ and whose degree $d$ is least $5$, one has
$
N_{\rm aff}(f,B)\leq c d^{14} B
$.
\end{prop}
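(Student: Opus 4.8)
The plan is to adapt the proof of Corollary~7.3 of \cite{Salberger-dgc}, feeding in the sharpened auxiliary polynomial of Proposition~\ref{prop:Ellenb}, the dedicated line bound of Proposition~\ref{prop1:Brow-Heath-Salb}, and the affine curve bound of Theorem~\ref{thm:Bombieri-Pila}. First note that since $f_d$ is irreducible over $\overline{\QQ}$, so is $f$: a proper factorization of $f$ would induce, by passing to leading forms, a proper factorization of $f_d$. After dividing out the content we may also assume $f$ primitive, and for $B$ below a suitable absolute constant the trivial bound $N_{\rm aff}(f,B)\le d(2B+1)^2$ of Lemma~\ref{lem:gen_Schwarz_Zippel} already suffices, so we assume $B$ large.

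Next, apply Proposition~\ref{prop:Ellenb} with $n=2$ to produce $g\in\ZZ[y_1,y_2,y_3]$, not divisible by $f$, vanishing at every integer zero of $f$ of height at most $B$, and (keeping the first term of the minimum there) of degree at most a constant times
\[
 B^{1/\sqrt d}\, d^{3/2}\,\frac{\log\lVert f_d\rVert+d\log B+d^2}{\lVert f_d\rVert^{1/(2d^{3/2})}}\;+\;d^{1/2}\log B\;+\;d^{7/2}.
\]
Using $\sup_{x\ge 1}(\log x)/x^{\varepsilon}=1/(e\varepsilon)$ with $\varepsilon=\frac{1}{2d^{3/2}}$, together with $\lVert f_d\rVert^{-\varepsilon}\le 1$, this simplifies to $\deg g\le O\!\big((d^{7/2}+d^{5/2}\log B)\,B^{1/\sqrt d}\big)$. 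Put $C=Z(f)\cap Z(g)$; as $\gcd(f,g)=1$ this is a curve containing all the integer points we wish to count, of degree $D\le d\deg g\le O\!\big((d^{9/2}+d^{7/2}\log B)\,B^{1/\sqrt d}\big)$.

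Now decompose $C$ into its $\QQ$-irreducible components and count separately on each. The degree-one components are lines on $Z(f)$; as $h(f)=f_d$ is irreducible and $d\ge 5\ge 3$, Proposition~\ref{prop1:Brow-Heath-Salb} bounds their joint contribution by $cd^6B$ plus the number of lines, which is at most $D$. To the components of degree $k$, $2\le k\le d$, we apply Theorem~\ref{thm:Bombieri-Pila} (valid also for the geometrically reducible ones, see Remark~\ref{rem:wlog_abs_irred}), getting at most $c\,k^3B^{1/k}(\log B+k)$ per component; since the degrees of the components sum to at most $D$, there are at most $D/k$ of degree $k$, so their total is at most $c\,D\,k^2B^{1/k}(\log B+k)$. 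After inserting the bound for $D$ and dividing by $B$, the power of $B$ that remains is $B^{1/\sqrt d+1/k-1}$, whose exponent is negative for every $2\le k\le d$ precisely because $d\ge 5$ (so $1/\sqrt d+1/2<1$); hence that power times any fixed power of $\log B$ is bounded by an absolute constant. Summing over $k$, adding the line term (and the trivial contribution for small $B$), and being generous with the exponent of $d$, one arrives at $N_{\rm aff}(f,B)\le c\,d^{14}B$.

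The delicate point --- and the reason $d\ge 5$ is needed --- is the treatment of the low-degree components of $C$. Since $\deg g$ genuinely grows like $B^{1/\sqrt d}$, the curve $C$ can split into $\sim B^{1/\sqrt d}$ conics, each carrying $\sim B^{1/2}$ integer points, for a total $\sim B^{1/2+1/\sqrt d}$; this stays below the target $B$ exactly when $d\ge 5$ (it fails for $d=4$). This is why one must use the sharp affine exponent $B^{1/d^{1/n}}$ of Proposition~\ref{prop:Ellenb} rather than the projective exponent $B^{(n+1)/(nd^{1/n})}=B^{3/(2\sqrt d)}$ of Theorem~\ref{thm:walsh-hypers-precise}, with which the conic contribution $\sim B^{1/2+3/(2\sqrt d)}$ would exceed $B$. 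A second, genuinely separate point is that lines cannot be handled through Theorem~\ref{thm:Bombieri-Pila}, which would only give $\sim B\log B$ per line; this forces the use of Proposition~\ref{prop1:Brow-Heath-Salb}, whose proof in turn exploits the irreducibility of $f_d$ via the planar curve bound of Corollary~\ref{cor:precise_Walsh_planar_curve} to limit the number of line directions.
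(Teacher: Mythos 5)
Your high-level blueprint for $d\ge 16$ (auxiliary polynomial from Proposition~\ref{prop:Ellenb}, intersect to get a curve $C$, lines via Proposition~\ref{prop1:Brow-Heath-Salb}, higher-degree components via Theorem~\ref{thm:Bombieri-Pila}) matches the paper's. However, there is a genuine gap that invalidates the claim that this single argument covers all $d \ge 5$.

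You assert that the components of $C = Z(f)\cap Z(g)$ have degree $k$ with $2 \le k \le d$. This is false: a component of $C$ lies on the degree-$d$ surface $Z(f)$, but that puts no upper bound of $d$ on its degree (a quadric already contains curves of every degree). All that B\'ezout gives is that the degrees of the components sum to at most $D \le d\cdot\deg g$, which is of order $d^{9/2}B^{1/\sqrt{d}}$, so an individual component can have degree comparable to $D$ itself. For such a component of degree $\delta \ge \log B$, the bound from Theorem~\ref{thm:Bombieri-Pila} degenerates to $O(\delta^4)$, and summing $\delta_i^4$ subject to $\sum\delta_i \le D$ can give $O(D^4) \approx d^{O(1)}B^{4/\sqrt{d}}$. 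Since $4/\sqrt{d} > 1$ for $d < 16$, this term is \emph{larger} than the target $O(B)$ in the whole range $5 \le d \le 15$. Your claim that ``the power of $B$ is $B^{1/\sqrt d + 1/k - 1}$ times a fixed power of $\log B$'' silently relies on $k$ being $O(d)$; once $k$ is allowed to be of size $D \sim B^{1/\sqrt d}$ the prefactor $k^2(\log B + k)$ contributes additional positive powers of $B$ and the estimate fails.

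Consequently your diagnosis of ``the reason $d\ge 5$ is needed'' is incomplete: the conic scenario ($\sim B^{1/\sqrt d}$ conics, each with $\sim B^{1/2}$ points, requiring $1/\sqrt d + 1/2 < 1$) is indeed one constraint, but the binding constraint of this self-contained argument is the single-large-component scenario, which requires $4/\sqrt d \le 1$, i.e.\ $d \ge 16$. The paper explicitly splits components at the threshold $\delta = \log B$ to isolate this effect (its equations~\eqref{eq:delta1} and~\eqref{eq:delta2}), restricts the self-contained argument to $d \ge 16$, and handles $6\le d\le 15$ by reworking the proof of \cite[Theorem~2]{Brow-Heath-Salb} and $d=5$ by invoking \cite[Theorem~7.2]{Salberger-dgc}. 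Your proof, as written, gives no mechanism to cover $5 \le d \le 15$ and would need to be supplemented by those external inputs. (A minor further point: you keep the first branch of the $\min$ in Proposition~\ref{prop:Ellenb}, picking up a spurious $\log B$ factor in $\deg g$; the paper uses the $d^2 b(f)$ branch together with $b(f)\le b(f_d)$ to avoid this, though this particular difference is harmless.)
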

\begin{proof}[Proof of Proposition \ref{prop:0.4strong} for $d\geq 16$]
For any prime modulo which $f_d$ is absolutely irreducible, the reduction of $f$ is likewise absolutely irreducible, so $b(f) \leq b(f_d)$.
Applying the usual estimate from Corollary \ref{cor:estimate_bad_primes}, Proposition \ref{prop:Ellenb} yields for each $B \geq 1$ a polynomial
$g$ of degree at most
\begin{equation}\label{eq:degree:g}
cd^{7/2} B^{1/\sqrt{d}},
\end{equation}
not divisible by $f$ and vanishing on all points  $x$ in $\ZZ^n$ satisfying $f(x)=0$ and $|x_i|\leq B$, with $c$ an absolute constant. Let $C$ be an irreducible component of the (reduced) intersection of $f=0$ with $g=0$. Call this intersection $\cC$.
%We first prove that
%$$
%n(f,B) %:= \#\{x\in \ZZ^n\mid |x_i|< B \mbox{ for each $i$ and }f(x)=0\}
%\leq c d^e \big( B^{ 2/\sqrt{d}} (\log B  + 1 ) + B\big),
%$$
%from which the cases $d=3$ a
If $C$ is of degree $\delta>1$, then
\begin{equation}\label{eq:delta}
N_{\rm aff}(C,B) \leq c'  \delta^{3} B^{1/\delta} (\log B +\delta)
\end{equation}
by Theorem \ref{thm:Bombieri-Pila}, for some absolute constant $c'$.%; in fact we can take $e' < \sqrt{17}$.

%For each $\delta >1$, there can at most be
%$$
%c\frac{d^{e+1}}{\delta} B^{1/\sqrt{d}} (\log B  + 1 )
%$$
%many irreducible components  of $\cC$ of degree $\delta$.

%Hence, the total contribution to $n(f,B)$ of all irreducible components $C$ of $\cC$ of degree exactly $\delta$ with $\delta>1$ is at most
%$$
%cc'  \delta^{e'-1} B^{1/\delta} (\log B +1) d^{e+1} B^{1/\sqrt{d}} (\log B  + 1 )
%$$

%larger than $2$ is at most
%$$
%cd^e B^{\frac{1}{\sqrt{d}} + \frac{1}{3} } (\log B  + 1 ),
%$$
%for some $c,e$, independent from $d$ and $B$.
By Proposition \ref{prop1:Brow-Heath-Salb}, the total contribution of integral curves $D$ of $\cC$ of degree  $1$ is at most
\begin{equation}\label{eq:degree1}
c''d^6 B
\end{equation}
for some absolute constant $c''$.
%Furthermore, again by Proposition \ref{prop1:Brow-Heath-Salb} %for the cases $d\not=4$ and by Proposition 1 of \cite{prop1:Brow-Heath-Salb} for the case $d=4$,
%the total contribution of all integral curves $D$ of the (reduced) intersection of $f=0$ with $g=0$ of degree  $2$ is at most
%$$
%cd^e B^{\frac{1}{\sqrt{d}} + \frac{1}{2} } (\log B  + 1 ),
%$$
%for some $c,e$, independent from $d$ and $B$.

%Now we first complete the proof when $d>4$.

Suppose that $C_1,...,C_k$ are irreducible components of the intersection of $f=0$ and $g=0$ and $\deg(C_i)>1$ for all $i$. Furthermore, we assume that $\deg(C_i)\leq\log B$ for all $1\leq i\leq m$ and $\deg(C_i)>\log B$ for all $i>m$. Since the function $\delta \mapsto 4\log_B(\delta)+\frac{1}{\delta}$ is decreasing in $(0,\frac{\log B}{4})$ and increasing in $(\frac{\log B}{4},+\infty)$, by enlarging $c'$, for all $1\leq i\leq m$ we have
\begin{equation}\label{eq:delta1}
N_{\rm aff}(C_i,B) \leq c' B^{1/2} (\log B +1).
\end{equation}
On the other hand, if $\delta> \log B$ then $B^{\frac{1}{\delta}}$ is bounded, so \eqref{eq:degree:g} and \eqref{eq:delta} imply
\begin{equation}\label{eq:delta2}
\sum_{m+1\leq i\leq k}N_{\rm aff}(C_i,B) \leq c'''d^{14}B^{4/\sqrt{d}}
\end{equation}
for some $c'''$ independent of $d$ and $B$.

Putting the estimates (\ref{eq:degree:g}),  (\ref{eq:degree1}), (\ref{eq:delta1}), (\ref{eq:delta2}) together proves the proposition when $d$ is at least $16$. %large (compared to $e'$ from (\ref{eq:delta}). %except for the case $d=4$.
%\change{make this more explicit!}
%
%Finally, when $d=3$ or when $d=4$, Proposition 1 of \cite{Brow-Heath-Salb} gives that the total contribution of all integral curves $D$ of $\cC$ of degree  $2$ is at most
%\begin{equation}\label{eq:degree2}
%c''' B
%\end{equation}
%for some $c'''$ independent from $B$.  Putting the estimates (\ref{eq:degree:g}), (\ref{eq:delta}) with $\delta>2$, (\ref{eq:degree1}), and  (\ref{eq:degree2})  finishes the cases $d=3$ and $d=4$. \change{make this more explicit!}
\end{proof}

To give a proof of Proposition \ref{prop:0.4strong} for lower values of $d$ than $16$, one could try to get a form of Theorem \ref{thm:Bombieri-Pila} with a lower  exponent of the degree and repeat the above proof. We proceed differently: we treat the values for $d$ going from $6$ up to $15$ by inspecting the proof of \cite[Theorem 2]{Brow-Heath-Salb} in combination with some of the above refinements, and the case of $d=5$ by using \cite[Theorem 7.2]{Salberger-dgc} (at the cost of being less self-contained). %and our Proposition \ref{prop1:Brow-Heath-Salb} and Theorem \ref{thm:Bombieri-Pila}.
 
\begin{proof}[Proof of Proposition \ref{prop:0.4strong} with $6 \leq d \leq 15$]
  Fix $6 \leq d \leq 15$, let $f \in \ZZ[y_1, y_2, y_3]$ be of degree $d$ with absolutely irreducible homogeneous part of highest degree, and let $X$ be the surface described by $f$.

  In \cite[Theorem 2]{Brow-Heath-Salb}, the estimate $N_{\rm aff}(f, B) \leq O_{d,\varepsilon}(B^{1+\varepsilon})$ is established for every $\varepsilon > 0$. However, using our Theorem \ref{thm:W1.1} and Proposition \ref{prop1:Brow-Heath-Salb}, we shall show that their proof \cite[pp. 568--570]{Brow-Heath-Salb} in fact gives the bound $N_{\rm aff}(f,B) \leq O_d(B)$, without any $\varepsilon$, which is sufficient for our purposes.

  Specifically, they first consider the case in which Lemma \ref{lem:bounded_coefficients_or_vanishing_g} applies, so all the rational points on $X$ of height up to $B$ lie on a union of irreducible curves with sum of degrees at most $d^2$. Applying Theorem \ref{thm:W1.1} to those curves of degree $\geq 2$ and Proposition \ref{prop1:Brow-Heath-Salb} for the contribution of curves of degree $1$ yields the claim in this case.

  In the remaining case, it is argued that there is an open subset $U \subseteq X$ (specifically consisting of those nonsingular points on $X$ which have multiplicity at most $2$ on the tangent plane section at the point) whose complement consists of $O_d(1)$ integral components of degree $O_d(1)$; by the same argument as in the preceding paragraph, the contribution of this complement is $O_d(B)$, so it suffices to estimate $N_{\rm aff}(U,B)$.

  Further, it is argued that the points on $U$ of height at most $B$ are covered by a certain collection of irreducible curves. The subcollection $I$ consisting of those curves of degree at most $2$ satisfies $\lvert I \rvert \leq O_{d, \varepsilon}(B^{2/\sqrt{d} + 2\varepsilon})$, so our Proposition \ref{prop1:Brow-Heath-Salb} and \cite[Proposition 1]{Brow-Heath-Salb} gives a contribution $O_{d,\varepsilon}(B + B^{2/\sqrt d + 3\varepsilon}) \leq O_{d}(B)$.

  The remaining curves, of which there are no more than $O_{d,\varepsilon}(B^{2/\sqrt d})$, all contribute at most $B^{1/3-1/(2\sqrt d)}$ (\cite[Proposition 2]{Brow-Heath-Salb}), so their total contribution is \[ O_{d,\varepsilon}(B^{3/(2\sqrt d) + 1/3 + \varepsilon}) \leq O_d(B). \qedhere \]
\end{proof}

\begin{thm}[{\cite[Theorem 7.2]{Salberger-dgc}}]\label{thm:7.2_salb_dgc}
  Let $X$ be a geometrically integral surface in $\PP^3_\QQ$ of degree $d$ and $X_{\mathrm{ns}}$ its non-singular locus. Suppose that the hyperplane defined by $x_0=0$ intersects $X$ properly, and identify $\AA^3$ with the open subset of $\PP^3$ given by $x_0 \neq 0$.
  There exists a positive constant $c$ bounded solely in terms of $d$ such that the following holds:
  for every $B \geq 1$ there exists a set of $O_d(B^{1/\sqrt d} \log B + 1)$ geometrically integral curves $D_\lambda$ on $X$ of degree $O_d(1)$ such that
  \[ N_{\rm aff}(X_{\mathrm{ns}} \setminus \bigcup_\lambda D_\lambda, B) \leq O_d(B^{2/\sqrt d + c/\log(1 + \log B)}). \]
\end{thm}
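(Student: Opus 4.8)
This is \cite[Theorem~7.2]{Salberger-dgc}, so the plan is to recall Salberger's global determinant method for surfaces, now fed by the explicit estimates of Section~\ref{sec:salberger}. First I would make the routine reductions: since $x_0=0$ meets $X$ properly, in the chart $\AA^3=\{x_0\neq 0\}$ the quantity to bound is the number of integral $(x_1,x_2,x_3)$ with $|x_i|\le B$ on the geometrically integral affine surface obtained by dehomogenising, and one may discard the singular locus: $X_{\rm sing}$ is cut out on $X$ by the partial derivatives of its defining form, hence lies on a union of $\le d^2$ geometrically integral curves of degree $O_d(1)$, which I simply put into the family $\{D_\lambda\}$ at the outset. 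It thus remains to treat nonsingular points.

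The core is the determinant dichotomy. Fix an auxiliary degree $e$, to be optimised at the very end, let $V_e$ be the image of the space of degree-$e$ forms on $\PP^3$ in the homogeneous coordinate ring of $X$, and recall that $\dim_\QQ V_e$ is the Hilbert function of $X$, equal to $\tfrac d2 e^2+O_d(e)$ by the same Hilbert-polynomial computation that underlies Lemma~\ref{lem:stalk}. One covers the nonsingular rational points of height $\le B$ by a family of \emph{cells} --- boxes in the affine (or projective) coordinates, refined by congruence conditions modulo the primes that will be used --- and for each cell runs the following alternative: either all points of the cell already lie on a single auxiliary hypersurface section $X\cap\{g=0\}$ with $g\notin I(X)$ of degree $O_d(1)$, or one extracts $s:=\dim V_e$ points of the cell, given by primitive integer tuples $\xi_1,\dots,\xi_s$, on which evaluation of $V_e$ has full rank, whence the $s\times s$ determinant $\Delta$ is nonzero. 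On the archimedean side $\log|\Delta|\le \tfrac12 s\log s+es\log(3B)$, each entry being a degree-$e$ monomial in coordinates of size $\le B$; on the $p$-adic side Corollary~\ref{cor:Salberger_determinant} with $n=2$, applied to every prime $p\le T$ modulo which $X_p$ is geometrically integral and combined over those primes, gives $\log|\Delta|\ge c\,s^{3/2}\log T-O_d(sT)$, the contribution of the finitely many bad primes being negligible by Corollary~\ref{cor:estimate_bad_primes} and the error controlled by Chebyshev's bound (Lemma~\ref{lem:chebyshev}). Comparing these two estimates --- the cell having been chosen so that the archimedean size of $\Delta$ is reduced commensurately with the prime used, and using $s^{1/2}\asymp_d e$ --- forces the first horn of the alternative, and a count of the number of cells together with the per-cell contributions shows that $O_d(B^{1/\sqrt d}\log B+1)$ bounded-degree sections carry all but the leftover points: the $\log B$ is the number of dyadic ranges of height considered, the $B^{1/\sqrt d}$ arises from the per-cell point count entering as $s^{3/2}/p$, and the leftover points total $O_d(B^{2/\sqrt d+c/\log(1+\log B)})$ once $e$ is calibrated against $B$.

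Finally I would split each of these auxiliary sections $X\cap\{g=0\}$, of degree $\le d\deg g=O_d(1)$, into its geometrically integral components and adjoin these to $\{D_\lambda\}$; the total number of curves stays $O_d(B^{1/\sqrt d}\log B+1)$ and each has degree $O_d(1)$, while by construction every nonsingular point of height $\le B$ not on some $D_\lambda$ is one of the leftover points and hence one of $O_d(B^{2/\sqrt d+c/\log(1+\log B)})$ many. The main obstacle is not any individual estimate but the bookkeeping of the middle step: arranging the cell decomposition so that one genuinely obtains only $O_d(B^{1/\sqrt d}\log B)$ distinct bad curves rather than one per cell, separating the ``popular'' sections (which become $D_\lambda$) from the residual points, and calibrating $e$ as a function of $B$ so that the polynomial-in-$d$ losses in the Hilbert function and in Corollary~\ref{cor:Salberger_determinant} produce exactly the exponent $2/\sqrt d$ with only the sub-polynomial correction $c/\log(1+\log B)$ --- precisely the cost of not being able to take $e$ of full size $\log B$. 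This is the content of \cite[Section~7]{Salberger-dgc}, and since we invoke Theorem~\ref{thm:7.2_salb_dgc} only for the single value $d=5$, it is legitimate to quote it as a black box here.
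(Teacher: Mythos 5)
The paper does not prove this statement; it is taken verbatim from Salberger and used only as a black box in the proof of Proposition \ref{prop:0.4strong} for $d=5$. You correctly identify this at the end of your proposal, and your preceding sketch of Salberger's global determinant method for surfaces (Hilbert function $\sim\frac d2 e^2$, cell decomposition with congruence classes, archimedean versus $p$-adic determinant comparison via Corollary \ref{cor:Salberger_determinant}, calibration of the auxiliary degree $e$ producing the $c/\log(1+\log B)$ correction) is a reasonable account of what lies behind the citation, so the approach matches the paper's.
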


\begin{proof}[Proof of Proposition \ref{prop:0.4strong} for $d = 5$]
  Suppose that the degree $d$ of $f$ is exactly $5$, and let $X$ be the surface in $\AA^3_\QQ$ given by $f$. We may assume that $B \geq 2$.
  By Theorem \ref{thm:7.2_salb_dgc}, there is $c>0$ such that for each $B\geq 2$ there is a set $\mathcal{C}$ of at most
$$cB^{1/\sqrt{d}}\log B$$
geometrically integral curves $C\subseteq \AA^3_\QQ$ of degree at most $c$ and lying on $X$ such that
$$N_{\rm aff}( X_{\rm ns} \setminus \bigcup_{C\in \mathcal{C}} C,   B)\leq O(B^{2/\sqrt{d} + c/\log(\log B) }) \leq O(B^{1/2}), $$
where $X_{\rm ns}$ is the open subvariety of nonsingular points.

  The complement of $X_{\rm ns}$ in $X$ is a union of irreducible curves the sum of whose degrees is bounded by a constant.
  Applying Theorem \ref{thm:W1.1} to those curves of degree $\geq 2$ and Proposition \ref{prop1:Brow-Heath-Salb} for the contribution of curves of degree $1$ yields that the complement of $X_{\rm ns}$ contributes at most $O(B)$ points, which is satisfactory for our purposes.

  Similarly, the curves in $\mathcal{C}$ of degree $1$ contribute at most $O(cB^{1/\sqrt{d}}\log B + B) \leq O(B)$ points by Proposition \ref{prop1:Brow-Heath-Salb}, and the curves in $\mathcal{C}$ of degree $\geq 2$ each contribute at most $O(B^{1/2+\varepsilon})$ by Theorem \ref{thm:Bombieri-Pila}, again giving a contribution of size $O(B)$. This proves the claim.
\end{proof}

\begin{remark}
  We see that Proposition \ref{prop:0.4strong} for fixed $d \geq 6$, and therefore also Theorems \ref{thm:dgcdegree} and \ref{thm:0.4} for fixed degree, already follow from combining \cite{Brow-Heath-Salb} with the results of \cite{Walsh} and Proposition \ref{prop1:Brow-Heath-Salb}. Similarly, for fixed degree $d \geq 5$ one can use the results of \cite{Salberger-dgc}.
  However, keeping track of the dependence on $d$ in Section \ref{sec:Walsh} permits us to use a considerably simpler argument for fixed $d \geq 16$ than in the works cited, and to furthermore obtain polynomial dependence on $d$.
\end{remark}

%
%\begin{proof}[Proof of Theorem \ref{thm:0.4}]
%The case $n=3$ follows from Proposition \ref{prop:0.4strong}.
%%\change{For general $n$, the theorem follows from the case $n=3$ and the projection results in section \ref{sec8}.}
%%Use our version of 3.16 of \cite{Salberger-dgc} as it is used in Section 7 of \cite{Salberger-dgc} for 7.2, 7.3 and 7.4 to prove the $n=3$ case.
%%Use the reduction argument of Section 4 (Lemma 8) of  \cite{Brow-Heath-Salb}  to do the case of general $n$.
%\end{proof}

It remains to prove Theorems \ref{thm:dgcdegree} and \ref{thm:0.4}. This closely follows \cite[Lemma 8, Theorem 3]{Brow-Heath-Salb}.
The proofs are based on Proposition \ref{prop:0.4strong} and the following lemma.

 \begin{lem}\label{hypersection}
Let $n\geq 3$ and $X\subseteq \PP_\QQ^{n}$ be a geometrically integral hypersurface of degree $d$. Then there exists a non-zero form $F\in \ZZ[y_0,\dotsc,y_{n}]$ of degree at most $(n+1)(d^2-1)$ such that $F(A)=0$ whenever the hyperplane section $H_{A}\cap X$ is not geometrically integral, where $A\in (\PP^n)^*$ and $H_A \subseteq \PP^n$ denotes the hyperplane cut out by the linear form associated with $A$.
\end{lem}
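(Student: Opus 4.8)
The plan is to parametrize hyperplane sections by points $A \in (\PP^n)^*$ and to exhibit a single form $F$ that detects when the section fails to be geometrically integral. The idea is to apply the explicit Noether polynomials of Theorem~\ref{thm:explicit_noether} not to $X$ directly but to the family of hyperplane sections. First I would choose coordinates and write the equation of a generic hyperplane section: for $A = (a_0 : \dotsb : a_n)$ with, say, $a_n \neq 0$, one can eliminate $y_n = -(a_0 y_0 + \dotsb + a_{n-1} y_{n-1})/a_n$ from the defining form $f$ of $X$, obtaining after clearing denominators a form $f_A \in \ZZ[a_0, \dotsc, a_n][y_0, \dotsc, y_{n-1}]$ that is homogeneous of degree $d$ in the $y_i$ and whose coefficients are homogeneous of degree $d$ in the $a_j$. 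The hyperplane section $H_A \cap X$, as a hypersurface in $H_A \cong \PP^{n-1}$, is cut out by $f_A$.

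Next I would feed the coefficients of $f_A$ (viewed as a tuple of forms in the $a_j$) into each Noether polynomial $\Phi$ from Theorem~\ref{thm:explicit_noether}, applied in the case of $n-1 \geq 2$ variables and degree $d$. Each $\Phi$ is homogeneous of degree $d^2 - 1$ in the coefficients, and each coefficient of $f_A$ is homogeneous of degree $d$ in $a_0, \dotsc, a_n$; hence $\Phi$ evaluated on these coefficients is a form in $\ZZ[a_0, \dotsc, a_n]$ of degree $d(d^2-1)$ in the $a_j$. (One should double-check the exact degree bookkeeping; in any case it is $O(d^3)$, which is well within the claimed bound $(n+1)(d^2-1)$ once one is slightly more careful — see the next paragraph.) By the first bullet of Theorem~\ref{thm:explicit_noether}, if $H_A \cap X$ is not geometrically integral then $f_A$ is not absolutely irreducible over the residue field, so all these $\Phi$-values vanish at $A$; by the second bullet, generically (i.e.\ for $A$ outside a proper closed subset) at least one $\Phi$-value is nonzero, since a generic hyperplane section of a geometrically integral hypersurface of dimension $\geq 2$ is again geometrically integral (Bertini). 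Thus the product, or any one of the nonvanishing ones, gives a nonzero form $F$ with the required vanishing property, defined over $\ZZ$. To handle all $A$ (not only those with $a_n \neq 0$) one repeats the construction on each of the $n+1$ standard charts and multiplies the resulting forms together, which is where a factor like $n+1$ naturally enters the degree bound.

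The main obstacle is getting the degree bound down to exactly $(n+1)(d^2-1)$ rather than the cruder $O(d^3)$ that the naive substitution gives. The point is that one does not need the full Noether forms $\Phi$: the relevant linear-algebra condition ("$f_A$ factors nontrivially") can be expressed by forms of degree only $d^2 - 1$ in the \emph{coefficients} of $f_A$, and instead of substituting the degree-$d$-in-$a$ coefficient forms one should keep track more carefully — or, more cleanly, one should realize the condition as coming from a resultant-type construction on $\PP^n \times (\PP^n)^*$ whose bidegree in the $A$-variables is linear after accounting for the chart. Concretely, I expect one wants: on each affine chart $a_j \neq 0$ the obstruction is cut out by forms of degree $d^2 - 1$ in the dehomogenized coordinates $a_i/a_j$, hence by a form of degree $d^2-1$ on $(\PP^n)^*$ after rehomogenizing with respect to that chart; taking the product over the $n+1$ charts yields total degree $(n+1)(d^2-1)$, and one must check these chart-by-chart forms do not all vanish identically, again via Bertini. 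Verifying that the chart-wise obstruction really has degree exactly $d^2-1$ in the rehomogenized $A$-variables — which amounts to a careful reading of how Ruppert's forms depend on the coefficients — is the delicate bookkeeping step; the geometric input (Bertini irreducibility of generic hyperplane sections) and the reduction to $\ZZ$-coefficients are routine.
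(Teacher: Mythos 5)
Your proof follows essentially the same route as the paper's: pass to an affine chart $a_j \neq 0$ of $(\PP^n)^*$, substitute to obtain the coefficient map $A \mapsto (\text{coefficients of } f|_{H_A})$, pull back the Noether forms from Theorem~\ref{thm:explicit_noether}, use Bertini to see that at least one pullback is nonzero, and multiply over the $n+1$ charts. So the \emph{idea} is right and matches the source.

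The problem is the degree accounting, which you flag but do not resolve. Your first computation is correct: each coefficient of $f_A$ in chart coordinates $a_i/a_j$ has degree at most $d$, a Noether form $\Phi$ has degree $d^2-1$ in those coefficients, so the per-chart pullback has degree at most $d(d^2-1)$ and the product over charts has degree at most $(n+1)d(d^2-1) = O(nd^3)$. But your parenthetical, ``in any case it is $O(d^3)$, which is well within the claimed bound $(n+1)(d^2-1)$,'' has the inequality reversed: $(n+1)(d^2-1)$ is $O(d^2)$ for fixed $n$, strictly \emph{below} $O(d^3)$. The final paragraph then simply re-asserts, without argument, that the obstruction ``is cut out by forms of degree $d^2-1$ in the dehomogenized coordinates $a_i/a_j$'' — but the Noether forms are of degree $d^2-1$ in the \emph{coefficients} of $f_A$, which are themselves degree-$d$ polynomials in the chart coordinates. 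Composing these gives $d(d^2-1)$; bringing it down to $d^2-1$ would require a structural cancellation that you gesture at (``a resultant-type construction on $\PP^n \times (\PP^n)^*$ whose bidegree in the $A$-variables is linear'') but never exhibit. As it stands this is a gap, not a proof of the $(n+1)(d^2-1)$ bound.

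For what it is worth, the paper's own proof asserts the per-chart degree bound $d^2-1$ without giving the computation, so this point is at best glossed over there too; the degree that this argument transparently yields is $(n+1)d(d^2-1)$, which is still polynomial in $d$ and entirely sufficient for its use in the proof of Theorem~\ref{thm:0.4} (via Corollary~\ref{cor:find_point} one only needs a polynomial-in-$d$ bound on the height of a good hyperplane). One further small point: for $n=3$ the sections are ternary forms, so you need Ruppert's result already for forms in $3$ variables, which is in Ruppert's original work even though the statement of Theorem~\ref{thm:explicit_noether} here is phrased with the hypothesis $n \geq 3$ (i.e.\ $\geq 4$ variables).
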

\begin{proof}
Suppose that $X$ is given by $f$, a geometrically irreducible form of degree $d$. For  $A\in (\PP^{n})^{*}$ write $A=(a_0:a_1:\ldots :a_n)\in (\PP^n)^*$.
% be such that $H_{A}\cap X$ is not geometrically integral and
%. We suppose that is $A=(a_0:a_1...:a_n)\in (\PP^n)^*$ such that $H_{A}\cap X$ is not geometrically integral.
Assuming $a_0\neq 0$, one has that $H_{A}\cap X$ is not geometrically integral if and only if $$f(-\dfrac{a_1}{a_0}x_1-...-\dfrac{a_n}{a_0}x_n,x_1,\dotsc,x_n)$$ is reducible.  Since $n\geq 3$ and since $X$ is geometrically integral, we have for a generic choice of $B\in (\PP^n)^*$ that $H_{B}\cap X$ is also geometrically integral. Hence Theorem \ref{thm:explicit_noether} implies that there exists a non-zero form $F_0$ in $\ZZ[y_1,\dotsc,y_n]$ of degree at most $d^2-1$ such that $F_0(a_1,\dotsc,a_n)=0$. Similarly, if $a_i\neq 0$, we produce a non-zero form $F_i$ in $\ZZ[y_0,\dotsc,y_{i-1}, y_{i+1},\dotsc,y_n]$ such that $F_{i}(a_0,\dotsc,a_{i-1}, a_{i+1}, \dotsc,a_n)=0$. So $F=\prod_{i=0}^{n} F_i$ is as we want.
\end{proof}

\begin{proof}[Proof of Theorem \ref{thm:0.4}]
Let $n\geq 3$ and $X\subseteq \AA_\QQ^{n}$ be a geometrically integral hypersurface of degree $d\geq 5$ described by a polynomial $f \in \ZZ[x_1, \dotsc, x_n]$ with absolutely irreducible highest degree part.
We proceed by induction on $n$, where the base case $n=3$ is Proposition \ref{prop:0.4strong}.

  Now assume that $n > 3$ and the theorem holds for all lower $n$. Let $f_d = h(f)$ be the homogeneous part of highest degree, which describes a hypersurface in $\PP^{n-1}$.
  By Lemma \ref{hypersection} and Corollary \ref{cor:find_point}, there exists $A=(a_1, \dotsc, a_n)$ such that the hyperplane section $\{ f_d = 0 \} \cap \{ \sum a_i x_i = 0 \}$ is geometrically integral of degree $d$, with all $a_i$ having absolute value at most $n(d^2-1)$.

  Now \[ N_{\rm aff}(f,B) \leq \sum_{\lvert k \rvert \leq n^2 (d^2-1) B} N_{\rm aff}(\{ f = 0 \} \cap \{ \sum a_i x_i = k \}, B)  .\]
  For each $k$, the variety $\{ f = 0 \} \cap \{ \sum a_i x_i = k \}$ is a hypersurface in the affine plane $\{ \sum a_i x_i = k \}$, which after a change of variables is described by a polynomial $g \in \ZZ[x_1, \dotsc, x_{n-1}]$ whose homogeneous part of highest degree is absolutely irreducible by the construction of $A$. Now the induction hypothesis finishes the proof.
\end{proof}

\begin{proof}[Proof of Theorem \ref{thm:dgcdegree}]
  We may assume that the variety in question is geometrically irreducible by Remark \ref{rem:wlog_abs_irred}, and can reduce to consideration of a hypersurface by Proposition \ref{prop:projective_reduction_projection}.
  Hence let $n \geq 3$ and consider an absolutely irreducible polynomial $f \in \ZZ[x_0, \dotsc, x_n]$ homogeneous of degree $d \geq 5$.

  Then $f$ defines not only a projective hypersurface $X$ in $\PP^n$, but also an affine hypersurface in $\AA^{n+1}$, the cone of $X$.
  We now trivially have \[ N(f,B) \leq N_{\rm aff}(f,B), \]
  so Theorem \ref{thm:0.4} finishes the proof.
%  By Lemmas \ref{hypersection} and \ref{findpoint}, there exists $A=(a_0, \dotsc, a_n)$ such that the hyperplane section $\{ f = 0 \} \cap \{ \sum a_i x_i = 0 \}$ is geometrically integral of degree $d$, with all $a_i$ having absolute value at most $(n+1)(d^2-1)$. Now
%  \[ N(f, B) \leq \sum_{\lvert k \rvert \leq (n+1)^2 (d^2-1) B} N_{\rm aff}(\{ f = 0 \} \cap \{ \sum a_i x_i = k \}, B) .\]
%
%   For each $k$, the variety $\{ f = 0 \} \cap \{ \sum a_i x_i = k \}$ is a hypersurface in the affine plane $\{ \sum a_i x_i = k \}$, which after a change of variables is described by a polynomial $g \in \ZZ[x_1, \dotsc, x_{n-1}]$ whose homogeneous part of highest degree is absolutely irreducible by the construction of $A$. Now Theorem \ref{thm:0.4} finishes the proof.
\end{proof}

\begin{remark}\label{rem:explicit_exponents}
  Using the explicit exponents obtained in Proposition \ref{prop:0.4strong} and in the proof of Proposition \ref{prop:projective_reduction_projection} in Section \ref{sec8}, we can conservatively estimate $e(n) \leq 2n + 8$ for the exponent in Theorem \ref{thm:0.4}, and $e(n) \leq 2n^3$ for the exponent in Theorem \ref{thm:dgcdegree}.
\end{remark}

\section{Reduction to hypersurfaces via projection}
\label{sec8}

%In this section we give an effective version of projection map from \cite{Brow-Heath-Salb}.
In this section we prove Propositions \ref{prop:affine_reduction_projection} and \ref{prop:projective_reduction_projection}, which allowed us to reduce to the case of hypersurfaces in the proofs of our main theorems.
This is an elaboration of familiar projection arguments, which classically show that every variety is birational to a hypersurface, and which are used in the proofs of \cite[Theorem 1]{Brow-Heath-Salb} and \cite[Theorem A]{Pila-ast-1995}. The additional difficulty for us is that we have to keep track of the dependence on the degree of the variety throughout.
Our main auxiliary result is: 

\begin{lem} \label{lem:projection}
Given a geometrically irreducible subvariety $X \subseteq \PP^n$ of dimension $m < n-1$ and degree $d$, one can find an $(n-m-2)$-plane $\Lambda$ disjoint from $X$ and an $(m+1)$-plane $\Gamma$, both defined over $\QQ$, such that $\Lambda \cap \Gamma = \emptyset$, such that the corresponding projection map $p_{\Lambda,\Gamma} : \PP^n \setminus \Lambda \to \Gamma$ satisfies 
\begin{equation} \label{height_after_projection}
 H(p_{\Lambda,\Gamma}(P))\leq c_n d^{2(n-m-1)^2} H(P)
\end{equation}
for all $P \in \PP^n(\QQ) \setminus \Lambda$,
and such that $p_{\Lambda,\Gamma}|_X$ is birational onto its image. Here $c_n$ is an explicit constant depending only on $n$.
\end{lem}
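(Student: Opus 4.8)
The plan is to realise $p_{\Lambda,\Gamma}$ as a composition of $n-m-1$ linear projections from a single point, each chosen over $\QQ$ and of height polynomial in $d$; all the real work lies in producing, at each step, a ``good'' centre of small height. So it suffices to treat one step: given a geometrically irreducible $X\subseteq\PP^N$ of dimension $m$ and degree $d$ with $m<N-1$, I want a point $P\in\PP^N(\QQ)$ with $H(P)\leq c_N d^{2}$ such that $P\notin X$ and the projection $\pi_P\colon \PP^N\setminus\{P\}\to\PP^{N-1}$ restricts on $X$ to a birational morphism onto $X':=\pi_P(X)$, with $X'$ geometrically irreducible of degree $d$, and such that $H(\pi_P(Q))\leq c_N d^{2}\,H(Q)$ for all $Q\notin\{P\}$. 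Iterating this $n-m-1$ times lowers the ambient dimension until $X$ becomes a hypersurface in $\PP^{m+1}$.

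The heart of the matter is that the set of ``bad'' centres is a proper closed subset of $\PP^N$ of degree bounded polynomially in $d$. To see this I would form the incidence variety
\[ Z=\overline{\{(x,y,P)\in X\times X\times\PP^N : x\neq y,\ P\in\overline{xy}\}}, \]
a $\PP^1$-bundle over $X\times X$ minus the diagonal, hence geometrically irreducible of dimension $2m+1$; since away from the diagonal it is cut out in $X\times X\times\PP^N$ by the vanishing of the $3\times3$ minors of the matrix with rows $x,y,P$ (each multilinear of tridegree $(1,1,1)$), a B\'ezout-type estimate bounds its degree by $c_N d^2$. A centre $P\notin X$ fails to give a birational $\pi_P|_X$ exactly when the first projection of the fibre $Z_P\subseteq X\times X$ is dense in $X$; since $P\notin X$ forces $\overline{Px}\cap X$ to be finite for every $x$, this happens iff $\dim Z_P\geq m$. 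By upper semicontinuity of fibre dimension the locus $B$ of such $P$ is closed; by the theorem on the dimension of fibres $\dim B\leq\dim Z-m=m+1<N$ (this is where the hypothesis $m<N-1$ enters), so $B\subsetneq\PP^N$, and its degree, being that of a degeneracy locus of $Z\to\PP^N$, is again bounded by $c_N d^2$ --- one may alternatively bound $\deg B$ directly by intersecting $Z$ with general linear spaces and projecting, reducing to Lemma~\ref{lem:gen_Schwarz_Zippel}. I expect this degree bound for the bad locus to be the main obstacle; everything else is formal.

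Granting it, Corollary~\ref{cor:find_point} applied to the proper closed set $X\cup B$ (which lies in a hypersurface of degree $\leq c_N d^2$, cf.\ \cite[Proposition~3]{Heintz_DefinabilityAndFastQE}) produces $P\in\PP^N(\QQ)$ with $H(P)\leq c_N d^2$ avoiding $X\cup B$; then $\pi_P|_X$ is birational onto $X'$, and $X'$ is geometrically irreducible as the image of a geometrically irreducible variety under a dominant morphism. Its degree is $d$: a general $(N-1-m)$-plane meets $X'$ in $\deg X'$ reduced points, its $\pi_P$-preimage is the $(N-m)$-dimensional cone over it with vertex $P$, and this cone meets $X$ in $\deg X'\cdot\deg(\pi_P|_X)=\deg X'$ points, while a general linear space of dimension $N-m$ through a point off $X$ meets $X$ in exactly $\deg X=d$ points; hence $\deg X'=d$. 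For heights, writing $P=[p_0:\dots:p_N]$ with $p_j\neq0$, one may use the presentation $\pi_P([x])=\big((p_jx_i-p_ix_j)\big)_{i\neq j}$, whose coordinates have height $\leq 2H(P)H(x)$, giving $H(\pi_P(Q))\leq c_N d^2\,H(Q)$.

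Finally I would assemble the iteration. After $n-m-1$ steps one has geometrically irreducible $X=X_0,X_1,\dots,X_{n-m-1}$ with $X_i\subseteq\PP^{n-i}$ of degree $d$ and $X_{n-m-1}$ a hypersurface in $\PP^{m+1}$; the composite projection $\PP^n\to\PP^{m+1}$ (defined off an $(n-m-2)$-plane) is linear, say $[x]\mapsto[L_0(x):\dots:L_{m+1}(x)]$ with $\QQ$-linear forms $L_i$, so its indeterminacy locus $\Lambda=\{L_0=\dots=L_{m+1}=0\}$ is an $(n-m-2)$-plane defined over $\QQ$, disjoint from $X$ because each centre avoids the relevant $X_i$. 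One chooses $\Gamma$ to be any coordinate $(m+1)$-plane disjoint from $\Lambda$ --- one exists since a basis matrix of the linear span of $\Lambda$ has some non-vanishing maximal minor --- so that $\Lambda\cap\Gamma=\emptyset$ and $p_{\Lambda,\Gamma}$ agrees with the composite up to a projective automorphism of the target; hence $p_{\Lambda,\Gamma}|_X$ is birational with image of degree $d$. Composing the $n-m-1$ height estimates --- re-expressing the composite projection through the successive centres via Cramer's rule, so that its coefficients are bounded products of heights $\leq c_n d^2$ --- yields $H(p_{\Lambda,\Gamma}(P))\leq c_n d^{2(n-m-1)^2}H(P)$ as claimed; this last step is a crude but routine accounting, and a tighter one gives a smaller exponent, but the stated bound is all that is needed downstream.
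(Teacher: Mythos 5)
Your approach is genuinely different from the paper's: you propose iterating $n-m-1$ single-point projections, controlling the ``bad'' centres at each step via the secant/incidence correspondence $Z\subseteq X\times X\times\PP^N$ and the semicontinuity of fibre dimension, whereas the paper (Lemmas~\ref{empty} and~\ref{Existence}) works directly with the full Grassmannian $\GG(n-m-2,n)$ of centres $\Lambda$, characterises the good locus $G_X$ as the set of $\Lambda$ for which the pulled-back Chow form $R_{X,\Lambda}$ is absolutely irreducible, and then bounds the degree of the complement of $G_X$ by evaluating the explicit Noether irreducibility polynomials of Theorem~\ref{thm:explicit_noether} on the (degree $(m+1)d$) coefficients of $R_{X,\Lambda}$ in the Pl\"ucker coordinates. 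This conversion of the birationality condition into an \emph{irreducibility} condition is exactly what supplies the polynomial degree bound for the bad locus, and it is the step your argument does not have a replacement for.

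Concretely, the gap is the degree bound on your bad locus $B=\{P\notin X:\dim Z_P\geq m\}$. You correctly obtain $\deg Z\leq c_N d^2$ (via a Segre/B\'ezout count on the $(1,1,1)$-determinantal conditions), and the fibre-dimension theorem correctly gives $\dim B\leq m<N$. But the degree of the locus where fibre dimension of $\pi\colon Z\to\PP^N$ jumps is \emph{not} bounded by $\deg Z$ in any elementary way: neither ``degeneracy locus'' nor the suggested slicing/Schwarz--Zippel reduction produces a bound, because a generic codimension-$\dim B$ slice $L$ of $\PP^N$ hits $B$ in $\deg B$ points, and the fibres $Z_P$ over these points are only known to be $m$-dimensional subvarieties of the $(2m+1-\dim B)$-dimensional variety $\pi^{-1}(L)\cap Z$; unless $\dim B=m+1$ (forcing each $Z_P$ to be a union of components) you cannot read off $\deg B$ from $\deg Z$. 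You flag this yourself as the ``main obstacle,'' but as it stands it is an unresolved step, and the natural fix — expressing ``$\deg\pi_P(X)<d$'' as the vanishing of explicit polynomials in $P$ of controlled degree — pushes you back to exactly the Chow-form-plus-Noether-polynomial device that the paper uses. (A minor remark in the other direction: your height accounting through the iterated projections actually gives roughly $c_nd^{2(n-m-1)}$, sharper than the stated $c_nd^{2(n-m-1)^2}$, so the exponent in \eqref{height_after_projection} is not the issue; the degree bound for $B$ is.)
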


Because $\Lambda$ is disjoint from $X$, the statement that $p_{\Lambda,\Gamma}|_X$ is birational onto its image is equivalent to saying that $p_{\Lambda,\Gamma}(X)$ is again a variety of degree $d$, see~\cite[Example 18.16]{Harris}.

In order to prove Lemma~\ref{lem:projection}, we first concentrate on finding an appropriate $\Lambda$, which we think of as living in the Grassmannian $\GG(n-m-2,n)$ consisting of all $(n-m-2)$-planes in $\PP^n$. 
It is well-known that the latter has the structure of 
an $(m+2)(n-m-1)$-dimensional 
irreducible projective variety through the
Pl\"ucker embedding
\[ P_{n-m-2,n}: \GG(n-m-2,n) \hookrightarrow \PP^\nu : \Lambda \mapsto \det(P_1,...,P_{n-m-1}),
\]
where $\nu={n+1 \choose n-m-1}-1$ and $(P_1,...,P_{n-m-1})$ is the $(n-m-1) \times (n+1)$ matrix whose rows
are coordinates for $n-m-1$ independent points $P_i \in \Lambda$. 
% It is well-known that $\GG(k,n)$ has degree 
% \[ ((k+1)(n-k))!\prod_{i=1}^{k+1}\frac{(i-1)!}{(n-k+i-1)!} \]
%and dimension $(k+1)(n-k)$.
Here and throughout this section, for a matrix $M$ whose number of rows does not exceed its number of columns, we write $\det(M)$ to denote the tuple consisting of its maximal minors, with respect to some fixed ordering.

Fixing such a $\Lambda\in \GG(n-m-2)$ and independent points $P_1,...,P_{n-m-1} \in \Lambda$, we can also consider the map
\[
\pi_{\Lambda}:\PP^n\setminus\Lambda \to \PP^{\mu} : P \mapsto \det(P,P_1,...,P_{n-m-1}), \] 
%\qquad \mu = {n+1\choose n-m}-1. \]
where $\mu={n+1 \choose n-m}-1$. Writing $\pi_\Lambda=(\pi_{\Lambda,0},...,\pi_{\Lambda,\mu})$ we see that the non-zero $\pi_{\Lambda,j}$'s can be viewed as linear forms whose coefficients are coordinates of $P_{n-m-2,n}(\Lambda)$, modulo sign flips. Note that $\pi_{\Lambda,j}(P)=0$ for all $j$ if and only if $P\in\Lambda$. In particular $\pi_\Lambda$ is well-defined and easily seen to factor as
\begin{equation} \label{factorizationofpiLambda} 
  \PP^n \setminus \Lambda \stackrel{p_{\Lambda, \Gamma}}{\longrightarrow} \Gamma \hookrightarrow \PP^\mu 
\end{equation}
for all $(m+1)$-planes $\Gamma$ such that $\Gamma\cap\Lambda=\emptyset$.

Another theoretical ingredient we need is the Chow point $F_X$ associated with an irreducible $m$-dimensional degree $d$ variety $X \subseteq \PP^n$. This is an irreducible multihomogeneous polynomial of multidegree $(d,d,\ldots ,d)$ in $m+1$ sets of $n+1$ variables such that for all tuples $(H_1,H_2, \ldots ,H_{m+1})$ of $m+1$ hyperplanes in $\PP^n$ one has $F_X(H_1,\ldots,H_{m+1})=0$ if and only if $H_1\cap H_2 \cap \ldots \cap H_{m+1}\cap X\neq\emptyset$. See e.g.~\cite[Chapter 4]{GKZ}.

\begin{lem}\label{empty} Let $X$ be a geometrically irreducible degree $d$ subvariety of $\PP^n$ having dimension $m<n-1$ and consider 
%Let $Y_X$ be as in Lemma~\ref{non-empty} and define 
\[ G_{X}=\{ \, \Lambda\in \GG(n-m-2,n) \, | \, \Lambda \cap X=\emptyset \text{ and } \pi_{\Lambda}|_{X} \text{ is birational onto its image} \, \} \] 
with
$\pi_{\Lambda}$ as above. 
This is a dense open subset of $\GG(n-m-2,n)$ whose complement, when viewed under the Pl\"ucker embedding, is cut out by
hypersurfaces of degree less than $(m+1)^2d^2$.
\end{lem}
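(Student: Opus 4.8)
I would split the complement of $G_X$ inside $\GG:=\GG(n-m-2,n)$ into the two closed sets $A=\{\Lambda:\Lambda\cap X\neq\emptyset\}$ and $B=\{\Lambda:\Lambda\cap X=\emptyset\text{ and }\pi_\Lambda|_X\text{ is not birational onto its image}\}$, and produce explicit forms, in the Pl\"ucker coordinates $p_\bullet$ of $\Lambda$, cutting out $A\cup B$. The only geometric input will be the Chow point $F_X$ (a form of multidegree $(d,\dots,d)$ in $m+1$ hyperplane arguments) and the linear forms $\pi_{\Lambda,j}$, which by the discussion above are linear in $p_\bullet$. Substituting linear combinations of the $\pi_{\Lambda,j}$ into $F_X$ gives forms of degree $\leq (m+1)d$ in $p_\bullet$, which will handle $A$; the failure of birationality costs one further discriminant, i.e.\ about a factor $2d$, and indeed $(m+2)+(2d-2)(m+1)d<(m+1)^2d^2$ for all $m\geq 1$ and $d\geq 1$, which is exactly the target. (For $m=0$, i.e.\ $X$ a point, the statement is treated directly: there $B=\emptyset$ and $A$ is visibly cut out by linear forms.)

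\textbf{The locus $A$.}
I would first prove the elementary equivalence: $\Lambda\cap X\neq\emptyset$ if and only if $F_X$ vanishes identically on $(\Lambda^*)^{m+1}$, where $\Lambda^*\subseteq(\PP^n)^*$ is the $(m+1)$-plane of hyperplanes through $\Lambda$. The forward direction is trivial, since a point of $\Lambda\cap X$ lies on $H_1\cap\dots\cap H_{m+1}$ whenever every $H_i\supseteq\Lambda$; for the converse one observes that when $\Lambda\cap X=\emptyset$ a generic $(n-m-1)$-plane through $\Lambda$ misses $X$, because such a plane equals $\overline{\Lambda,Q}=p_\Lambda^{-1}(p_\Lambda(Q))\cup\Lambda$ and a generic $p_\Lambda(Q)$ avoids the proper subvariety $p_\Lambda(X)$ of $\Gamma$, while generic intersections $H_1\cap\dots\cap H_{m+1}$ of hyperplanes through $\Lambda$ realize all such planes. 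Since the $\pi_{\Lambda,j}$ span $\Lambda^*$ (their common zero locus being exactly $\Lambda$, and each vanishing on $\Lambda$ by a repeated-row determinant), for each index set $J=\{j_0,\dots,j_{m+1}\}$ and auxiliary variables $c_{k,l}$ I form $\Phi_J(\Lambda;c):=F_X\!\bigl(\sum_l c_{1,l}\pi_{\Lambda,j_l},\ \dots,\ \sum_l c_{m+1,l}\pi_{\Lambda,j_l}\bigr)$, which has degree $\leq (m+1)d$ in $p_\bullet$ and degree $d$ in each group $c_{k,\bullet}$; then $F_X\equiv 0$ on $(\Lambda^*)^{m+1}$ iff all the $c$-coefficients of all the $\Phi_J$ vanish.

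\textbf{The locus $B$.}
Fix $\Lambda$ with $\Lambda\cap X=\emptyset$ and a $J$ for which the $\pi_{\Lambda,j_l}$ are linearly independent, so that they parametrize $\Lambda^*$. Put $X'=p_\Lambda(X)$, a hypersurface in $\Gamma$ of some degree $e\leq d$; by the fact recalled after the statement, $\pi_\Lambda|_X$ is birational precisely when $e=d$. Now $\Phi_J(\Lambda;\cdot)$ is nonzero (since $\Lambda\cap X=\emptyset$, by the equivalence above) with the same, irreducible, zero locus as the Chow point $F_{X'}$, and has degree $d$ in each argument where $F_{X'}$ has degree $e$; by the Nullstellensatz $\Phi_J(\Lambda;\cdot)=(\text{unit})\cdot F_{X'}^{d/e}$, so $\pi_\Lambda|_X$ is birational iff $\Phi_J(\Lambda;\cdot)$ is square-free in $c$. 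Square-freeness I would detect by restricting the group $c_{1,\bullet}$ to a line $c_{1,\bullet}=s_0+\tau s_1$ and taking the discriminant in $\tau$: the restriction has $\tau$-degree $\leq d$ with coefficients of $p_\bullet$-degree $\leq (m+1)d$, so this discriminant has $p_\bullet$-degree $\leq (2d-2)(m+1)d$, and its coefficients (in $s_0,s_1$ and the remaining $c$'s) vanish at $\Lambda$ exactly when $\Phi_J(\Lambda;\cdot)$ fails to be square-free — here one uses that an irreducible $F_{X'}$ stays square-free under a generic linear restriction. Finally, to produce equations valid for every $\Lambda$ (not just those for which a chosen $J$ is admissible), I would take, over all $J$, the intersection of the sets $D_J\cup S_J$, where $D_J$ is the determinantal locus (of degree $m+2$ in $p_\bullet$) on which the $\pi_{\Lambda,j_l}$ become dependent and $S_J$ is the discriminant vanishing locus just described. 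For an admissible $J$ the set $S_J$ captures $A$ together with the non-birational $\Lambda$, while a non-admissible $J$ always lands in $D_J$; hence $\bigcap_J(D_J\cup S_J)$ is precisely the complement of $G_X$, and taking products of the defining forms of $D_J$ and of $S_J$ exhibits this complement as cut out by forms of degree $\leq (m+2)+(2d-2)(m+1)d<(m+1)^2d^2$.

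\textbf{Density, and where the real work is.}
Once the complement is a closed set (cut out by forms) openness of $G_X$ is automatic, and density follows from $G_X\neq\emptyset$, i.e.\ from the classical fact that a generic $(n-m-2)$-plane is disjoint from $X$ with birational projection (see e.g.\ \cite[Example 18.16]{Harris} and the references there, or the standard generic-projection theorem). The genuinely delicate point is the effective degree bookkeeping for $B$: one must verify the identity $\Phi_J=(\text{unit})\cdot F_{X'}^{d/e}$ and that detecting its square-freeness costs only a single discriminant, and one must handle carefully the auxiliary choices — which index sets $J$ parametrize $\Lambda^*$, the choice of line direction in the $c_{1,\bullet}$-group, and the behaviour of the discriminant both on $A$ (where $\Phi_J\equiv 0$, so the discriminant vanishes trivially) and on the degeneracy loci $D_J$. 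The remaining ingredients — that $F_X$ is defined over the base field, the claimed genericity of $(n-m-1)$-planes through $\Lambda$, and that all the intersections and unions above meet $\GG$ in the intended sets — are routine.
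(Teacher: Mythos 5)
Your proof is correct and shares the paper's central observation --- namely that for $\Lambda$ disjoint from $X$, pulling back the Chow form along $\pi_\Lambda$ gives a power of the Chow form of the image, i.e.\ $R_{X,\Lambda} = F_{\pi_\Lambda(X)}^r$ in the paper's notation (your $\Phi_J(\Lambda;\cdot) = (\text{unit})\cdot F_{X'}^{d/e}$), with exponent $1$ precisely in the birational case --- but it then takes a genuinely different route in how the complement of $G_X$ is cut out by forms in the Pl\"ucker coordinates. The paper packages both conditions (disjointness from $X$ and birationality of $\pi_\Lambda|_X$) into the single statement that $R_{X,\Lambda}$ is absolutely irreducible as a multihomogeneous form in $m+1$ blocks of $\mu+1$ variables, and then cuts out the complement by evaluating the explicit Noether irreducibility forms of Theorem~\ref{thm:explicit_noether} at its degree-$(m+1)d$ coefficients. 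You instead split the complement into the two loci $A$ and $B$, observe that $\Phi_J(\Lambda;\cdot)$ is automatically a power of an irreducible so that absolute irreducibility reduces to square-freeness, and you detect square-freeness with a discriminant along a generic line in the first $c$-block; $A$ is swept up because there $\Phi_J(\Lambda;\cdot)$ vanishes identically. Your route avoids Noether forms entirely, which is arguably more elementary, at the price of extra auxiliary bookkeeping: the index sets $J$, the degeneracy loci $D_J$, a Bertini-type fact that a generic restriction of the irreducible $F_{X'}$ stays square-free of the correct degree, and some care when the $\tau$-degree drops (for which working with the Sylvester resultant of $\Phi_J$ and its $\tau$-derivative, rather than the normalised discriminant, is cleaner). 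Both versions meet the stated budget; your figure $(m+2)+(2d-2)(m+1)d$ is indeed below $(m+1)^2d^2$ for all $m\geq 1$, $d\geq 1$, and your separate treatment of $m=0$ covers the remaining case.
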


\begin{proof}
Given a hyperplane $H \subseteq \PP^{\mu}$ we
abusively write $H \circ \pi_\Lambda$ for $\pi_\Lambda^{-1}(H) \cup \Lambda$, since this is the hyperplane in $\PP^n$ cut out by 
the precomposition of $\pi_\Lambda$ with the linear form associated with $H$.
Define a multihomogeneous degree $(d,d, \ldots, d)$ polynomial $R_{X, \Lambda}$ in $m+1$ sets of $\mu + 1$ variables
by letting
\[ R_{X, \Lambda}(H_1, H_2, \ldots, H_{m+1}) = F_X(H_1 \circ \pi_\Lambda, H_2 \circ \pi_\Lambda, \ldots, H_{m+1} \circ \pi_\Lambda). \]
Note that its coefficients are degree $(m+1)d$ polynomial expressions in the coordinates of
$P_{n-m-2,n}(\Lambda)$. We will show that
\begin{equation} \label{interpretation_of_GX}
 G_{X}=\{ \, \Lambda\in \GG(n-m-2,n) \, | \, R_{X, \Lambda} \text{ is absolutely irreducible} \, \}, 
\end{equation}
which implies that the complement of $G_X$ is precisely the vanishing locus of the Noether irreducibility polynomials from Theorem~\ref{thm:explicit_noether} evaluated in these coefficients. This indeed yields expressions in the coordinates of
$P_{n-m-2,n}(\Lambda)$ of degree less than $(m+1)^2d^2$, where
we note that not all these expressions can vanish identically, since
generic $\Lambda$'s do not meet $X$ and generic projections are known to be birational~\cite[p.\,224]{Harris}.

We now prove~\eqref{interpretation_of_GX}. First note that $\Lambda \cap X \neq \emptyset$ implies that $R_{X, \Lambda}$ vanishes identically. Indeed, if $P \in \Lambda$ then all hyperplanes of the form $H \circ \pi_\Lambda$ pass through $P$, so if moreover $P \in X$ we see that
$R_{X, \Lambda}$ is identically zero. We can therefore assume that $\Lambda \cap X = \emptyset$. This ensures that $\pi_\Lambda(X)$ is an irreducible projective variety of dimension $m$, see~\cite[p.\,134]{Harris}, so we can consider
its Chow point $F_ {\pi_\Lambda(X)}$, which is an irreducible multihomogeneous polynomial of multidegree 
\[ \left( \deg (\pi_\Lambda(X)), \deg (\pi_\Lambda(X)), \ldots, \deg (\pi_\Lambda(X)) \right) \]
in the same $m+1$ sets of $\mu + 1$ variables as in the case of $R_{X, \Lambda}$. It has the property that
for all tuples $(H_1,\ldots ,H_{m+1})$ of hyperplanes in $\PP^\mu$ we have $F_ {\pi_\Lambda(X)}(H_1, \ldots ,H_{m+1})=0$ if and only if $H_1\cap \ldots \cap H_{m+1}\cap \pi_\Lambda(X) \neq\emptyset$. But in this case 
$\pi^{-1}_\Lambda(H_1) \cap \ldots \cap \pi^{-1}_\Lambda(H_{m+1}) \cap X \neq \emptyset$ so that $R_{X, \Lambda}(H_1, \ldots ,H_{m+1})=0$. Conversely, if $R_{X, \Lambda}(H_1, \ldots ,H_{m+1})=0$
then there exists a point $P \in H_1\circ\pi_\Lambda\cap \ldots \cap H_{m+1}\circ\pi_\Lambda\cap X$, which since $\Lambda \cap X=\emptyset$ implies that $\pi_{\Lambda}(P)\in H_1 \cap \ldots \cap H_{m+1} \cap \pi_\Lambda(X)$ and hence that 
$F_ {\pi_\Lambda(X)}(H_1, \ldots ,H_{m+1})=0$. We conclude that $F_{\pi_\Lambda(X)}$ and $R_{X, \Lambda}$ have the same vanishing locus and because the former polynomial is irreducible there must exist some $r \geq 1$ such that
\[ R_{X, \Lambda} = F_{\pi_\Lambda(X)}^r. \]
In particular $R_{X, \Lambda}$ is irreducible if and only if $r = 1$. But this is true if and only if $\pi_\Lambda(X)$ has degree $d$, which as we know holds if and only if $\pi_{\Lambda}|_{X}$ is birational onto its image. 
\end{proof}

\begin{lem}\label{Existence} Using the assumptions and notation from Lemma~\ref{empty}, there exists an
$(n-m-2)$-plane $\Lambda \in G_X(\QQ)$ such that 
\[ H(\Lambda) \leq ((m+1)^2d^2)^{n-m-1}(n-m-1)! \]
when considered under the Pl\"ucker embedding.
\end{lem}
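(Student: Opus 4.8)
The plan is to produce $\Lambda$ explicitly inside a standard affine chart of the Grassmannian and to control its Pl\"ucker height through the multilinearity of determinants. First I would isolate from Lemma~\ref{empty} a single genuine obstruction: its proof identifies $\GG(n-m-2,n) \setminus G_X$ with the common zero locus on $\GG(n-m-2,n)$ of the Noether forms of Theorem~\ref{thm:explicit_noether} evaluated at the coefficients of $R_{X,\Lambda}$, and records that these are expressions of degree less than $(m+1)^2d^2$ in the Pl\"ucker coordinates of $\Lambda$ which do not all vanish identically on $\GG(n-m-2,n)$. Fix one of them, a form $F$ of degree $e < (m+1)^2 d^2$ in the $\binom{n+1}{n-m-1}$ Pl\"ucker coordinates, so that $\Lambda \in G_X$ as soon as $\Lambda \in \GG(n-m-2,n)(\QQ)$ and $F(\Lambda) \neq 0$.

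Next I would pass to the affine chart parametrizing the $(n-m-2)$-planes spanned by the rows of $(I_{n-m-1} \mid A)$, where $A$ runs over $(n-m-1)\times(m+2)$ integer matrices; this chart is dense open in $\GG(n-m-2,n)$, so $F$ is not identically zero on it. Its key feature is that each Pl\"ucker coordinate of the plane $\Lambda(A)$ is a maximal minor of $(I_{n-m-1}\mid A)$, hence a polynomial of degree at most $1$ in every single entry of $A$. Consequently $F(\Lambda(A))$, viewed as a polynomial in the entries of $A$, has degree at most $e < (m+1)^2 d^2$ in each variable separately. By the elementary box version of the Schwartz--Zippel lemma (a nonzero polynomial with all partial degrees $< R$ does not vanish on $\{0,1,\dots,R-1\}^N$), there is an integer matrix $A$ with entries bounded in absolute value by $(m+1)^2 d^2 - 1$ and $F(\Lambda(A)) \neq 0$; setting $\Lambda := \Lambda(A)$ gives a point of $G_X(\QQ)$.

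Finally I would bound $H(\Lambda)$. The Pl\"ucker point of $\Lambda(A)$ is the integer tuple of maximal minors of $(I_{n-m-1}\mid A)$; one of these minors is $\det(I_{n-m-1}) = 1$, so the tuple is primitive and $H(\Lambda)$ is simply the largest absolute value occurring among the minors. Cofactor expansion along the identity columns shows that every maximal minor is, up to sign, a $k \times k$ minor of $A$ with $k \leq n-m-1$, so by the Leibniz formula its absolute value is at most $(n-m-1)!\,((m+1)^2d^2-1)^{n-m-1} \leq ((m+1)^2 d^2)^{n-m-1}(n-m-1)!$, which is the claimed estimate (the degenerate situations where $A=0$ is forced, e.g.\ $(m+1)^2d^2 = 1$, only make $H(\Lambda) = 1$, which is harmless).

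The point I expect to require the most care is the passage to a \emph{per-variable} degree bound on $F(\Lambda(A))$: a crude total-degree estimate would introduce a spurious factor $(n-m-1)^{n-m-1}$ and break the stated bound, and it is precisely the multilinearity of the Pl\"ucker coordinates in the chart variables --- together with invoking the box form of Schwartz--Zippel rather than Corollary~\ref{cor:find_point} --- that avoids this. Everything else is routine bookkeeping with the Leibniz formula and the chart maps of the Grassmannian.
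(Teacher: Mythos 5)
Your argument is correct, and it reaches the stated bound by a route that differs in its mechanics from the paper's. The paper pulls $F$ back through the multilinear map $\pi:(\PP^n)^{n-m-1}\dashrightarrow \PP^\nu$ to obtain a multihomogeneous polynomial $Q$ of multidegree $(\deg F,\ldots,\deg F)$, then decomposes $Q=\sum_j Q_j(x_{1\bullet})R_j(x_{2\bullet},\ldots)$ and applies Lemma~\ref{lem:gen_Schwarz_Zippel} block by block to build up $n-m-1$ points $P_i\in\PP^n(\QQ)$ of height at most $\deg F$; the Pl\"ucker coordinates are then Leibniz-bounded minors of the matrix of $P_i$'s. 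You instead stay on the Grassmannian, restrict $F$ to the standard affine chart $(I_{n-m-1}\mid A)$, and exploit the multilinearity of the Pl\"ucker coordinates in the entries of $A$ to get a per-variable degree bound $<(m+1)^2d^2$ on $F(\Lambda(A))$, after which the box form of Schwartz--Zippel produces a small integer matrix $A$ directly. Both arguments hinge on the same two inputs --- the degree bound of Lemma~\ref{empty} and the multilinearity of Pl\"ucker coordinates --- and arrive at the same Leibniz estimate; what your version buys is a shorter bookkeeping (no $Q=\sum_j Q_jR_j$ decomposition) and the automatic primitivity of the Pl\"ucker tuple, since one maximal minor of $(I_{n-m-1}\mid A)$ is $\det(I_{n-m-1})=1$ (the paper's proof gets the same $H(\Lambda)$ bound anyway, since passing to a primitive representative only decreases the height). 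The one thing your write-up relies on that is not literally stated in the paper is the grid version of Schwartz--Zippel, but that is elementary and your appeal to it is sound. You correctly flagged the key subtlety --- that a per-variable (not total) degree bound is essential to avoid an extraneous $(n-m-1)^{n-m-1}$ --- which mirrors the role multihomogeneity plays in the paper's version.
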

\begin{proof} %Let $\nu = {n + 1 \choose n-m-1} - 1$ and 
Consider the rational map 
\[ \pi:(\PP^n)^{n-m-1} \dashrightarrow \PP^{\nu} : (P_1,\ldots,P_{n-m-1}) \mapsto \det(P_1,\ldots,P_{n-m-1}) \]
which is well-defined on the open $U$ consisting of tuples of independent points. Observe that $\pi(U) = \GG(n-m-2,n)$.
By Lemma~\ref{empty} there exists a polynomial $F$ of degree less than $(m+1)^2d^2$ which vanishes on the complement of $G_X$ but which does not vanish identically on $\GG(n-m-2,n)$. 
The polynomial
\[ Q := F \left( \det \begin{pmatrix} x_{10} & x_{11} & \ldots & x_{1n} \\
                                       x_{20} & x_{21} & \ldots & x_{2n} \\
                                        \vdots & \vdots & \ddots & \vdots \\
                                       x_{n-m-1,0} & x_{n-m-1,1} & \ldots & x_{n-m-1,n} \\  \end{pmatrix} \right) \]
is multihomogeneous of multidegree $(\deg(F), \ldots ,\deg(F))$ in the $n-m-1$ blocks of $n+1$ variables corresponding to the rows of the displayed matrix. Clearly $Q$ vanishes on the complement of $U$, while it is 
not identically zero because
$Q(P_1,\ldots,P_{n-m-1}) = F(\pi(P_1,\ldots,P_{n-m-1}))$ for any tuple of independent points $P_i$. 

Write 
\[ Q = \sum_j Q_j(x_{10}, \ldots, x_{1n}) R_j(x_{20}, \ldots, x_{n-m-1,n}) \]
for non-zero $Q_j$ and linearly independent polynomials $R_j$. Lemma~\ref{lem:gen_Schwarz_Zippel} helps us to find a point $P_1 \in\PP^n(\QQ)$ of height at most $\deg(F)$ such that $Q_1(P_1) \neq 0$.
By the linear independence of the $R_j$'s one sees that $Q(P_1,x_{20},\ldots,x_{n-m-1,n})$ is not identically zero. Repeating the argument eventually yields a tuple of points $P_1, P_2, \ldots, P_{n-m-1}$
of height at most $\deg(F)$ such that $Q(P_1,\ldots,P_{n-m-1}) \neq 0$. In particular this tuple of points belongs to $U$, i.e., they are independent, and $\pi(P_1,P_2,\ldots,P_{n-m-1})\in G_X(\QQ)$. From this the lemma follows easily.
\end{proof}

\begin{proof}[Proof of Lemma~\ref{lem:projection}]
Let $\Lambda$ be the $\QQ$-rational $(n-m-2)$-plane produced by the proof of Lemma~\ref{Existence}. 
In particular $\Lambda \cap X = \emptyset$ and $\pi_\Lambda|_X$ is birational onto its image. 
Then for all $(m+1)$-planes $\Gamma$ such that $\Gamma\cap\Lambda=\emptyset$ the projection map
$p_{\Lambda,\Gamma}|_X$ is also birational onto its image, thanks to the factorization from~\eqref{factorizationofpiLambda}.

The proof of Lemma~\ref{Existence} moreover shows that $\Lambda$ can be assumed to be the linear span of rational points $P_1, \ldots ,P_{n-m-1} \in\PP^n$ satisfying $H(P_i)\leq (m+1)^2d^2 =: B_1$. 
By Lemma~\ref{bombierivaaler2} below we can find linear forms $L_1, L_2, \ldots, L_{n-m-1}$ with integral coefficients whose absolute value is bounded by
%\[ B_2 := (n-m-2)!^{\frac{1}{2(m+2)}}\left(4nB_1^4 \right)^\frac{n-m-2}{2(m + 2)}, \] 
\[ B_2 := \sqrt{(n-m-2)! (n+1)} B_1^{n-m-2} \]
such that $L_i$ vanishes on $P_1, \ldots, P_{i-1}, P_{i+1}, \ldots, P_{n-m-1}$ but not on $P_i$.
Together these linear forms cut out an $(m+1)$-plane $\Gamma$ such that $\Gamma \cap \Lambda = \emptyset$.
Furthermore 
\begin{equation} \label{projection_formula}
p_{\Lambda, \Gamma}(P)=P-\dfrac{L_1(P)}{L_1(P_1)}P_1 - \ldots - \dfrac{L_{n-m-1}(P)}{L_{n-m-1}(P_{n-m-1})}P_{n-m-1}
\end{equation}
for all $P \in \PP^n \setminus \Lambda$. So we have 
\begin{equation} \label{projectionlemmabound} 
 H(p_{\Lambda, \Gamma}(P))\leq (n-m)((n+1)B_1B_2)^{n-m-1} H(P) = c d^{2(n-m-1)^2} H(P) 
\end{equation}
for
some constant $c$ that is easily bounded by an expression purely in $n$.
\end{proof}

\begin{lem} \label{bombierivaaler2}
Let $B, r, s \in \ZZ_{\geq 1}$ be integers such that $s < r$. Consider a linear system of linearly independent equations $\sum_{k=1}^r a_{ik} x_k  = 0$
for $i=1, \ldots, s$, where all $a_{ij}$ are integers satisfying $|a_{ij}| \leq B$. There exists a non-zero tuple of integers $(x_1, x_2, \ldots, x_r)$ violating the first equation but satisfying all other equations such that 
\begin{equation} \label{BVbound}
% |x_i| \leq (s-1)!^{\frac{1}{2(r-s)}} \left(4(r-1) B^4 \right)^\frac{s-1}{2(r-s)} 
 |x_i| \leq \sqrt{(s-1)! r} B^{s-1} 
\end{equation}
for all $i$.
\end{lem}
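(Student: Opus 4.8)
The plan is to deduce this from the Bombieri--Vaaler form of Siegel's lemma (Theorem~\ref{thm:bombval_siegel}) applied to the subsystem of equations $2,\dots,s$, the one ingredient beyond a plain Siegel's lemma being to arrange that the solution we extract does not also kill equation~$1$. First I would dispose of the case $s=1$ directly: the single equation $\sum_k a_{1k}x_k=0$ is nontrivial, so $a_{1k}\neq 0$ for some $k$ and $x=e_k$ works, with $|x_i|\le 1\le\sqrt r$. So assume $s\ge 2$, let $A'\in\ZZ^{(s-1)\times r}$ be the coefficient matrix of equations $2,\dots,s$ (of rank $s-1$), put $L=\{x\in\ZZ^r:A'x=0\}$, a lattice of rank $\ell:=r-s+1\ge 2$, write $\phi(x)=\sum_k a_{1k}x_k$, and set $L_0=L\cap\ker\phi$, which has rank $\ell-1$ because $\phi|_L\not\equiv 0$ by linear independence of the $s$ equations. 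The goal is to produce $x\in L\setminus L_0$ with $\|x\|_\infty\le\sqrt{(s-1)!\,r}\,B^{s-1}$.

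The two quantitative inputs are as follows. By Hadamard's inequality, $\det(A'(A')^\top)\le\prod_{i=2}^{s}\|\text{(row $i$)}\|_2^2\le (rB^2)^{s-1}$, so $\operatorname{covol}(L)=\sqrt{\det(A'(A')^\top)}/D\le r^{(s-1)/2}B^{s-1}$, where $D\ge 1$ is the gcd of the maximal minors of $A'$ (keeping this factor is worthwhile, since for very structured coefficient matrices it is large and genuinely needed). Theorem~\ref{thm:bombval_siegel} applied to equations $2,\dots,s$ then yields a nonzero $y\in L$ with $\|y\|_\infty\le\operatorname{covol}(L)^{1/\ell}\le r^{(s-1)/(2\ell)}B^{(s-1)/\ell}$. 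If $\phi(y)\neq 0$ we are essentially done, after the elementary check that $r^{(s-1)/(2\ell)}B^{(s-1)/\ell}\le\sqrt{(s-1)!\,r}\,B^{s-1}$ for all $2\le s<r$: since $B\ge 1$ the exponent of $B$ only shrinks, and for the $r$-exponent one splits on whether $r\ge 2s-2$ (then the exponent of $r$ is $\le\tfrac12$) or $r<2s-2$ (then $\log r\le\log(2(s-1))$ is bounded against $\tfrac12\log((s-1)!)=\tfrac12\sum_{k\le s-1}\log k$).

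The remaining point is to force $\phi(y)\neq 0$. Rather than extract a single Bombieri--Vaaler vector, I would produce a linearly independent family of $\ell$ small elements of $L$, so that at least one falls outside the rank-$(\ell-1)$ sublattice $L_0$: fix $s-1$ pivot columns $J$ of $A'$; for each of the $\ell$ free columns $k$, the tuple with $x_k=\det(A'_J)$, $x_{k'}=0$ for the other free columns $k'$, and $x_J=-\operatorname{adj}(A'_J)\cdot(\text{column $k$ of }A')$ lies in $L$, has entries that are $(s-1)\times(s-1)$ or $(s-2)\times(s-2)$ subdeterminants of $A'$ (hence $\le (s-1)!\,B^{s-1}$ in absolute value), and these $\ell$ tuples are independent because their free-coordinate parts are $\det(A'_J)$ times distinct standard vectors; one of them violates equation~$1$, which already settles the lemma whenever $(s-1)!\le r$. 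For the complementary range $(s-1)!>r$ I would instead run the Bombieri--Vaaler / geometry-of-numbers estimate above on the kernel lattice $L$ (via Minkowski's second theorem: not all $\ell$ successive-minima directions of $L$ can lie in $L_0$, so some $v\notin L_0$ has $\|v\|_\infty=\lambda_{j_0}(L)\le\lambda_\ell(L)$, with $\lambda_\ell(L)$ controlled by $\operatorname{covol}(L)$, the sharper bound coming from a balancing argument on the projection of $L$ onto $(\operatorname{span}L_0)^\perp\cap\operatorname{span}L$).

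The main obstacle will be reconciling all of this with the exact constant $\sqrt{(s-1)!\,r}$: one needs the solution simultaneously to be \emph{small} — which forces exploiting all $r$ variables, i.e.\ the $(r-s+1)$-th-root savings of Bombieri--Vaaler rather than a bare adjugate bound — and to \emph{violate equation~$1$} — which forces producing a spanning family of small solutions rather than just one — and making these compatible requires the case split on $r$ versus $(s-1)!$, Hadamard's inequality in the form above (including the gcd factor $D$), and the elementary logarithmic estimate. Everything else is routine bookkeeping.
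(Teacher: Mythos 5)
Your plan has the right shape --- produce a linearly independent family of $\ell=r-s+1$ small integer solutions of the subsystem of equations $2,\dots,s$, then argue by a dimension count that one of them must violate equation~$1$ --- and this is exactly the structure of the paper's proof. But the paper achieves it in a single line by citing \cite[Theorem~2]{Bombi-Vaal}, the multi-solution form of the Bombieri--Vaaler theorem, which produces $\ell$ linearly independent integer solutions of $A'x=0$ whose sup-norms have \emph{product} at most $D^{-1}\sqrt{\det(A'(A')^\top)}$; since each factor is a positive integer, each individual solution already obeys the full product bound, and the dimension count you already have in hand finishes. You invoke only the single-solution Theorem~\ref{thm:bombval_siegel} and then try to build the spanning family by hand, and that is where the argument comes apart.

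Concretely, the case $\phi(y)=0$ together with $(s-1)!>r$ is a genuine hole. Your Cramer/adjugate family has entries bounded by $(s-1)!\,B^{s-1}$ (or $(s-1)^{(s-1)/2}B^{s-1}$ via Hadamard on the minors), and as you note this only meets the target $\sqrt{(s-1)!\,r}\,B^{s-1}$ when $(s-1)!\le r$. The Minkowski fallback for the complementary range is only a sketch, and the one concrete estimate on offer, $\lambda_\ell(L)\le\operatorname{covol}(L)\le r^{(s-1)/2}B^{s-1}$, already exceeds the target whenever $r^{s-2}>(s-1)!$, which does occur inside the range $(s-1)!>r$: for $s=4$, $r=5$ one has $r^{(s-1)/2}=5^{3/2}\approx 11.2$ against $\sqrt{(s-1)!\,r}=\sqrt{30}\approx 5.5$. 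The ``balancing argument on the projection'' you gesture at is not worked out, and carrying it out correctly would essentially amount to reproving Bombieri--Vaaler's Theorem~2, which is itself a packaged successive-minima statement for this kernel lattice. The missing ingredient is therefore not a new idea but the right citation: with \cite[Theorem~2]{Bombi-Vaal} in hand, the $s=1$ special case, the $\phi(y)=0$ versus $\phi(y)\neq 0$ split, the Cramer construction, and the Minkowski detour all become unnecessary.
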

\begin{proof}
This follows from~\cite[Theorem 2]{Bombi-Vaal}, which strengthens~Theorem~\ref{thm:bombval_siegel}.
It ensures the existence of $r-s+1$ linearly independent 
tuples of integers $(x_1, x_2, \ldots, x_r)$ satisfying the last $s-1$ equations 
and meeting the bound~\eqref{BVbound}. Since the space of solutions to the full linear system of $s$ equations has dimension $r-s$, at least one of these tuples must violate the first equation.
\end{proof}

We can now prove Propositions \ref{prop:affine_reduction_projection} and \ref{prop:projective_reduction_projection}, reducing the situation of a general variety to a hypersurface.
%\begin{prop}\label{prop:affine_reduction_projection}
%  Given an integral affine variety $X$ in $\AA^n$ of dimension $m$ and degree $d$, there exists an integral affine variety $X'$ in $\AA^{m+1}$ birational to $X$, also of degree $d$, such that for any $B \geq 1$ we have \[ N_{\rm aff}(X,B) \leq d N_{\rm aff}(X', c_n d^{e_n} B), \] where $c_n, e_n$ are constants depending only on $n$.
%  
%  For $m=1$, we can even achieve \[ N_{\rm aff}(X,B) \leq N_{\rm aff}(X', c_n d^{e_n} B) + d^2 .\]
%\end{prop}

%\begin{prop}\label{prop:projective_reduction_projection}
%  Given an integral projective variety $X$ in $\PP^n$ of dimension $m$ and degree $d$, there exists an integral projective variety $X'$ in $\PP^{m+1}$ birational to $X$, also of degree $d$, such that for any $B \geq 1$ we have \[ N(X,B) \leq d N(X', c_n d^{e_n} B), \] where $c_n, e_n$ are constants depending only on $n$.
%  
%  For $m=1$, we can even achieve \[ N(X,B) \leq N(X', c_n d^{e_n} B) + d^2 .\]
%\end{prop}

\begin{proof}[Proof of Proposition \ref{prop:projective_reduction_projection}]
  Let $X$ be a geometrically integral projective variety in $\PP^n$ of dimension $m$ and degree $d$, where we may assume that $n>m+1$.
  We consider a projection $p_{\Lambda, \Gamma}$ as in Lemma \ref{lem:projection}. By dropping appropriately chosen coordinates, its image $X'$ can be viewed as a hypersurface in $\PP^{m+1}$, birational to $X$ and hence also of degree $d$.
  In each fibre of $p_{\Lambda, \Gamma}$ there are at most $d$ points. The height relation from Lemma \ref{lem:projection} now immediately implies
  \[  N(X,B) \leq d N(X', c_n d^{2(n-m-1)^2} B) \]
  for all $B \geq 1$. %, with the constant $c$ as in Lemma \ref{lem:projection}.
  This proves the claim for $m>1$.
  For $m=1$, consider the normalization $\tilde{X} \to X$ and compose it with the morphism $X \to X'$ induced by $p_{\Lambda, \Gamma}$ to find a resolution of singularities $\tilde{X} \to X'$. 
The latter map is one-to-one away from the singular points of $X'$, which together have no more than $(d-1)(d-2)$ preimages by~\cite[Theorem 17.7(b)]{kunz}. But then the same claims must apply to $X \to X'$, yielding the stronger bound
  \[ N(X,B) \leq N(X', c_n d^{2(n-2)^2} B) + d^2,  \]
as wanted.
\end{proof}

\begin{proof}[Proof of Proposition \ref{prop:affine_reduction_projection}]
  Let $X$ be a geometrically integral affine variety in $\AA^n$ of dimension $m$ and degree $d$, where we may assume that $m<n-1$.
  Let $Z$ be the projective closure of $X$ in $\PP^n$; we apply Lemma \ref{lem:projection} and shall argue later that we can take the $(n-m-2)$-plane $\Lambda$ to be contained in the hyperplane $\PP^{n-1}$ at infinity.
  Let $Z' \subseteq \Gamma$ be the image of $Z$ under the projection $p_{\Lambda, \Gamma}$. As above, by dropping some coordinates we can view $\Gamma$ as $\PP^{m+1} = \mathbb{A}^{m+1} \sqcup \PP^m$ where $p_{\Lambda, \Gamma}(\PP^{n-1} \setminus \Lambda)$ corresponds to $\PP^m$. In particular $p_{\Lambda, \Gamma}$ maps $X$ to the affine part $X_0' = Z' \cap \AA^{m+1}$ of $Z'$.
  
  Consider $P_1, P_2, \ldots, P_{n-m-1}$ and $L_1, L_2, \ldots, L_{n-m-1}$ as in the proof of Lemma~\ref{lem:projection}. Let $P \in X$ be a point having integer coordinates; when considered as a projective point of $Z$ its coordinate at infinity is $1$. Since the coordinates at infinity of the $P_i$'s are $0$, the projection formula~\eqref{projection_formula} shows that $p_{\Lambda, \Gamma}(P) \in Z'$ admits integer coordinates such that the coordinate at infinity is
  \[ L_1(P_1) L_2(P_2) \cdots L_{n-m-1}(P_{n-m-1}), \]
regardless of the choice of $P$. As a consequence, this is a multiple of the denominators appearing among the coordinates of $p_{\Lambda, \Gamma}(P)$ when viewed as an affine rational point of $X_0'$. Therefore, postcomposing with a coordinate scaling map $\AA^{m+1} \to \AA^{m+1}$, we obtain another variety $X'$ in $\AA^{m+1}$ such that every integral point $P$ of $X$ is mapped to an integral point of $X'$ whose height satisfies the same upper bound as in~\eqref{projectionlemmabound}.
%\[ (n-m)((n+1)B_1B_2)^{n-m} H(P) = c d^\frac{2(n-m)(2n-m-2)}{m+2} H(P) \]
%for some constant $c=c(n)$. 
  All fibres of this map $X \to X'$ have at most $d$ points, and in the case of curves the map is even one-to-one away from the singular points on $X'$.
 So we can conclude as in the proof of Proposition \ref{prop:projective_reduction_projection}. 
  
  It remains to argue why we can take $\Lambda$ in the hyperplane at infinity. We first claim that the ``good set'' $G_Z$ from Lemma~\ref{empty} has a non-empty intersection with the Grassmannian parametrizing $(n-m-2)$-planes $\Lambda$ contained in $\PP^{n-1}$.
Indeed, it is apparent that the generic such $\Lambda$ does not intersect the $(m-1)$-dimensional set $Z \cap \PP^{n-1}$ and hence satisfies $\Lambda \cap Z = \emptyset$.
Furthermore, the argument from~\cite[p.\,224]{Harris} showing that generic projections are birational leaves enough freedom to draw the same conclusion when restricting to projections from planes at infinity. More precisely, if $m = n-2$ then it suffices to project from a point outside the cone spanned by $Z$ and some random point $q \in Z$. Since this cone is irreducible of dimension at most $m+1 = n-1$ and since $Z \not \subseteq \PP^{n-1}$, the generic point at infinity indeed meets this requirement. If $m < n-2$ then the desired conclusion follows by applying the foregoing argument to 
$n - m -1$ successive projections from points.

So we can redo the proof of Lemma~\ref{Existence} starting from a polynomial $F$ of degree less than $(m+1)^2d^2$ which vanishes on the complement of $G_X$ but which does not vanish identically on the Grassmannian of $(n-m-2)$-planes that are contained in the hyperplane at infinity; we just argued that such an $F$ exists. Then one can proceed with the same polynomial $Q$ as before, but with zeroes substituted for the variables $x_{10}, x_{20}, \ldots, x_{n-m-1,0}$.
\end{proof}

\section{Lower bounds}\label{sec7} 

%\subsection{Lower bounds} 
We conclude with some lower bounds showing that one cannot make the dependence on $d$ sub-polynomial. Our main auxiliary tool is the following lemma.

\begin{lem} \label{lem:lowerbound}
For each pair of integers $d \geq 1$, $n \geq 2$ there exists an absolutely irreducible degree $d$ polynomial $f \in \QQ[x_1, x_2, \ldots, x_n]$
which vanishes at all integral points
$(r_1, r_2, \ldots, r_n)$ for which $|r_i| \leq \lfloor(d-1)/2n \rfloor$ for all $i$.
\end{lem}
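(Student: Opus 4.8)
The plan is to construct $f$ using only the first two variables $x_1,x_2$, so that vanishing on the required box reduces to vanishing whenever $x_1,x_2\in\{-m,\dots,m\}$, where $m:=\lfloor(d-1)/(2n)\rfloor$. For $d=1$ one has $m=0$ and $f=x_1$ already does the job, so assume $d\ge 2$. The first thing I would check is the elementary inequality $2m+1\le d-1$: since $n\ge 2$ we get $2m\le(d-1)/n\le(d-1)/2$, hence $2m+1\le(d+1)/2\le d-1$ for $d\ge 3$, while for $d=2$ one has $m=0$ and $2m+1=1=d-1$ directly.

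Write $e:=2m+1$ and $P(t):=\prod_{j=-m}^{m}(t-j)$, a monic polynomial of degree $e$ vanishing exactly on $\{-m,\dots,m\}$. I would then set
\[
 f:=x_1^{\,d-e}P(x_1)+x_2^{\,d-1-e}P(x_2)\in\ZZ[x_1,x_2]\subseteq\QQ[x_1,\dots,x_n],
\]
which is legitimate since $d-e\ge 1$ and $d-1-e\ge 0$ by the inequality above. The two summands are monic of degree $d$ in $x_1$ and of degree $d-1$ in $x_2$ respectively, so $f$ has total degree exactly $d$, with the coefficients of $x_1^d$ and of $x_2^{d-1}$ both equal to $1$. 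Vanishing is immediate: if $(r_1,\dots,r_n)\in\ZZ^n$ has $|r_i|\le m$ for all $i$, then $r_1,r_2\in\{-m,\dots,m\}$ are roots of $P$, so both summands vanish at $(r_1,\dots,r_n)$.

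It then remains only to see that $f$ is absolutely irreducible, and here I would invoke the Newton-polytope edge criterion already used in the paper. Apart from the corner monomials $x_1^d$ and $x_2^{d-1}$, every monomial of $f$ is a pure power $x_1^i$ with $i\le d-1$ or $x_2^{i'}$ with $i'\le d-2$, and each such lattice point lies strictly below the segment joining $(d,0)$ and $(0,d-1)$; since $\gcd(d,d-1)=1$ and the two corner coefficients are nonzero, $f$ has exactly the shape to which \cite[Theorem~4.11]{gao} applies (as in the proof of Corollary~\ref{cor:indecomposableNP}), so $f$ is absolutely irreducible over every field, and a fortiori in $\QQ[x_1,\dots,x_n]$ since it involves only $x_1,x_2$. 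I do not expect a serious obstacle: the one genuinely essential use of the hypothesis $n\ge 2$ is the inequality $e\le d-1$, which is what permits placing the $x_2$-block in degree exactly $d-1$ and thereby makes the two leading degrees coprime, enabling the irreducibility criterion; for $n=1$ this fails and the construction would have to be modified.
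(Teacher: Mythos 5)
Your proof is correct, and it follows a genuinely different route from the paper's. The paper works with a polynomial in all $n$ variables whose "leading" monomials are $x_1^d + \cdots + x_{n-1}^d + x_n^{d-1}$ and whose low-order coefficients are left free; it then solves a linear system to force vanishing on a translated box, observing that the coefficient matrix is the $n$th Kronecker power of a Vandermonde matrix and hence invertible. You instead give a fully explicit construction in only the two variables $x_1, x_2$, taking $P(t) = \prod_{j=-m}^{m}(t-j)$ and setting $f = x_1^{d-e}P(x_1) + x_2^{d-1-e}P(x_2)$ with $e = 2m+1$; this makes the vanishing on the box immediate and dispenses with the linear algebra entirely, at the cost of the small arithmetic check $2m+1 \leq d-1$ (which you carry out correctly, including the boundary cases $d=1,2$). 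Both proofs conclude with the same Newton-polytope criterion for absolute irreducibility from \cite[Theorem~4.11]{gao}, hinging on the coprime edge from $(d,0)$ to $(0,d-1)$, and your passage from $\QQ[x_1,x_2]$ to $\QQ[x_1,\ldots,x_n]$ is legitimate by Gauss's lemma. The trade-off: your construction is more elementary and explicit, while the paper's produces a polynomial that genuinely involves all $n$ variables rather than a cylinder over a plane curve; for the lower-bound applications in Section~\ref{sec7}, which only exploit the cube of forced integral zeroes, either choice suffices.
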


\begin{proof}
The lemma is immediate if $d=1$, so we can assume that $d \geq 2$.
We claim that there exists a polynomial
\[ x_1^d + x_2^d + \ldots + x_{n-1}^d + x_n^{d-1} + \sum_{0 \leq i_1, \ldots, i_n \leq \lfloor(d-1)/n \rfloor} a_{i_1, \ldots, i_n} x_1^{i_1} x_2^{i_2} \cdots x_n^{i_n} \]
which vanishes simultaneously at the integral points $(r_1, r_2, \ldots, r_n)$ satisfying
\[ \lfloor(d-1)/2n \rfloor - \lfloor(d-1)/n \rfloor  \leq r_i \leq \lfloor(d-1)/2n \rfloor \]
for all $i$. From this the lemma follows, because indeed $\lfloor(d-1)/2n \rfloor - \lfloor(d-1)/n \rfloor \leq -  \lfloor(d-1)/2n \rfloor$ and because the polynomial
is absolutely irreducible, as its Newton polytope is indecomposable; see e.g.~\cite[Theorem~4.11]{gao}.
To verify the claim, note that
every point $(r_1,r_2, \ldots, r_n)$ imposes a linear condition on the coefficients $a_{i_1, \ldots, i_n}$, together resulting
in a linear system of $(\lfloor(d-1)/n \rfloor + 1)^n$ equations
in the same number of unknowns.
It suffices to see that the matrix corresponding to its linear part is non-singular. But this matrix
is the $n$th Kronecker power of the Vandermonde matrix $(r^i)_{r,i}$ where $r$ and $i$ range over
\[ \{ \lfloor(d-1)/2n \rfloor - \lfloor(d-1)/n \rfloor , \ldots,  \lfloor(d-1)/2n \rfloor \} \quad \text{resp.} \quad \{ 0 , \ldots,  \lfloor(d-1)/n \rfloor \}. \]
Therefore its determinant is a power of the determinant of this Vandermonde matrix, from which the desired conclusion follows.
\end{proof}

%\begin{cor}
%For each integer $d>0$ there is an integral projective curve $X\subseteq \PP^2$ of degree $d$ and an integer $B\geq 1$ such that
%$$
%\frac{1}{5}d^2 B^{2/d}  \leq N(X,B).
%$$
%\end{cor}
\begin{proof}[Proof of Proposition~\ref{prop:W1.1:opt}]
If $d = 1, 2$ then we let $X$ be a line resp.\ conic through a coordinate point, so that we can take $B = 1$. If $d \geq 3$ then we
consider the affine curve defined by the polynomial $f$ from the proof of the foregoing lemma for $n=2$. Let $X$ be its projective closure, which has an extra height $1$ point at infinity.
With
$B = \lfloor(d-1)/2 \rfloor - \lfloor(d-1)/4 \rfloor$ one observes that
\[ N(X, B) \geq
(\lfloor(d-1)/2 \rfloor + 1)^2 + 1 \geq d^2 / 4 = d^2/5 \cdot 5/4 \geq d^2/5 \cdot B^{2/d}. \qedhere \]
\end{proof}

Note that using the same $f$ and $B$ one also finds that
\[ N_{\rm aff}(f,B) \geq (\lfloor(d-1)/2 \rfloor + 1)^2 \geq \frac{d^2}{4 \log d} B^{1/d} \log B \]
for all $d \geq 3$, confirming our claim that, in the statement of Theorem~\ref{thm:Bombieri-Pila}, it is impossible to replace the quartic dependence on $d$ by any expression which is $o(d^2/ \log d)$. In arbitrary dimension, the same reasoning 
shows that there exists a positive constant $c = c(n)$ such that for all integers $d>0$ we can find an absolutely irreducible degree $d$ polynomial $f \in \QQ[x_1, x_2, \ldots, x_n]$ along with an integer $B\geq 1$ such that
$$
  N_{\rm aff}(f,B) \geq cd^2B^{n-2} \quad \text{and} \quad N(X,B) \geq c d B^{\dim X},
$$
where $X \subseteq \PP^n_\QQ$ denotes the integral degree $d$ hypersurface defined by the homogenization of $f$.
This shows that Theorems~\ref{thm:dcgdegree} and~\ref{thm:0.4} cannot hold with $e < 1$ resp.\ $e < 2$.

\bibliographystyle{amsplain}
\bibliography{anbib}
%\appendix{\textbf{Appendix: Lock File with the log of names and dates}}
%\input{Lock.tex}
\end{document}